\renewcommand{\@seccntformat}[1]{
  \ifcsname prefix@#1\endcsname
    \csname prefix@#1\endcsname
  \else
    \csname the#1\endcsname\quad
  \fi}
\numberwithin{equation}{section}
\newtheorem{theorem}{Theorem}[section]
\newtheorem{corollary}{Corollary}[theorem]
\newtheorem{lemma}[theorem]{Lemma}
\newtheorem{remark}{Remark}
\newtheorem{claim}{Claim}
\newtheorem{definition}{Definition}
\newtheorem{proposition}[theorem]{Proposition}
\newcommand{\R}{\mathbb{R}}
\newcommand{\Z}{\mathbb{Z}}
\newcommand{\eps}{\epsilon}
\newcommand{\half}{\frac{1}{2}}
\newcommand{\tht}{\theta}
\newcommand{\sg}{\sigma}
\newcommand{\dl}{\delta}
\newcommand{\al}{\alpha}
\newcommand{\la}{\lambda}
\newcommand{\om}{\omega}
\newcommand{\Om}{\Omega}
\newcommand{\f}{\frac}
\newcommand{\F}{\mathcal{F}}
\newcommand{\Ss}{\mathcal{S}}
\newcommand{\Tt}{\mathcal{T}}
\newcommand{\Sp}{\mathbb{S}}
\newcommand{\na}{\nabla}
\newcommand{\dd}{\partial}
\newcommand{\dhalf}{(-\Delta)^{1/2}}
\newcommand{\Phip}{\Pi_{\phi^{\perp}}}
\newcommand{\pip}{\Pi_{\tilde{\phi}_{\perp}}}
\newcommand{\D}{\Delta}
\newcommand{\tk}{2^{-k}}
\newcommand{\ko}{k_1}
\newcommand{\kt}{k_2}
\newcommand{\kth}{k_3}
\newcommand{\sko}{S_{k_1}}
\newcommand{\uko}{u_{k_1}}
\newcommand{\ukt}{u_{k_2}}
\newcommand{\lkmt}{\leq k-10}
\newcommand{\x}{\times}
\newcommand{\dphi}{\dhalf \phi}
\newcommand{\kmt}{k-10}
\newcommand{\ti}{\times}
\newcommand{\LOR}{\langle\Omega\rangle}
\newcommand{\sbr}{\sg^\beta_r}
\newcommand{\vpl}{\varphi_{\lambda}}
\newcommand{\rd}{\sqrt{-\Delta}}
\newcommand{\sgb}{\sg^\beta}
\newcommand{\pgt}{\phi_{>-10}}
\newcommand{\plt}{\phi_{\leq-10}}
\newcommand{\pk}{\phi_k}
\newcommand{\phil}{\phi_l}
\newcommand{\psk}{\phi_{\sim k}}
\newcommand{\plk}{\phi_{<k}}
\newcommand{\dau}{\dd^\al}
\newcommand{\dad}{\dd_\al}
\newcommand{\pj}{\phi_j}
\newcommand{\vp}{\varphi}
\newcommand{\po}{\tilde{\psi}}
\newcommand{\ui}{\Uu^{-1}}
\newcommand{\pkmo}{\phi_{K-1}}
\newcommand{\plko}{\phi_{<K-1}}
\newcommand{\pko}{\phi_{\ko}}
\newcommand{\pkt}{\phi_{\kt}}
\newcommand{\pkth}{\phi_{\kth}}
\newcommand{\Lc}{\mathcal{L}}
\newcommand{\gam}{\gamma}
\newcommand{\hx}{\hat{x}}
\newcommand{\hxi}{\hat{\xi}}
\newcommand{\spt}{\text{supp}}
\newcommand{\pso}{\phi_{\sim0}}
\newcommand{\japa}{\langle a \rangle}
\newcommand{\japb}{\langle b \rangle}
\newcommand{\du}{\dhalf \phi}
\newcommand{\puj}{\phi^{(j)}}
\newcommand{\pujmo}{\phi^{(j-1)}}
\newcommand{\puo}{\phi^{(1)}}
\newcommand{\puoj}{\phi^{(1)}_j}
\newcommand{\puojmo}{\phi^{(1)}_{j-1}}
\newcommand{\pji}{\phi^{(J,i)}}
\newcommand{\puJ}{\phi^{(J)}}
\newcommand{\puJmo}{\phi^{(J-1)}}
\newcommand{\pl}{\phi^L}
\newcommand{\bl}{L}
\newcommand{\psil}{\psi^L}
\newcommand{\lnp}{L_n\phi}
\newcommand{\Uu}{\mathbf{U}}
\newcommand{\Aa}{\mathbf{A}}
\newcommand{\plkmt}{\phi_{\leq k-10}}
\newcommand{\pgkmt}{\phi_{>k-10}}
\newcommand{\Psk}{P_{\sim k}}
\newcommand{\vpkt}{\vp_{\kt}}
\newcommand{\fkth}{F_{\kth}}
\newcommand{\xst}{X^{s,\tht}}
\newcommand{\xstmo}{X^{s-1,\tht-1}}
\newcommand{\vptk}{\vp^{(2)}_{\kt}}
\newcommand{\vpthk}{\vp^{(3)}_{\kth}}
\newcommand{\Sko}{S_{\ko}}
\newcommand{\tp}{\tilde{\phi}}
\newcommand{\Pp}{\mathcal{P}}
\newcommand{\omij}{\Om_{ij}}
\newcommand{\pho}{\phi^{(1)}}
\newcommand{\pht}{\phi^{(2)}}
\newcommand{\phth}{\phi^{(3)}}
\newcommand{\tpim}{2+,\infty-}
\newcommand{\imtp}{\infty-,2+}
\newcommand{\llh}{\text{(low)}\na\text{(low)}\na\text{(high)}}
\newcommand{\lllow}{\text{(low)}\na\text{(low)}\na\text{(low)}}
\newcommand{\lhh}{\text{(low)}\na\text{(high)}\na\text{(high)}}
\newcommand{\hhh}{\text{(high)}\na\text{(high)}\na\text{(high)}}
\newcommand{\llowesth}{\text{(low)}\na\text{(lowest)}\na\text{(high)}}
\newcommand{\lowestlh}{\text{(lowest)}\na\text{(low)}\na\text{(high)}}
\newcommand{\hll}{\text{(high)}\na\text{(low)}\na\text{(low)}}
\newcommand{\hhl}{\text{(high)}\na\text{(high)}\na\text{(low)}}
\newcommand{\fpq}{\f{1}{p}+\f{3}{q}}
\newcommand{\mfpq}{-\f{1}{p}-\f{3}{q}}
\newcommand{\tpxst}{\|\tp\|_{\xst_1}}
\newcommand{\dtpt}{\dd_t\tp}
\newcommand{\pkqj}{P_kQ_j}
\newcommand{\wip}{\widetilde{\Pi}_{\tilde{\phi}^{\perp}}}
\newcommand{\Pllk}{P_{\ll k}}
\newcommand{\Plsk}{P_{\lesssim k}}
\newcommand{\Pgsk}{P_{\gtrsim k}}
\newcommand{\Pggk}{P_{\gg k}}
\newcommand{\Qsj}{Q_{\sim j}}
\newcommand{\Qllj}{Q_{\ll j}}
\newcommand{\Qgsj}{Q_{\gtrsim j}}
\newcommand{\Qlsj}{Q_{\lesssim j}}
\title{Global Solutions to the 3D Half-Wave Maps Equation with Angular Regularity}
\author{Katie Marsden}
\begin{document}

\maketitle

\begin{abstract}
    The half-wave maps equation is a nonlocal geometric equation arising in the continuum dynamics of Haldane-Shashtry and Calogero-Moser spin systems. In high dimensions $n\geq4$, global wellposedness for data which is small in the critical Besov space $\dot{B}^{n/2}_{2,1}$ is known since \cite{KS,KK}. There is a major obstruction in extending these results to three dimensions due to the loss of the crucial $L^2_tL^\infty_x$ Strichartz estimate. In this work, we make progress on this case by proving that the equation is ``weakly" globally well-posed (in the sense of \cite{Tao_1}) for initial data which is not only small in $\dot{B}^{3/2}_{2,1}$ but also possesses some angular regularity and weighted decay of derivatives. We use Sterbenz's improved Strichartz estimates \cite{Sterbenz} in conjunction with certain commuting vector fields to develop trilinear estimates in weighted Strichartz spaces which avoid the use of the $L^2_tL^\infty_x$ endpoint.
\end{abstract}
\setcounter{tocdepth}{2}
\tableofcontents
\section{Introduction}
This paper concerns the global existence of solutions to the half-wave maps equation
\begin{equation}\label{eqn1.1}
\begin{cases}
\dd_t\phi=\phi\times\dhalf\phi\\
\phi(0,\cdot)=\phi_0
\end{cases}
\end{equation}
in the critical Besov space $\dot{B}^{3/2}_{2,1}$ in dimension 3, partially generalising known results in higher dimensions (see Theorem \ref{thm1.1} for the precise result). In \eqref{eqn1.1}, $\phi$ is a function on $\R\x\R^3$ taking values in the sphere and the operator $\dhalf$ is defined via its action in Fourier space:\footnote{The reader is directed to Section \ref{notation_section} for standard notation used in this introduction.}
$$
\F(\dphi)(\xi)=|\xi|\F(\phi)(\xi).
$$
The space $\dot{B}^{3/2}_{2,1}$ is critical in the sense that it is invariant with respect to the rescaling  
$$
\phi(t,x)\mapsto \phi_\la(t,x):=\phi(\la t,\la x)
$$
which is a symmetry of the equation.

As the name suggests, the half-wave maps equation has close links to the well-studied wave-maps equation (see \eqref{eqn1.2}), however one notable difference is that the conserved energy
\begin{equation*}
    E(t):=\int_{\R^n}|(-\D)^{1/4}\phi|^2dx
\end{equation*}
(posed for $n$ spatial dimensions) scales like $\dot{H}^{1/2}$, as opposed to $\dot{H}^1$ for the wave maps equation.

In high dimensions ($n>1$), the theory of half-wave maps is currently limited to the questions of wellposedness \cite{KS,KK,yang} and uniqueness \cite{eyeson2022uniqueness}. Krieger and Sire \cite{KS} showed via the reformulation as a semilinear wave equation
\begin{align}
(\dd_t^2-\D)\phi&=-\phi \text{ } \dau\phi\cdot\dad\phi+\Phip[(\dphi)(\phi\cdot\dphi)]\nonumber\\
&\quad\quad+\phi\x[\dhalf(\phi\x\dphi)-(\phi\x(-\D)\phi)]\label{eqn1.2}
\end{align}
that \eqref{eqn1.1} is globally well-posed for small initial data in $\dot{B}^{n/2}_{2,1}\x\dot{B}^{n/2-1}_{2,1}$ for $n\geq5$. This result was extended to Sobolev data in \cite{yang}. In \eqref{eqn1.2}, $\Phip$ denotes the projection onto the orthogonal complement of $\phi$ and we sum over $\al=0,\ldots,3$ with respect to the Minkowski metric. 

The first term in the forcing above corresponds to that of the wave maps equation. Observe that the second and third terms are heuristically of the same form
$$
\phi\,\na\phi\na\phi
$$
if one can account for cancellations of the nonlocal derivatives and the action of the nonlinear projection operator. There is an important distinction to be made, however, in that the wave maps source terms are based on the null form
\begin{equation}\label{ns}
\dad\phi^T\dau\phi=\half(\Box(\phi^T\phi)-2\phi^T\Box\phi)
\end{equation}
which is not present in the additional terms. Krieger and Sire handled the wave maps source terms using dyadic $X^{s,\tht}$-type spaces which have played a central role in the analysis of the wave maps equation (see for example \cite{KlSe,klainerman1996estimates} for subcritical results and \cite{tataru1} in the critical case). The argument relies on the null structure so does not apply to the new half-wave maps terms, however it turns out that these terms have enough geometric structure (see \eqref{geometry}) to be handled independently in the Strichartz spaces $L^p_tL^q_x$, where 
$$
\f{2}{p}+\f{n-1}{q}\leq\f{n-1}{2}, \quad\quad\quad n\geq2, \ (n,p,q)\neq(3,2,\infty)
$$
This range becomes increasingly restrictive in lower dimensions, and in dimension $4$ we lose the $L^2_tL^4_x$ Stichartz estimate which was frequently required in \cite{KS}. This was overcome by Krieger and Kiesenhofer in \cite{KK}, fully generalising the higher dimensional results. In three dimensions, the range of available estimates becomes smaller still, and in particular we lose the endpoint space $L^2_tL^\infty_x$ \cite{tao1998counterexamples}. This space plays an essential role in the arguments of \cite{KS,KK,yang} and hence the methods used there do not obviously generalise to $n=3$. However, if we consider only radial initial data, we may appeal to the wider range of \textit{radially admissible} Strichartz spaces, for which
$$
\f{2}{p}+\f{n-1}{2q}<\f{n-1}{4}
$$
The methods of \cite{KK} can then be straightforwardly applied. The standard and radial Strichartz pairs $(p,q)$ in three dimensions are displayed in Figure \ref{fig1}.
\begin{figure}[h]\label{fig1}
\begin{center}
\begin{tikzpicture}[scale=1.6]
    \draw[->] (-0.2,0) -- (2.2,0) node[right] {$\f{1}{p}$};
    \draw[->] (0,-0.2) -- (0,2.2) node[above] {$\f{1}{q}$};    

    \coordinate (A) at (0,0);
    \coordinate (B) at (0,2);
    \coordinate (C) at (2,0);
    \coordinate (D) at (1,2);
        
    \node[left] at (B) {$\half$};
    \node[below] at (C) {$\half$};
    \node[above right] at (D) {$(\f{1}{4},\half)$};

    \fill[gray] (A) -- (B) -- (C) -- cycle;
    \fill[gray!30] (B) -- (D) -- (C) -- cycle;

    \draw (B) -- (C);
    \draw (B) -- (C);
    \draw (B) -- (D);
    \draw[dashed, dash pattern=on 2.5pt off 2pt] (1,2) -- (2,0);
    \draw (A) -- (B);
    \draw (A) -- (C);

    \fill[gray!30] (B) circle (1.5pt);
\end{tikzpicture}
\end{center}
\caption{The admissible Strichartz pairs $(p,q)$ in $n=3$. The dark gray region depicts the standard admissible pairs, and the light gray region the extended range of radially admissible pairs. The endpoint $L^2_tL^\infty_x$ is radially but not standard admissible.}
\end{figure}
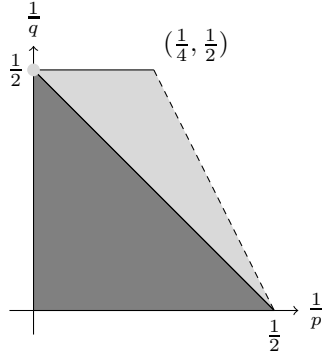

Under the weaker assumption that the data is not radial but merely has some angular regularity, Sterbenz \cite{Sterbenz} also proved Strichartz estimates in the radially admissible spaces \textit{excluding} the $L^2_tL^\infty_x$ endpoint (in fact the whole line $q=2$). The goal of this paper is to exploit these improved estimates to obtain the following ``weak" small data-global wellposedness result for angularly regular initial data. We introduce the notation
\begin{equation}
\|\LOR u\|\equiv\|u\|+\max_{i,j}\|\omij u\|\label{LOR_notn}
\end{equation}
for any norm $\|\cdot\|$ and the angular derivatives $\omij$, see \eqref{com_fields}. Here and throughout, $\|\LOR (x\cdot\na)\phi[0]\|$ is taken to mean $\max_{k,l=1,2,3}\|\LOR (x_k\na_l\phi[0])\|$. 
\begin{theorem}\label{thm1.1}
Let $\phi_0:\R^3\rightarrow \Sp^2$ be a smooth initial datum which is constant outside a compact set. 
 There exists $0<\eps<1$ such that whenever
\begin{equation}\label{small0}
\|\LOR \phi_0\|_{\dot{B}^{3/2}_{2,1}}+\|\LOR (x\cdot\na)\phi_0\|_{\dot{B}^{3/2}_{2,1}}<\eps
\end{equation}
the problem \eqref{eqn1.1} admits a global smooth solution. Moreover for any $s$ sufficiently close to $3/2$ it holds
\begin{equation}\label{global_bounds}
\|\phi(t)\|_{\dot{B}^s_{2,1}}\lesssim_s\|\LOR \phi_0\|_{\dot{B}^s_{2,1}}+\|\LOR (x\cdot\na)\phi_0\|_{\dot{B}^s_{2,1}}
\end{equation}
for all $t\in\R$.
\end{theorem}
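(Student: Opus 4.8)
The plan is to set up a contraction/iteration scheme for the semilinear wave equation \eqref{eqn1.2} in a weighted Strichartz-type space adapted to the angular vector fields $\omij$ and the scaling field $x\cdot\na$, and to propagate the smallness \eqref{small0} through the iteration. First I would fix a suitable function space $\mathcal{X}$: a sum over dyadic frequency blocks $P_k$ of the energy space $\dot{H}^{3/2}\cap\dot{H}^{1/2}$ (in the spirit of the $\dot{B}^{3/2}_{2,1}$ norm summing the pieces) intersected with the radially admissible Strichartz norms $L^p_tL^q_x$ allowed by Sterbenz's theorem \cite{Sterbenz} — crucially \emph{excluding} the line $q=2$, hence excluding $L^2_tL^\infty_x$. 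One then defines the full norm by also requiring $\LOR$-regularity: $u\in\mathcal{X}$ iff $u$, each $\omij u$, and $(x\cdot\na)u$ (together with its angular derivatives) all lie in the frequency-summed Strichartz space. The right-hand side of \eqref{eqn1.2} is trilinear of schematic type $\phi\,\na\phi\,\na\phi$ (after accounting, as in \cite{KS}, for the nonlocal-derivative cancellations in \eqref{geometry} and the projection $\Phip$), so the heart of the argument is a trilinear estimate
$$
\|\LOR\, \mathcal{N}(\phi,\phi,\phi)\|_{N}\lesssim \|\LOR\phi\|_{\mathcal{X}}^2\,\|\LOR\phi\|_{\mathcal{X}}
$$
in a suitable inhomogeneous norm $N$ dual to the energy.

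The key steps, in order, are: (i) prove the linear theory — the homogeneous solution with data satisfying \eqref{small0} lies in $\mathcal{X}$ with comparable norm, using Sterbenz's angularly-regular Strichartz estimates frequency block by frequency block and summing with the $\dot B^{3/2}_{2,1}$ (i.e.\ $\ell^1$ in $k$) weights; (ii) verify that the vector fields $\omij$ and $x\cdot\na$ commute (or commute up to lower-order controllable error) with the d'Alembertian $\dd_t^2-\D$ and interact favourably with the Littlewood–Paley projections, so that applying $\LOR$ or $x\cdot\na$ to \eqref{eqn1.2} again produces an equation of the same trilinear type with an extra vector field distributed by the Leibniz rule across the three factors; (iii) establish the core trilinear/bilinear Strichartz estimates, performing the standard Littlewood–Paley trichotomy ($\hhl$, $\llh$, $\lhh$, etc.) and in each case choosing Hölder exponents $(p,q)$ from the \emph{interior} of the radially admissible region so that the endpoint is never invoked — this is where the angular regularity pays for itself, replacing the missing $L^2_tL^\infty_x$ by $L^2_tL^q_x$ with $q<\infty$ at the cost of spending one $\omij$; (iv) close a Picard iteration $\phi^{(j+1)} = (\text{linear evolution}) + (\text{Duhamel of }\mathcal{N}(\phi^{(j)}))$ in $\mathcal{X}$, getting uniform-in-$j$ bounds and contraction for $\eps$ small, which yields the global solution and the bound \eqref{global_bounds} for the critical $s=3/2$; (v) upgrade to $s$ near $3/2$ by persistence of regularity — the same trilinear estimates with one index perturbed — and to smoothness by bootstrapping, using that $\phi_0$ is smooth and constant outside a compact set (so all the weighted norms are finite to begin with).

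The main obstacle I expect is step (iii), specifically the \emph{low-high-high} and \emph{high-high-high} frequency interactions where two derivatives fall on high-frequency factors: here the natural estimate in three dimensions would want $L^2_tL^\infty_x$ on the doubly-high output and there is genuine room for failure if one is not careful about how the angular derivative is allocated. The resolution is to always arrange that at least one high-frequency factor carries an $\omij$ (or that one can integrate by parts to move it there), trading a logarithmic or endpoint loss for a summable gain off the line $q=2$; keeping track of the frequency-summation weights $2^{3k/2}$ while doing this — so that the $\ell^1_k$ Besov sum still converges and is not merely $\ell^2_k$ — is the delicate bookkeeping. A secondary difficulty is handling the nonlinear projection $\Phip$ and the nonlocal operator $\dhalf$ under the vector fields $\LOR$: one must check that commutators such as $[\omij,\dhalf]$ and $[\omij,\Phip]$ are benign (the former vanishes by rotational invariance of $|\xi|$, the latter is again trilinear of the same type), so that the weighted estimates genuinely close rather than generating an uncontrolled hierarchy of vector-field-weighted terms.
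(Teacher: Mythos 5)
Your proposal does not close, and the gap is exactly where the paper's hard work lies.

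You propose to run a direct Picard iteration in a frequency-summed Strichartz space with angular regularity, handling all trichotomy cases by choosing $(p,q)$ strictly inside the radially admissible region. This framework is fine for the half-wave maps source terms (which have the geometric cancellation \eqref{geometry}) and for the high-high-low and high-low-low interactions of the wave maps nonlinearity — but it cannot close for the \emph{lowest-low-high} interactions
$$
\phi_{\text{lowest}}\;\partial_\alpha\phi_{\text{low}}\;\partial^\alpha\phi_{\text{high}},
$$
where the non-differentiated factor sits at very low frequency. In \cite{Tao_1} (where $L^2_tL^\infty_x$ \emph{is} available) this interaction is estimated by $L^2_tL^\infty_x\cdot L^2_tL^\infty_x\cdot L^\infty_tL^2_x$ with no slack, and is then renormalized away by Tao's approximate parallel transport (a nonperturbative gauge transformation applied iteratively on dyadic scales). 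No choice of radially admissible exponents away from $q=\infty$ recovers this: once one derivative absorbs an angular weight, the remaining product of two low-frequency factors in Strichartz spaces still produces a divergent sum over the frequency gap. The paper handles this in two distinct stages which have no analogue in your scheme: first a pair of normal-form transformations (Section 5) that kill the low-high-high and an $\llh$ error term, and then the gauge transformation $\Phi^L=\mathbf{U}w^L$ (Sections 6–7), whose telescoping difference $\partial_\alpha\mathbf{U}-\mathbf{A}^L_\alpha\mathbf{U}$ cancels the $\lowestlh$ terms. The remaining ``next hardest'' piece, the $\llowesth$ interactions, is what the angular separation lemma (Lemma \ref{angular_sep_lem}) and the weighted trilinear Lemma \ref{SIL} are actually for — not the $\lhh$/$\hhh$ terms you flag as the main obstacle, which the paper dispatches relatively quickly in Proposition \ref{removing_trivial_terms}.

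There is also a structural mismatch: the paper does not close a contraction in a critical space. It proves local wellposedness in a \emph{subcritical} $X^{s,\theta}$-type space (Theorem \ref{LWP_thm}, Section 9), then propagates a critical bound via a frequency-envelope bootstrap (Propositions \ref{main_prop}–\ref{reduced_main_prop}) — precisely because the wave-maps nonlinearity is not amenable to a direct small-data iteration at the critical regularity in $n=3$ (this is true even with the null form; Tataru's critical Besov result \cite{tataru} needs tailored function spaces, not Strichartz ones). Finally, your claim that $x\cdot\nabla$ interacts well with $\Box$ is an oversimplification: the paper uses the Lorentz boosts $L_n=x_n\partial_t+t\partial_{x_n}$, which do commute with $\Box$ but crucially \emph{not} with the Littlewood–Paley projections (the commutator $[L_n,P_j]=2^{-j}\partial_t\mathcal{P}_j$ is a recurring nuisance, see \eqref{commr3}); the weight $x\cdot\nabla$ in the hypothesis appears only in the initial-data norm as a way to control $L_n\phi[0]$. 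You would need to build all of this in before the trilinear estimates you gesture at could even be formulated correctly.
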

The slightly unusual assumptions on the initial data come from our use of commuting vector fields, to be discussed shortly.

We now give some more details on the difficulties encountered in the proof of Theorem \ref{thm1.1}. Due to the loss of the endpoint Strichartz estimate, the low dimensional analysis of the wave maps equation becomes increasingly reliant on the null structure, and it turns out that in three dimensions the geometric structure of the additional half-wave maps terms is insufficient to compensate this in the Strichartz spaces. Indeed the iteration argument of Tataru \cite{tataru} for proving small data-global wellposedness of the wave maps equation in the $n=3$ critical Besov space involves the development of highly tailored function spaces.
We therefore turn to Tao's approach for studying wave maps in the critical \textit{Sobolev} space \cite{Tao_1} which works entirely in the framework of Strichartz spaces and does not rely heavily on the null structure, at the cost that we can only obtain a ``weak" wellposedness result as in Theorem \ref{thm1.1}. We remark that Tao's method was previously used in \cite{yang} to show small data-global wellposedness of the half-wave maps equation in the critical Sobolev space in high dimensions.

The argument of \cite{Tao_1} relies on a carefully chosen coordinate transformation which cancels out the most difficult frequency interactions of the nonlinearity. These are certain terms in which one of the differentiated factors appears at low frequency, but not as low as the non-differentiated factor, which we write as
$$
\lowestlh
$$
Admitting this cancellation, the principal difficulty of the present work is dealing with interactions of the form
\begin{equation}\label{warning}
\llowesth,
\end{equation}
In \cite{Tao_1} such interactions are controlled by placing the terms into $L^2_tL^\infty_x$, $L^2_tL^\infty_x$ and $L^\infty_tL^2_x$ respectively, with no flexibility in the estimate. As already  mentioned, the space $L^2_tL^\infty_x$ is no longer available to us. To overcome this we incorporate into our function spaces a range of commuting vector fields, 
\begin{align}\label{com_fields}
L_n:=x_n\dd_t+t\dd_{x_n}\quad(n=1,2,3)&&\quad\text{and}\quad&&\Om_{ij}:=x_i\dd_{x_j}-x_j\dd_{x_i}\quad(i,j=1,2,3).
\end{align}
These were first introduced in the context of global regularity for nonlinear wave equations in \cite{klainerman}. By incorporating them into the Strichartz norms, we are able to develop spacetime estimates for terms of the form \eqref{warning}, gaining decay in time via the Lorentz boosts and in space via the heuristic
\begin{equation}\label{obs}
    \phi(x)\simeq\f{1}{\omij}\omij\phi(x)\simeq\f{1}{x_i\xi_j-x_j\xi_i}\omij\phi(x)\simeq\f{1}{|x_{ij}||\xi_{ij}|\sin(\angle(x_{ij},\xi_{ij}))}\omij\phi(x)
\end{equation}
Here $x$, $\xi$ denote the physical and Fourier variables respectively, and $x_{ij}$, $\xi_{ij}$ their projections onto the $i-j$ plane. Assuming $\phi$ has angular regularity and can absorb the derivative $\omij$, we therefore gain decay in $x$ when the the Fourier and physical variables have some angular separation (see Lemma \ref{angular_sep_lem}). In practice, we implement this via a simultaneous decomposition of the trilinear term \eqref{warning} on angular caps in physical and Fourier space. See Lemma \ref{SIL} for the detailed argument.

We remark that it is the use of commuting vector fields which limits our result to Besov rather than Sobolev spaces. The issue is that we occasionally need bounds such as
$$
\|\Om_{ij}\phi\|_{L^\infty_tL^\infty_x},\|L_n\phi\|_{L^\infty_tL^\infty_x}\lesssim1,
$$
which in the absence of the commuting vector fields would come for free from the fact that the solution lies on the sphere.

It is appropriate to note here the paper \cite{SterbenzYM} of Sterbenz regarding global regularity of the $(4+1)$-dimensional Yang-Mills equation in Lorentz gauge. This article also uses an angular regularity assumption to exploit the improved estimates of \cite{Sterbenz}, however the argument there is based on measuring angular concentration phenomena and avoids the use of the Lorentz boosts in order to recover estimates in $X^{s,\tht}$-based spaces. See also \cite{hirayama2024sharp} for results on the quadratic nonlinear wave equation with angular regularity and \cite{hong2022improved} in the context of the nonlinear Schr\"odinger equation.

\bigskip

We now give a brief overview of the argument and the structure of this paper. We will prove the following small data-global existence result for the differentiated equation \eqref{eqn1.2}. Denote $\phi[t]\equiv(\phi(t),\dd_t\phi(t))$.
\begin{theorem}\label{thm1.2}
Let $\phi[0]:=(\phi_0,\phi_1):\R^3\rightarrow \Sp^2\x T\Sp^2$ be a smooth initial data pair which is constant outside a compact set. There exists $0<\eps<1$ such that whenever
\begin{equation}\label{small2}
\|\LOR \phi[0]\|_{\dot{B}^{3/2}_{2,1}\x\dot{B}^{1/2}_{2,1}}+\|\LOR (x\cdot\na)\phi[0]\|_{\dot{B}^{3/2}_{2,1}\x\dot{B}^{1/2}_{2,1}}<\eps
\end{equation}
the equation \eqref{eqn1.2} with data $\phi[0]$ admits a global smooth solution $\phi[t]$ with
\begin{equation}\label{global_bounds2}
\|\phi[t]\|_{\dot{B}^s_{2,1}\x \dot{B}^{s-1}_{2,1}}\lesssim_s\|\LOR \phi[0]\|_{\dot{B}^s_{2,1}\x\dot{B}^{s-1}_{2,1}}+\|\LOR (x\cdot\na)\phi[0]\|_{\dot{B}^s_{2,1}\x\dot{B}^{s-1}_{2,1}}
\end{equation}
If moreover $\phi_1=\phi_0\x\dphi_0$, the global solution solves the half-wave maps equation \eqref{eqn1.1}.
\end{theorem}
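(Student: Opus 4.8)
The plan is to establish Theorem \ref{thm1.2} by a fixed-point iteration for the modified wave equation \eqref{eqn1.2}, in the spirit of Tao's treatment of wave maps in critical Sobolev spaces \cite{Tao_1} (used for half-wave maps in high dimensions in \cite{yang}), but with the forbidden endpoint space $L^2_tL^\infty_x$ systematically replaced by weighted Strichartz norms built from the commuting vector fields \eqref{com_fields}. First I would fix the functional framework: for each dyadic frequency $2^k$, let $S_k$ be the intersection of the wave-admissible and radially admissible Strichartz norms at that frequency \emph{excluding} the line $q=2$, together with an $\xst$-type component adapted to the null form \eqref{ns}, and --- the new ingredient --- the same norms applied to $\Om_{ij}u$, to $L_nu$, and to the weighted derivative $(x\cdot\na)u$; the iteration space is then the $\ell^1$-Besov assembly $\sum_k S_k$ at regularity $(3/2,1/2)$, so that \eqref{small2} is precisely smallness of $\phi[0]$ in the associated data space, equivalently smallness of its free evolution in $\sum_k S_k$. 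The required linear inputs are the energy and Strichartz estimates for the inhomogeneous wave equation --- where Sterbenz's improved bounds \cite{Sterbenz} are what license the radially admissible (non-endpoint) norms for the angularly regular pieces --- together with the commutation relations ($\Om_{ij}$ and $L_n$ commute with $\Box$, and the commutators of these fields with the nonlocal operator $\dhalf$ are of zeroth order in space), which allow the vector fields to be passed through the Duhamel operator at the cost of harmless lower-order terms.

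The next step is Tao's renormalization: one conjugates the frequency-localized pieces of the solution by a carefully chosen, approximately orthogonal, matrix-valued gauge $U$, designed so that in the new variable the most dangerous interactions $\lowestlh$ --- where the undifferentiated factor sits at the lowest frequency and one derivative lands on a low but strictly higher frequency --- are cancelled, modulo errors that are either of a more favourable frequency configuration or genuinely cubic and benign. Constructing $U$ and checking that it, together with its images under $\Om_{ij}$, $L_n$ and $(x\cdot\na)$, is bounded on the iteration ball is lengthy but proceeds essentially as in \cite{Tao_1,yang} once the vector fields are carried along.

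The heart of the argument --- and the step I expect to be the principal obstacle --- is the multilinear estimate for the surviving interaction \eqref{warning}, namely $\llowesth$, which in \cite{Tao_1} was controlled by the now-unavailable splitting $L^2_tL^\infty_x\cdot L^2_tL^\infty_x\cdot L^\infty_tL^2_x$. My plan here is to decompose the trilinear expression simultaneously into angular caps in physical space and in Fourier space. On aligned caps one extracts decay in $t$ from the Lorentz boosts $L_n$; on transverse caps one uses the heuristic \eqref{obs}, made rigorous in Lemma \ref{angular_sep_lem}, to replace an undifferentiated low-frequency factor $\phi$ by $\omij^{-1}\omij\phi$ and thereby gain a power of the angular separation between the physical and Fourier variables. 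Summing over caps against the angular regularity of the factors, and placing the high-frequency factor in a radially admissible ($q\neq2$) Strichartz norm, then closes the estimate without ever invoking $L^2_tL^\infty_x$; this is carried out in Lemma \ref{SIL}. The remaining interaction types --- $\lhh$, $\hhh$, $\hll$, $\hhl$, $\llh$ --- are comparatively routine: those carrying the wave-maps null form are handled in the $\xst$ component as in \cite{KS}, and the other half-wave-maps terms are treated by their geometric cancellations in the available Strichartz range as in \cite{KS,KK}.

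Granting these bounds, the Duhamel map is a contraction on a small ball of $\sum_k S_k$, producing a global solution of \eqref{eqn1.2} with \eqref{global_bounds2}; smoothness and the property of being constant outside a compact set propagate by a standard persistence-of-regularity argument applied at each $s$ near $3/2$. Finally, for the last assertion, suppose $\phi_1=\phi_0\x\dphi_0$, so that $|\phi_0|=1$ (as $\phi_0$ maps into $\Sp^2$) and the datum is compatible with \eqref{eqn1.1}. Setting $\rho:=|\phi|^2-1$ and $\psi:=\dd_t\phi-\phi\x\dphi$, one derives from \eqref{eqn1.2} a closed system of (half-)wave equations for $(\rho,\psi)$ that is linear in $(\rho,\psi)$ with coefficients controlled by the solution just constructed, and with vanishing initial data; a uniqueness/energy estimate in the iteration space forces $\rho\equiv0$ and $\psi\equiv0$, so $\phi$ solves \eqref{eqn1.1}, exactly as in \cite{KS}.
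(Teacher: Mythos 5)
Your multilinear strategy matches the paper's: Sterbenz's angular Strichartz bounds plus the commuting fields $\Om_{ij}$, $L_n$ to replace $L^2_tL^\infty_x$, with the simultaneous cap decomposition in physical and Fourier space (the paper's Lemma \ref{angular_sep_lem} and Lemma \ref{SIL}) controlling the $\llowesth$ interaction. The weak point is the outer scaffolding.

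The claim that ``the Duhamel map is a contraction on a small ball of $\sum_k S_k$'' does not follow from the estimates you describe, and the paper does not prove anything of the sort. After the gauge conjugation $\Phi^L=\Uu w^L$ the coefficients of the transformed equation depend on $\Uu$, which is built by an inductive, frequency-by-frequency product in $\phi$ itself; this change of variables is non-perturbative and does not descend from a bounded bilinear map on the iteration space, so there is no clean contraction in the critical topology. That is precisely why the result is called \emph{weak} global wellposedness in the sense of \cite{Tao_1}: the paper first proves a subcritical local existence theorem (Theorem \ref{LWP_thm}, via a Picard scheme in $\xst_1$-type spaces), then, separately, an a~priori bound for the critical $S$-norm of an already-existing smooth solution (Propositions \ref{main_prop}--\ref{reduced_main_prop}, propagated through Tao's frequency-envelope bootstrap), and finally stitches the two together by a continuity/persistence-of-regularity argument to extend the solution. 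A contraction argument would give a stronger conclusion (genuine continuous dependence in the critical Besov topology) than Theorem \ref{thm1.2} asserts, and you have not indicated how to overcome the obstruction the gauge poses.

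A second, smaller deviation: you put an $\xst$ component ``adapted to the null form'' into the critical-level space $S_k$, but the paper deliberately keeps $S_k$ purely Strichartz-based (weighted by $\LOR$ and $L_n$), exploiting the null structure only through the normal-form subtractions $\D_1,\D_2$ and the gauge $\Uu$; $\xst$ spaces appear only in the subcritical local theory of Section \ref{LWP_proof}. Mixing angular derivatives and modulation localizations at the critical level creates commutator issues the paper avoids by this separation. Your treatment of the final assertion via $(\rho,\psi)=(|\phi|^2-1,\ \dd_t\phi-\phi\times\dhalf\phi)$ is close to the paper's (Section \ref{ss2} tracks only $X=\dd_t\phi-\phi\times\dhalf\phi$, since $|\phi|=1$ is already built into the local construction), so that part is acceptable once the local theory delivers a sphere-valued solution.
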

Note that in the case $\phi_1=\phi_0\x\dphi_0$, the smallness assumption on $\phi_1$ in \eqref{small2} is inherited from that on $\phi_0$.

The starting point for our proof is the following local existence result, which reduces the proof of Theorem \ref{thm1.2} to finding uniform bounds on the solution in subcritical Besov spaces. Following Tao's method of frequency envelopes, we will show in Section \ref{reduction_chap} that this in turn comes down to establishing a priori estimates for the solution in a space $S$, defined in Section \ref{function_spaces_section}.
\begin{theorem}\label{LWP_thm}
There exists $\nu>0$ such that for any $3/2<s<3/2+\nu$ the following holds. Let $\phi[0]\in B^s_{2,1}\x B^{s-1}_{2,1}$ be a smooth initial data taking values in $\Sp^2\x T\Sp^2$, equal to a constant $p$ outside a compact set.\footnote{Since $\phi$ lies on $\Sp^2$, when we say e.g. $\phi\in B^s_{2,1}$ we really mean that $\phi-p\in B^s_{2,1}$ for $p$ the fixed limit of $\phi$ at infinity.}$^{,}$\footnote{This assumption is far stronger than necessary, and not preserved under the flow (since the equation is nonlocal). A better assumption for this paper is that $\phi[0]$, $L_n\phi[0]$ and $\Om_{ij}\phi[0]$ lie in $B^{s'}_{2,1}\x B^{s'-1}_{2,1}$ for every $s'\geq1$. This property is preserved by the flow (as can be seen by a persistence-of-regularity type argument) and thus leads to a blow-up criterion.} Suppose further that
$$
\|\LOR \phi[0]\|_{\dot{B}^{3/2}_{2,1}\x\dot{B}^{1/2}_{2,1}}+\|\LOR (x\cdot\na)\phi[0]\|_{\dot{B}^{3/2}_{2,1}\x\dot{B}^{1/2}_{2,1}}<\eps
$$
for some $\eps$ sufficiently small. Then there exists $T>0$ depending only on $\|\phi[0]\|_{B^s_{2,1}\x B^{s-1}_{2,1}}$ and a smooth solution $\phi\in C([0,T],B^s_{2,1})\cap C^1([0,T],B^{s-1}_{2,1})$ to \eqref{eqn1.2}. Moreover, $\phi(t)\in\Sp^2$ for all $t\in[0,T]$.

If we further have $\phi_1=\phi_0\x\dhalf\phi_0$, this solution solves the half-wave maps equation \eqref{eqn1.1} on its maximal interval of existence.
\end{theorem}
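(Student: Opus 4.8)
The proof has three parts: recast the nonlinearity so that no genuine second derivatives survive; establish local existence with a lifespan governed by the subcritical norm, via the three-dimensional Strichartz estimates; and propagate the two geometric identities (values in $\Sp^2$, and --- for the special velocity --- agreement with \eqref{eqn1.1}) by deriving linear equations for the relevant defects.

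\textbf{Step 1: recasting the nonlinearity.} We use throughout that $B^\sigma_{2,1}(\R^3)$ is a Banach algebra continuously embedded in $C^0\cap L^\infty$ for $\sigma>3/2$, and that $\dhalf$ maps $B^\sigma_{2,1}$ boundedly into $B^{\sigma-1}_{2,1}$, so that the nonlocal operator respects the Sobolev scale. Writing $\dhalf=|D|$ and $-\D=|D|^2$, the identity $\dhalf(\phi\x\dphi)-\phi\x(-\D)\phi=[|D|,\phi\x](|D|\phi)$ shows that this half-wave maps source term is governed by the commutator of $|D|$ with a pointwise multiplication, which is a pseudodifferential operator of order $0$ --- bounded on $B^\sigma_{2,1}$ and on $L^2$ with norm controlled by $\|\na\phi\|_{L^\infty}$ together with higher Besov norms of $\phi$ (a Calder\'on-type commutator estimate, harmless since our solutions are smooth). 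Likewise $\Phip=\mathrm{Id}-\phi\phi^{T}$ is a smooth function of $\phi$, so by the Moser composition estimates in Besov spaces it preserves each $B^\sigma_{2,1}$ with constants depending only on $\|\phi\|_{L^\infty}=1$ and on $\|\phi\|_{B^{\max(\sigma,3/2+)}_{2,1}}$. After these reductions the right-hand side of \eqref{eqn1.2} becomes a finite sum of terms of the schematic form $(\text{smooth function of }\phi)\,\na\phi\,\na\phi$: it carries only the derivative budget of the wave maps nonlinearity, and --- apart from the genuine wave maps term, which retains the null form \eqref{ns} --- it comes with no overt null structure, though the commutator above conceals a partial cancellation in the remaining terms.

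\textbf{Step 2: local existence and a subcritical a priori bound.} Since $\phi[0]$ is smooth, the energy method at high regularity --- legitimate thanks to Step 1 and the mapping property of $\dhalf$ --- produces a unique smooth solution $\phi$ on a maximal interval $[0,T_{\max})$, so the substance of the theorem is the lower bound $T_{\max}\ge T_0(\|\phi[0]\|_{B^s_{2,1}\x B^{s-1}_{2,1}})$. We obtain this by a bootstrap/continuation argument for the Duhamel form of \eqref{eqn1.2}, estimating $\phi$ in $C([0,T],B^s_{2,1})\cap C^1([0,T],B^{s-1}_{2,1})$ together with a finite family of Strichartz norms with $2<p$ and $q<\infty$ lying strictly inside the three-dimensional admissible range (thus avoiding the forbidden endpoint $L^2_tL^\infty_x$) and with the angular-regularity Strichartz estimates of \cite{Sterbenz}. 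The null structure of \eqref{ns} disposes of the wave maps term, while the terms of Step 1, being only $\na\phi\,\na\phi$-type, are controlled by these sub-endpoint and angular norms: the surplus regularity $s-3/2>0$ provides the room to absorb one derivative, and the borderline (critical-level) pieces of those norms are absorbed back into the left-hand side via the smallness hypothesis. Because the Strichartz norms of the free evolution of $\phi[0]$ over $[0,T]$ tend to $0$ as $T\to0$, the scheme closes on an interval $[0,T_0]$ with $T_0$ depending only on $\|\phi[0]\|_{B^s_{2,1}\x B^{s-1}_{2,1}}$ (and the fixed $\eps$); time-continuity of $(\phi,\dd_t\phi)$ is read off from the Duhamel formula, and smoothness of the datum persists by running the same estimates at every regularity level. (As the footnote to the statement records, the quantities to carry along are $\phi$, $L_n\phi$ and $\Om_{ij}\phi$, which persist under the flow by these same estimates and yield the blow-up criterion used in the global argument.) We expect this a priori estimate to be the main obstacle: in three dimensions the non-null half-wave maps terms must be handled without $L^2_tL^\infty_x$, and it is precisely the combination of the subcritical margin, the commutator cancellation of Step 1, the angular-regularity estimates of \cite{Sterbenz}, and the smallness hypothesis that makes the surviving Strichartz spaces sufficient --- which is why the statement is confined to $s$ in a small window above $3/2$.

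\textbf{Step 3: the geometric identities.} Both remaining claims follow by deriving, from \eqref{eqn1.2}, a linear homogeneous equation for the relevant defect and invoking uniqueness. For the sphere constraint, taking the Euclidean inner product of \eqref{eqn1.2} with $\phi$ annihilates the $\Phip[\cdot]$ term (its values being orthogonal to $\phi$) and the $\phi\x[\cdot]$ term (a scalar triple product with a repeated factor), leaving $\phi\cdot(\dd_t^2-\D)\phi=-|\phi|^2\,\dau\phi\cdot\dad\phi$; together with $(\dd_t^2-\D)(|\phi|^2)=2\,\phi\cdot(\dd_t^2-\D)\phi+2\,\dau\phi\cdot\dad\phi$, this yields $(\dd_t^2-\D)v=-2\,v\,(\dau\phi\cdot\dad\phi)$ for $v:=|\phi|^2-1$, with smooth (hence $L^1_tL^\infty_x$) coefficient, and $v(0)=0$, $\dd_t v(0)=2\,\phi_0\cdot\phi_1=0$ since $\phi_1\in T_{\phi_0}\Sp^2$; the energy inequality and Gronwall then force $v\equiv0$, so $\phi(t)\in\Sp^2$. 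When moreover $\phi_1=\phi_0\x\dphi_0$, set $\psi:=\dd_t\phi-\phi\x\dphi$, so $\psi(0)=0$; differentiating in $t$, substituting $\dd_t^2\phi=\D\phi+F$ with $F$ the right-hand side of \eqref{eqn1.2}, replacing $\dd_t\phi$ by $\psi+\phi\x\dphi$, and using $|\phi|\equiv1$, the terms not involving $\psi$ collapse into the identity by which \eqref{eqn1.2} is produced from \eqref{eqn1.1} and cancel, leaving $\dd_t\psi=-\psi\x\dphi-\phi\x\dhalf\psi$. Here the integrand of $\langle\psi\x\dphi,\psi\rangle_{L^2}$ vanishes pointwise, and $\psi\mapsto\phi\x\dhalf\psi$ is skew-adjoint on $L^2$ modulo the order-$0$ commutator $[\dhalf,\phi\x]$ of Step 1, so $\tfrac{d}{dt}\|\psi\|_{L^2}^2\lesssim(1+\|\na\phi\|_{L^\infty})\|\psi\|_{L^2}^2$; with $\psi(0)=0$, Gronwall gives $\psi\equiv0$, so $\phi$ solves \eqref{eqn1.1} on its maximal interval of existence.
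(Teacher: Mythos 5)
Your Step~2 departs from the paper's approach and, more importantly, does not confront the difficulty it claims to resolve. The paper does not run a Strichartz bootstrap in $C_tB^s_{2,1}\cap C^1_tB^{s-1}_{2,1}$; it works in the Fourier-restriction spaces $X^{s,\theta}_1$ and proves the trilinear estimates there (Proposition~\ref{all_ML_estimates}), using the Klainerman--Selberg bilinear estimates of Lemma~\ref{stan_lem} to exploit the null form rather than any angular Strichartz decomposition. But the decisive structural point you omit is that the paper's iteration is \emph{staggered}: the half-wave-maps source is evaluated at the \emph{previous} iterate $\phi^{(J-1)}$, the projection is replaced by a smooth truncation $\widetilde{\Pi}_{\tilde\phi^\perp}$, and after constructing each iterate the entire global argument of Sections~\ref{reduction_chap}--\ref{cancellation_chapter} is re-run to verify that it stays small in the critical norm and hence near the sphere. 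This is forced on the paper because the $\lhh$ frequency interactions of $HWM_1$ and $HWM_2$ --- where neither the null form nor the commutator gain is available --- are closed only by invoking the geometric identity \eqref{geometric_identity_4}, which requires its argument to lie on $\Sp^2$. Your Step~1 correctly recognises that $HWM_2$ is a commutator of schematic order $\phi\,\nabla\phi\,\nabla\phi$ (the paper's $\Lc_k$ operators make the same observation), but that alone does not handle the $\lhh$ interactions without $L^2_tL^\infty_x$; your phrase ``combination of the subcritical margin, the commutator cancellation, the angular-regularity estimates and the smallness hypothesis'' names tools without showing they close these cases, so Step~2 as written asserts the theorem rather than proving it.

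Your Step~3 also contains a computational error. The wave-maps nonlinearity $-\phi\,\partial_\alpha\phi\cdot\partial^\alpha\phi$ contains $-\phi\,|\partial_t\phi|^2$ (with the $(+,-,-,-)$ convention); since $\partial_t\phi=\psi+\phi\times\dhalf\phi$, replacing $\partial_t\phi$ by $\phi\times\dhalf\phi$ in the nonlinearity --- which is what ``the identity by which \eqref{eqn1.2} is produced from \eqref{eqn1.1}'' amounts to --- leaves the discrepancy
\[
-\phi\bigl(|\psi|^2 + 2\,\psi\cdot(\phi\times\dhalf\phi)\bigr)=-\phi\bigl(\psi\cdot(\phi\times\dhalf\phi+\partial_t\phi)\bigr),
\]
so the correct evolution is $\partial_t\psi=-\phi\bigl(\psi\cdot(\phi\times\dhalf\phi+\partial_t\phi)\bigr)-\psi\times\dhalf\phi-\phi\times\dhalf\psi$, the paper's identity preceding \eqref{dtet2}, not the two-term equation you wrote. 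The missing term is harmless for Gronwall --- it pairs with $\psi$ to give a contribution $\lesssim\|\phi\|_\infty\|\phi\times\dhalf\phi+\partial_t\phi\|_\infty\|\psi\|_{L^2}^2$ --- but the claim that ``the terms not involving $\psi$ collapse'' overlooks the $\psi$-dependence hidden in the wave-maps source. (Your skew-adjointness observation and the paper's factoring $\dhalf=(-\Delta)^{1/4}(-\Delta)^{1/4}$ then lead to the same commutator bound.)
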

The proof of this local result is postponed to Section \ref{LWP_proof} so as not to distract from the main ideas of the paper. Note the unusual assumption of smallness in a critical norm. This restriction appears somewhat artificial since it is only needed to keep the Picard iterates away from the origin in order to control the projection operator $\Pi_{\phi^{\perp}}$, which is a feature only of the differentiated equation.

Returning to the main argument, we first consider the wave maps contribution to the nonlinearity and in Section \ref{discarding} discard the frequency interactions in which the non-differentiated factor $\phi$ appears at high frequency. These can be dealt with via standard Strichartz estimates. In Section \ref{normal_forms_chapter} we discard the $\lhh$ interactions as well as a certain error term via normal form transformations exploiting the null structure.

This leaves only the $\llh$ interactions in the wave maps source terms.\footnote{We are considering the whole term localised to unit frequency, so $\lllow$ interactions are impossible.} These are handled in Sections \ref{approx_PT_chapter} and \ref{cancellation_chapter} using Tao's gauge transformation and the arguments involving commuting vector fields already discussed, depending on precisely which factor, $\phi$ or $\dad\phi$, appears at lowest frequency. We remark that we are able to slightly simplify the gauge transformation from \cite{Tao_1} due to our working in Besov spaces. In particular, we do not need to antisymmetrise the equation in order to obtain almost-orthogonality of the transformation matrix, which is automatically a perturbation of the identity. 

It remains to discuss how to control the additional nonlocal terms appearing in the half-wave maps equation. This is the content of Section \ref{HWM_chapter}. The main difference from the wave maps terms arises in studying interactions which are (morally speaking) of type
\begin{equation}\label{warning2}
\lhh \quad\quad\text{ or }\quad\quad \lowestlh
\end{equation}
The analogous wave maps source terms were discarded by the normal form and gauge transformations respectively, both of which relied on the structure of the nonlinearity so can no longer be applied. To compensate this we use that the remaining terms of \eqref{eqn1.2} involve interactions which are loosely speaking of the form
\begin{equation}\label{geometry}
\phi\cdot\na\phi,
\end{equation}
which vanishes for functions on the sphere. As in \cite{KS}, we exploit this cancellation via the following identity which allows us to flip the low frequency factors in \eqref{warning2} to high frequency, and thus appeal to Strichartz-based methods:
\begin{equation}\label{geometric_identity_4}
P_k(\phi_{<k-10}\cdot\phi_{\geq k-10})=-\half P_k(\phi_{\geq k-10}\cdot\phi_{\geq k-10})\tag{GeId}
\end{equation}
This is a straightforward consequence of the property $P_k(\phi\cdot\phi)=P_k(1)=0$.

Besides this, the half-wave maps terms present various technical complications due to the nonlocal nature of the operator $\dhalf$. This is a particular issue when working with the commuting vector fields which are non-translation invariant.

\bigskip

We conclude this introduction with a brief discussion of the physical background and low dimensional results pertaining to equation \eqref{eqn1.1}.
The half-wave maps equation was first derived as the classical limit of a continuous Haldane-Shashtry quantum spin chain in \cite{CM_derivation2} (see also \cite{LG}) and was further shown in \cite{lenzmannCM} to arise in the continuum limit of the completely integrable classical Calegero-Moser spin systems. Unsurprisingly therefore, the one-dimensional equation possesses a complete integrability structure by way of a Lax Pair \cite{LG}, and in this setting there has been significant interest in special solutions of the equation. Indeed soliton solutions were first studied numerically and analytically in \cite{CM_derivation2}, with further investigations of multi-solitons in \cite{berntson2020multi,matsuno2022integrability} and a complete classification of the finite-energy travelling solitary waves in \cite{lenzmann}. The equation is energy critical in one dimension, and the problem of wellposedness remains open. Results in this direction include \cite{yang2} which showed the global existence of large data-weak solutions in $\dot{H}^1\cap\dot{H}^{1/2}$, and the more recent work \cite{ohlmann2023analytic} which establishes the global existence of a particular family of rational solutions.

The equation into manifolds other than the sphere has been studied in dimension one in \cite{LG} and in higher dimensions in \cite{yang}, both in the context of hyperbolic space. For further background on the half-wave maps equation, see Lenzmann's primer \cite{primer}.
 
\bigskip
\textbf{Acknowledgements:} The author wishes to thank her doctoral advisor Joachim Krieger for many helpful discussions and suggestions, in particular with regards to the observation and manipulation of \eqref{obs}.

\subsection{Notation}\label{notation_section}
We write $X\lesssim_\al Y$ to mean $X\leq C(\al)\,Y$ for a constant $C$ depending only on the parameters $\al$. $X\sim_\al Y$ means that $X\lesssim_\al Y$ and $Y\lesssim_\al X$. We also use the notation $\F(f)(\xi)$ or sometimes $\hat{f}(\xi)$ to denote the Fourier transform of a function $f$.

To define the Littlewood-Paley multipliers we introduce $\chi:\R^3\rightarrow\R_{+}$ be a smooth function supported in $\{|\xi|\leq 2\}$ equal to $1$ on $\{|\xi|\leq1\}$ and set
$$
\chi_k(\xi):=\chi(2^{-k}\xi)-\chi(2^{-(k-1)}\xi)
$$ 
so that
$$
\sum_{k\in\Z}\chi_k(\xi)=1
$$
for all $\xi\neq0$. We denote
\begin{equation}\label{chi_deff}
P_k\phi\equiv \phi_k:=\F^{-1}(\chi_k(\xi)\hat{\phi}(\xi))
\end{equation}
We will also write $\tilde{\chi}_k(\xi)$ for $\sum_{j=k-C}^{k+C}\chi_j(\xi)$ and $\tilde{P}_k\phi$ or $\phi_{\sim k}$ for $\F^{-1}(\tilde{\chi}_k(\xi)\hat{\phi}(\xi))$ when $C$ is any fixed constant up to $100$. Likewise $\phi_{<k}:=\sum_{j<k}\pj$ and so on.

To reduce notation, we will often abusively write expressions such as $j\ll k$ to mean $j\leq k-C$, of course this really means $2^j\ll 2^k$, $2^j\sim 2^k$, etc..

Our argument is based in the homogeneous Besov spaces with norm
$$
\|\phi\|_{\dot{B}^{s}_{2,1}}:=\sum_{k\in\Z}2^{sk}\|\phi_k\|_{L^2_x}
$$
or in the subcritical case (at the end of the paper) the inhomogeneous spaces
$$
\|\phi\|_{B^{s}_{2,1}}:=\sum_{k>0}2^{sk}\|\phi_k\|_{L^2_x}+\|P_{\leq0}\phi\|_{L^2_x}
$$
In addition to the usual Littlewood-Paley cut-offs we will also need dyadic cut-offs in physical space which we denote $\vp_\la(x)$. Here $\vp_\la(x)\equiv\chi_\la(x)$ (see \eqref{chi_deff}) but we adopt a different notation in order to emphasise that the cut-offs are acting in different spaces. We will also use  notation such as $\vp_{\geq\la}:=\sum_{\la'\geq\la}\vp_{\la'}$, and slightly abusively denote $\vp_{\la}(t)$ for the analogous cut-offs in the time variable.

We will also frequently use $\|\phi\|_{p,q}$ to mean $\|\phi\|_{L^p_tL^q_x}$, with similar variants in the $x$ and $t$ variables, and throughout $M$ should always be interpreted as a very large constant.

\section{Preliminaries}\label{preliminaries_section}
\subsection{Angular Derivatives and Commuting Vector Fields}\label{ang_com_chap}
In our argument the Lorentz boosts, $L_n$, and the angular derivative operators, $\Om_{ij}$, defined in \eqref{com_fields} will play a key role. Observe that these operators obey the Leibniz rule. We will also need the Riesz transforms $R_n$ defined by $\F(R_n\phi)(\xi)=\f{\xi_n}{|\xi|}\F(\phi)(\xi)$ ($n=1,2,3$), which we recall are bounded on $L^p_x$ for $1<p<\infty$.

One may readily verify that the operators $L_n$ and $\Om_{ij}$ commute with the wave operator $\Box$, and satisfy the relations
\begin{align}
[L,\dd]=\dd, && [L,\dhalf]=R \dd_t , &&[L,\Om]=L \label{commr1}
\end{align}
and
\begin{align}
[\Om,R]=R, && [\Om,\dd]=\dd
\end{align} 
Here $L$, $R$, $\Om$, and $\dd$ denote linear combinations of the identity with the operators $(L_n)_{n=1,2,3}$, $(R_n)_{n=1,2,3}$, $(\Om_{ij})_{i,j=1,2,3}$, $(\dd_\al)_{\al=0,1,2,3}$ respectively. Note that $\Om$ commutes with any radial Fourier multiplier such as $\dhalf$, thanks to the property $\F(\Om_{ij}\phi)=\Om_{ij}\F(\phi)$.

Unfortunately, there is a non-trivial commutation relation between the $L_n$ and the Littlewood-Paley operators $P_j$, which will be a source of some irritation throughout the paper. Precisely, let $\mathcal{P}_j$ denote a generic operator corresponding to a (not necessarily radial) smooth multiplier $\chi^{(\mathcal{P})}(2^{-j}\xi)$, with $\text{supp}\chi^{(\mathcal{P})}\subset\text{supp}\chi$. It holds
\begin{align}
 [L_n,\mathcal{P}_j]=2^{-j}\dd_t\mathcal{P}_j &&\text{ and } &&[\Om,\mathcal{P}_j]=\mathcal{P}_j \label{commr3}
\end{align}
for potentially different operators $\Pp_j$ of the same form on the right hand side.
\bigskip

We now introduce the angular Sobolev spaces which will play an important role in this paper, using the construction in \cite{Sterbenz}. For background on the properties of spherical harmonics see \cite{stein1970singular}, Chapter 3.3.

For a function $f$ on $\R^3$, we define fractional angular derivatives $|\Om|^s$ as follows. First decompose $f$ into a sum of spherical harmonics:
\begin{align}\label{sph_decomp}
f(r,\tht)=\sum_{l=0}^\infty\sum_{i=1}^{N_l}c^i_l(r) Y^i_l(\tht), && c^i_l(r):=\f{1}{4\pi}\int_{\Sp^2}f(r,\tht)\overline{Y}^i_l(\tht)d\tht
\end{align}
Here $(Y^i_l)_{i=1,\ldots,N_l}$ is an orthonormal basis for the space of spherical harmonics of degree $l$, of finite dimension $N_l$. We recall that the spherical harmonics are eigenfunctions of the spherical Laplacian $|\Om|^2$:
\begin{align*}
|\Om|^2Y^i_l=l(l+1)Y^i_l, \quad\quad\quad l\geq0,\ i=0,\ldots,N_l.
\end{align*}
It follows that a suitable definition of $|\Om|^sf$ is given by
$$
|\Om|^sf(r,\tht):=\sum_{l=0}^\infty\sum_{i=1}^{N_l}[l(l+1)]^{s/2}c^i_l(r) Y^i_l(\tht)
$$
Note that this vanishes whenever $f$ is a radial function.

Recall the important fact that the spherical harmonics are invariant under the action of the Fourier transform, that is to say for each $l\geq0$ there is a map $T_l:L^2(r^2dr)\mapsto L^2(r^2dr)$ such that
$$
\mathcal{F}(c(r)Y^i_l(\tht))(\rho,\om)=T_l(c)(\rho)Y^i_l(\om)
$$
In particular, since the wave evolution operators $e^{\pm it\sqrt{-\D}}$ are given by radial multipliers we have
$$
e^{\pm it\sqrt{-\D}}(c(r)Y^i_l(\tht))=T_l^{-1}(e^{\pm it|\cdot|}T_l(c))(r)Y^i_l(\tht)
$$
and it follows that the fractional angular derivatives commute with the free evolution operators:
$$
|\Om|^s(e^{\pm it\sqrt{-\D}}f)=e^{\pm it\sqrt{-\D}}(|\Om|^sf)
$$

By incorporating angular regularity into our function spaces, we are able to make use of the following generalised Strichartz estimate which follows from the work of \cite{Sterbenz}.
\begin{theorem}[$n=3$ Strichartz estimates with angular regularity]\label{angular_strichartz}
Let $(p,q)$ be a pair which is radially admissible but not standard wave admissible:
\begin{align*}
\f{1}{p}+\f{2}{q}<1, && \f{1}{p}+\f{1}{q}>\half
\end{align*}
Suppose further that $p\neq2$. Then for all $\eta>0$ sufficiently small it holds
\begin{equation*}
\|e^{\pm it\sqrt{-\D}}P_kf\|_{L^p_tL^q_x}\lesssim_\eta2^{(\f{3}{2}-\f{1}{p}-\f{3}{q})k}(\|P_kf\|_{L^2_x}+\||\Om|^{s(p,q)}P_kf\|_{L^2_x})
\end{equation*}
for any function $f$ such that the right hand side is finite. Here
$$
s(p,q):=\f{2}{p}+\f{2}{q}-1+\eps(p,q;\eta)
$$
where $\eps(p,q;\eta)\to0$ as $\eta\to0$. Note that $s(p,q)\leq\half$ for $(p,q)$ as given and $\eta$ sufficiently small.
\end{theorem}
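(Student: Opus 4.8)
The plan is to obtain this as a frequency‑localised, suitably normalised restatement of Sterbenz's angular Strichartz estimates for the wave equation \cite{Sterbenz}, so the proof is mostly scaling and bookkeeping rather than new analysis. First I would reduce to unit frequency. The propagators $e^{\pm it\sqrt{-\Delta}}$ intertwine with the isotropic dilations $u(t,x)\mapsto u(2^k t,2^k x)$, which are symmetries of $\Box$; under such a dilation $\|\cdot\|_{L^2_x(\R^3)}$ scales by $2^{-3k/2}$, $\|\cdot\|_{L^p_tL^q_x}$ by $2^{-k(1/p+3/q)}$, and $|\Omega|^{s}$ is invariant — it is a purely angular Fourier multiplier, so it commutes with dilations and, acting only in the angular variable, it preserves the dyadic radial support $\{|\xi|\sim 2^k\}$. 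Carrying this out reduces the general‑$k$ estimate to $k=0$ and reproduces exactly the prefactor $2^{(3/2-1/p-3/q)k}$. (Equivalently: if one takes Sterbenz's estimate in its $\dot H^{\gamma}$ form one applies it directly to $P_kf$ and uses $\|P_kf\|_{\dot H^{\gamma}}\sim 2^{k\gamma}\|P_kf\|_{L^2}$, $\||\Omega|^{s}P_kf\|_{\dot H^{\gamma}}\sim 2^{k\gamma}\||\Omega|^{s}P_kf\|_{L^2}$ with $\gamma=3/2-1/p-3/q$.)

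For $k=0$ the estimate reads $\|e^{\pm it\sqrt{-\Delta}}P_0f\|_{L^p_tL^q_x}\lesssim_\eta\|P_0f\|_{L^2}+\||\Omega|^{s(p,q)}P_0f\|_{L^2}$, and this is a special case of the main Strichartz estimate with angular regularity of \cite{Sterbenz}. Since that work treats the full homogeneous wave equation, both signs are covered at once. I would not reprove it, but its structure is: decompose $P_0f$ into angular‑frequency blocks $|\Omega|\sim 2^m$; on each block the free evolution obeys a dispersive/Strichartz bound improved towards the radial range, with a constant growing only polynomially in $2^m$; an almost‑orthogonality (square‑function) argument controls the superposition of the angular pieces; summing in $m$ and absorbing the polynomial loss into $s(p,q)$ angular derivatives gives the claim. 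The hypothesis $p\neq2$ is exactly the restriction carried by \cite{Sterbenz}: the line $p=2$, which in three dimensions contains the forbidden endpoint $L^2_tL^\infty_x$, is not reachable this way.

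It remains to pin down the index $s(p,q)=2/p+2/q-1+\epsilon(p,q;\eta)$ and the meaning of ``$\eta$ sufficiently small''. Sterbenz's estimate degrades as $(p,q)$ approaches the boundary of the radially admissible region — either it is stated on the strict interior, or it carries a loss equal to a small power of an auxiliary parameter — which is the source of the $\epsilon(p,q;\eta)\to0$ correction. To get the clean affine index on the whole stated range I would interpolate, in the $L^2_x$‑based angular Sobolev scale (using that $|\Omega|^{s}$ commutes with $e^{\pm it\sqrt{-\Delta}}$), between Sterbenz's estimate at an interior point of the radial region and the standard frequency‑localised Strichartz estimate, which holds with \emph{no} angular regularity along the standard admissible boundary $1/p+1/q=1/2$ (away from $L^2_tL^\infty_x$). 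Complex interpolation then delivers the estimate with $s(p,q)$ affine in $(1/p,1/q)$ and vanishing up to $\epsilon$ on that boundary, i.e. $s(p,q)=2(1/p+1/q)-1+\epsilon(p,q;\eta)$; the bound $s(p,q)\le 1/2$ on the stated range (in the relevant regime $p\ge2$) then reduces to the elementary inequality $1/p+1/q\le 3/4$.

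I do not expect a genuine obstacle: the analytic work — the angular‑block dispersive estimates and their square‑function decoupling — lives in \cite{Sterbenz}, and everything above is scaling and interpolation. The only points demanding real care are bookkeeping ones: reconciling Sterbenz's normalisation of the angular Sobolev index with the one used here so that his range of $(p,q)$ genuinely covers the region $\{1/p+2/q<1\}\cap\{1/p+1/q>1/2\}$, tracking the near‑boundary loss faithfully through the interpolation so as to produce an honest $\epsilon(p,q;\eta)\to0$, and confirming that the excluded line $p=2$ in \cite{Sterbenz} is exactly the set of cases (the endpoint $L^2_tL^\infty_x$ chief among them) that three‑dimensional Strichartz theory forces us to avoid.
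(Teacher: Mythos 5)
Your approach — rescale to $k=0$, invoke Sterbenz's angular Strichartz machinery, and interpolate in the angular Sobolev scale to pin down the affine index — is the same route the paper takes. The paper makes the sketch precise by a three‑way interpolation on each angular‑frequency block $f_N$, between Sterbenz's Proposition~3.4 (an $L^2_tL^{r_\eta}_x$ estimate, $r_\eta\searrow 4$, with an $N^{1/2+\eta}$ loss), the near‑endpoint Strichartz bound $L^{2/(1-\eta)}_tL^\infty_x$, and the energy estimate $L^\infty_tL^2_x$; it then sums the blocks via the Littlewood–Paley–Stein theorem for the sphere and treats the radial part of $f$ separately using Sterbenz's radial Strichartz estimate (Theorem~1.3), which is precisely why the $\|P_kf\|_{L^2_x}$ term appears on the right-hand side in addition to $\||\Omega|^{s(p,q)}P_kf\|_{L^2_x}$ — a point your sketch leaves implicit.
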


\begin{proof}
By scaling it suffices to consider $k=0$. We use the notation of \cite{Sterbenz} and direct the reader to that work for further details. In particular, let $\theta:[0,\infty)\rightarrow[0,1]$ be a smooth function equal to $1$ on $[1,2]$ and vanishing outside $[1/2,4]$, and set $\theta_N(l):=\theta(N^{-1}l)$ for $N\in 2^{\mathbb{N}}$. For the decomposition of $f$ as in \eqref{sph_decomp} we then denote 
\begin{equation}\label{ang_lloc}
f_N:=\sum_{l=0}^\infty\sum_{i=1}^{N_l}\tht_N(l)c^i_l(r) Y^i_l(\tht)
\end{equation}

Let $\eta>0$. By Proposition 3.4 in \cite{Sterbenz} we find
\begin{equation*}
\|e^{\pm it\sqrt{-\D}}P_0f_N\|_{L^{2}_tL^{r_\eta}_x}\lesssim_\eta N^{\half+\eta}\|P_0f_N\|_{L^2_x}
\end{equation*}
for some $r_\eta\searrow4$ as $\eta\to0$. A three-way interpolation of this result with the standard Strichartz estimate
\begin{equation*}
\|e^{\pm it\sqrt{-\D}}P_0f_N\|_{L^{\f{2}{1-\eta}}_tL^\infty_x}\lesssim_\eta\|P_0f_N\|_{L^2_x}
\end{equation*}
and the energy estimate
\begin{equation*}
\|e^{\pm it\sqrt{-\D}}P_0f_N\|_{L^\infty_tL^2_x}\lesssim\|P_0f_N\|_{L^2_x}
\end{equation*}
yields
\begin{equation*}
\|e^{\pm it\sqrt{-\D}}P_0f_N\|_{L^p_tL^q_x}\lesssim_\eta N^{s(p,q)}\|P_0f_N\|_{L^2_x}\simeq\||\Om|^{s(p,q)}P_0f_N\|_{L^2_x}
\end{equation*}
provided we choose $\eta$ sufficiently small to ensure that the pair $(p,q)$ is covered by the interpolation.

The radial part of the evolution is covered by the radial Strichartz estimate (Theorem 1.3, \cite{Sterbenz}): denoting $f_0:=c^0_0(r)Y^0_0(\tht)=c^0_0(r)$, the radial part of $f$, we have
\begin{equation*}
\|e^{\pm it\sqrt{-\D}}P_0f_0\|_{L^p_tL^q_x}\lesssim \|P_0f_0\|_{L^2_x}
\end{equation*}

The result then follows from the Littlewood-Paley-Stein theorem for the sphere (Theorem 2 \cite{S_multipliers}, see also \cite{stein_book2}.), upon observing that the angular frequency localisation \eqref{ang_lloc} commutes with the operator $P_0e^{\pm it\sqrt{-\D}}$. 
\end{proof}

In practice, we will only work with integer-order angular derivatives so as to use the Leibniz properties discussed previously. For this we must be able to exchange fractional angular derivatives for true derivatives, which is possible thanks to the following result:
\begin{lemma}[Riesz estimate for angular Sobolev spaces (Theorem 3.5.3, \cite{spheres_book})]\label{angular_riesz}
Let $1<p<\infty$. Then
$$
\max_{i,j}\|\Om_{ij}f\|_{L^p_x}\simeq\||\Om|f\|_{L^p_x}
$$
for any $f$ such that the right hand side is finite.
\end{lemma}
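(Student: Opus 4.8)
The plan is to strip off the radial variable, reduce the two-sided comparison to a single $L^p$-boundedness statement for first-order Riesz transforms on $\Sp^2$, and then invoke standard semigroup or pseudodifferential machinery. \textbf{First}, since $\omij$ and $|\Om|$ act only on the angular variable, fibrewise over $r=|x|$, Fubini gives $L^p(\R^3)\cong L^p((0,\infty),r^2\,dr;L^p(\Sp^2))$ with the constant in any angular estimate automatically $r$-independent, so it suffices to prove
\[
\max_{i,j}\|\omij g\|_{L^p(\Sp^2)}\simeq\||\Om|g\|_{L^p(\Sp^2)},\qquad 1<p<\infty.
\]
Both sides vanish on the degree-zero spherical harmonic, so one may restrict to $g$ of vanishing spherical mean; on that subspace $|\Om|=(-\D_{\Sp^2})^{1/2}$ is invertible, $\D_{\Sp^2}$ being the Laplace--Beltrami operator.

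\textbf{Next} I would set $\mathcal{R}_{ij}:=\omij\,|\Om|^{-1}$ (defined to kill the spherical mean) and reduce \emph{both} inequalities to the single claim that each $\mathcal{R}_{ij}$ is bounded on $L^p(\Sp^2)$. The bound $\|\omij g\|_{L^p}\lesssim\||\Om|g\|_{L^p}$ is then immediate from $\omij=\mathcal{R}_{ij}\circ|\Om|$. For the converse I would use the Casimir identity $|\Om|^2=-\sum_{i<j}\omij^2$ (the Laplace--Beltrami operator is minus the sum of squares of the rotation generators) together with the commutation $[\omij,|\Om|]=0$ — valid because $\omij$ preserves each eigenspace of $\D_{\Sp^2}$ — to write $|\Om|g=|\Om|^{-1}|\Om|^2g=-\sum_{i<j}\mathcal{R}_{ij}(\omij g)$, so that $\||\Om|g\|_{L^p}\lesssim\big(\sum_{i<j}\|\mathcal{R}_{ij}\|_{L^p\to L^p}\big)\,\max_{i,j}\|\omij g\|_{L^p}$.

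\textbf{The main obstacle} is thus proving $\mathcal{R}_{ij}\colon L^p(\Sp^2)\to L^p(\Sp^2)$ bounded for $1<p<\infty$. Since the $L^p(\Sp^2)$ norm is rotation invariant and every $\omij$ is conjugate to $\Om_{12}=\partial_\varphi$ by a rotation, it suffices to handle $\mathcal{R}_{12}$, which in the basis $(Y_l^m)$ acts by the symbol $im/\sqrt{l(l+1)}$ (of modulus $<1$); this gives the $L^2$ bound for free, and the real content is the passage to $p\neq 2$. I would obtain that from the $L^p$-boundedness of the first-order Riesz transforms $\omij(-\D_{\Sp^2})^{-1/2}$ of the Laplace--Beltrami operator, which follows either from Stein's $g$-function method applied to the (symmetric diffusion) spherical heat semigroup $e^{t\D_{\Sp^2}}$, or from the fact that $\omij(-\D_{\Sp^2})^{-1/2}$ is a zeroth-order pseudodifferential — equivalently Calder\'on--Zygmund — operator on the compact manifold $\Sp^2$, or by transference from $SO(3)$, on which the $\omij$ lift to right-invariant vector fields and $-\D_{\Sp^2}$ to a sum of their squares. (Alternatively one simply invokes \cite{spheres_book}, Theorem~3.5.3.) The endpoints $p=1,\infty$ genuinely fail, consistent with the stated hypothesis $1<p<\infty$.
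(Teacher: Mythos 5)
The paper does not give a proof of this lemma; it is quoted directly as Theorem 3.5.3 of \cite{spheres_book}, so there is no in-text argument to compare against. Evaluated on its own, your sketch is correct and follows the standard route. The Fubini reduction to $L^p(\Sp^2)$ is legitimate because both $\omij$ and $|\Om|$ act purely on the angular variable, so the constant in the spherical estimate is automatically $r$-independent and the $L^p(\R^3)$ statement follows fibrewise. The Casimir identity $|\Om|^2=-\sum_{i<j}\omij^2$ is correct (expand $\sum_{i<j}(x_i\partial_j-x_j\partial_i)^2$ and compare with $r^2\Delta-(x\cdot\nabla)^2-(x\cdot\nabla)$), and the commutation $[\omij,|\Om|]=0$ holds because $\omij$ preserves each degree-$l$ eigenspace; hence the two-sided comparison does reduce to $L^p$-boundedness of the single family $\mathcal{R}_{ij}=\omij|\Om|^{-1}$, with the $L^2$ case free from the multiplier $im/\sqrt{l(l+1)}$. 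The remaining step — $L^p$-boundedness of these spherical Riesz transforms for $1<p<\infty$ — is exactly the content of the cited theorem, and the approaches you list (Stein's $g$-function for the symmetric spherical heat semigroup, Calder\'on--Zygmund/zeroth-order $\Psi$DO theory on a compact manifold, or transference from $SO(3)$) are all standard and correct. Up to the choice of which classical Riesz-transform theorem one invokes, this is essentially the argument one would find in the reference.
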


We also use the following monotonicity property for the angular Sobolev spaces, which can be proved for example using the decay of the corresponding multiplier (see Corollary 1, \cite{S_multipliers}).
\begin{lemma}[Monotonicity of Angular Sobolev Spaces]\label{monotoncity}
Let $1<p<\infty$, $s>s'>0$. It holds
$$
\||\Om|^{s'}f\|_{L^p_x}\lesssim_{s-s'} \||\Om|^s f\|_{L^p_x}
$$
\end{lemma}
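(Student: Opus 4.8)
The plan is to read the inequality as a Sobolev-type embedding in the angular variable: it says that lowering the order of an angular derivative is a bounded operation, so its content lies entirely in the $L^p$-boundedness of a negative-order spherical multiplier. The first step is to peel off the top-order derivative. Setting $\alpha:=s-s'>0$ and $g:=|\Om|^sf$ (we may assume $g\in L^p_x$, since otherwise the right-hand side is infinite and there is nothing to prove), I would use the spherical-harmonic expansion \eqref{sph_decomp} to observe that both $|\Om|^s$ and $|\Om|^{s'}$ annihilate the $l=0$ mode, and that, writing $g=\sum_{l,i}g^i_l(r)Y^i_l(\tht)$,
\[
|\Om|^{s'}f=\sum_{l\geq1}\sum_{i=1}^{N_l}[l(l+1)]^{-\alpha/2}g^i_l(r)Y^i_l(\tht)=:T_\alpha g,
\]
where $T_\alpha$ is the operator acting on the $l$-th spherical-harmonic block ($l\geq1$) by multiplication by $[l(l+1)]^{-\alpha/2}$ (and by $0$ on the $l=0$ block). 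Thus the lemma reduces to the bound $\|T_\alpha g\|_{L^p_x}\lesssim_\alpha\|g\|_{L^p_x}$.

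Next I would localize in the radial variable. Since $T_\alpha$ acts only in the angular variables, for each fixed $r>0$ it is a Fourier multiplier operator on $L^p(\Sp^2)$, and Fubini's theorem gives $\|T_\alpha g\|_{L^p_x}^p=\int_0^\infty\|T_\alpha g(r,\cdot)\|_{L^p(\Sp^2)}^p\,r^2\,dr$; it is therefore enough to bound $T_\alpha$ on $L^p(\Sp^2)$ by a constant depending only on $\alpha$, in particular uniform in $r$. Finally I would invoke the multiplier theorem for spherical-harmonic expansions (Corollary 1, \cite{S_multipliers}): the sequence $l\mapsto[l(l+1)]^{-\alpha/2}$ (extended by $0$ at $l=0$) is the restriction to the integers of a symbol $M(t):=\psi(t)[t(t+1)]^{-\alpha/2}$, with $\psi$ a fixed smooth cut-off equal to $1$ on $[1,\infty)$ and vanishing near $0$. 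Since $M$ is smooth and bounded on $[0,\infty)$ and obeys $|t^kM^{(k)}(t)|\lesssim_{k,\alpha}1$ for all $k\geq0$ (because $[t(t+1)]^{-\alpha/2}\sim t^{-\alpha}$ at infinity and each derivative gains a factor $t^{-1}$), and since only finitely many such $k$ (depending on the dimension of the sphere) are needed, $M$ satisfies the hypotheses of that theorem for every $1<p<\infty$, giving $\|T_\alpha\|_{L^p(\Sp^2)\to L^p(\Sp^2)}\lesssim_\alpha1$; integrating in $r$ then closes the argument.

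I do not expect a genuine obstacle here: the statement is a routine consequence of the cited spherical multiplier theorem. The only points needing a little care are the cost-free treatment of the $l=0$ mode (both sides of the inequality kill it, so the composition defining $T_\alpha$ is unambiguous and the top-order derivative can indeed be factored off) and checking that the symbol bounds — hence the $L^p(\Sp^2)\to L^p(\Sp^2)$ operator norm of $T_\alpha$ — are independent of $r$, so that the radial integration goes through with a constant depending only on $s-s'$.
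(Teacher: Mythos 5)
Your proof is correct and follows exactly the route the paper itself indicates: it reduces the statement to the $L^p(\Sp^2)$-boundedness of the negative-order spherical multiplier $l\mapsto[l(l+1)]^{-(s-s')/2}$ and invokes Corollary~1 of \cite{S_multipliers}, which is precisely the reference the paper cites. The only details the paper leaves implicit — that both $|\Om|^s$ and $|\Om|^{s'}$ kill the $l=0$ mode so the composition $T_\alpha\circ|\Om|^s$ is well-defined, and that Fubini reduces the $L^p_x$ bound to a uniform-in-$r$ bound on $L^p(\Sp^2)$ — you have filled in correctly.
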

Combined with Theorem \ref{angular_strichartz} the previous two lemmas yield the following (defining $\LOR$ as in \eqref{LOR_notn}).
\begin{corollary}\label{angular_strichartz_c}
    Let $\mathcal{Q}$ be any finite set of radially admissible pairs $(p,q)$ with $p\neq2$. Then it holds
    \begin{equation*}
    \max_{(p,q)\in\mathcal{Q}}2^{(\f{3}{2}\mfpq)k}\|e^{\pm it\sqrt{-\D}}P_kf\|_{L^p_tL^q_x}\lesssim_{\mathcal{Q}}\|\LOR P_kf\|_{L^2_x}
    \end{equation*}
\end{corollary}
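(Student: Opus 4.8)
The plan is to combine Theorem \ref{angular_strichartz} with Lemmas \ref{angular_riesz} and \ref{monotoncity} in a routine way; the only genuine content is bookkeeping over the finite set $\mathcal{Q}$. First I would fix any $(p,q)\in\mathcal{Q}$. Since this pair is radially admissible with $p\neq2$, it satisfies the hypotheses of Theorem \ref{angular_strichartz}, so there is an exponent $s(p,q)$ (tending to $0$ as the auxiliary parameter $\eta\to 0$, and in particular $\leq\half$ once $\eta$ is small enough) with
\begin{equation*}
2^{(\f{3}{2}\mfpq)k}\|e^{\pm it\sqrt{-\D}}P_kf\|_{L^p_tL^q_x}\lesssim_\eta \|P_kf\|_{L^2_x}+\||\Om|^{s(p,q)}P_kf\|_{L^2_x}.
\end{equation*}
Because $\mathcal{Q}$ is finite, I can choose a single $\eta>0$ that works simultaneously for every pair in $\mathcal{Q}$, and set $s_0:=\max_{(p,q)\in\mathcal{Q}}s(p,q)$, which is still $\leq\half$ (and in any case some fixed number in $(0,1)$) for $\eta$ small.

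Next I would upgrade the fractional angular derivative $|\Om|^{s(p,q)}$ to the integer-order angular derivatives appearing in $\LOR$. Apply Lemma \ref{monotoncity} with $p=2$ (noting $1<2<\infty$) to get $\||\Om|^{s(p,q)}P_kf\|_{L^2_x}\lesssim \||\Om|^{s_0}P_kf\|_{L^2_x}$ when $s(p,q)\le s_0$, and trivially equality when $s(p,q)=s_0$; if $s_0\ge 1$ one would instead first interpolate down, but since $s_0\le\half<1$ the cleaner route is: by Lemma \ref{monotoncity} again, $\||\Om|^{s_0}P_kf\|_{L^2_x}\lesssim \||\Om|^{1}P_kf\|_{L^2_x}$, and then by Lemma \ref{angular_riesz} with $p=2$, $\||\Om|P_kf\|_{L^2_x}\simeq\max_{i,j}\|\Om_{ij}P_kf\|_{L^2_x}$. (One should check that the monotonicity estimate can be applied to the frequency-localised function $P_kf$; this is fine since $P_kf$ is Schwartz-class when the right-hand side is finite, or one simply notes the angular multiplier bound of Lemma \ref{monotoncity} is a statement about the $L^p_x$ operator norm uniformly in the radial variable.) Since $\Om_{ij}$ commutes with the radial Fourier multiplier $P_k$, we have $\Om_{ij}P_kf=P_k\Om_{ij}f$, so altogether
\begin{equation*}
\||\Om|^{s(p,q)}P_kf\|_{L^2_x}\lesssim \max_{i,j}\|P_k\Om_{ij}f\|_{L^2_x}\leq \max_{i,j}\|\Om_{ij}f\|_{L^2_x}\cdot\|P_k\|_{L^2\to L^2}\lesssim \max_{i,j}\|\Om_{ij}f\|_{L^2_x}.
\end{equation*}
Wait — to keep $P_k$ on the inside and match the statement, I would instead just note $\|P_k\Om_{ij}f\|_{L^2_x}\le \|\Om_{ij}f\|_{L^2_x}$ is not quite what is wanted; rather one keeps $\|P_k\Om_{ij}f\|_{L^2_x}$, which together with $\|P_kf\|_{L^2_x}$ is exactly $\|\LOR P_kf\|_{L^2_x}$ up to the commutation $P_k\Om_{ij}=\Om_{ij}P_k$. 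So the clean statement is $\||\Om|^{s(p,q)}P_kf\|_{L^2_x}\lesssim \|\LOR P_kf\|_{L^2_x}$.

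Finally I would assemble: for each $(p,q)\in\mathcal{Q}$,
\begin{equation*}
2^{(\f{3}{2}\mfpq)k}\|e^{\pm it\sqrt{-\D}}P_kf\|_{L^p_tL^q_x}\lesssim \|P_kf\|_{L^2_x}+\|\LOR P_kf\|_{L^2_x}\lesssim \|\LOR P_kf\|_{L^2_x},
\end{equation*}
and taking the maximum over the finitely many pairs $(p,q)\in\mathcal{Q}$ preserves the estimate with an implicit constant depending only on $\mathcal{Q}$ (through the cardinality of $\mathcal{Q}$, the chosen $\eta$, and the individual Strichartz constants). This yields the claimed bound. There is no real obstacle here: the one point that requires a moment's care is ensuring a \emph{uniform} choice of $\eta$ over $\mathcal{Q}$ so that $s(p,q)\leq\half$ (hence $\leq 1$) for all pairs at once, which is immediate from finiteness of $\mathcal{Q}$; everything else is a direct citation of Theorem \ref{angular_strichartz}, Lemma \ref{monotoncity}, Lemma \ref{angular_riesz}, and the boundedness of $P_k$ on $L^2_x$ together with $[\Om_{ij},P_k]=0$.
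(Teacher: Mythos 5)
Your proof is correct and matches the paper's (tacit) argument: the paper gives no explicit proof, simply stating that Corollary \ref{angular_strichartz_c} follows by combining Theorem \ref{angular_strichartz} with Lemmas \ref{angular_riesz} and \ref{monotoncity}, which is exactly the chain you carry out — using finiteness of $\mathcal{Q}$ to fix $\eta$ uniformly, upgrading $|\Om|^{s(p,q)}$ to $|\Om|$ via Lemma \ref{monotoncity} (since $s(p,q)\leq\half<1$), converting to $\max_{i,j}\|\Om_{ij}\cdot\|$ via Lemma \ref{angular_riesz}, and using that $\Om_{ij}$ commutes with the radial multiplier $P_k$. One small overstatement worth tidying: a radially admissible pair with $p\neq2$ may also be \emph{standard} admissible, in which case it does not satisfy the hypotheses of Theorem \ref{angular_strichartz} as written; for those pairs the bound follows directly from the classical Strichartz estimate (without any angular term), which is dominated by $\|\LOR P_kf\|_{L^2_x}$, so the conclusion is unaffected.
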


\subsection{Function Spaces and Linear Estimates}\label{function_spaces_section}
Our function spaces are an adaptation of the usual Besov-type Strichartz spaces. Henceforth $\mathcal{Q}$ will denote a fixed set of radially admissible exponents as in Corollary \ref{angular_strichartz_c} to be determined throughout the proof, but certainly containing $(\infty,2)$. We then define the norm\footnote{We make the usual abuse of notation and write that $\phi\in S$ when really $\phi-p\in S$ for some fixed $p\in\Sp^2$, $S$ being the closure of the Schwarz functions under the given norm.}
$$
\|\phi\|_{S([0,T])}=\sum_{k\in\Z}\|\phi_k\|_{S_k([0,T])}
$$
with
\begin{equation}\label{norm}
\|\phi_k\|_{S_k([0,T])}:=\max_{(p,q)\in\mathcal{Q}}2^{(\f{1}{p}+\f{3}{q}-1)k}\|\LOR^{1-\dl(p,q)}\na_{t,x}P_k\phi\|_{L^p_tL^q_x([0,T]\x\R^3)}
\end{equation}
Here\footnote{Note that a more natural choice would be
$
\dl(p,q)=\max\{\f{2}{p}+\f{2}{q}-1+\eps(p,q;\eta),0\},
$
however we opt for the weaker norm above so as to encounter only full angular derivatives. Presumably it would be possible to to work with fractional angular derivatives by introducing a paradifferential calculus in the angular variable, see for instance  \cite{hirayama2024sharp,hong2022improved}.}
\[
\dl(p,q)=
\begin{cases}
0 \text{ if } \f{1}{p}+\f{1}{q}\leq \f{1}{2}\\
1 \text{ otherwise}
\end{cases}
\]

We will also work with the vector fields $L_n$ introduced in Section \ref{ang_com_chap}, however rather than incorporating these into the norm we will directly apply them to the solution we are working with.\footnote{This is for technical reasons to handle the non-trivial commutator $[L_n,P_k]$.}
Define 
\begin{align}\label{pl}
\pl:=L\phi=
\begin{pmatrix}
\phi\\
L_1\phi\\
L_2\phi\\
L_3\phi
\end{pmatrix},
&&
\bl:=
\begin{pmatrix}
1\\
L_1\\
L_2\\
L_3
\end{pmatrix},
\end{align}
so
\begin{equation}\label{norm}
\|P_k\phi^L\|_{S_k([0,T])}\simeq\max_{n=0,\ldots,3}\max_{(p,q)\in\mathcal{Q}}2^{(\f{1}{p}+\f{3}{q}-1)k}\|\LOR^{1-\dl(p,q)}\na_{t,x}P_kL_n\phi\|_{L^p_tL^q_x([0,T]\x\R^3)}
\end{equation}
with the convention $L_0:=1$.

We have the following linear estimate which is a straightforward application of Corollary \ref{angular_strichartz_c}:
\vspace{-0.5em}
\begin{theorem}[Linear Estimate]\label{linear_estimate}
Let $\phi$ satisfy the linear wave equation $\Box \phi=F$ with initial data $\phi[0]\equiv(\phi(0,\cdot),\dd_t\phi(0,\cdot))$ on the interval $[0,T]$.
It holds
\begin{align*}
\|\phi_k\|_{S_k([0,T])}\lesssim\|\LOR P_k\phi[0]\|_{\dot{H}^{3/2}\x \dot{H}^{1/2}}+\|\LOR F_k\|_{L^1_t \dot{H}^{1/2}_x([0,T]\x\R^3)}
\end{align*}
and as a corollary
\begin{align*}
\|P_k\phi^L\|_{S_k([0,T])}&\lesssim\|\LOR P_k\phi[0]\|_{\dot{H}^{3/2}\x \dot{H}^{1/2}}+\|\LOR P_k(x\cdot\na)\phi[0]\|_{\dot{H}^{3/2}\x \dot{H}^{1/2}}+\|\LOR P_k(x\cdot\Box\phi(0))\|_{\dot{H}^{1/2}}\\
&\hspace{22em}+\|\LOR P_kF^L\|_{L^1_t \dot{H}^{1/2}_x([0,T]\x\R^3)}
\end{align*}
where $F^L$ is as in \eqref{pl}.
\end{theorem}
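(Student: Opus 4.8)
The plan is to deduce both statements from Corollary~\ref{angular_strichartz_c} by Duhamel's formula. First I would recall the representation of the solution to $\Box\phi=F$ with data $\phi[0]$: writing $|D|:=\sqrt{-\D}$, one has
\[
\phi(t)=\cos(t|D|)\phi(0)+\frac{\sin(t|D|)}{|D|}\dd_t\phi(0)+\int_0^t\frac{\sin((t-s)|D|)}{|D|}F(s)\,ds,
\]
so that $\na_{t,x}\phi(t)$ is a superposition (in $s$) of free half-wave evolutions $e^{\pm i(t-s)|D|}$ applied to $\na_{t,x}$ of the data and of $|D|^{-1}F(s)$, with kernels bounded by $O(1)$. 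Applying $P_k$ and one of the operators $1,\Om_{ij}$ — which commute with $P_k$ and with $e^{\pm it|D|}$ by the discussion in Section~\ref{ang_com_chap}, in particular with the free evolution since $\Om$ commutes with radial multipliers — and taking $L^p_tL^q_x$ norms on $[0,T]$, Corollary~\ref{angular_strichartz_c} gives for each radially admissible $(p,q)$ with $p\neq2$ (and trivially, via the energy estimate, for $(\infty,2)$)
\[
2^{(\f32-\f1p-\f3q)k}\|e^{\pm it|D|}P_kg\|_{L^p_tL^q_x}\lesssim_{\mathcal Q}\|\LOR P_kg\|_{L^2_x}.
\]
Since $\dl(p,q)\in\{0,1\}$, the factor $\LOR^{1-\dl(p,q)}$ appearing in \eqref{norm} is at most $\LOR$, so the above bound with $g=\na_{t,x}(\text{data})$ and $g=|D|^{-1}F(s)$, followed by Minkowski's integral inequality in $s$ to pull the $L^p_tL^q_x$ norm inside the Duhamel integral, yields
\[
\|\phi_k\|_{S_k([0,T])}\lesssim\|\LOR P_k\na_{t,x}\phi(0)\|_{\dot H^{1/2}\text{-type}}+\int_0^T\|\LOR P_k|D|^{-1}F(s)\|_{\dot H^{1/2}}\,ds,
\]
and matching $\na_{t,x}$ of the data at frequency $2^k$ with the $\dot H^{3/2}\times\dot H^{1/2}$ norm of $\phi[0]$, and $|D|^{-1}$ at frequency $2^k$ against $\dot H^{1/2}$ relative to $L^2$, gives exactly the claimed first estimate. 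The tiny technical points here are: checking the $(\infty,2)$ endpoint is handled directly by energy conservation rather than by the angular Strichartz theorem (Corollary~\ref{angular_strichartz_c} already folds this in since $(\infty,2)$ is radially admissible and $p=\infty\neq2$); and noting that only finitely many $(p,q)\in\mathcal Q$ occur so the implicit constants are uniform.

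For the corollary I would apply the first estimate to each component of $\phi^L=L\phi$, i.e.\ to $\phi$ and to each $L_n\phi$, $n=1,2,3$. Since $L_n$ commutes with $\Box$, the function $L_n\phi$ satisfies $\Box(L_n\phi)=L_nF=(F^L)_n$, so the first estimate gives
\[
\|P_k(L_n\phi)_{\phantom k}\|_{S_k}\lesssim\|\LOR P_k(L_n\phi)[0]\|_{\dot H^{3/2}\times\dot H^{1/2}}+\|\LOR P_k(L_nF)\|_{L^1_t\dot H^{1/2}_x},
\]
and summing over $n=0,\dots,3$ reproduces the $\|P_kF^L\|_{L^1_t\dot H^{1/2}}$ term on the right. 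It then remains to express the data norm $\|\LOR P_k(L_n\phi)[0]\|$ in terms of the quantities appearing in the statement. Here one uses $(L_n\phi)(0)=t\dd_{x_n}\phi|_{t=0}+x_n\dd_t\phi|_{t=0}=x_n\phi_1$ and $\dd_t(L_n\phi)(0)=\dd_{x_n}\phi(0)+x_n\dd_t^2\phi(0)+\dd_t(t\dd_{x_n}\phi)|_{t=0}$; using the equation $\dd_t^2\phi=\D\phi+F$ to substitute $\dd_t^2\phi(0)=\D\phi(0)+F(0)$, one sees $(L_n\phi)[0]$ is a combination of $\dd_{x_n}\phi(0)$, $x_n\dd_t\phi(0)$, $x_n\na^2\phi(0)=x_n\na\phi_1$-type terms and $x_n(x\cdot\Box\phi(0))$-type terms — more precisely, the pieces involving $x\cdot\na$ applied to the data $(\phi(0),\dd_t\phi(0))$, plus the contribution of $x\cdot\Box\phi(0)=x\cdot F(0)$. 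Bounding the plain-derivative pieces by $\|\LOR P_k\phi[0]\|_{\dot H^{3/2}\times\dot H^{1/2}}$ and the $x\cdot\na$ pieces by $\|\LOR P_k(x\cdot\na)\phi[0]\|_{\dot H^{3/2}\times\dot H^{1/2}}$ and the remaining piece by $\|\LOR P_k(x\cdot\Box\phi(0))\|_{\dot H^{1/2}}$ gives precisely the stated right-hand side.

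The main obstacle — really the only nontrivial point — is bookkeeping the commutators with the Littlewood–Paley projector. The operators $L_n$ do not commute with $P_k$: by \eqref{commr3} one has $[L_n,P_k]=2^{-k}\dd_t\Pp_k$ for some multiplier $\Pp_k$ of the same type, and the angular Strichartz corollary is stated for the clean localization $P_k$ applied after the evolution. So strictly speaking, when I write ``$\Box(L_n\phi)=L_nF$ and apply the first estimate to $L_n\phi$'', I am working with the genuine function $L_n\phi$ and its genuine frequency pieces $P_k(L_n\phi)$, which is exactly what \eqref{norm} for $\phi^L$ demands — so no commutator with $P_k$ is actually incurred \emph{in this theorem}; the commutator $[L_n,P_k]$ is deferred to the nonlinear estimates later in the paper, as the footnote to \eqref{norm} anticipates. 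The one place care is needed is that $L_n$ does not commute with $e^{\pm it|D|}$ either ($[L,\dhalf]=R\dd_t$ by \eqref{commr1}), but this is irrelevant here because I apply the free-evolution Strichartz bound to the function $L_n\phi$ itself (which solves a wave equation), not by commuting $L_n$ through the propagator. Hence the argument reduces cleanly to the Duhamel formula plus Corollary~\ref{angular_strichartz_c} plus the elementary algebra expressing $(L_n\phi)[0]$ via $(x\cdot\na)\phi[0]$ and $x\cdot\Box\phi(0)$, and I expect no substantive difficulty.
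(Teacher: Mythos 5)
Your proof is correct and fills in exactly the argument the paper intends; indeed the paper gives no explicit proof, remarking only that the statement ``is a straightforward application of Corollary~\ref{angular_strichartz_c}'', and your Duhamel-plus-Strichartz argument for the first inequality, followed by applying it to each $L_n\phi$ (using $[L_n,\Box]=0$ so $\Box(L_n\phi)=L_nF$) and unwinding $(L_n\phi)[0]$ via the product rule and $\dd_t^2\phi(0)=\D\phi_0+\Box\phi(0)$, is precisely that implicit argument. Two very minor points: your intermediate display for $\dd_t(L_n\phi)(0)$ double-counts the $\dd_{x_n}\phi(0)$ term (the correct expression is $\dd_t(L_n\phi)(0)=x_n\dd_t^2\phi(0)+\dd_{x_n}\phi(0)$, and you recover the right final decomposition anyway), and the exponent $2^{(\f{3}{2}\mfpq)k}$ in Corollary~\ref{angular_strichartz_c} as typed has the wrong sign relative to Theorem~\ref{angular_strichartz} (it should be $2^{(\fpq-\f32)k}$ on the left); your argument implicitly uses the correct version, so this is a paper typo rather than a gap in your reasoning.
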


\subsection{Angular Multipliers}\label{angular_multipliers_subsection}
To conclude this section, we introduce the angular multipliers which will play a key role in the main estimates of this article (see Lemma \ref{angular_sep_lem}). For fixed $\rho\leq0$, we first introduce a smooth partition of unity on the sphere, $(\sg^\beta_\rho)_{\beta\in\Ss_\rho}$, given by
\begin{equation}\label{spatial_partition}
\sg^\beta_\rho(x):=\f{\sg(2^{-\rho}\|\hat{x}\x\beta\|)}{\sum_{\beta'\in\Ss_\rho}\sg(2^{-\rho}\|\hat{x}\x\beta'\|)}
\end{equation}
for $\sg\in C_c^\infty$ supported on $[0,101/100]$ and equal to $1$ on $[0,1]$. $\Ss_\rho$ is a set of $\sim 2^{-2\rho}$ points on the sphere such that for every $\hat{x}\in\Sp^2$ there exists $\beta\in\Ss_\rho$ such that $\|\hat{x}\x\beta\|\leq 2^\rho$.
We choose our functions in such a way as to ensure the almost-orthogonality relation
$$
\|u\|_{L^2_x}\simeq\left(\sum_{\beta\in\Ss_\rho}\|\sg^\beta_\rho(x)u(x)\|_{L^2_x}^2\right)^\half
$$
holds uniformly in $\rho\leq0$.

For each $\rho$ sufficiently small, $\beta\in\Ss_\rho$, we also introduce a Whitney-type decomposition of the sphere in Fourier space. This consists of functions $\eta^{(r,l)}_r$ cutting off to discs of radius $\sim 2^r$ at distance $\sim 2^r$ from $\beta$, made precise in the following proposition. These cut-offs are turned into operators by defining, for example,
$$
\eta_r^{(r,l)}(D)\phi(x):=\F^{-1}(\eta_r^{(r,l)}(\xi)\hat{\phi}(\xi))(x)
$$

\begin{proposition}\label{partition_prop}
There exist absolute constants $C_1,C_2,C_3,C_4,N>0$ such that the following holds. For any $\rho\leq-C_1$, $\beta\in\Ss_\rho$ there is a partition of unity consisting of functions 
\begin{equation}\label{nbr_def}
\eta^\beta_{\rho+C_2} \text{ and } \eta_r^{(r,l)} \text{ } (\rho+C_1\leq r\leq0, l=1,\ \ldots,N)
\end{equation}
with the following properties:
\begin{enumerate}
\item There are points $\al_{r,l}\in\Sp^2$ and functions $\tilde{\eta}^\beta_{\rho+C_1}$, $\tilde{\eta}_r^{(r,l)}$ of the form $\tilde{\eta}^\beta_{\rho+C_2}(\xi)=\sg(2^{-(\rho+C_2)}\|\hxi\x\beta\|)$ and $\tilde{\eta}_r^{(r,l)}(\xi)=\sg(2^{-(r-C_3)}\|\hxi\x\al_{r,l}\|)$ for $\sg$ as before such that
$$
\eta^\beta_{\rho}=\f{\tilde{\eta}^\beta_{\rho}}{\tilde{\eta}^\beta_{\rho+C_2}+\sum\tilde{\eta}_r^{(r,l)}} \text{ and } \eta_r^{(r,l)}=\f{\tilde{\eta}_r^{(r,l)}}{\tilde{\eta}^\beta_{\rho+C_2}+\sum\tilde{\eta}_r^{(r,l)}}
$$
We allow for a smaller constant $C_3$ when $r=0$.
\item $\|\al_{r,l}\x\beta\|\simeq2^r$ for all $(r,l)$ 
and $\|\hx\x\hxi\|\gtrsim2^r$ for all $\hx\in\spt(\sg^\beta_\rho)$ and all $(r,l)$.
\item $\spt(\eta^{(r',l')}_{r'})\cap\spt(\eta^{(r,l)}_r)=\emptyset$ for all $|r-r'|\geq C_4$ and $\spt(\eta^\beta_{\rho+C_2})\cap\spt(\eta^{(r,l)}_r)=\emptyset$ for all $r\geq\rho+C_4$.
\end{enumerate}
\end{proposition}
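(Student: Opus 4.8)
The statement to prove is Proposition \ref{partition_prop}, which constructs a Whitney-type decomposition of the sphere in Fourier space, adapted to a fixed spatial cap direction $\beta \in \Ss_\rho$.

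\textbf{Overview of the approach.} The plan is to build the partition directly from the geometry of the sphere near and away from the point $\beta$. The idea is that any $\hxi \in \Sp^2$ is at angular distance $\|\hxi \times \beta\|$ from $\beta$, and we want to dyadically decompose in this distance: the single cap $\eta^\beta_{\rho + C_2}$ handles the region within angular distance $\sim 2^\rho$ of $\beta$ (this is where $\hxi$ may be nearly parallel to the spatial direction and no angular separation is available), while for each dyadic scale $2^r$ with $\rho + C_1 \le r \le 0$ the functions $\eta_r^{(r,l)}$, $l = 1,\ldots,N$, cover the annulus $\{\hxi : \|\hxi \times \beta\| \sim 2^r\}$ by finitely many caps of radius $\sim 2^r$. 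The key quantitative point is that an annulus of angular width $\sim 2^r$ and angular radius $\sim 2^r$ on $\Sp^2$ can be covered by a number $N$ of caps of radius $\sim 2^r$ that is \emph{bounded independently of $r$} — this is what gives the uniform constant $N$. I would make this precise by parametrising the annulus by a single angular coordinate (the longitude around $\beta$) ranging over an interval of length $\sim 2\pi$, and noting that at annular radius $\theta \sim 2^r$ the arclength of the annulus is $\sim 2^r$, so $O(1)$ arcs of length $2^r$ suffice; choosing their centres $\al_{r,l}$ at angular distance exactly $\sim 2^r$ from $\beta$ gives property (2).

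\textbf{Key steps in order.} (i) Fix the absolute constants: choose $\sg$ as in the statement (smooth, supported on $[0,101/100]$, equal to $1$ on $[0,1]$), and then pick $C_1$ large enough that for $r \le -C_1$ the relevant caps are genuinely small (so spherical geometry is essentially Euclidean), $C_2$ the fixed overlap constant between the central cap and the first annulus, $C_3$ the constant relating the cap radius $2^{r - C_3}$ to the annular scale $2^r$ (with a smaller value allowed at $r = 0$ since there the "cap" is a fixed-size region), and $C_4$ the dyadic separation constant. (ii) For each $r \in [\rho + C_1, 0]$ choose points $\al_{r,l} \in \Sp^2$, $l = 1,\ldots,N$, with $\|\al_{r,l} \times \beta\| \simeq 2^r$, whose associated caps $\{\|\hxi \times \al_{r,l}\| \le 2^{r - C_3}\}$ cover the annulus $\{\|\hxi \times \beta\| \sim 2^r\}$; the count $N$ is uniform by the arclength argument above. (iii) Define the tilde functions $\tilde\eta^\beta_{\rho + C_2}(\xi) = \sg(2^{-(\rho + C_2)}\|\hxi \times \beta\|)$ and $\tilde\eta_r^{(r,l)}(\xi) = \sg(2^{-(r - C_3)}\|\hxi \times \al_{r,l}\|)$, check that their sum is bounded above and \emph{below} by absolute constants on all of $\Sp^2$ (below: every point is in the support of at least one, with the function equal to $1$ there by the choice of $\sg$; above: bounded overlap, again uniform in $r$), and then normalise to get the genuine partition of unity $\eta^\beta_\rho$, $\eta_r^{(r,l)}$ as in property (1). (iv) Verify properties (2) and (3): the first clause of (2) is by construction; the second clause, $\|\hx \times \hxi\| \gtrsim 2^r$ for $\hx \in \spt(\sg^\beta_\rho)$ and $\hxi \in \spt(\eta_r^{(r,l)})$, follows from the triangle inequality for angular distance on the sphere — $\hx$ is within $\sim 2^\rho$ of $\beta$, $\hxi$ is at distance $\sim 2^r \ge 2^{\rho + C_1} \gg 2^\rho$ from $\beta$, hence $\hx$ and $\hxi$ are separated by $\gtrsim 2^r$. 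Property (3) is immediate from the dyadic scale separation: caps at scales differing by $\ge C_4$ have disjoint supports by choosing $C_4$ compatibly with the support of $\sg$ and the constant $C_3$, and similarly the central cap is disjoint from annuli at scale $r \ge \rho + C_4$.

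\textbf{Main obstacle.} The only real subtlety is (ii)–(iii): establishing that a single $N$, independent of both $\rho$ and $r$, suffices to cover each dyadic annulus \emph{and} that the resulting overlap is uniformly bounded so that the normalisation in (1) is harmless. This is a routine but slightly fiddly packing argument on $\Sp^2$, cleanest to run in the regime $r \le -C_1$ where the annulus looks like a Euclidean annulus $\{|y| \sim 2^r\} \subset \R^2$ of area $\sim 2^{2r}$ being covered by discs of radius $\sim 2^{r}$ — manifestly an $r$-independent count — and then to handle the finitely many remaining scales $-C_1 < r \le 0$ by hand, absorbing them into $N$ and $C_1$. Everything else (smoothness of the normalised functions, the $L^2$ almost-orthogonality implicitly needed later, the triangle-inequality estimates) is then standard.
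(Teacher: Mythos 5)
Your proposal is correct and takes essentially the same approach as the paper's (commented-out) proof: a Whitney-type dyadic covering of $\Sp^2$ around $\beta$, with the uniform count $N$ coming from the near-Euclidean packing of each annulus, and disjointness/overlap verified via triangle-inequality estimates for $\|\cdot\times\cdot\|$. The paper makes the small-cap "near-Euclidean" step explicit by projecting from a flat disc via $w(x,y)=(x,y,\sqrt{1-x^2-y^2})$ and tracking the bi-Lipschitz constant $C_1(R)\to1$ as $R\to0$, which is what rigorously justifies your informal "the annulus looks like a Euclidean annulus" step, but the underlying argument is the same.
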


We then have the following lemma concerning the boundedness of these multipliers.
\begin{lemma}\label{boundedness_of_angular_multipliers}
Let $1\leq q\leq\infty$. For any $j\in\Z$, $\rho\leq -C_1$, $\beta\in\Ss_\rho$, $\rho+C_1\leq r\leq0$ and $l=1,\ldots,N$ it holds
$$
\|\eta^\beta_{\rho+C_1}(D)P_j\phi\|_{L^q_x},\, \|\eta^{(r,l)}_r(D)P_j\phi\|_{L^q_x}\lesssim_q\|P_j\phi\|_{L^q_x}
$$
\end{lemma}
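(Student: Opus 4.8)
The plan is to realize each of these operators as convolution against a kernel of uniformly bounded $L^1$ mass and then invoke Young's inequality. By the rescaling $\xi\mapsto 2^j\xi$ --- under which $P_j\mapsto P_0$, the degree-zero multipliers $\eta^\beta_{\rho+C_1}$ and $\eta^{(r,l)}_r$ are unchanged, and both sides of the asserted inequality pick up the same power of $2^j$ --- we may assume $j=0$. Since $\widehat{P_0\phi}$ is supported in $\{|\xi|\sim1\}$, we may replace $\eta^\beta_{\rho+C_1}(D)P_0$ (resp. $\eta^{(r,l)}_r(D)P_0$) by $m(D)$ for the compactly supported multiplier $m(\xi):=\eta^\beta_{\rho+C_1}(\xi)\tilde{\chi}_0(\xi)$ (resp. $m(\xi):=\eta^{(r,l)}_r(\xi)\tilde{\chi}_0(\xi)$), where $\tilde{\chi}_0$ is a fattened Littlewood--Paley cutoff. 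It then suffices to show $\|m(D)g\|_{L^q_x}\lesssim\|g\|_{L^q_x}$ for all $1\le q\le\infty$, uniformly in the parameters, and apply this to $g=P_0\phi$.

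The key geometric point is that $\mathrm{supp}\,m$ is contained in an anisotropic box $\mathcal R=\xi_0+A([-1,1]^3)$, where $\xi_0$ is the centre of the relevant cap (a point of size $\sim1$ on the ray through $\beta$, resp. $\alpha_{r,l}$) and $A=O\,\mathrm{diag}(1,2^{\rho},2^{\rho})$ (resp. $A=O\,\mathrm{diag}(1,2^{r},2^{r})$) with $O$ an orthogonal matrix aligning the first axis with $\beta$ (resp. $\alpha_{r,l}$) and the other two with the tangent plane. I claim that $n(\zeta):=m(\xi_0+A\zeta)$ is smooth, supported in a fixed ball, and satisfies $\|\partial^\gamma n\|_{L^\infty}\lesssim_\gamma 1$ uniformly in $j,\rho,r,l,\beta$. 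Indeed, the numerators $\tilde\eta^\beta_{\rho+C_2},\tilde\eta^{(r,l)}_r$ from Proposition \ref{partition_prop} are the fixed profile $\sigma$ composed with the rescaled angular distance to $\beta$, resp. $\alpha_{r,l}$; the only non-smoothness of that distance function lies at the poles $\pm\beta$, resp. $\pm\alpha_{r,l}$, and there it is harmless because $\sigma\equiv1$ near $0$, so the numerators are genuinely smooth, with each tangential derivative costing only a factor $2^{-\rho}$ (resp. $2^{-r}$) which is exactly absorbed by the anisotropy of $A$, the radial direction entering only through $\tilde{\chi}_0$. For the denominator $D:=\tilde\eta^\beta_{\rho+C_2}+\sum\tilde\eta^{(r',l')}_{r'}$ one uses property (3) of Proposition \ref{partition_prop}: near any point of $\mathcal R$ only boundedly many of the summands are nonzero and all are at a scale $\gtrsim$ that of $\mathcal R$, and the construction keeps $D$ bounded below by an absolute positive constant on $\mathrm{supp}\,\tilde{\chi}_0$; hence $1/D$, and therefore the pullback $n$ of $m=(\tilde\eta/D)\tilde{\chi}_0$, is a uniformly smooth bump.

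Granting the claim, the kernel of $m(D)$ is
\[
K(x)=\mathcal F^{-1}m(x)=e^{ix\cdot\xi_0}\,|\det A|\;\check n(A^{\mathsf T}x),
\]
so $\|K\|_{L^1_x}=\|\check n\|_{L^1_x}$; since $\check n$ is Schwartz with seminorms dominated by finitely many $C^\infty$ seminorms of $n$, this is $\lesssim1$ uniformly. Young's inequality gives $\|m(D)g\|_{L^q_x}\le\|K\|_{L^1_x}\|g\|_{L^q_x}\lesssim\|g\|_{L^q_x}$ for every $1\le q\le\infty$, and taking $g=P_j\phi$ completes the proof.

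The only real obstacle I anticipate is the uniform $C^\infty$ bound on the rescaled multiplier $n$: one must check that the quotient structure of Proposition \ref{partition_prop} does not spoil the derivative bounds --- i.e. that the denominator $D$ stays bounded away from zero on the relevant cone and has only $O(1)$ nonzero summands at each point, all at comparable or coarser scales --- and that the angular-distance functions really are smooth on the caps in question, which hinges on the flatness of $\sigma$ at the origin. Everything downstream of that is the standard rescaled-bump-multiplier estimate via Young's inequality.
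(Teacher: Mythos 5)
Your proof is correct and rests on the same key observation as the paper's: the multiplier, restricted to the Littlewood--Paley annulus, is a uniformly smooth bump with respect to the anisotropic box of dimensions $\sim 1\times 2^\rho\times 2^\rho$ (resp.\ $1\times 2^r\times 2^r$) containing its support, each tangential derivative costing only a factor of the inverse cap width. The paper packages the conclusion by expanding the multiplier as a Fourier series on that box with rapidly decaying coefficients, whereas you rescale the box to a fixed ball and apply Young's inequality to the resulting Schwartz kernel; these are two standard renditions of the same estimate, and the derivative bounds on the numerator and denominator that you flag as the potential weak point are exactly what the paper's proof verifies as well.
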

\section{Reduction to main proposition}\label{reduction_chap}
We will work with frequency envelopes to reduce our critical global result to the subcritical local result of Theorem \ref{LWP_thm} (proved in Section \ref{LWP_proof}). This section is largely based on Section 3 of \cite{Tao_1}.

In what follows we fix $\sg\in(0,1)$ (which will need to be taken sufficiently small), $s\in(3/2,3/2+\sg)$ and $0<\eps\ll 1$ which may depend on $\sg$. We also need the following definition from \cite{Tao_1}.
\begin{definition}[Frequency envelope]
We call $c=(c_k)_{k\in\Z}\in\ell^1$ a \textit{frequency envelope} if $$\|c\|_{\ell^1}\lesssim\eps$$ and $$2^{-\sg|k-k'|}c_{k'}\lesssim c_k\lesssim 2^{\sg|k-k'|}c_{k'}$$
We say that $(f,g)$ \textit{lies underneath} the envelope $c$ if 
\begin{equation*}
\|P_k(f,g)\|_{\dot{H}^{3/2}\x\dot{H}^{1/2}}\leq c_k
\end{equation*}
for all $k\in\Z$.
\end{definition}

Our first step in proving Theorem \ref{thm1.2} is to make the following reduction, saying that the frequency profile of the solution stays roughly constant along the evolution.
\begin{proposition}[Main Proposition]\label{main_prop}
Let $0<T<\infty$, $c$ be a frequency envelope and $\phi$ a smooth solution to \eqref{eqn1.2} on $[0,T]\times\R^3$ with initial data $\phi[0]$ satisfying the smallness condition
\begin{equation}\label{small1}
\|\LOR P_k\phi[0]\|_{\dot{H}^{3/2}\x\dot{H}^{1/2}}+\|\LOR (x\cdot\na) P_k\phi[0]\|_{\dot{H}^{3/2}\x\dot{H}^{1/2}}\leq c_k
\end{equation}
for all $k$. Let $\pl$ be as in \eqref{pl}. Then if $\eps$ is sufficiently small it holds
\begin{equation}\label{result}
\|P_k\pl\|_{S_k([0,T])}\leq C_0c_k
\end{equation}
for all $k\in\Z$, where $C_0\gg1$ is an absolute constant. In particular, $\phi[t]$ lies underneath the frequency envelope $C_0c$ for all $t\in[0,T]$.
\end{proposition}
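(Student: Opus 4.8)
The plan is to run a bootstrap–continuity argument which, via the linear estimate of Theorem~\ref{linear_estimate}, reduces \eqref{result} to a single trilinear estimate on the nonlinearity of \eqref{eqn1.2} — and that trilinear estimate is precisely what Sections~\ref{discarding}--\ref{HWM_chapter} are devoted to. So the task here is to set up the bootstrap cleanly and record how those pieces assemble.

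\textbf{Bootstrap set-up.} Fix a large absolute constant $C_0$, whose size will be dictated by the implied constant in Theorem~\ref{linear_estimate}, and for $0\leq T'\leq T$ consider the hypothesis
\[
(\star_{T'}):\qquad \|P_k\pl\|_{S_k([0,T'])}\leq 2C_0\,c_k\quad\text{for all }k\in\Z .
\]
Because $\phi$ is a smooth solution equal to a constant outside a compact set, $\pl(t)$ ranges over a bounded subset of Schwartz space as $t$ ranges over $[0,T]$, so for all $|k|$ beyond some $K_0=K_0(\phi)$ one has $\|P_k\pl\|_{S_k([0,T])}\leq c_k$ for free; hence $(\star_{T'})$ is effectively a condition on finitely many $k$. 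For those $k$, $T'\mapsto\|P_k\pl\|_{S_k([0,T'])}$ is continuous and non-decreasing and at $T'=0$ it is $\lesssim\|\LOR P_k\pl[0]\|_{\dot{H}^{3/2}\x\dot{H}^{1/2}}\lesssim c_k$ (using \eqref{small1} and a multilinear estimate at $t=0$ for the $x\cdot\Box\phi(0)$ contribution hidden in $\pl[0]$). Thus $\{T':(\star_{T'})\text{ holds}\}$ is a closed, nonempty subinterval of $[0,T]$, and it will suffice to prove the \emph{self-improvement} $\|P_k\pl\|_{S_k([0,T'])}\leq\tfrac12 C_0 c_k$; this makes the set open, hence all of $[0,T]$, and yields \eqref{result}.

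\textbf{Reduction to a trilinear bound.} Since $L_n$ commutes with $\Box$, $\pl$ solves $\Box\pl=F^L$, where $F$ is the right side of \eqref{eqn1.2} and $F^L$ is obtained by Leibniz-differentiating $F$ with the components of $\bl$; in doing so I would use \eqref{commr1}, \eqref{commr3} to rewrite the commutators $[L,\dhalf]=R\dd_t$ and $[L_n,P_j]=2^{-j}\dd_t P_j$ as harmless terms of the same cubic-and-higher schematic form in the components of $\pl$. Theorem~\ref{linear_estimate} on $[0,T']$ then bounds $\|P_k\pl\|_{S_k([0,T'])}$ by $\|\LOR P_k\phi[0]\|_{\dot{H}^{3/2}\x\dot{H}^{1/2}}+\|\LOR P_k(x\cdot\na)\phi[0]\|_{\dot{H}^{3/2}\x\dot{H}^{1/2}}+\|\LOR P_k(x\cdot\Box\phi(0))\|_{\dot{H}^{1/2}}+\|\LOR P_kF^L\|_{L^1_t\dot{H}^{1/2}_x([0,T'])}$. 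The first two summands are $\leq c_k$ by \eqref{small1}, and the third is $\lesssim\eps c_k$ by a multilinear estimate at $t=0$, since $\Box\phi(0)=F(0)$ is an algebraic (nonlocal) expression in $\phi[0]$. Everything thus reduces to proving, uniformly in $k$,
\[
\|\LOR P_kF^L\|_{L^1_t\dot{H}^{1/2}_x([0,T'])}\lesssim c_k\bigl(\|\pl\|_{S([0,T'])}^{2}+\|\pl\|_{S([0,T'])}^{3}+\cdots\bigr),\qquad \|\pl\|_{S([0,T'])}:=\sum_{k}\|P_k\pl\|_{S_k([0,T'])},
\]
the series being summable once $\|\pl\|_S$ is small. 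This is the content of Sections~\ref{discarding}--\ref{HWM_chapter}: after a Littlewood--Paley trichotomy of each cubic term of \eqref{eqn1.2} — and of $\Phip$, expanded as a Neumann series whose convergence uses the smallness of the critical norm — the interactions with the undifferentiated factor at highest frequency are absorbed by the angular Strichartz estimates of Corollary~\ref{angular_strichartz_c} (Section~\ref{discarding}); the $\lhh$ pieces plus a remainder are removed by null-form normal forms (Section~\ref{normal_forms_chapter}); the surviving $\llh$ pieces are treated by Tao's gauge transformation together with the commuting-vector-field and angular-separation argument of \eqref{obs} (Sections~\ref{approx_PT_chapter}, \ref{cancellation_chapter}); and the extra nonlocal half-wave maps terms are flipped to Strichartz-controllable configurations via the geometric identity \eqref{geometric_identity_4} (Section~\ref{HWM_chapter}). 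Throughout, the frequency-envelope property of $c$ and the small parameter $\sg$ are what let one bounce the constrained output frequency back to $c_k$. Granting this bound, under $(\star_{T'})$ one has $\|\pl\|_{S([0,T'])}\leq 2C_0\sum_k c_k\lesssim C_0\eps$, hence $\|\LOR P_kF^L\|_{L^1_t\dot{H}^{1/2}_x([0,T'])}\lesssim (C_0\eps)^2 c_k$, so $\|P_k\pl\|_{S_k([0,T'])}\leq C_1(1+(C_0\eps)^2)c_k$ for an absolute $C_1$; choosing $C_0:=4C_1$ and then $\eps$ small enough that $C_1(C_0\eps)^2\leq\tfrac14 C_0$ closes the bootstrap, and the last sentence of the proposition follows since the $(\infty,2)$ component of $\|P_k\pl\|_{S_k}$ dominates $\|\LOR P_k\phi[t]\|_{\dot{H}^{3/2}\x\dot{H}^{1/2}}$ for each $t$.

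\textbf{Where the difficulty lies.} The entire substance is in the trilinear estimate above, and within it in the $\llowesth$ interactions \eqref{warning}: these admit neither a normal form nor the gauge cancellation, and the now-unavailable endpoint $L^2_tL^\infty_x$ must be replaced — this is exactly the purpose of the Lorentz boosts and the simultaneous physical/Fourier angular decomposition (Lemma~\ref{angular_sep_lem}, Lemma~\ref{SIL}). The persistent secondary irritant is that the $L_n$ are not translation invariant, so every Littlewood--Paley cut-off generates the commutator $2^{-j}\dd_t P_j$ of \eqref{commr3}, which has to be carried and reabsorbed at each stage — the reason $\pl$ is propagated directly rather than folding the $L_n$ into the $S_k$-norm.
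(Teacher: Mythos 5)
Your proposal reproduces the paper's route: Proposition~\ref{main_prop} is reduced, by the Tao-style continuity/bootstrap argument, to the self-improvement step which is exactly Proposition~\ref{reduced_main_prop}, and that step is then discharged by the linear estimate (Theorem~\ref{linear_estimate}) together with the trilinear bounds established in Sections~\ref{discarding}--\ref{cancellation_chapter}; the paper simply cites \cite{Tao_1} for the continuity part, whereas you spell it out. One small repair: the clause ``$\pl(t)$ ranges over a bounded subset of Schwartz space as $t$ ranges over $[0,T]$'' is not literally true --- as the footnote to Theorem~\ref{LWP_thm} notes, the compact-support property is destroyed by the nonlocal flow --- but the conclusion you actually need (that $\|P_k\pl\|_{S_k([0,T])}/c_k\to 0$ as $|k|\to\infty$, so the bootstrap only involves finitely many $k$) still follows from the higher inhomogeneous $B^{s'}_{2,1}$ regularity that \emph{is} propagated together with the envelope constraint $c_k\gtrsim 2^{-\sg|k|}c_0$, so the continuity argument closes.
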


We quickly outline how Theorem \ref{thm1.2} follows from this proposition (and Theorem \ref{LWP_thm}). Given data $\phi[0]$ as in the statement with $\eps$ sufficiently small, define a frequency envelope
\begin{equation*}
c_k:=\sum_{j\in\Z}2^{-\sg|j-k|}(\|\LOR  P_j\phi[0]\|_{\dot{H}^{3/2}\x\dot{H}^{1/2}}+\|\LOR  (x\cdot\na) P_j\phi[0]\|_{\dot{H}^{3/2}\x\dot{H}^{1/2}})
\end{equation*}
It is then clear that \eqref{small1} holds, so we see from the proposition that the local solution $\phi:[0,T]\x\R^3\rightarrow\Sp^2$ of Theorem \ref{LWP_thm} satisfies
$$
\|P_k\phi\|_{S_k([0,T])}\leq C_0c_k
$$
for all $k$. It follows that for any $s>3/2$ with $|s-3/2|<\sg$  we have
\begin{align*}
&\|P_k\phi\|_{L^\infty \dot{H}^s([0,T]\x\R^3)}+\|P_k\dd_t\phi\|_{L^\infty \dot{H}^{s-1}([0,T]\x\R^3)}\\
&\quad\quad\lesssim 2^{(s-3/2)k}\|P_k\phi\|_{S_k([0,T])}\\
&\quad\quad\lesssim2^{(s-3/2)k}C_0c_k\\
&\quad\quad\lesssim C_0\sum_{j\in\Z}2^{(|s-3/2|-\sg)|k-j|}(\|P_j\LOR \phi[0]\|_{\dot{H}^s\x\dot{H}^{s-1}}+\|P_j\LOR (x\cdot\na)\phi[0]\|_{\dot{H}^s\x\dot{H}^{s-1}})
\end{align*}
for all $k\in\Z$, from which we see that
$$
\|\phi[t]\|_{\dot{B}^s_{2,1}\x\dot{B}^{s-1}_{2,1}}\lesssim C_0(\|\LOR \phi[0]\|_{\dot{B}^s_{2,1}\x\dot{B}^{s-1}_{2,1}}+\|\LOR (x\cdot\na)\phi[0]\|_{\dot{B}^s_{2,1}\x\dot{B}^{s-1}_{2,1}})
$$
for all $t\in[0,T]$. The low-frequency portion of $\phi[t]$ is straightforward to bound using energy estimates, however it is something of a distraction at this point so we postpone this to Appendix \ref{appendixN}.

In summary we obtain uniform bounds on the ${B}^s_{2,1}\x {B}^{s-1}_{2,1}$ norm of the solution. Since Proposition \ref{main_prop} also shows that smallness in the critical space is (almost) conserved, it follows from the local theory that the solution extends globally.

Using the same argument as in \cite{Tao_1}, we see that Proposition \ref{main_prop} can be further reduced to the following statement, to whose proof the bulk of this paper is dedicated.
\begin{proposition}[Reduced Main Proposition]\label{reduced_main_prop}
Let $c$ be a frequency envelope, $0<T<\infty$ and $\phi$ be a smooth half-wave map on $[0,T]\times\R^3$ such that $\phi[0]$ satisfies \eqref{small1}. Suppose that
\begin{equation}\label{assumption1}
\|P_k\pl\|_{S_k([0,T])}\leq 2C_0c_k
\end{equation}
for all $k\in\Z$. Then in fact
\begin{equation}\label{main_goal}
\|P_k\pl\|_{S_k([0,T])}\leq C_0c_k
\end{equation}
for all $k\in\Z$ (assuming that $C_0$ is sufficiently large and $\eps$ is sufficiently small).
\end{proposition}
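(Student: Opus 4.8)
### Proof proposal

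The plan is to run Tao's frequency‑envelope bootstrap argument, adapted to the $S_k$ spaces with commuting vector fields. Starting from the linear estimate of Theorem \ref{linear_estimate} applied to the reformulated equation \eqref{eqn1.2} (with $\Box$ on the left) and its $L$-differentiated version, \eqref{main_goal} will follow once we establish a nonlinear estimate of the schematic form
\begin{equation*}
\|\LOR P_kF^L\|_{L^1_t\dot H^{1/2}_x([0,T]\x\R^3)}\lesssim \eps\, c_k,
\end{equation*}
where $F^L$ collects the right‑hand side of \eqref{eqn1.2} together with the terms produced by commuting $L_n$ through it (using the relations \eqref{commr1}, \eqref{commr3} and the Leibniz rule for $L_n$, $\Om_{ij}$), all built from factors of $\phi$ and $L_n\phi$ which by the bootstrap hypothesis \eqref{assumption1} satisfy $\|P_k\phi^L\|_{S_k}\le 2C_0c_k$. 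Because the envelope is slowly varying and $\ell^1$-small, summing the resulting dyadic pieces against $c_k$ will close the bootstrap provided every trilinear (and higher, after Taylor expansion of $\Pi_{\phi^\perp}$) interaction carries at least one power of $\eps$, as long as $C_0$ is chosen large and $\eps$ small.

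The core of the argument is the trilinear estimate, carried out by the usual Littlewood–Paley trichotomy on the two differentiated factors and the undifferentiated factor $\phi$ (and $L\phi$). First I would dispose of the high–high and high–low interactions in which the undifferentiated factor sits at high frequency — these are handled by plain Strichartz estimates from Corollary \ref{angular_strichartz_c}, exactly as flagged for Section \ref{discarding}. Next, the $\lhh$ interactions and the residual error terms of wave‑maps type are removed by normal‑form transformations exploiting the null structure \eqref{ns}, as in Section \ref{normal_forms_chapter}; the corresponding half‑wave‑maps terms, which lack the null form, are instead converted by the geometric identity \eqref{geometric_identity_4} into high–high interactions to which Strichartz applies (Section \ref{HWM_chapter}). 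This leaves the genuinely hard $\llh$ pieces, split according to whether $\phi$ or $\dad\phi$ is the lowest‑frequency factor: when $\dad\phi$ is lowest one performs Tao's gauge/coordinate transformation (Sections \ref{approx_PT_chapter}–\ref{cancellation_chapter}, simplified here since we work in Besov spaces and $\Pi_{\tilde\phi^\perp}$ is automatically close to the identity), which cancels the $\lowestlh$ interactions; and when $\phi$ is lowest — the $\llowesth$ term \eqref{warning} — one runs the commuting‑vector‑field estimate.

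The main obstacle, as the introduction itself signals, is precisely this last estimate \eqref{warning}, where in Tao's original proof one used $L^2_tL^\infty_x\cdot L^2_tL^\infty_x\cdot L^\infty_tL^2_x$ with no slack, and where that endpoint is now unavailable. My plan for it follows the heuristic \eqref{obs}: simultaneously decompose the physical space of the lowest‑frequency factor $\phi$ into caps $\sg^\beta_\rho$ via \eqref{spatial_partition} and the Fourier support of one of the other factors into the Whitney caps $\eta^{(r,l)}_r$, $\eta^\beta_{\rho+C_2}$ of Proposition \ref{partition_prop}; on the diagonal piece (physical cap aligned with Fourier cap, contributing a null‑type gain) one argues directly, and on the off‑diagonal pieces one uses that $x_i\xi_j-x_j\xi_i$ is bounded below by $\sim 2^r$ (times the relevant dyadic scales) to write $\phi\simeq (x_i\xi_j-x_j\xi_i)^{-1}\Om_{ij}\phi$, trading the undifferentiated factor for $\LOR\phi$ at the cost of $2^{-r}$, and then recovering time decay from the Lorentz boosts $L_n$ already carried in the norm. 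One then sums over $\rho$, over the caps, and over $r$; the geometric/combinatorial bookkeeping of these three nested sums — and verifying that the $\eta$‑multipliers are $L^q$‑bounded uniformly in the cap parameters (Lemma \ref{boundedness_of_angular_multipliers}) so that no $\rho$-dependent losses accumulate — is where the real work lies, and is the content of Lemmas \ref{angular_sep_lem} and \ref{SIL}. The remaining nonlocal‑operator technicalities (commuting $\dhalf$ past $L_n$ via $[L,\dhalf]=R\dd_t$, and past $P_k$) are routine but must be tracked carefully throughout.
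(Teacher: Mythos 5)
Your roadmap matches the paper's own proof closely: reduce to a frequency-localized nonlinear estimate via the linear bound, dispose of the easy interactions by (angular) Strichartz, use normal forms (with the null structure) for the $\lhh$ wave-maps pieces, exploit the geometric identity \eqref{geometric_identity_4} for the half-wave-maps pieces, run Tao's gauge transformation to cancel the hardest $\llh$ interaction, and attack the leftover pieces by the angular-separation decomposition of Lemmas \ref{angular_sep_lem} and \ref{SIL}. This is essentially the paper's argument and cites the right ingredients throughout.

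One small slip to flag: you have the two hard sub-cases of the $\llh$ interaction swapped. In the paper's notation, $\lowestlh$ means the \emph{undifferentiated} factor $\phi$ sits at the lowest frequency, with $\dad\phi$ at low — and this is exactly what the gauge/parallel-transport cancellation removes (the telescoping identity in Section \ref{cancellation_chapter} cancels $\phi_{<k}\dad\phi_k^T$ against $\dad\Uu_k$). The commuting-vector-field machinery (Lemma \ref{SIL}) is what handles $\llowesth$, where the \emph{differentiated} factor $\dad\phi$ is the lowest. Your sentence pairs "$\dad\phi$ lowest" with the gauge transformation and "$\phi$ lowest" with the vector fields, which contradicts both the definitions of $\lowestlh$, $\llowesth$ and your own parentheticals; the plan is right, the labels are crossed. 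Also, the paper handles the projection $\Pi_{\phi^\perp}$ via Moser-type estimates (Lemmas \ref{Moser}, \ref{projection_lemma}) rather than a literal Taylor expansion, though this is a cosmetic difference.
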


\section{Discarding some error terms}\label{discarding}
We now begin the first step in the proof of Proposition \ref{reduced_main_prop}, where we will show that some terms in the forcing of equation \eqref{eqn1.2} can be ignored.

Fix $c$, $T$ and $\phi$ satisfying the hypotheses of  the proposition. We need to show (\ref{main_goal}). By scaling invariance it suffices to prove that $$\|\psi^L\|_{S_0([0,T])}\leq C_0c_0$$
for $\psi^L:=P_0(\phi^L)$. We will use the notation $\psi^L\equiv(\psi_0,\psi_1,\psi_2,\psi_3)$ so that $\psi_n=P_0(L_n\phi)$.

Thanks to the linear estimate, it would be sufficient to show that
\begin{equation*}
\|\LOR  \Box\psi^L\|_{L^1_tL^2_x}\lesssim C_0^3c_0\eps
\end{equation*}
and take $\eps(C_0)$ sufficiently small (the initial data term involving $\Box\phi$ can be bounded straightforwardly, see \eqref{x_nbox}). Unfortunately, it will not be possible to show this directly, however after some transformations we will be able to achieve a similar form, as we shall see in the coming sections.

This motivates the definition of an ``error" term:
\begin{definition}[Error terms]
A function $F=(F_0,\ldots,F_3)$ on $[0,T]\times\R^n $ is said to be an \emph{acceptable error} if $$\|\langle\Omega\rangle F\|_{L^1_tL^2_x}\lesssim C_0^3c_0\eps$$
In this case we write $F=error$. We will also denote by \emph{error} the components of such a vector.
\end{definition}

Applying $P_0$ to equation \eqref{eqn1.2} we find
\begin{equation}\label{Boxpsi}
\Box\psil=P_0\bl(-\phi\dd_\alpha\phi^T\dd^\alpha\phi+HWM(\phi))
\end{equation}
where
$$
HWM(\phi):=\Phip[(\dphi)(\phi\cdot\dphi)]\nonumber+\phi\x[\dhalf(\phi\x\dphi)-(\phi\x(-\D)\phi)]
$$
  
Our first step is to remove the most simple frequency interactions from the wave-maps source term. We introduce the following notation for some commonly used Strichartz pairs, with $M=\infty-$ a large constant which $\eta$, $\sg$ and $\eps$ will all depend on. 
\begin{gather}\label{notation}
\begin{aligned}
    &(2+,\infty-):=(\f{2M}{M-1},2M), && \|\LOR P_k\phi^L\|_{2+,\infty-}\lesssim2^{-(\half+\f{1}{M})k}\|P_k\phi^L\|_{S_k}\\
    &(\infty-,2+):=(M,\f{2M}{M-2}), && \|\LOR P_k\phi^L\|_{\infty-,2+}\lesssim 2^{-(\f{3}{2}-\f{2}{M})k}\|P_k\phi^L\|_{S_k}
\end{aligned}
\end{gather}

\begin{proposition}\label{removing_trivial_terms}
We have 
\begin{align}
P_0(\phi\dd_\alpha\phi^T\dd^\alpha\phi)=2P_0(\phi_{\leq-10}\dd_\alpha\phi_{\leq-10}^T\dd^\alpha\phi_{>-10})+P_0(\phi_{\leq-10}\dd_\alpha\phi_{>-10}^T\dd^\alpha\phi_{>-10})+error\label{big_thing1}
\end{align}
and
\begin{align}
P_0L_n(\phi\dd_\alpha\phi^T\dd^\alpha\phi)&=
2P_0((\lnp)_{\leq-10}\dd_\alpha\phi_{\leq-10}^T\dd^\alpha\phi_{>-10})\nonumber\\
&\quad\quad+2P_0(\phi_{\leq-10}\dad(L_n\phi)_{\leq-10}^T\dd^\alpha\phi_{>-10})\\
&\quad\quad+2P_0(\phi_{\leq-10}\dad\phi_{\leq-10}^T\dau(L_n\phi)_{>-10})\nonumber\\
&\quad+P_0((\lnp)_{\leq-10}\dd_\alpha\phi_{>-10}^T\dd^\alpha\phi_{>-10})\nonumber\\
&\quad\quad+P_0(\phi_{\leq-10}\dd_\alpha(L_n\phi)_{>-10}^T\dd^\alpha\phi_{>-10})\nonumber\\
&\quad\quad+P_0(\phi_{\leq-10}\dd_\alpha\phi_{>-10}^T\dd^\alpha(L_n\phi)_{>-10})\nonumber\\
&\quad+error \label{big_thing2}
\end{align}
for $n=1,2,3$.
\end{proposition}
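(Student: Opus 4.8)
The plan is to perform a Littlewood--Paley trichotomy on the trilinear term $\phi\,\dd_\al\phi^T\,\dd^\al\phi$, localised to unit output frequency by $P_0$, and to show that every frequency configuration \emph{other than} the two listed ones (``low--low--high'' with the non-differentiated factor at the lowest frequency) contributes an acceptable error. Write $\phi=\sum_{k_1}\phi_{k_1}$, $\dd_\al\phi=\sum_{k_2}\dd_\al\phi_{k_2}$, $\dd^\al\phi=\sum_{k_3}\dd^\al\phi_{k_3}$; by the support property of the convolution, $P_0$ of the product vanishes unless the two largest of $k_1,k_2,k_3$ are comparable to each other and $\gtrsim 0$ (or all three are $\lesssim 0$, but then one easily reduces to the stated terms). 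The stated terms \eqref{big_thing1} are precisely those where $k_1\leq -10$ and at least one of $k_2,k_3$ is $>-10$; the factor $2$ in front of the first term is the combinatorial symmetry $k_2\leftrightarrow k_3$. So what must be shown is: (i) the ``high--high--high'' and ``high--high--low'' type interactions where $k_1>-10$; (ii) the interaction where all three factors are $\leq -10$ is impossible after $P_0$ (mentioned in the footnote — $\lllow$ is impossible at unit output frequency); (iii) the genuinely diagonal piece $k_1\sim k_2\sim k_3\sim 0$ up to $O(1)$, which is a high--high--high interaction, is an error.

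For step (i), the key estimates are the Strichartz bounds recorded in \eqref{notation}: for any admissible $(p,q)$ one has $\|\LOR P_k\phi^L\|_{p,q}\lesssim 2^{(1-\f1p-\f3q)k}\|P_k\phi^L\|_{S_k}\lesssim 2^{(1-\f1p-\f3q)k}c_k$ under the bootstrap hypothesis \eqref{assumption1}, and for derivatives one picks up an extra $2^k$. The idea is to place the non-differentiated high-frequency factor $\phi_{k_1}$ into $L^\infty_tL^2_x$ (costing $2^{-3k_1/2}c_{k_1}$), and the two differentiated factors into the Strichartz pair $(2+,\infty-)$, each contributing roughly $2^{k_j}\cdot 2^{-(1/2+1/M)k_j}c_{k_j}=2^{(1/2-1/M)k_j}c_{k_j}$; one also uses Bernstein to absorb the $P_0$ projection when a factor sits at frequency $\gg 1$. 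Summing the resulting dyadic expression $\sum 2^{-3k_1/2}2^{(1/2-1/M)k_2}2^{(1/2-1/M)k_3}c_{k_1}c_{k_2}c_{k_3}$ over the allowed range (two frequencies comparable and $\geq 0$, third $\lesssim$ them) converges because of the frequency-envelope property $c_k\lesssim 2^{\sg|k-k'|}c_{k'}$ with $\sg$ small, and yields a bound $\lesssim \|c\|_{\ell^1}^2 c_0\lesssim \eps^2 c_0$, hence $\lesssim C_0^3 c_0\eps$ for $\eps$ small. One must be slightly careful to always keep the $\LOR$ acting on a single factor at a time, distributing it via the Leibniz rule for $\Om_{ij}$ (and, for the $L_n$ version below, treating the commutator $[L_n,P_k]$ via \eqref{commr3}); this is routine but is where the $\dl(p,q)$ bookkeeping in the norm \eqref{norm} matters, since the pair $(2+,\infty-)$ has $\f1p+\f1q>\half$, so its $S_k$-controlled object is $\na_{t,x}P_k\phi$ with \emph{one} angular derivative to spare — enough to put $\LOR$ on it — while the high factor in $L^\infty_tL^2_x$ uses the $(\infty,2)\in\mathcal{Q}$ slot, which carries a full $\LOR$.

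The $L_n$-version \eqref{big_thing2} follows by applying the Leibniz rule for $L_n$ to the trilinear expression, producing three copies of the term with $L_n$ landing on each factor in turn, and then running exactly the same trichotomy on each copy; since $L_n\phi$ is itself a component of $\phi^L$ and is controlled in $S_k$ by hypothesis \eqref{assumption1}, the same Strichartz inputs apply verbatim, and the same two ``surviving'' configurations appear — now with $L_n$ on the lowest (non-differentiated) factor, on the middle factor, or on the high factor, which accounts for the six listed terms (two symmetric low--low--high configurations, each with three placements of $L_n$, but the placement on a $>-10$ factor of the first configuration coincides combinatorially with a placement in the second, giving six rather than nine). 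The one genuine subtlety is the non-translation-invariance of $L_n$: when we write $L_n(\phi_{\leq -10}\cdots)$ we really mean $(L_n\phi)_{\leq -10}$ plus a commutator $2^{j}\dd_t(\cdots)_{\sim j}$ term from \eqref{commr3}, but such commutator terms come with a favourable power of the (low) frequency $2^j$ and are therefore absorbed into the error exactly as in step (i).

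\textbf{Main obstacle.} The computation itself is routine summation of geometric series; the only place demanding genuine care is ensuring that \emph{every} term not of the two advertised shapes is summable, in particular the boundary cases where the ``third'' frequency is close to $0$ (so all three are comparable) — here one cannot use a large negative power from the low factor and must instead exploit that two differentiated factors in $(2+,\infty-)$ already give a convergent $\sum 2^{(1-2/M)k}$ once $M$ is large, together with the envelope decay. Keeping the vector-field and angular-derivative bookkeeping consistent with the definition of the $S_k$ norm — i.e. never needing more than one angular derivative on a factor sitting in a pair with $\f1p+\f1q>\half$, and routing the remaining $\Om_{ij}$'s onto the factor held in $L^\infty_tL^2_x$ — is the other point where one must be attentive rather than mechanical.
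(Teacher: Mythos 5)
Your overall strategy — Littlewood--Paley trichotomy, Strichartz inputs, frequency-envelope summation — is the right one, and the vector-field bookkeeping issues you flag are indeed the relevant ones. The genuine gap is in the $\hhh$ interaction, which the paper singles out as the delicate case.

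You propose to place the non-differentiated factor in $L^\infty_tL^2_x$ and the two differentiated factors in $(2+,\infty-)=(\tfrac{2M}{M-1},2M)$. This does not close the H\"older product to $L^1_tL^2_x$: in the time variable you get $0+\tfrac{M-1}{2M}+\tfrac{M-1}{2M}=\tfrac{M-1}{M}<1$, so the product lands only in $L^{M/(M-1)}_t$, and upgrading to $L^1_t$ would cost $T^{1/M}$, which is forbidden since $T$ is arbitrary in the a priori estimate. Moreover, even ignoring the exponent arithmetic, the dyadic count diverges in the sub-case $k_2\sim k_3\sim k\gg k_1\sim 0$: your weights give a contribution of size $\sim 2^{(1-2/M)k}c_k^2$, while the two-sided envelope property only guarantees $c_k\gtrsim 2^{-\sg k}c_0$ with $\sg$ small, so $\sum_k 2^{(1-2/M-2\sg)k}$ diverges. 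So neither the H\"older arithmetic nor the dyadic summation can be made to work with the pairs you chose.

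The resolution is what the paper announces: the $\hhh$ term \emph{cannot} be treated in standard-admissible Strichartz spaces alone. The paper uses the triple $(\tfrac{18}{7},\infty)$, $(9,\tfrac{10}{3})$, $(2,5)$. This closes exactly to $L^1_tL^2_x$, and the associated weights $2^{-\frac{7}{18}j}$, $2^{-\frac1{90}k}$, $2^{-\frac1{10}l}$ are all strictly decaying in the high-frequency range, so the sum converges without exploiting any low-frequency factor. The reason this works, and the reason it is subtle, is the $\LOR$ routing: the pair $(2,5)$ lies in the radially admissible but non-standard range ($\tfrac1p+\tfrac1q>\tfrac12$), hence $\dl(p,q)=1$ and the $S_k$-norm carries \emph{no} angular derivative in that slot. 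The argument is designed so that, when $\LOR$ distributes over the three factors, there is always one differentiated factor that does not receive it, and \emph{that} factor is the one placed into $(2,5)$. This is precisely the mechanism your proposal does not account for, and it is the reason the paper writes ``this term cannot be handled in the standard Strichartz spaces.''

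A smaller point: for \eqref{big_thing2} the paper does not apply a naive Leibniz rule to $L_n(\dad\phi^T\dau\phi)$, but instead invokes the exact identity $L_n(\dad\phi^T\dau\phi)=2\dad(L_n\phi)^T\dau\phi$, obtained by writing the null form as $\tfrac12(\Box(\phi^T\phi)-2\phi^T\Box\phi)$ and using $[L_n,\Box]=0$. The would-be commutator terms from $[L_n,\dd_\al]$ cancel through the Minkowski contraction, so there are no residual error terms at this stage. A naive Leibniz expansion as you propose would produce those commutators explicitly; they are of the same trilinear form and could be absorbed, but the paper's route bypasses the issue cleanly and is worth adopting.
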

\begin{proof}
We will only show \eqref{big_thing2}, \eqref{big_thing1} being similar. We start with the following observation, using that $L_n$ commutes with the wave operator and satisfies the Leibniz rule:
$$
L_n(\dad\phi^T\dau\phi)=\half L_n(\Box(\phi^T\phi)-2\phi^T\Box\phi)=2\dad(L_n\phi)^T\dau\phi
$$
Applying this property and the Leibniz rule on the whole nonlinearity we have
$$
P_0L_n(\phi\dd_\alpha\phi^T\dd^\alpha\phi)
=P_0((L_n\phi)\dd_\alpha\phi^T\dd^\alpha\phi)+2P_0(\phi\dad(L_n\phi)^T\dau\phi)
$$
Henceforth we restrict our attention to the first term, the other term being treated identically. We also drop the subscript on $L_n$.

Decomposing each factor of $\phi$ into low and high frequencies, and noting that the term vanishes when all three factors are at low frequency, we write
\begin{align}
P_0((L\phi)\dd_\alpha\phi^T\dd^\alpha\phi)=&2P_0((L\phi)_{>-10}\dd_\alpha\phi_{>-10}^T\dd^\alpha\phi_{\leq-10})\label{term1}\\
&+{P}_0((L\phi)_{>-10}\dd_\alpha\phi_{\leq-10}^T\dd^\alpha\phi_{\leq-10})\label{term2}\\
&+{P}_0((L\phi)_{>-10}\dd_\alpha\phi_{>-10}^T\dd^\alpha\phi_{>-10})\label{term3}\\
&+2{P}_0((L\phi)_{\leq-10}\dd_\alpha\phi_{\leq-10}^T\dd^\alpha\phi_{>-10})\label{term4}\\
&+{P}_0((L\phi)_{\leq-10}\dd_\alpha\phi_{>-10}^T\dd^\alpha\phi_{>-10})\label{term5}
\end{align}
Of these terms, \eqref{term4} and \eqref{term5} appear in \eqref{big_thing2} so we have to show that $\eqref{term1}=\eqref{term2}=\eqref{term3}=error$.
\begin{itemize}
\item \underline{\eqref{term1}, $\hhl$:}

We have
\begin{align*}
&\|\LOR P_0((L\phi)_{>-10}\dd_\alpha\phi_{>-10}^T\dd^\alpha\phi_{\leq-10})\|_{1,2}\\
&\lesssim\|\LOR (L\phi)_{>-10}\|_{\tpim}\|\LOR \dd^\alpha \phi_{>-10}\|_{\imtp}\|\LOR \dd_\alpha \phi_{\leq-10}\|_{\tpim}
\end{align*}
where we used the Leibniz rule to spread the angular derivatives across the 3 terms, followed by the monotonicity of the angular Sobolev spaces. Using the definition of the S-norm and the local constancy of the frequency envelope we therefore see that
\begin{align*}
\|\LOR \eqref{term1}\|_{1,2}&\lesssim\left(\sum_{j>-10}2^{-(\half+\f{1}{M})j}\|P_j\phi^L\|_{S_j}\right)\left(\sum_{k>-10}2^{-(\half-\f{2}{M})k}\|P_k\phi\|_{S_k}\right)\left(\sum_{l\leq-10}2^{(\half-\f{1}{M})l}\|P_l\phi\|_{S_l}\right)\\
&\lesssim C_0^3\eps^2 c_0
\end{align*}

\item \underline{\eqref{term2}, $\hll$:}

We similarly estimate
\begin{align*}
&\|\LOR P_0((L\phi)_{>-10}\dd_\alpha \phi_{\leq-10}^T\dd^\alpha \phi_{\leq-10})\|_{1,2}\\
&\lesssim\|\LOR (L\phi)_{>-10}\|_{\imtp}\|\LOR \dd^\alpha \phi_{\leq-10}\|_{\tpim}\|\LOR \dd_\alpha \phi_{\leq-10}\|_{\tpim}\\
&\lesssim C_0^3\eps^2 c_0
\end{align*}

\item \underline{\eqref{term3}, $\hhh$:}

This term cannot be handled in the standard Strichartz spaces. Observe that when $\LOR$ spreads over the three terms according to the Leibniz rule, in each case there will be at least one differentiated term which is not hit by an angular derivative. This term can then be placed into a non-standard Strichartz space. For example, when $\LOR$ falls on the first differentiated factor we have
\begin{align*}
\|P_0((L\phi)_{>-10}\LOR\dd_\alpha \phi_{>-10}^T\dd^\alpha \phi_{>-10})\|_{1,2}\lesssim&\|(L\phi)_{>-10}\|_{\f{18}{7},\infty}\|\LOR\dd_\alpha \phi_{>-10}\|_{9,\f{10}{3}}\|\dd^\alpha \phi_{>-10}\|_{2,5}\nonumber\\
\lesssim&\sum_{j,k,l>-10}2^{-\f{7}{18}j}\|P_j\phi^L\|_{S_j}\cdot 2^{-\f{1}{90}k}\|\phi_k\|_{S_k}\cdot 2^{-\f{1}{10}l}\|\phi_l\|_{S_l}\nonumber\\
\lesssim&C_0^3c_0^2\eps
\end{align*}
\end{itemize}
\end{proof}

Thanks to this proposition we can rewrite the frequency-localised equation as
\begin{align}
\Box\psi_0=&-2\phi_{\leq-10}\dd_\alpha\phi_{\leq-10}^T\dd^\al\psi_0-2[P_0(\phi_{\leq-10}\dd_\al\phi_{\leq-10}^T\dd^\al\phi_{>-10})-\phi_{\leq-10}\dd_\alpha\phi_{\leq-10}^T\dd^\al\psi_0]\nonumber\\
&-P_0(\phi_{\leq-10}\dd_\alpha\phi_{>-10}^T\dd^\alpha\phi_{>-10})+P_0(HWM(\phi))+error\label{new_wm_eqn}
\end{align}
and
\begin{align}
\Box\psi_n&=-2(L_n\phi)_{\leq-10}\dd_\alpha\phi_{\leq-10}^T\dd^\al\psi_0\nonumber\\
&\quad\quad-2\phi_{\leq-10}\dd_\alpha(L_n\phi)_{\leq-10}^T\dd^\al\psi_0\nonumber\\
&\quad\quad-2\phi_{\leq-10}\dd_\alpha\phi_{\leq-10}^T\dd^\al\psi_n\nonumber\\
&\quad-2[P_0((L_n\phi)_{\leq-10}\dd_\al\phi_{\leq-10}^T\dd^\al\phi_{>-10})-(L_n\phi)_{\leq-10}\dd_\alpha\phi_{\leq-10}^T\dd^\al\psi_0]\nonumber\\
&\quad\quad-2[P_0(\phi_{\leq-10}\dd_\al(L_n\phi)_{\leq-10}^T\dd^\al\phi_{>-10})-\phi_{\leq-10}\dd_\alpha(L_n\phi)_{\leq-10}^T\dd^\al\psi_0]\nonumber\\
&\quad\quad-2[P_0(\phi_{\leq-10}\dd_\al\phi_{\leq-10}^T\dd^\al(L_n\phi)_{>-10})-\phi_{\leq-10}\dd_\alpha\phi_{\leq-10}^T\dd^\al\psi_n]\nonumber\\
&\quad-P_0((L_n\phi)_{\leq-10}\dd_\alpha\phi_{>-10}^T\dd^\alpha\phi_{>-10})\nonumber\\
&\quad\quad-P_0(\phi_{\leq-10}\dd_\alpha(L_n\phi)_{>-10}^T\dd^\alpha\phi_{>-10})\nonumber\\
&\quad\quad-P_0(\phi_{\leq-10}\dd_\alpha\phi_{>-10}^T\dd^\alpha(L_n\phi)_{>-10})\nonumber\\
&\quad+P_0L_n(HWM(\phi))+error\label{new_wm_eqn_n}
\end{align}
for $n=1,2,3$.
We have now clearly identified the troublesome frequency interactions in the wave maps source term. In the next chapter we will show that the half-wave maps terms are acceptable, and in Chapter \ref{normal_forms_chapter} we will discard the second and third terms (or groups of terms) via normal transformations. Lastly in Chapters \ref{approx_PT_chapter} and \ref{cancellation_chapter} we will show that the remaining $\llh$ term can be gauged away using Tao's approximate parallel transport.

\section{The half-wave maps contributions are negligible}\label{HWM_chapter}
We decompose the half-wave maps forcing into two terms:
$$
HWM(\phi)=HWM_1(\phi)+HWM_2(\phi)
$$
with
$$
HWM_1(\phi):=\Phip[(\dphi)(\phi\cdot\dphi)]
$$
and
$$
HWM_2(\phi):=\phi\x[\dhalf(\phi\x\dphi)-(\phi\x(-\D)\phi)]
$$
As discussed in the introduction, we are able to discard of these terms entirely due to their geometric structures. We largely use techniques from \cite{KK}, with a novel ingredient for handling the $\llowesth$ frequency interactions (see Lemma \ref{SIL}).

Before preceding to the estimates, we present some lemmas which will be used frequently in the sequel. Denote
\begin{equation}\label{Lc}
\Lc_k(u_{\ko},v_{\kt}):=\int_{\R^3}\int_{\R^3}m_k(\xi,\eta)e^{ix\cdot(\xi+\eta)}\chi_{\ko}(\xi)\hat{u}(\xi)\chi_{\kt}(\eta)\hat{v}(\eta)d\xi d\eta
\end{equation}
for $m_k$ any smooth multiplier satisfying the pointwise bounds
$$
|m_k(\xi,\eta)|\lesssim 2^k,\text{ }|(2^{\ko}\na_{\xi})^i(2^{\kt}\na_{\eta})^jm_k(\xi,\eta)|\lesssim_{i,j}2^k
$$
on the support of $\chi_{\ko}(\xi)\chi_{\kt}(\eta)$. 
Note that such a multiplier can be expanded as a Fourier series with rapidly decaying coefficients on the support of $\chi_{\ko}(\xi)\chi_{\kt}(\eta)$:
\begin{align}\label{FS}
m_k(\xi,\eta)=\sum_{a,b\in\Z^3}c^{(k)}_{a,b}e^{-i(2^{-\ko}a\cdot\xi+2^{-\kt}b\cdot\eta)}&& \text{ with } |c^{(k)}_{a,b}|\lesssim_N 2^k\japa^{-N}\japb^{-N} \text{ for any }N\in\mathbb{N}.
\end{align}
We can therefore, at least formally, write
\begin{align}\label{FS2}
\Lc_k(u_{\ko},v_{\kt})=\sum_{a,b\in\Z^3}c^{(k)}_{a,b}\uko(x-2^{-\ko}a) v_{\kt}(x-2^{-\kt}b)
\end{align}
Operators of this form arise in studying cancellations in $HWM_2(\phi)$, and an important property is given by the following 
\begin{lemma}[Lemma 3.1, \cite{KS}]\label{useful_lemma}
Let $\Lc_k$ be as above. Then if $\|\cdot\|_Z$, $\|\cdot\|_X$, $\|\cdot\|_Y$ are translation invariant norms with the property that
$$
\|u\cdot v\|_Z\leq \|u\|_X\|v\|_Y
$$
it holds
$$
\|\Lc_k(u_{\ko},v_{\kt})\|_Z\lesssim 2^k\|u_{\ko}\|_X\|v_{\kt}\|_Y
$$
\end{lemma}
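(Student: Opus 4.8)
The plan is to reduce the claimed bound entirely to the hypothesized product inequality for $\|\cdot\|_Z$, by representing $\Lc_k$ as an absolutely convergent superposition of translates, via the Fourier series expansion \eqref{FS}--\eqref{FS2} of the multiplier. Throughout we may take $\uko$ and $v_{\kt}$ Schwartz (the general case follows by the usual approximation, since the norms in the applications are closures of Schwartz functions), so that all the manipulations below are legitimate.

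\textbf{Making the expansion rigorous.} First I would justify \eqref{FS}--\eqref{FS2}. Pick a fattened cutoff $\tc$ with $\tc\equiv1$ on $\spt\chi$ and $\spt\tc\subset\{|\cdot|\leq4\}$. Since $\tc(2^{-\ko}\xi)\tc(2^{-\kt}\eta)\equiv1$ on $\spt\big(\chi_{\ko}(\xi)\chi_{\kt}(\eta)\big)$, the operator $\Lc_k$ is unchanged if $m_k$ is replaced by $\tilde m_k(\xi,\eta):=m_k(\xi,\eta)\tc(2^{-\ko}\xi)\tc(2^{-\kt}\eta)$. The support of $\tilde m_k$ fits inside a box of side $2\pi2^{\ko}$ in $\xi$ and $2\pi2^{\kt}$ in $\eta$, so $\tilde m_k$ extends to a function periodic in those periods, with Fourier series $\sum_{a,b\in\Z^3}c^{(k)}_{a,b}e^{-i(2^{-\ko}a\cdot\xi+2^{-\kt}b\cdot\eta)}$. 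The symbol bounds $|(2^{\ko}\na_\xi)^i(2^{\kt}\na_\eta)^jm_k|\lesssim2^k$ on $\spt(\chi_{\ko}\chi_{\kt})$, together with the derivative bounds for $\tc$, give $|(2^{\ko}\na_\xi)^i(2^{\kt}\na_\eta)^j\tilde m_k|\lesssim_{i,j}2^k$ everywhere; integrating by parts $N$ times in $\xi$ and $N$ times in $\eta$ in the integral formula for $c^{(k)}_{a,b}$ (and using $|\tilde m_k|\lesssim2^k$ on a box of volume $\sim2^{3\ko+3\kt}$ for the trivial bound) then yields $|c^{(k)}_{a,b}|\lesssim_N2^k\japa^{-N}\japb^{-N}$, which is \eqref{FS}. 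Substituting the series into \eqref{Lc} and using $\F^{-1}\big(e^{-i2^{-\ko}a\cdot\xi}\hat w(\xi)\big)(x)=w(x-2^{-\ko}a)$ gives \eqref{FS2}, the sum converging absolutely in $\|\cdot\|_Z$ by the rapid decay of the coefficients.

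\textbf{Term-by-term estimate.} With \eqref{FS2} in hand, I would apply the triangle inequality in $\|\cdot\|_Z$, then the assumed bound $\|u\cdot v\|_Z\leq\|u\|_X\|v\|_Y$ to each product $\uko(\cdot-2^{-\ko}a)\,v_{\kt}(\cdot-2^{-\kt}b)$, then the translation invariance of $\|\cdot\|_X$ and $\|\cdot\|_Y$:
\begin{align*}
\|\Lc_k(\uko,v_{\kt})\|_Z
&\leq\sum_{a,b\in\Z^3}|c^{(k)}_{a,b}|\,\big\|\uko(\cdot-2^{-\ko}a)\big\|_X\big\|v_{\kt}(\cdot-2^{-\kt}b)\big\|_Y\\
&=\Big(\sum_{a,b\in\Z^3}|c^{(k)}_{a,b}|\Big)\|\uko\|_X\|v_{\kt}\|_Y.
\end{align*}
Taking $N=4$ in \eqref{FS} gives $\sum_{a,b}|c^{(k)}_{a,b}|\lesssim2^k\sum_{a,b}\japa^{-4}\japb^{-4}\lesssim2^k$, which is the assertion.

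\textbf{Main obstacle.} There is no substantial difficulty; the only step needing care is the rigorous justification of the Fourier expansion — periodizing $\tilde m_k$ and obtaining the rapid decay of its Fourier coefficients by integration by parts — and even this is routine and is already recorded as \eqref{FS}. The remainder is nothing more than the triangle inequality, the hypothesized product bound, translation invariance, and summing a convergent series.
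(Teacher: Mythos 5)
Your proof is correct and is essentially the paper's intended argument: the Fourier series expansion \eqref{FS}--\eqref{FS2} is set up immediately before the lemma precisely so that it reduces to the triangle inequality, the hypothesized product bound, translation invariance, and summability of the coefficients. The justification you give of \eqref{FS} (periodizing after a fattened cutoff and integrating by parts) and the choice $N=4$ to make the sum converge are both standard and correctly carried out.
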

In particular this lemma tells us we can (and should) think of $\Lc_j(\phi_j,\phi_k)$ as $\dd\phi_j\cdot\phi_k$.

Due to the generally nonlocal nature of these operators, they interact non-trivially with the non-translation invariant commuting vector fields. In fact for $k,\ko,\kt\in\Z$, $n=1,2,3$, $i,j=1,2,3$ it holds
\begin{equation}\label{L_LL}
L_n(\Lc_k(u_{\ko},v_{\kt}))=\Lc_k(L_nu_{\ko},v_{\kt})+\Lc_k(u_{\ko},L_nv_{\kt})+\Lc_{k-\ko}(\dd_t\uko,\ukt)+\Lc_{k-\kt}(\uko,\dd_t\ukt)
\end{equation}
and
\begin{align}
\Om_{ij}(\Lc_k(u_{\ko},v_{\kt}))=\Lc_k(\Om_{ij} u_{\ko},v_{\kt})+\Lc_k(u_{\ko},\Om_{ij}v_{\kt})+\Lc_k(u_{\ko},v_{\kt})\label{om_exp}
\end{align}
The $\Lc_k$ in these expressions need not all correspond to the same multiplier $m_k$.

Lastly, we note the following basic facts which follow from the commutation relations between $\Om$, $L$ and $P_k$. For any $(p,q)\in\mathcal{Q}$ and $a\in\R$ it holds\footnote{This may be interpreted as saying that translation does not affect the norm provided we translate on scales at most comparable to the natural oscillation length of $u_k$.}
\begin{equation}\label{translation_bound}
\|\LOR^{1-\dl(p,q)} L(u_k(\cdot+a))\|_{p,q}\lesssim \langle 2^ka \rangle^2 2^{-(\f{1}{p}+\f{3}{q})k}\|P_{\sim k}u^L\|_{S_{\sim k}}
\end{equation}
and if further $q\neq\infty$ we also have\footnote{The restriction to $q<\infty$ comes from the need to bound the Riesz transform appearing in $[L,\dhalf]$. In practice this is not important since there is usually enough flexibility in the estimates to lower $q$ using Bernstein's inequality.}
$$
\|\LOR^{1-\dl(p,q)}L\dhalf u_k\|_{p,q}\lesssim2^{(1-\f{1}{p}-\f{3}{q})k}\|P_{\sim k}u^L\|_{S_{\sim k}}
$$

\bigskip

We now come to the most important lemma of this paper, which is the key ingredient for handling the $\llowesth$ terms in 3 dimensions. For notation used in the statement we refer the reader to Section \ref{angular_multipliers_subsection}.

\begin{lemma}[Angular Separation Estimate]\label{angular_sep_lem}
Fix $\rho\leq-C_1$. Then for any $\la,k\in\Z$, $\rho+C_1\leq r\leq0$, $l=0,\ldots,N$ it holds
\begin{align*}
\|\vpl(x)\sg_{\rho}^\beta(x)\eta^{(r,l)}_r(D)\phi_k\|_{L^q_x}\lesssim 2^{-(\la+k+2r)}(\|\phi_k\|_{L^q_x}+\max_{i,j}\|\Om_{ij} \phi_k\|_{L^q_x})
\end{align*}
for any $1\leq q\leq\infty$.
\end{lemma}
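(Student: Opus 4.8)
The heuristic \eqref{obs} is the guiding principle: on the relevant supports we have $\phi_k\simeq\frac{1}{x_i\xi_j - x_j\xi_i}\,\Om_{ij}\phi_k$, and the denominator is of size $|x_{ij}|\,|\xi_{ij}|\sin(\angle(x_{ij},\xi_{ij}))$. The cutoffs in the statement are precisely arranged so that each factor of this denominator is under control: $\vpl(x)$ localizes the physical variable to $|x|\sim 2^\la$, hence forces $|x_{ij}|\lesssim 2^\la$ whenever we pick the plane carrying most of $x$; $P_k$ forces $|\xi|\sim 2^k$, hence $|\xi_{ij}|\lesssim 2^k$; and the combination $\sg^\beta_\rho(x)\eta^{(r,l)}_r(D)$ forces $\hat x$ to lie within $2^\rho$ of $\beta$ while $\hat\xi$ is at distance $\sim 2^r$ from $\beta$ (this is exactly Proposition \ref{partition_prop}(2): $\|\hat x\times\hat\xi\|\gtrsim 2^r$ on these supports). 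So the angular separation is bounded below by $\sim 2^r$, giving a denominator of size $\gtrsim 2^{\la+k+r}$. The extra factor $2^{-r}$ in the claimed bound comes from the fact that applying $\Om_{ij}$ to a function whose Fourier support is at angular distance $\sim 2^r$ from $\beta$ and in a cap of angular size $\sim 2^r$ costs only $\sim 2^r$ (rather than $1$), effectively because $\Om_{ij}$ acts like $2^r$ on such a piece — this is what upgrades $\frac{1}{x_i\xi_j-x_j\xi_i}\Om_{ij}$ to $\frac{1}{2^{2r}}\cdot(\text{small operator})$, whence $2^{-(\la+k+2r)}$.

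To make this rigorous I would first reduce by scaling to $k=0$ (absorbing $2^k$ via the homogeneity of $\Om_{ij}$ and the multipliers), and by rotation invariance fix $\beta = e_3$, so $\sg^\beta_\rho$ is supported on $\{\|\hat x\times e_3\|\lesssim 2^\rho\}$, i.e. $\hat x$ nearly parallel to $e_3$, meaning $|x_1|,|x_2|\lesssim 2^\rho|x|\lesssim 2^{\rho+\la}$ while $|x_3|\sim 2^\la$. Take the plane $(i,j)=(1,3)$ (or $(2,3)$), so $\Om_{13} = x_1\dd_{x_3} - x_3\dd_{x_1}$, and on the Fourier side $\eta^{(r,l)}_r(D)$ is supported where $\hat\xi$ is in a $2^r$-cap around a point $\al_{r,l}$ with $\|\al_{r,l}\times e_3\|\sim 2^r$; choosing coordinates so that $\al_{r,l}$ lies (essentially) in the $(1,3)$-plane, the symbol of $\Om_{13}$ is $i(\xi_1\dd_{\xi_3} - \xi_3\dd_{\xi_1})$ and one checks that on this support the multiplier $m(\xi) := (\xi_1 (\cdot) - \xi_3(\cdot))^{-1}$-type weights are smooth of the right size. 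Concretely, I would write
\[
\vpl(x)\sg^\beta_\rho(x)\eta^{(r,l)}_r(D)\phi_0 = \vpl(x)\sg^\beta_\rho(x)\,K(x,D)\,\Om_{13}\phi_0
\]
where $K(x,D)$ has kernel built from $\eta^{(r,l)}_r(\xi)$ divided by the symbol $x_1\xi_3 - x_3\xi_1$ — but since $K$ depends on $x$ through the coefficients, the cleaner route is to fix the physical localization first: on $\spt(\vpl\sg^\beta_\rho)$ the quantity $x_3$ is $\sim 2^\la$ up to a harmless bounded factor and $x_1$ is negligible compared to it, so $x_1\xi_3 - x_3\xi_1 \approx -x_3\xi_1$, and on $\spt(\eta^{(r,l)}_r)$ one has $|\xi_1|\sim 2^r$ (this is the content of the angular separation). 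Thus the operator $(x_1\xi_3 - x_3\xi_1)^{-1}\eta^{(r,l)}_r(\xi)$ is, for fixed $x$ in the physical support, a Fourier multiplier of size $\sim 2^{-\la} 2^{-r}$ with symbol derivatives of the matching size — by the Coifman–Meyer / Bernstein-type bound (exactly in the spirit of Lemma \ref{boundedness_of_angular_multipliers}) it is bounded on $L^q_x$ with norm $\lesssim 2^{-(\la+r)}$. Applying this and then the analogous gain of $2^{-r}$ from the angular size of the cap (so that $\|\Om_{13}\phi_0\|$ over this localized piece is really only $2^r$ times the corresponding non-differentiated norm, which is where the second $2^{-r}$ enters), together with Lemma \ref{boundedness_of_angular_multipliers} to restore the full $\eta^{(r,l)}_r(D)\phi_0$, yields $2^{-(\la+2r)}(\|\phi_0\|_{L^q_x} + \max_{i,j}\|\Om_{ij}\phi_0\|_{L^q_x})$, which after undoing the scaling is the claim.

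The main obstacle, and the step I would spend the most care on, is making the symbol bounds for the operator with symbol $\eta^{(r,l)}_r(\xi)/(x_1\xi_3 - x_3\xi_1)$ genuinely uniform in $\rho$, $r$, $l$, $\la$: one must verify that every $\xi$-derivative of this symbol is controlled by the expected power of $2^{-r}$ on the cap (the cap has size $2^r$, so each derivative costs $2^{-r}$, but the denominator is also of size $2^r$, and the two must balance correctly so that no net loss accumulates past what is claimed), and that the physical-space truncation $\vpl(x)\sg^\beta_\rho(x)$ can legitimately be used to treat $x_1, x_3$ as the "frozen" near-constant quantities $\sim 2^{\rho+\la}$ and $\sim 2^\la$. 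This is essentially the same mechanism as in the proof of Lemma \ref{boundedness_of_angular_multipliers} (expanding the relevant multiplier in a rapidly converging Fourier series adapted to the $1\times 2^r\times 2^r$ box containing $\spt\eta^{(r,l)}_r$, and summing up the translated copies of $\phi_0$), so I would model the argument on that proof, carrying along the extra weight $(x_1\xi_3 - x_3\xi_1)^{-1}$ and checking it does not destroy the decay of the Fourier coefficients. A secondary technical point is the reduction to $\al_{r,l}$ lying in a fixed coordinate plane and the choice of which $\Om_{ij}$ to use; by Proposition \ref{partition_prop}(2) the separation $\|\hat x\times\hat\xi\|\gtrsim 2^r$ holds for \emph{every} $\hat x\in\spt(\sg^\beta_\rho)$, so one always has \emph{some} pair $(i,j)$ realizing the lower bound, and replacing $\Om_{ij}$ by $\max_{i,j}\|\Om_{ij}\cdot\|$ on the right absorbs this choice.
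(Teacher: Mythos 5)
Your overall strategy — fix coordinates by rotation and scaling, bring the weight $(x_i\xi_j - x_j\xi_i)^{-1}$ into the symbol, expand the resulting $x$-dependent multiplier as a rapidly convergent Fourier series over the anisotropic box containing $\spt\,\eta^{(r,l)}_r$, and sum the translated copies — is exactly what the paper does, and you have correctly identified the main technical burden (uniform symbol derivative estimates for the weighted multiplier). So the skeleton is right.

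The gap is in your account of where the second factor of $2^{-r}$ comes from, and this is not a cosmetic point. You write that $\Om_{ij}$ ``costs only $\sim 2^r$'' on the angularly localized piece and that this ``upgrades'' the bound to $2^{-(\la+k+2r)}$. First, the arithmetic does not close: if the weight $(x_i\xi_j-x_j\xi_i)^{-1}$ contributes $2^{-(\la+k+r)}$ and $\Om_{ij}$ on the localized piece really only costs $2^r$, you would land on $2^{-(\la+k)}$, which since $r\leq 0$ is a \emph{stronger} bound than the lemma asserts; your heuristic overshoots. Second, and more importantly, the second $2^{-r}$ in the paper is a \emph{loss} relative to the naive $\tfrac{1}{|x||\xi|\sin\theta}$ heuristic (which would give $2^{-(\la+k+r)}$), not a gain, and the paper states explicitly that it arises because $\Om_{ij}$ is not translation invariant. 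Concretely: after the Fourier series expansion, what lands on $\phi_0$ is not $\Om_{12}\phi_0$ evaluated at a shifted point, but $(x_1\dd_2 - x_2\dd_1)\bigl(\phi_0(x+2^{-r}\cdot\text{shift})\bigr)$, and these differ by a commutator of size $2^{-r}|p|\,\|\na\phi_0\|$. It is this commutator, combined with Bernstein at unit frequency, that produces both the undifferentiated $\|\phi_0\|_{L^q}$ term on the right-hand side and the extra $2^{-r}$. Your plan, as written, would simply get $2^{-(\la+r)}\|\Om\phi_0\|$ from the multiplier bound and then has no mechanism to produce the $\|\phi_0\|$ term or to account for where the exponent $2r$ (rather than $r$) actually comes from. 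Until you track the non-commutativity of $\Om_{ij}$ with the translates from the Fourier series, the proof doesn't close.

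A minor remark: the paper normalizes $\al_{r,l} = e_1$ (the Fourier cap center) and then rotates $\beta$ into the $x$-$y$ plane, rather than fixing $\beta = e_3$ as you propose; the paper's choice is slightly cleaner because it makes the Fourier support of $\eta^{(r,l)}_r\tilde\chi_0$ an axis-aligned $1\times 2^r\times 2^r$ box with no further adjustment, which is what the Fourier series expansion wants. Your normalization would also work, but you would then have to realign the cap before expanding.
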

The intuition for this estimate was discussed in the introduction, and leads us to expect a preferable loss of $2^{-(\la+j+r)}$. Unfortunately, we were not quite able to achieve this. This comes from the $\Om_{ij}$ being non-translation invariant.

\begin{proof}
We may assume without loss of generality that $\alpha_{r,l}=e_1$, and by scaling it suffices to consider $k=0$. We may further assume that $\beta$ lies in the $x-y$ plane so that $|\hat{x}_1\hxi_2-\hx_2\hxi_1|\gtrsim 2^r$ for any $\hx,\,\hxi\in\Sp^2$ in the supports of $\sg^\beta_\rho,\,\eta^{(r,l)}_r$ respectively.


Write
$$
\vpl(x)\sg_\rho^\beta(x)\eta^{(r,l)}_r(D)\phi_0=\int_{\xi}e^{ix\cdot\xi}m(x,\xi)\cdot(x_1\xi_2-x_2\xi_1)\hat{\phi}_0(\xi)d\xi
$$
for
$$
m(x,\xi):=\vpl(x)\sg_\rho^\beta(x)\tilde{\chi}_0(\xi)\eta^{(r,l)}_r(\xi)(x_1\xi_2-x_2\xi_1)^{-1}
$$
Expand $m$ as a Fourier series in $\xi$. Since $\al_{r,l}=e_1$ we have $\text{supp}\,m\subset\{\xi_1\sim 1, |\xi_2|, |\xi_3|\lesssim 2^r \}$ so
$$
m(x,\xi)=\vpl(x)\sg_\rho^\beta(x)\sum_{p\in\Z^3}c_{p}(x) e^{2\pi i(\xi_1 p_1+2^{-r}\xi_2 p_2+2^{-r}\xi_3 p_3)}
$$
where
\begin{align*}
c_{p}(x)\simeq2^{-2r}\int_{\substack{|\xi_1|\sim 1\\|\xi_2|,|\xi_3|\lesssim 2^r}}\tilde{\chi}_0(\xi)\eta_r^{(r,l)}(\xi)(x_1\xi_2-x_2\xi_1)^{-1} e^{-2\pi i(\xi_1 p_1+2^{-r}\xi_2 p_2+2^{-r}\xi_3 p_3)}d\xi  
\end{align*}
We want to integrate by parts so need bounds on the derivatives of the integrand. A calculation yields
$$
|\na_\xi^\gamma \eta^{(r,l)}_r(\xi)|\lesssim_\gamma 2^{-(\gamma_2+\gamma_3)r}
$$
for all $\gamma\in \mathbb{N}^3$, $\xi\in\text{supp}(\tilde{\chi}_0)\cap\spt(\eta^{(r,l)}_r)$.

Furthermore, for $x$, $\xi$ in the support of $m$, we have $|x_2|\leq|x|\|\hx\x\al_{r,l}\|\lesssim2^{\la+r}$ and so
\begin{align*}
|\dd_{\xi_1}^{\gamma_1}\dd_{\xi_2}^{\gamma_2}(x_1\xi_2-x_2\xi_1)^{-1}|=&|x_1\xi_2-x_2\xi_1|^{-(\gamma_1+\gamma_2+1)}|x_2|^{\gamma_1}|x_1|^{\gamma_2}\\
\lesssim& 2^{-(\la+r)(\gamma_1+\gamma_2+1)}2^{(\la+r) \gamma_1}2^{\la\gamma_2}=2^{-(\la+r)}2^{-r\gamma_2}
\end{align*}

It follows that
\begin{align*}
&|\na_\xi^\gam(\tilde{\chi}_0(\xi)\eta^{(r,l)}_r(\xi)(x_1\xi_2-x_2\xi_1)^{-1})|\\
&\lesssim\sum_{\gam^{(1)}+\gam^{(2)}+\gam^{(3)}=\gam}|\na_\xi^{\gam^{(1)}}\tilde{\chi}_0(\xi)\cdot \na_\xi^{\gam^{(2)}}\eta^{(r,l)}_r(\xi) \cdot\na_\xi^{\gam^{(3)}}((x_1\xi_2-x_2\xi_2)^{-1})|\\
&\lesssim\sum_{\substack{\gam^{(1)}+\gam^{(2)}+\gam^{(3)}=\gam,\\ \gam^{(3)}_3=0}}1 \cdot 2^{-(\gam^{(2)}_2+\gam^{(2)}_3)r}\cdot 2^{-(\la+r)}2^{-\gam^{(3)}_2r}\\
&\lesssim 2^{-(\la+r)}\sum_{\substack{\gam^{(1)}+\gam^{(2)}+\gam^{(3)}=\gam,\\ \gam^{(3)}_3=0}} 2^{-(\gam^{(2)}_2+\gam^{(3)}_2+\gam^{(2)}_3)r}
\end{align*}
For $r\leq0$, the right hand side of this expression is largest when $\gam^{(2)}_2+\gam^{(3)}_2=\gam_2$ and $\gam^{(2)}_3=\gam_3$, leading to
$$
|\na_\xi^\gam(\tilde{\chi}_0(\xi)\eta^{(r,l)}_r(\xi)(x_1\xi_2-x_2\xi_1)^{-1})| \lesssim 2^{-(\la+r)}2^{-(\gam_2+\gam_3)r}
$$

Integrating by parts in the expression for $c_p(x)$ we therefore obtain
\begin{align*}
|c_{p}(x)|\lesssim \f{2^{-2r}}{p_1^{\gam_1}(2^{-r}p_2)^{\gam_2}(2^{-r}p_3)^{\gam_3}}\int_{\substack{|\xi_1|\sim1\\ |\xi_2|,|\xi_3|\lesssim 2^r}}2^{-(\la+r)}2^{-(\gam_2+\gam_3)r}d\xi
\lesssim\f{2^{-(\la+r)}}{\langle p\rangle^{|\gam|}}
\end{align*}

With this bound we calculate, for any $N\in\mathbb{N}$,
\begin{align*}
&\|\mu_2(x)\vpl(x)\sg_\rho^\beta(x)\eta^{(r,l)}_r(D)\phi_0\|_{L^q_x}\\
&\lesssim_N\sum_{p}\f{2^{-(\la+r)}}{\langle p\rangle^{N}}\|\int_{\xi}
e^{i((x_1 +2\pi p_1)\xi_1+(x_2+2\pi 2^{-r}p_2)\xi_2+(x_3+2\pi 2^{-r}p_3)\xi_3)}
(x_1\xi_2-x_2\xi_1)\hat{\phi}_0(\xi)d\xi \|_{L^q_x}\\
&\lesssim_N\sum_{p}\f{2^{-(\la+r)}}{\langle p\rangle^{N}}\|(x_1\dd_2-x_2\dd_1)(\phi_0(x_1+2\pi p_1,x_2+2^{-r}2\pi p_2,x_3+2^{-r}2\pi p_3))\|_{L^q_x}\\
&\lesssim_N\sum_{p}\f{2^{-(\la+r)}}{\langle p\rangle^{N}}(\|(x_1\dd_2-x_2\dd_1)\phi_0\|_{L^{q}_x}+2^{-r}|p|\|\na \phi_0\|_{L^{q}_x})\\
&\lesssim_N\sum_{p}\f{2^{-(\la+2r)}}{\langle p\rangle^{N-1}}(\|\Om_{1,2}\phi_0\|_{L^{q}_x}+\|\phi_0\|_{L^{q}_x})
\end{align*}
Choosing $N$ sufficiently large and summing over $p$ gives the desired result.
\end{proof}

As a consequence of this lemma we can bound certain trilinear terms as follows.
\begin{lemma}\label{SIL}
Let $m,j,k\in\Z$ and fix $M$ sufficiently large. Then for any (scalar) functions $\phi^{(1)}_j$, $\phi^{(2)}_k$, $\phi^{(3)}_m$ we have the following estimates:
\begin{enumerate}
\item If $j\lesssim k$, $j\lesssim m$ we have
\begin{align*}
&\|\phi^{(1)}_j\cdot\phi^{(2)}_k\cdot \phi^{(3)}_m\|_{1,2}\\
&\lesssim 2^{-j/M}2^{3k/2M}\|\phi^{(1)}_j\|_{\f{2M}{M-1},\infty}\|\LOR\pk^{(2)}\|_{\f{2M}{M-1},2M}(\|\phi^{(3)}_m\|_{\infty,2}+2^{j-m}\|L\phi^{(3)}_m\|_{\infty,2}+2^{-m}\|\dd_t\phi^{(3)}_m\|_{\infty,2})\\
&\quad+2^{-j/M}2^{3k\f{M-1}{4M}}\|\phi^{(1)}_j\|_{\f{2M}{M-1},\infty}\|\pk^{(2)}\|_{\f{2M}{M-1},\f{4M}{M-1}}(\|\phi^{(3)}_m\|_{\infty,2}+2^{j-m}\|L\phi^{(3)}_m\|_{\infty,2}+2^{-m}\|\dd_t\phi^{(3)}_m\|_{\infty,2})
\end{align*}

\item If $j\lesssim m$ we have
\begin{align*}
&\|\phi^{(1)}_j\cdot\phi^{(2)}_k\cdot \phi^{(3)}_m\|_{1,2}\\
&\lesssim 2^{-j/M}2^{3j/2M}\|\LOR\phi^{(1)}_j\|_{\f{2M}{M-1},2M}\|\pk^{(2)}\|_{\f{2M}{M-1},\infty}(\|\phi^{(3)}_m\|_{\infty,2}+2^{j-m}\|L\phi^{(3)}_m\|_{\infty,2}+2^{-m}\|\dd_t\phi^{(3)}_m\|_{\infty,2})\\
&\quad+2^{-j/M}2^{3j\f{M-1}{4M}}\|\phi^{(1)}_j\|_{\f{2M}{M-1},\f{4M}{M-1}}\|\pk^{(2)}\|_{\f{2M}{M-1},\infty}(\|\phi^{(3)}_m\|_{\infty,2}+2^{j-m}\|L\phi^{(3)}_m\|_{\infty,2}+2^{-m}\|\dd_t\phi^{(3)}_m\|_{\infty,2})
\end{align*}

\item If $j\lesssim k$ we have
\begin{align*}
&\|\phi^{(1)}_j\cdot\phi^{(2)}_k\cdot \phi^{(3)}_m\|_{1,2}\\
&\lesssim 2^{-j/M}2^{3k/2M}\|\phi^{(1)}_j\|_{\f{2M}{M-1},\infty}\\
&\quad\quad\quad\quad\enspace\cdot(\|\LOR\pk^{(2)}\|_{\f{2M}{M-1},2M}+2^{j-k}\|\LOR L\pk^{(2)}\|_{\f{2M}{M-1},2M}+2^{-k}\|\LOR\dd_t\pk^{(2)}\|_{\f{2M}{M-1},2M})\|\phi^{(3)}_m\|_{\infty,2}\\
&\quad+2^{-j/M}2^{3k\f{M-1}{4M}}\|\phi^{(1)}_j\|_{\f{2M}{M-1},\infty}\\
&\quad\quad\quad\quad\enspace\cdot(\|\pk^{(2)}\|_{\f{2M}{M-1},\f{4M}{M-1}}+2^{j-k}\| L\pk^{(2)}\|_{\f{2M}{M-1},\f{4M}{M-1}}+2^{-k}\|\dd_t\pk^{(2)}\|_{\f{2M}{M-1},\f{4M}{M-1}})\|\phi^{(3)}_m\|_{\infty,2}
\end{align*}

\item If $j\lesssim k$ we have
\begin{align*}
&\|\phi^{(1)}_j\cdot\phi^{(2)}_k\cdot \phi^{(3)}_m\|_{1,2}\\
&\lesssim 2^{-j/M}2^{3j/2M}\|\LOR\phi^{(1)}_j\|_{\f{2M}{M-1},2M}\\
&\quad\quad\quad\quad\quad\quad\quad\quad\quad\quad\quad\quad\cdot(\|\pk^{(2)}\|_{\f{2M}{M-1},\infty}+2^{j-k}\|L\pk^{(2)}\|_{\f{2M}{M-1},\infty}+2^{-k}\|\dd_t\pk^{(2)}\|_{\f{2M}{M-1},\infty})\|\phi^{(3)}_m\|_{\infty,2}\\
&\quad+2^{-j/M}2^{3j\f{M-1}{4M}}\|\phi^{(1)}_j\|_{\f{2M}{M-1},\f{4M}{M-1}}\\
&\quad\quad\quad\quad\quad\quad\quad\quad\quad\quad\quad\quad\cdot(\|\pk^{(2)}\|_{\f{2M}{M-1},\infty}+2^{j-k}\|L\pk^{(2)}\|_{\f{2M}{M-1},\infty}+2^{-k}\|\dd_t\pk^{(2)}\|_{\f{2M}{M-1},\infty})\|\phi^{(3)}_m\|_{\infty,2}
\end{align*}
\end{enumerate}
We note that since the above holds for arbitrary scalar functions, in our setting the same will hold for vector functions independent of the type of multiplication used (dot product, cross product,...) and the order of the terms. 
\end{lemma}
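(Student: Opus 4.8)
The plan is to prove all four estimates by a single scheme, which I describe for part~(1); parts (2)--(4) then follow by permuting which of $\phi^{(2)}_k,\phi^{(3)}_m$ plays which role and by running the angular separation on whichever factor the assumed frequency ordering permits. The governing heuristic is \eqref{obs}: instead of placing the low factor, or a high factor, into the unavailable space $L^2_tL^\infty_x$, one decomposes the trilinear expression simultaneously in physical space --- into dyadic annuli $\vp_\la(x)$ and, on each annulus, into directional caps $\sg^\beta_\rho(x)$ with $2^\rho$ adapted to $\la$ and to the lowest frequency $2^j$ (Section~\ref{angular_multipliers_subsection}) --- and in frequency space, decomposing the direction of the high factor $\phi^{(2)}_k$ about the cap $\beta$ via the Whitney family of Proposition~\ref{partition_prop}, so that $\phi^{(2)}_k=\eta^\beta_{\rho+C_2}(D)\phi^{(2)}_k+\sum_{r,l}\eta^{(r,l)}_r(D)\phi^{(2)}_k$. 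Since $\vp_\la=\vp_\la\vp_{\sim\la}$ and likewise for $\sg^\beta_\rho$, the physical cutoffs distribute freely across the three factors, so it suffices to estimate, for each $(\la,\beta)$, each Whitney piece and then sum.

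On the separated pieces --- the $\eta^{(r,l)}_r$ terms, on which Proposition~\ref{partition_prop}(2) furnishes an angular separation $\sim 2^r$ between $\spt\sg^\beta_\rho$ and the frequency support of $\eta^{(r,l)}_r(D)\phi^{(2)}_k$ --- I would apply the Angular Separation Estimate, Lemma~\ref{angular_sep_lem}, to $\vp_{\sim\la}(x)\sg^\beta_{\sim\rho}(x)\eta^{(r,l)}_r(D)\phi^{(2)}_k$ in $L^{2M}_x$. This supplies the spatial gain $2^{-(\la+k+2r)}$ --- enough both to sum the $\la$- and $r$-series and to absorb a weight $|x|\sim 2^\la$ appearing below --- together with the angular derivative, so that $\LOR$ lands on $\phi^{(2)}_k$ (here $q=2M<\infty$ is forced, so that Lemma~\ref{angular_riesz} converts $\Om_{ij}\phi^{(2)}_k$ into an honest norm). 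A Bernstein step from $L^{2M}_x$ to $L^\infty_x$ then permits a spatial H\"older with $\phi^{(1)}_j$ in $L^\infty_x$ (taking $\ell^\infty$ in $\beta$ there and summing the caps by the almost-orthogonality \eqref{spatial_partition} of the $\sg^\beta_\rho$) and $\phi^{(3)}_m$ left in $L^2_x$; the powers of $2^k$ (in particular $2^{3k/2M}$) in the statement are what emerge from this Bernstein step and from summing the geometric series. To obtain a norm of $\phi^{(3)}_m$ that is summable in time I would not keep $\|\vp_\la\phi^{(3)}_m\|_{L^2_x}$ bare, but replace it by a \emph{Lorentz-boosted} weighted norm $\|\langle 2^jt\rangle\phi^{(3)}_m\|_{L^\infty_tL^2_x}$, the scale $2^{-j}$ of the weight being dictated by the lowest frequency; using $\langle 2^jt\rangle\lesssim 1+2^j|t|$ and the vector-field identities $t\dd_{x_n}\phi^{(3)}_m=L_n\phi^{(3)}_m-x_n\dd_t\phi^{(3)}_m$, $[L_n,\rd]\sim R\dd_t$ (to move $\rd^{-1}$ past $L_n$ after expanding $\phi^{(3)}_m$ via the Riesz transforms), and controlling the resulting $x_n$-factor on the annulus by $2^\la$ against the $2^{-\la}$ gain, one bounds this weighted norm precisely by the combination $\|\phi^{(3)}_m\|_{\infty,2}+2^{j-m}\|L\phi^{(3)}_m\|_{\infty,2}+2^{-m}\|\dd_t\phi^{(3)}_m\|_{\infty,2}$ of the statement. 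The weight in turn yields the time decay $\|\phi^{(3)}_m(t)\|_{L^2_x}\lesssim\langle 2^jt\rangle^{-1}(\cdots)$, and since $\|\langle 2^jt\rangle^{-1}\|_{L^M_t}\lesssim 2^{-j/M}$ --- the origin of the $2^{-j/M}$ prefactor --- the time-H\"older $\tfrac{M-1}{2M}+\tfrac{M-1}{2M}+\tfrac1M=1$ now closes. This is exactly the point at which the forbidden endpoint $L^2_tL^\infty_x$ is circumvented, and it produces the first term on the right-hand side.

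For the single non-separated piece $\eta^\beta_{\rho+C_2}(D)\phi^{(2)}_k$ (frequency direction $\simeq\beta$, so no angular separation) Lemma~\ref{angular_sep_lem} is unavailable; instead one exploits that the frequency support is now a thin tube of dimensions $\sim 2^k\times2^{k+\rho}\times2^{k+\rho}$, combining Bernstein on this tube --- which is what passes from the norm $\|\phi^{(2)}_k\|_{L^{4M/(M-1)}_x}$ of the statement towards $L^\infty_x$ --- with the almost-orthogonality of the pieces $\eta^\beta_{\rho+C_2}(D)\phi^{(2)}_k$ over $\beta$, the same Lorentz-boosted treatment of $\phi^{(3)}_m$, and the same placement of $\phi^{(1)}_j$; this yields the second term, the larger power $2^{3k(M-1)/4M}$ reflecting the absence of the spatial gain. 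Since $\eta^\beta_{\rho+C_2}$ is merely a bounded multiplier (Lemma~\ref{boundedness_of_angular_multipliers}), no angular derivative is generated here, consistent with the plain $\phi^{(2)}_k$-norms in the second term.

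The hard part will be the bookkeeping rather than any individual estimate: choosing $\rho=\rho(\la,j)$ so that the Whitney decomposition of Proposition~\ref{partition_prop} is legitimate ($\rho\le-C_1$) while all the internal geometric series in $\la$, $r$ and $\beta$ converge, and tracking the non-translation-invariant operators throughout --- the commutators $[L_n,P_j]=2^{-j}\dd_t P_j$ and $[L_n,\vp_\la],[L_n,\sg^\beta_\rho]$, and the translations introduced when the physical cutoffs and the multiplier $\eta^{(r,l)}_r$ are resolved by Fourier series (as in the proof of Lemma~\ref{angular_sep_lem}), all of which are absorbed via bounds of the type \eqref{translation_bound}. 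Parts (2)--(4) need no new idea: under the weaker hypothesis $j\lesssim m$ one runs the Whitney decomposition on $\phi^{(1)}_j$ instead (so $\LOR$ falls on $\phi^{(1)}_j$), and in (3)--(4) the factor kept in $L^{2+}_t$ alongside $\phi^{(1)}_j$ is $\phi^{(2)}_k$, so it --- rather than $\phi^{(3)}_m$ --- inherits the $L$- and $\dd_t$-variants. Finally, the closing sentence of the statement is immediate, since the argument nowhere uses commutativity or associativity of the pointwise product, only the multilinear H\"older estimates and the Leibniz-type behaviour of $L$, $\Om$ and $\rd$.
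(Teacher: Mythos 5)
Your plan correctly identifies the four ingredients the paper combines: a simultaneous physical-space decomposition into annuli and angular caps, a Whitney frequency decomposition of the high factor about the cap centre, the Angular Separation Lemma~\ref{angular_sep_lem} with Bernstein on the far-angle pieces and Bernstein-on-a-thin-tube on the close-angle piece, and a Lorentz-boost identity to generate time decay and close the $L^M_t$ H\"older. This is the paper's scheme, and your remark that the argument is oblivious to the type of product is also correct. However, there are two places where your uniform treatment hides a step that does not go through as described.

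First, you propose running the angular cap decomposition on \emph{every} dyadic annulus $\vp_\la$, with $2^\rho$ ``adapted to $\la$ and $2^j$.'' But the construction of Proposition~\ref{partition_prop} requires $\rho\le -C_1<0$, and the scale the argument wants is $\rho\sim-(\la+j)/3$, which is non-negative as soon as $\la\le -j$. On those annuli the Whitney family simply does not exist, and in any case the far-angle gain $2^{-(\la+k+2r)}$ stops being a gain when $\la$ is very negative. The paper therefore separates off the region $|x|\lesssim 2^{-j}$ (its case (A)) and treats it with plain time H\"older and Bernstein, with no angular decomposition at all; only on $|x|\gtrsim 2^{-j}$ (case (B)) is the cap decomposition invoked, and there $\rho=-(j+\ko)/3\le 0$ is legitimate. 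Your write-up needs this case split --- without it the cap decomposition is being applied where it is undefined.

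Second, the Lorentz-boost bookkeeping in your weighted-norm formulation does not close on the large annuli. Expanding $\phi^{(3)}_m$ via the identity \eqref{F_decomposition} introduces a factor $x_n\sim 2^\la$, and you propose to cancel it against the $2^{-\la}$ from Lemma~\ref{angular_sep_lem}. But that $2^{-\la}$ is precisely what makes the geometric series in $\la$ (equivalently $\ko$) converge; spending it on the $x_n$ factor leaves nothing to sum with. The way out, which the paper uses, is a \emph{second} dyadic split in time: for $|t|\sim 2^{\kt}\lesssim 2^{\ko}$ no boost is needed at all (the $2^{-\la}$ from angular separation plus the harmless $2^{\kt/M}$ from H\"older already sum), while for $2^{\kt}\gg 2^{\ko}$ the identity \eqref{P_mF_bound0} cancels $x_n\sim 2^{\ko}$ against $t^{-1}\sim 2^{-\kt}$ with $\kt>\ko$, which is where the combination $\|\phi\|_{\infty,2}+2^{j-m}\|L\phi\|_{\infty,2}+2^{-m}\|\dd_t\phi\|_{\infty,2}$ actually comes from. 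The uniform weight $\langle 2^jt\rangle$ collapses this distinction, and on the annuli $|x|\gg 2^{-j}$ it produces $2^{j+\la-m}\|\dd_t\phi^{(3)}_m\|_{\infty,2}$, not $2^{-m}\|\dd_t\phi^{(3)}_m\|_{\infty,2}$. You need the $\kt\lessgtr\ko$ split to recover the claimed bound.
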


Before going into the proof, let us shed some light on the relevance of this lemma. It will be applied to terms of the form 
\begin{equation}\label{keyterm}
\sum_{j\leq k\leq-10}\|L\LOR (\phi_k\text{ }\na\phi_j\cdot\na\phi_{\sim0})\|_{1,2}
\end{equation}
which are beyond the reach of the standard Strichartz estimates. For example, when $L$ and $\LOR $ both fall on $\na\pj$ we use point 1 of the above lemma to find
\begin{align}
&\sum_{j\leq k\leq-10}\|\phi_k\text{ }L\LOR \na\phi_j\cdot\na\phi_{\sim0}\|_{1,2}\nonumber\\
&\lesssim\sum_{j\leq k\leq-10}2^{-j/M}2^{3k/2M}\|\LOR\pk\|_{\f{2M}{M-1},2M}\|L\LOR \na\phi_j\|_{\f{2M}{M-1},\infty}(\|\na\pso\|_{\infty,2}+2^{j}\|L\na\pso\|_{\infty,2}+\|\dd_t\na\pso\|_{\infty,2})\nonumber\\
&\hspace{4.6em}+2^{-j/M}2^{3k\f{M-1}{4M}}\|\pk\|_{\f{2M}{M-1},\f{4M}{M-1}}\|L\LOR \na\phi_j\|_{\f{2M}{M-1},\infty}(\|\na\pso\|_{\infty,2}+2^{j}\|L\na\pso\|_{\infty,2}+\|\dd_t\na\pso\|_{\infty,2})\nonumber\\
&\lesssim\sum_{j\leq k\leq-10}
2^{(\f{1}{2}-\f{1}{2M})(j-k)}C_0^3c_jc_kc_0\lesssim C_0^3\eps^2c_0\label{example}
\end{align}
where we were able to place $\pk$ into $S_k$ in both cases since it only appears in a non-standard Strichartz space when not accompanied by an angular derivative.

The same argument works for any combination in which $\LOR$ falls on $\pj$ or $\pso$, and $L$ on $\pk$ or $\pj$. If $\LOR$ falls on $\pk$, and $L$ still doesn't hit $\pso$, we obtain the same result using point 2 of the lemma. Points 3 and 4 are for when $L$ hits $\pso$ and $\LOR$ avoids or hits $\pk$ respectively.

Due to the non-local nature of our equation, frequency interactions of the type discussed will appear in many different guises, which is why we give the lemma in such generality.

\begin{proof}[Proof of Lemma \ref{SIL}]
We focus on point 1, noting the adaptations needed for the other cases at the end.

Using the notation introduced in Section \ref{preliminaries_section} we first split the term over regions where $|x|$ is large or small compared to the natural oscillations of $\phi^{(1)}_j$:
\begin{equation*}
\|\phi^{(1)}_j\cdot\phi_k^{(2)}\cdot \phi^{(3)}_m\|_{1,2}
\lesssim\underbrace{\|\vp_{<-j}(x)(\phi^{(1)}_j\cdot\phi_k^{(2)}\cdot \phi^{(3)}_m)\|_{1,2}}_{(A)}+\underbrace{\|\vp_{\geq -j}(x)(\phi^{(1)}_j\cdot\phi_k^{(2)}\cdot \phi^{(3)}_m)\|_{1,2}}_{(B)}
\end{equation*}
Starting with (A), we further split the norm depending on the size of $|t|$:
\begin{equation*}
(A)\lesssim\underbrace{\|\vp_{<-j}(x)\vp_{<-j}(t)(\phi^{(1)}_j\cdot\phi_k^{(2)}\cdot \phi^{(3)}_m)\|_{1,2}}_{(A.I)}+\underbrace{\|\vp_{<-j}(x)\vp_{\geq-j}(t)(\phi^{(1)}_j\cdot\phi_k^{(2)}\cdot \phi^{(3)}_m)\|_{1,2}}_{(A.II)}
\end{equation*}

To reduce notation, we will often omit the space/time cut-offs.
Starting with (A.I), we use H\"older in the time variable to obtain
\begin{align*}
(A.I)&\lesssim\|\phi^{(1)}_j\|_{2,\infty}\|\phi_k^{(2)}\|_{2,\infty}\|\phi^{(3)}_m\|_{\infty,2}
\lesssim2^{-j/M}\|\phi^{(1)}_j\|_{\f{2M}{M-1},\infty}\|\phi_k^{(2)}\|_{\f{2M}{M-1},\infty}\|\phi^{(3)}_m\|_{\infty,2}
\end{align*}
Using Bernstein's inequality on $\pk^{(2)}$ and the monotonicity of the angular Sobolev spaces we see that this term is as required.

We now study (A.II). To counteract the loss coming from the use of H\"older's inequality in time we use a trick that will come up frequently in the sequel: since $\phi^{(3)}_m$ has not yet been acted on by any Lorentz boost we can write
\begin{align}
\phi^{(3)}_m=&t^{-1}\D^{-1}\dd_nL_n\phi^{(3)}_m-\D^{-1}\dd_n[(t^{-1}x_n)\dd_t\phi^{(3)}_m]\label{F_decomposition}
\end{align}
with the implicit sum over $n=1,2,3$. 
A simple computation using that the spatial localisation passes through the Fourier multipliers up to exponentially decaying tails (since $m\gtrsim j$) yields
\begin{equation}\label{large_t_bound}
\|\vp_{<-j}(x)\vp_{\ko}(t)\phi^{(3)}_m\|_{\infty,2}\lesssim2^{-\ko-m}\|L\phi^{(3)}_m\|_{\infty,2}+2^{-\ko-j-m}\|\dd_t\phi^{(3)}_m\|_{\infty,2}
\end{equation}

Therefore
\begin{align*}
(A.II)&\lesssim\sum_{k_1\geq -j}\|\phi^{(1)}_j\|_{2,\infty}\|\phi_k^{(2)}\|_{2,\infty}\|\vp_{<-j}(x)\vp_{\ko}(t)\phi^{(3)}_m\|_{\infty,2}\\
&\lesssim\sum_{k_1\geq -j}2^{k_1/M}\|\phi^{(1)}_j\|_{\f{2M}{M-1},\infty}\|\phi^{(2)}_k\|_{\f{2M}{M-1},\infty}(2^{-\ko-m}\|L\phi^{(3)}_m\|_{\infty,2}+2^{-\ko-j-m}\|\dd_t\phi^{(3)}_m\|_{\infty,2})\\
&\lesssim2^{-j/M}\|\phi^{(1)}_j\|_{\f{2M}{M-1},\infty}\|\phi^{(2)}_k\|_{\f{2M}{M-1},\infty}(2^{j-m}\|L\phi^{(3)}_m\|_{\infty,2}+2^{-m}\|\dd_t\phi^{(3)}_m\|_{\infty,2})
\end{align*}
which is as required.

We now turn to
$$
(B)=\|\vp_{\geq -j}(x)(\phi^{(1)}_j\cdot\phi_k^{(2)}\cdot \phi^{(3)}_m)\|_{1,2}
$$ 
It is here that we need to invoke the angular multipliers. Write
\begin{align*}
(B)\leq&\sum_{k_1\geq-j}\sum_{\kt\in\Z}\underbrace{\|\vp_{\ko}(x)\vp_{\kt}(t)(\phi^{(1)}_j\cdot\phi_k^{(2)}\cdot \phi^{(3)}_m)\|_{1,2}}_{(B)_{\ko,\kt}}
\end{align*}
Fix $k_1\geq-j$. Let $(\sg^\beta_{-(j+k_1)/3})_{\beta\in\Ss_{j,k_1}}$ be a partition of unity on $\Sp^2$ as in \eqref{spatial_partition} (denoting $\Ss_{j,\ko}:=\Ss_{-(j+\ko)/3}$). Then for fixed $k_1$, $\kt$ we can split $(B)_{\ko,\kt}$ into a square-sum
\begin{align*}
(B)_{\ko,\kt}\lesssim&\left\|\left(\sum_{\beta\in\Ss_{j,k_1}}\|\sg^\beta_{-(j+\ko)/3}(x)(\phi^{(1)}_j\cdot\phi_k^{(2)}\cdot \phi^{(3)}_m)\|_{L^2_x(|x|\sim 2^{\ko})}^2\right)^\half\right\|_{L^1_t(|t|\sim 2^{\kt})}
\end{align*}

Now, for each $\beta\in\mathcal{S}_{j,\ko}$, use the Fourier multipliers introduced in (\ref{nbr_def}) to write
\begin{align}
\phi^{(2)}_k
=&\eta^\beta_{-(j+\ko)/3}(D)\phi^{(2)}_k
+\sum_{l=1}^N\sum_{-(j+\ko)/3\ll r\leq0}\eta^{(r,l)}_r(D)\phi_k^{(2)}\label{far_angle110}
\end{align}
(dropping the ``$+C_2$'' in the subscript of $\eta^\beta_{-(j+\ko)/3}$ to reduce notation).
Let's start with the first term, where the angular localisations in Fourier space and physical space are forced to be close. We find
\begin{align}
&\left\|\left(\sum_{\beta\in\Ss_{j,k_1}}\|\sg^\beta_{-(j+\ko)/3}(x)(\phi^{(1)}_j\cdot \eta^\beta_{-(j+\ko)/3}(D)\phi_k^{(2)}\cdot \phi^{(3)}_m)\|_{L^2_x(|x|\sim 2^{\ko})}^2\right)^\half\right\|_{L^1_t(|t|\sim 2^{\kt})}\nonumber\\
&\quad\quad\lesssim\left\|\left(\sum_{\beta\in\mathcal{S}_{j,k_1}}(\|\phi_j^{(1)}\|_\infty \|\eta^\beta_{-(j+\ko)/3}(D) \phi^{(2)}_k\|_\infty\|\sg^\beta_{-(j+\ko)/3}(x)\phi^{(3)}_m\|_2)^2\right)^\half\right\|_{L^1_t(|t|\sim 2^{\kt})}
\end{align}
We then use Bernstein's inequality on the middle term to benefit from the close angular localisation, and thereby bound the above by
\begin{align}
&\|\phi_j^{(1)}\|_{2,\infty}\left\|\left(\sum_{\beta\in\Ss_{j,k_1}}(2^{(3k-2(j+\ko)/3)(\f{M-1}{4M})}\|\eta^\beta_{-(j+\ko)/3}(D) \phi^{(2)}_k\|_\f{4M}{M-1}\|\sgb_{-(j+\ko)/3}(x)\phi^{(3)}_m\|_2)^2\right)^\half\right\|_2\nonumber\\
&\lesssim\|\phi_j^{(1)}\|_{2,\infty}\cdot 2^{(3k-2(j+\ko)/3)(\f{M-1}{4M})}\|\pk^{(2)}\|_{2,\f{4M}{M-1}}\cdot\|\phi^{(3)}_m\|_{\infty,2}\nonumber\\
&\lesssim2^{\kt/M}2^{(3k-2(j+\ko)/3)(\f{M-1}{4M})}\|\phi_j^{(1)}\|_{\f{2M}{M-1},\infty} \|\pk^{(2)}\|_{\f{2M}{M-1},\f{4M}{M-1}}\|\phi^{(3)}_m\|_{\infty,2}
\label{katie-1}
\end{align}
where we used that the operator $\eta^\beta_{-(j+\ko)/3}(D)$ is bounded and square summed over the $\sg^\beta_{-(j+\ko)/3}$. 

Now, fixing $M$ sufficiently large this gives an acceptable bound in the range $\kt\leq\ko$:
\begin{multline*}
    \sum_{\ko\geq-j}\sum_{\kt\leq\ko}\left\|\left(\sum_{\beta\in\Ss_{j,k_1}}\|\sg^\beta_{-(j+\ko)/3}(x)(\phi^{(1)}_j\cdot \eta^\beta_{-(j+\ko)/3}(D)\phi_k^{(2)}\cdot \phi^{(3)}_m)\|_{L^2_x(|x|\sim 2^{\ko})}^2\right)^\half\right\|_{L^1_t(|t|\sim 2^{\kt})}\\
    \lesssim 2^{-j/M}2^{3k\f{M-1}{4M}}\|\phi_j^{(1)}\|_{\f{2M}{M-1},\infty} \|\pk^{(2)}\|_{\f{2M}{M-1},\f{4M}{M-1}}\|\phi^{(3)}_m\|_{\infty,2}
\end{multline*}

For $\kt>\ko$, we obtain decay in $t$ via an estimate analogous to \eqref{large_t_bound}, 
\begin{equation}
\|\vp_{\ko}(x)\vp_{\kt}(t)\phi^{(3)}_m\|_{\infty,2}\lesssim2^{-\kt-m}\|L\phi^{(3)}_m\|_{\infty,2}+2^{\ko-\kt-m}\|\dd_t\phi^{(3)}_m\|_{\infty,2}\label{P_mF_bound0}
\end{equation}
and the desired result follows.

We now turn to the second term in (\ref{far_angle110}), the ``far-angle case". We use the angular separation estimate, Lemma \ref{angular_sep_lem}, followed by Bernstein's inequality and the Riesz estimate for angular derivatives (which only holds for finite exponents) to bound
\begin{align*}
\|\vp_{\ko}(x)\sgb_{-(j+k_1)/3}(x)\eta^{(r,l)}_{r}(D)\phi_k^{(2)}\|_{\infty}&\lesssim 2^{-(\ko+k+2r)}(\|\phi^{(2)}_k\|_\infty+\max_{i,j}\|\Om_{ij}\phi^{(2)}_k\|_{\infty})\\
&\lesssim 2^{-(\ko+k+2r)}2^{3k/2M}\|\LOR\pk^{(2)}\|_{2M}
\end{align*}
Therefore
\begin{align}
\sum_{l=1}^N\sum_{-(j+\ko)/3\ll r}\|\sg^\beta_{-(j+\ko)/3}(x)(\phi^{(1)}_j\cdot\eta^{(r,l)}_r(D)\phi_k^{(2)}\cdot\phi^{(3)}_m)\|_{L^2_x(|x|\sim 2^{\ko})}\nonumber\\
\lesssim 2^{-(\ko+k)}2^{2(j+\ko)/3}2^{3k/2M}\|\phi^{(1)}_j\|_{\infty}\|\LOR\pk^{(2)}\|_{2M}\|\sgb_{-(j+k_1)/3}(x)\phi^{(3)}_m\|_{2} \label{sum_eqn1}
\end{align}
Lastly the $L^1_{(|t|\sim 2^{\kt})}$ norm of the square-sum of (\ref{sum_eqn1}) over $\beta\in \Ss_{j,\ko}$ is bounded by
\begin{align}
2^{\kt/M}2^{-(\ko+k)}2^{2(j+\ko)/3}2^{3k/2M}\|\phi^{(1)}_j\|_{\f{2M}{M-1},\infty}\|\LOR\pk^{(2)}\|_{\f{2M}{M-1},2M}\|\phi^{(3)}_m\|_{\infty,2}\label{joanna-1}
\end{align}
Summed over $\kt\leq\ko$ and $k_1\geq -j$ this gives
\begin{equation*}
2^{-j/M}2^{j-k}2^{3k/2M}\|\phi^{(1)}_j\|_{\f{2M}{M-1},\infty}\|\LOR\pk^{(2)}\|_{\f{2M}{M-1},2M}\|\phi^{(3)}_m\|_{\infty,2}
\end{equation*}
which is acceptable since $j\lesssim k$. In the case of large $t$ we again apply \eqref{P_mF_bound0} before summing over $\kt>\ko$.

\bigskip

To prove point 2 of the Lemma, we perform the same argument but carry out the angular decomposition in Fourier space on $\pj^{(1)}$ instead of $\pk^{(2)}$. In the far-angle case we no longer gain a factor of $2^{j-k}$ so the restriction $j\lesssim k$ is not necessary.
For point 3, we do not change the angular decomposition but get the gain in $|t|^{-1}$ from $\pk^{(2)}$ rather than $\phi_m^{(3)}$, using the estimate
$$
\|\vp_{\ko}(x)\vp_{\kt}(t)\phi^{(2)}_k\|_{2,\infty}\lesssim2^{-\kt-k}\|L\phi^{(2)}_k\|_{2,\infty}+2^{\ko-\kt-k}\|\dd_t\phi^{(2)}_k\|_{2,\infty}$$
for $k+\ko\gtrsim0$.
For point 4 we use both of the adaptations described above.
\end{proof}

Due to some error terms which appear as a result of the commutation relations \eqref{commr1}-\eqref{commr3}, we will also need the following form of Lemma \ref{SIL}.
\begin{corollary}\label{SIC}
Let $K_j$ be a convolution operator given by a Schwarz kernel $k_j(x):=2^{3k}k(2^jx)$. Then for $m,r,s\gtrsim j$ it holds
\begin{enumerate}
\item \begin{align*}
&\|K_j(\phi^{(1)}_r\cdot\phi^{(2)}_s)\cdot\phi^{(3)}_m\|_{1,2}\\
&\lesssim 2^{-j/M}2^{3j\f{M-1}{4M}}2^{\f{21}{4M}(r-j)}\|\phi^{(1)}_r\|_{\f{2M}{M-1},\f{4M}{M-1}}\|\phi^{(2)}_s\|_{\f{2M}{M-1},\infty}(\|\phi^{(3)}_m\|_{\infty,2}+2^{j-m}\|L\phi^{(3)}_m\|_{\infty,2}+2^{-m}\|\dd_t\phi^{(3)}_m\|_{\infty,2})\\
&\quad+2^{-j/M}2^{3r/2M}2^{j-r}\|\LOR\pho_r\|_{\f{2M}{M-1},2M}\|\pht_s\|_{\f{2M}{M-1},\infty}(\|\phi^{(3)}_m\|_{\infty,2}+2^{j-m}\|L\phi^{(3)}_m\|_{\infty,2}+2^{-m}\|\dd_t\phi^{(3)}_m\|_{\infty,2})
\end{align*}
\item \begin{align*}
&\|K_j(\phi^{(1)}_r\cdot\phi^{(2)}_s)\cdot\phi^{(3)}_m\|_{1,2}\\
&\lesssim 2^{-j/M}2^{3j\f{M-1}{4M}}2^{\f{21}{4M}(r-j)}\|\phi^{(1)}_r\|_{\f{2M}{M-1},\f{4M}{M-1}}(\|\pht_s\|_{\f{2M}{M-1},\infty}+2^{j-s}\|L\pht_s\|_{\f{2M}{M-1},\infty}+2^{-b}\|\dd_t\pht_s\|_{\f{2M}{M-1},\infty})\|\phi^{(3)}_m\|_{\infty,2}\\
&\quad+2^{-j/M}2^{3r/2M}2^{j-r}\|\LOR\pho_r\|_{\f{2M}{M-1},2M}(\|\pht_s\|_{\f{2M}{M-1},\infty}+2^{j-s}\|L\pht_s\|_{\f{2M}{M-1},\infty}+2^{-s}\|\dd_t\pht_s\|_{\f{2M}{M-1},\infty})\|\phi^{(3)}_m\|_{\infty,2}
\end{align*}
\end{enumerate}
The important thing to note here is the gain in powers of $2^j$ rather than $2^r$ (up to a small amount of leakage).
\end{corollary}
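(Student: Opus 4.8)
The plan is to reduce Corollary \ref{SIC} to Lemma \ref{SIL} by expanding the convolution operator $K_j$ and tracking how its kernel distributes over the two factors $\phi^{(1)}_r$ and $\phi^{(2)}_s$. Writing $k_j(x) = 2^{3j}k(2^jx)$ with $k$ Schwarz, the product $K_j(\phi^{(1)}_r\cdot\phi^{(2)}_s)(x) = \int k_j(y)\,\phi^{(1)}_r(x-y)\,\phi^{(2)}_s(x-y)\,dy$ is, after a rescaling argument as in \eqref{FS}--\eqref{FS2}, essentially an average over translations on scale $2^{-j}$ of the product $\phi^{(1)}_r\cdot\phi^{(2)}_s$. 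Since $r,s\gtrsim j$, translating $\phi^{(1)}_r$ or $\phi^{(2)}_s$ by an amount $\lesssim 2^{-j}$ only changes things by a factor $\langle 2^r y\rangle^2$ or $\langle 2^s y\rangle^2$ (cf.\ the commutation estimate \eqref{translation_bound}), which against the rapid decay of $k$ costs at most a power of $2^{r-j}$ (resp.\ $2^{s-j}$). This is the origin of the claimed leakage factor $2^{\frac{21}{4M}(r-j)}$.

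First I would set $\phi^{(3)}_m$ aside and write $K_j(\phi^{(1)}_r\cdot\phi^{(2)}_s) = \sum_{a\in\Z^3} c_a\,\phi^{(1)}_r(\cdot - 2^{-j}a)\,\phi^{(2)}_s(\cdot - 2^{-j}a)$ with $|c_a|\lesssim_N \langle a\rangle^{-N}$, exactly in the style of \eqref{FS}--\eqref{FS2} but now with a common translation on both factors. The product $\phi^{(1)}_r(\cdot-2^{-j}a)\cdot\phi^{(2)}_s(\cdot-2^{-j}a)\cdot\phi^{(3)}_m$ is then estimated by applying Lemma \ref{SIL} — point 2 for part (1) of the corollary, and the appropriate mixed variant (with the $L$ and $\dd_t$ pieces on $\phi^{(2)}_s$) for part (2). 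The key point is that the frequency localisations of the translated functions $\phi^{(1)}_r(\cdot-2^{-j}a)$, $\phi^{(2)}_s(\cdot-2^{-j}a)$ are unchanged — translation is a Fourier multiplier — so Lemma \ref{SIL} applies with the same exponents $r,s,m$; the output is $K_j(\phi^{(1)}_r\cdot\phi^{(2)}_s)$ is frequency-localised near $2^{\max(r,s)}$, but since in the relevant regime $\phi^{(1)}_r$ is at frequency $2^r \gtrsim 2^j$ we take $j$ as the ``low'' frequency for the purposes of the $2^{-j/M}$ gain. Then one needs to control the norms of the translated pieces: $\|\phi^{(1)}_r(\cdot - 2^{-j}a)\|_{\frac{2M}{M-1},\frac{4M}{M-1}}$ is translation-invariant, and the same for the $\LOR$-version modulo an error $[\Om,\text{translation}]$ which, since the translation is on scale $2^{-j}\gg 2^{-r}$, produces a factor $\langle 2^{r-j}a\rangle$ — this is where we lose the small power $2^{\frac{21}{4M}(r-j)}$ (from $\LOR$ being order $2$ and the factors $2^{3\cdot/4M}$ from Bernstein in Lemma \ref{SIL}). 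Summing the absolutely convergent series $\sum_a |c_a|\langle a\rangle^{O(1)}$ gives the stated bound.

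The two terms in each conclusion correspond exactly to the two terms in the conclusion of Lemma \ref{SIL} (the close-angle / far-angle dichotomy there), with the substitution ``$k\mapsto j$, plus leakage $2^{\frac{21}{4M}(r-j)}$ or $2^{j-r}$'' carried through. For the first term I use that the angular derivative sits on $\phi^{(1)}_r$ and that in the close-angle regime one spends $2^{3r/2M}$ from Bernstein, giving $2^{3r/2M}2^{j-r}$ after the kernel is absorbed; for the second term the angular derivative is again on $\phi^{(1)}_r$ and the gain is the cleaner $2^{3j\frac{M-1}{4M}}2^{\frac{21}{4M}(r-j)}$. The main obstacle is the bookkeeping of the commutators $[\Om_{ij}, \tau_{2^{-j}a}]$ and $[L_n, \tau_{2^{-j}a}]$ where $\tau$ denotes translation, and verifying that the resulting polynomial-in-$a$ growth, combined with the $2^{r-j}$ or $2^{s-j}$ losses from moving $\LOR$ across a translation that is coarse relative to the function's oscillation scale, is exactly absorbed by the small exponent $\frac{21}{4M}$ and still leaves a convergent sum against the Schwarz decay of $k$. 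Everything else is a routine application of Lemma \ref{SIL} term by term.
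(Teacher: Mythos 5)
There is a genuine gap. Lemma \ref{SIL}, which you want to apply term by term to $\phi^{(1)}_r(\cdot-2^{-j}a)\cdot\phi^{(2)}_s(\cdot-2^{-j}a)\cdot\phi^{(3)}_m$, bases its entire spatial and angular decomposition on the lowest \emph{input} frequency. Translation does not change frequency support, so each translated term still lives at frequencies $r,s,m$, and Lemma \ref{SIL} would return gains in powers of $2^{\min(r,s,m)}\gtrsim 2^j$, together with an $L\phi^{(3)}_m$ correction weighted by $2^{\min(r,s)-m}$ rather than $2^{j-m}$. Since $j\leq r$ and $j$ is typically negative in the applications, these are strictly \emph{weaker} than what Corollary \ref{SIC} asserts, not equal to it. Your step ``we take $j$ as the low frequency'' is precisely the thing that needs proof and cannot be obtained by relabelling. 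There is also a hypothesis mismatch: Lemma \ref{SIL}(2) requires its lowest frequency to be $\lesssim m$, which for your translates becomes $\min(r,s)\lesssim m$; Corollary \ref{SIC} does not assume this, and some of its applications (e.g.\ in the proof of Proposition \ref{HWM1_prop}) have $r$ far above $m$.

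The deeper reason the translate-then-apply strategy fails is that the frequency-$2^j$ localisation of $K_j(\phi^{(1)}_r\cdot\phi^{(2)}_s)$ is a property of the \emph{sum} over translates, not of any individual translate. Once you separate the sum and estimate each term with Lemma \ref{SIL}, this cancellation/localisation is lost and you only see the input frequencies $r,s$. The paper never separates the kernel: it keeps $K_j$ intact, performs the spatial decomposition $\vp_{\ko}(x)$ (with $\ko\gg -j$) and the angular decomposition $\sg^\beta_{-(j+\ko)/3}$ directly on the \emph{output} of $K_j$, and then commutes these physical-space cutoffs through the convolution using Proposition \ref{comm_prop}, paying only exponentially small errors because the scale of the cutoffs ($\geq 2^{-j}$) is coarse relative to the kernel scale $2^{-j}$. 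Only after this commutation is complete does it apply Bernstein and the angular-separation lemma to $\phi^{(1)}_r$; the $j$-scale gain comes from applying Bernstein to the kernel $K_j$ (via Young's inequality at scale $2^j$) rather than to the factors themselves. None of this appears in your proposal; what you call the ``leakage'' factor $2^{\frac{21}{4M}(r-j)}$ in the paper arises from a careful split of the Bernstein gains between the $K_j$ scale and the $\phi^{(1)}_r$ scale, not from commutators $[\Om,\tau_{2^{-j}a}]$ as you suggest.
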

The proof of this corollary relies on the following simple proposition, the main point of which is that angular localisation passes through convolution up to exponentially decaying tails.
\begin{proposition}\label{comm_prop}
    Let $1\leq q\leq p\leq\infty$. Let $K_j$ be as in the corollary. Then the following commutator estimates hold for any $N\in\mathbb{N}$.
    \begin{enumerate}
        \item Let $l>\ko+5$. Then 
        $
        \|\varphi_{\ko}\cdot K_j(\varphi_l F)\|_{L^p_x}\lesssim_N 2^{-(l+j)N}2^{3(\f{1}{q}-\f{1}{p})j}\|F\|_{L^q_x}
        $.
        \item $\|\varphi_{\ko}\cdot K_j(\varphi_{<\ko-5} F)\|_{L^p_x}\lesssim_N 2^{-(\ko+j)N}2^{3(\f{1}{q}-\f{1}{p})j}\|F\|_{L^q_x}$.
        \item Let $r\geq-(j+\ko)/3+C_1$. Then 
        $$
        \|\sg^{\beta}_{-\f{(j+\ko)}{3}}(x)\varphi_{\ko}(x)\cdot K_j(\eta^{(r,l)}_r(x)\widetilde{\vp}_{\ko}(x) F)\|_{L^p_x}\lesssim_N 2^{-(j+\ko+r)N}2^{3(\f{1}{q}-\f{1}{p})j}\|F\|_{L^p_x}.
        $$
    \end{enumerate}
\end{proposition}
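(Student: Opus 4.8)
The plan is to prove each of the three estimates by reducing matters to a pointwise kernel bound and then applying Young's (or H\"older's) inequality. In all three cases the key is that $K_j$ is convolution with $k_j(x) = 2^{3j}k(2^jx)$ for a Schwarz function $k$, so its kernel decays rapidly on scales $\gg 2^{-j}$. In part (1), after writing $\varphi_{\ko}(x)\cdot K_j(\varphi_l F)(x) = \varphi_{\ko}(x)\int k_j(x-y)\varphi_l(y) F(y)\,dy$, I note that $|x|\sim 2^{\ko}$ on the support of $\varphi_{\ko}$ while $|y|\sim 2^l$ on the support of $\varphi_l$, and since $l>\ko+5$ this forces $|x-y|\sim 2^l$; using that $k$ is Schwarz (so $|k_j(x-y)|\lesssim_N 2^{3j}\langle 2^j(x-y)\rangle^{-N'}\lesssim 2^{3j}2^{-N'(l+j)}$ for $|x-y|\sim 2^l$ and large $N'$) and H\"older in $y$ on the ball $|y|\sim 2^l$ gives $\|\varphi_{\ko}\cdot K_j(\varphi_l F)\|_{L^p_x}\lesssim_N 2^{-(l+j)N}2^{3(\frac1q - \frac1p)j}\|F\|_{L^q_x}$ after absorbing the volume factors $2^{3l}$ into the rapidly decaying tail and accounting for the gain $2^{3(\frac1q-\frac1p)l}\leq 2^{3(\frac1q-\frac1p)j}$ from the finite measure of the support. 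Part (2) is essentially the same: on the supports $|x|\sim 2^{\ko}$, $|y|\lesssim 2^{\ko-5}$, so again $|x-y|\sim 2^{\ko}$, and the Schwarz decay of $k_j$ on this scale gives the stated bound with $2^{\ko}$ in place of $2^l$.

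Part (3) is the genuinely new one and will be the main obstacle, because the cutoff on the right of $K_j$ is $\eta^{(r,l)}_r(x)\widetilde\varphi_{\ko}(x)$, a \emph{physical}-space angular cutoff rather than the Fourier multiplier used elsewhere — I read $\eta^{(r,l)}_r(x)$ here as the function $\eta^{(r,l)}_r$ from Proposition \ref{partition_prop} evaluated in the physical variable, cutting off to a cap of angular radius $\sim 2^r$ around a direction $\alpha_{r,l}$ at angular distance $\sim 2^r$ from $\beta$. The point is geometric: on the support of $\sg^\beta_{-(j+\ko)/3}(x)\varphi_{\ko}(x)$ the direction $\hat x$ lies within angular distance $\lesssim 2^{-(j+\ko)/3}\ll 2^r$ of $\beta$, while on the support of $\eta^{(r,l)}_r(y)\widetilde\varphi_{\ko}(y)$ the direction $\hat y$ lies within $\lesssim 2^r$ of $\alpha_{r,l}$, and $\|\alpha_{r,l}\times\beta\|\simeq 2^r$ by part (2) of Proposition \ref{partition_prop}; combined with $|x|,|y|\sim 2^{\ko}$ this forces $|x-y|\gtrsim 2^{\ko+r}$ (the two points live in caps of the sphere of radius $2^{\ko}$ that are angularly separated by $\sim 2^r$). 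Then the Schwarz decay of $k_j$ on the scale $|x-y|\gtrsim 2^{\ko+r}$ yields $|k_j(x-y)|\lesssim_N 2^{3j}2^{-N'(j+\ko+r)}$, and Young's inequality (here with $p=q$, so just $L^1\to L^p$ boundedness of convolution against a function of $L^1$ norm $\lesssim 2^{-N'(j+\ko+r)}$ after the angular localisation, up to the $2^{3(\frac1q-\frac1p)j}$ factor which is $1$ when $p=q$) closes the estimate. The careful bookkeeping — verifying that the angular-cap geometry really does give $|x-y|\gtrsim 2^{\ko+r}$ using $r\geq -(j+\ko)/3 + C_1$ (so that the cap of $\hat x$ is strictly inside that angular scale), and checking that $N'$ can be taken large enough relative to the polynomial volume losses $2^{3\ko}$, $2^{-2(j+\ko)/3}$ etc. to absorb everything into $2^{-(j+\ko+r)N}$ — is the step I expect to require the most care, though it is routine once the geometric separation is established.

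In all three parts the proof thus has the same two-step structure: (i) establish a lower bound on $|x-y|$ valid on the product of the relevant supports (the only nontrivial such bound being the angular one in part (3)), and (ii) insert this into the rapid decay $|k_j(x-y)|\lesssim_N 2^{3j}(2^j|x-y|)^{-N''}$, then finish with H\"older/Young, throwing the finitely many polynomial powers of $2$ coming from volumes and from the ratio of exponents into the rapidly decaying tail by choosing the internal exponent $N''$ large in terms of the claimed $N$.
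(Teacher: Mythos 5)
Your proof follows exactly the paper's route: in each case you extract a lower bound on $|x-y|$ from the supports (the nontrivial one in part (3) coming from the angular separation $\|\widehat{(x-y)}\times\beta\|\sim 2^r$ versus $\|\hat{x}\times\beta\|\lesssim 2^{-(j+\ko)/3}\ll 2^r$ with $|x|,|x-y|\sim 2^{\ko}$), feed it into the Schwarz decay of $k_j$, and close with Young/H\"older. The only cosmetic difference is that you specialise to $p=q$ in part (3) because the statement reads $\|F\|_{L^p_x}$ there (almost certainly a typo for $L^q_x$), whereas the paper's argument keeps the general Young exponent $c$ with $1+\tfrac1p=\tfrac1c+\tfrac1q$; your method handles that case with no change.
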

\begin{proof}[Proof of Corollary \ref{SIC}]
We show only point (1), the adaptations for (2) being as in the previous proof. As in the proof of Lemma \ref{SIL} we decompose
    \begin{align*}              \|K_j(\pho_a\cdot\pht_b)\cdot\phth_m\|_{1,2}&\lesssim\|\cdot\|_{L^1_tL^2_x(|x|\lesssim 2^{-j})}+\|\cdot\|_{L^1_tL^2_x(|x|\gg 2^{-j})}=:(A)+(B)
    \end{align*}
Here (A) can be treated similarly to in the lemma, so we focus on (B).
Further decompose 
    \begin{align*}
        (B)\leq\sum_{\ko\gg -j}\sum_{\kt\in\Z}\|\cdot\|_{L^1_t L^2_x(|x|\sim 2^{\ko},|t|\sim 2^{\kt})}
    \end{align*}
Performing an angular decomposition in the physical variable as in the previous proof and moving the spatial localisation inside the convolution, we have
    \begin{align*}
        \|\cdot\|_{L^1_t L^2_x(|x|\sim 2^{\ko},|t|\sim 2^{\kt})}&\lesssim\left\|\left(\sum_{\beta\in\mathcal{S}_{j,\ko}}\left\|\sg^{\beta}_{-(j+\ko)/3}(x)\vp_{\ko}(x)\cdot K_j(\pho_a\cdot\pht_b)\cdot\phth_m\right\|_{L^2_x}^2\right)^\half\right\|_{L^1_t(|t|\sim 2^{\kt})}\\
        &\lesssim\left\|\left(\sum_\beta\left\|\sg^{\beta}_{-(j+\ko)/3}(x){\vp}_{\ko}(x)\cdot K_j(\vp_{[\ko-5,\ko+5]}\cdot\pho_a\cdot\pht_b)\cdot\phth_m\right\|_{L^2_x}^2\right)^\half\right\|_{L^1_t(|t|\sim 2^{\kt})}\\
        &\quad+\left\|\left(\sum_\beta\left\|\sg^{\beta}_{-(j+\ko)/3}(x){\vp}_{\ko}(x)\cdot K_j(\vp_{<\ko-5}\cdot\pho_a\cdot\pht_b)\cdot\phth_m\right\|_{L^2_x}^2\right)^\half\right\|_{L^1_t(|t|\sim 2^{\kt})}\\        &\quad+\left\|\left(\sum_\beta\left\|\sg^{\beta}_{-(j+\ko)/3}(x){\vp}_{\ko}(x)\cdot K_j(\vp_{>\ko+5}\cdot\pho_a\cdot\pht_b)\cdot\phth_m\right\|_{L^2_x}^2\right)^\half\right\|_{L^1_t(|t|\sim 2^{\kt})}
    \end{align*}
The second and third terms above are error terms bounded using Proposition \ref{comm_prop}. 
 For the first term we still have to exchange the angular localisation and the convolution. Denoting $\vp_{[\ko-5,\ko+5]}=:\widetilde{\vp}_{\ko}$, we have (again dropping the $``+C_2"$ notation)
\begin{align*}
    &\left\|\left(\sum_\beta\left\|\sg^{\beta}_{-(j+\ko)/3}(x){\vp}_{\ko}(x)\cdot K_j(\widetilde{\vp}_{\ko}\cdot\pho_a\cdot\pht_b)\cdot\phth_m\right\|_{L^2_x}^2\right)^\half\right\|_{L^1_t(|t|\sim 2^{\kt})}\nonumber\\
    &\lesssim \left\|\left(\sum_\beta\left\|\sg^{\beta}_{-(j+\ko)/3}(x){\vp}_{\ko}(x)\cdot K_j({\eta}^{\beta}_{-(j+\ko)/3}(x)\widetilde{\vp}_{\ko}(x)\cdot\pho_a\cdot\pht_b)\cdot\phth_m\right\|_{L^2_x}^2\right)^\half\right\|_{L^1_t(|t|\sim 2^{\kt})}\\
    &\quad+\sum_{\substack{r\gg-(j+\ko)/3\\l=1,\ldots,N}}\left\|\left(\sum_\beta\left\|\sg^{\beta}_{-(j+\ko)/3}(x){\vp}_{\ko}(x)\cdot K_j(\eta^{(r,l)}_r(x)\widetilde{\vp}_{\ko}(x)\cdot\pho_a\cdot\pht_b)\cdot\phth_m\right\|_{L^2_x}^2\right)^\half\right\|_{L^1_t(|t|\sim 2^{\kt})}
\end{align*}
Note that here the multipliers $\eta$ are acting on the space variable. 
 Here the second term is an error and again treated using Proposition \ref{comm_prop}, while for the main term we use the close/far angle decomposition from the previous proof on $\pho$ to reduce to 
\begin{align}
    &\left\|\left(\sum_\beta\left\|\sg^\beta_{-(j+\ko)/3}(x)\vp_{\ko}(x)K_j(\eta^{\beta}_{-(j+\ko)/3}(x)\widetilde{\vp}_{\ko}(x)\cdot\tilde{\eta}^{\beta}_{-(j+\ko)/3}(D)\pho_a\cdot\pht_b)\cdot\phth_m\right\|_{L^2_x}^2\right)^\half\right\|_{L^1_t}\tag{C-A}\label{C-A}\\
    &\quad+\sum_{\substack{r\gg-(j+\ko)/3\\l=1,\ldots,N}}\left\|\left(\sum_\beta\left\|\sg^\beta_{-(j+\ko)/3}(x)\vp_{\ko}(x)K_j(\eta^{\beta}_{-(j+\ko)/3}(x)\widetilde{\vp}_{\ko}(x)\cdot\tilde{\eta}^{(r,l)}_r(D)\pho_a\cdot\pht_b)\cdot\phth_m\right\|_{L^2_x}^2\right)^\half\right\|_{L^1_t}\tag{F-A}\label{F-A}
\end{align}
where $\tilde{\eta}^{\beta}_{-(j+\ko)/3}+\sum_{r,l}\tilde{\eta}^{(r,l)}_r$ is an angular decomposition as in Proposition \ref{partition_prop} with $\eta^{\beta}_{-(j+\ko)/3}$ playing the role of $\sg^\beta_{-(j+\ko)/3}$. The restriction to $|t|\sim 2^{\kt}$ is implicit.
For the close-angle term we proceed almost as in the proof of Lemma \ref{SIL}, however we must be careful applying Bernstein's inequality. We first apply Bernstein in the form of Young's inequality on the convolution $K_j$ then directly on the term $\tilde{\eta}^{\beta}_{-(j+\ko)/3}(D)\pho_a$ to find
\begin{align*}
    \eqref{C-A}&\lesssim\left\|\left(\sum_{\beta}(\|\sg^{\beta}_{-(j+\ko)/3}\phth_m\|_2\,2^{3j\f{M-8}{4M}}\|\tilde{\eta}^{\beta}_{-(j+\ko)/3}(D)\pho_a\cdot\pht_b\|_{\f{4M}{M-8}})^2\right)^\half\right\|_{L^1_t}\\
    &\lesssim2^{\kt/M}2^{3j\f{M-8}{4M}}2^{(3a-2(j+\ko)/3)\f{7}{4M}}\|\pho_a\|_{\f{2M}{M-1},\f{4M}{M-1}} \|\pht_b\|_{\f{2M}{M-1},\infty}\|\phth_m\|_{\infty,2}
\end{align*}
Summing first over $\kt\leq\ko$ this may be summed over $\ko\gg-j$ (due to our careful application of Bernstein) yielding
$$
2^{-j/M}2^{3j\f{M-1}{4M}}2^{3(a-j)\f{7}{4M}}\|\pho_a\|_{\f{2M}{M-1},\f{4M}{M-1}} \|\pht_b\|_{\f{2M}{M-1},\infty}\|\phth_m\|_{\infty,2}
$$
as required. We make the usual adaptation involving \eqref{P_mF_bound0} in the case $\kt>\ko$.

For the far-angle term, we apply the angular separation lemma, Lemma \ref{angular_sep_lem}, to bound
\begin{align*}
    \eqref{F-A}&\lesssim\sum_{r,l}\left\|\left(\sum_\beta(\|\sg^\beta_{-(j+\ko)/3}\phth_m\|_2
    \|\eta^{\beta}_{-(j+\ko)/3}(x)\widetilde{\vp}_{\ko}(x)\cdot\tilde{\eta}^{(r,l)}_r(D)\pho_a\|_{\infty}\|\pht_b\|_{\infty})^2\right)^\half\right\|_{L^1_t}\\
    &\lesssim 2^{\kt/M}\sum_{r,l}2^{-(a+\ko+2r)}2^{3a/2M}\|\LOR\pho_a\|_{\f{2M}{M-1},2M}\|\pht_b\|_{\f{2M}{M-1},\infty}\|\phth_m\|_{\infty,2}\\
    &\lesssim 2^{\kt/M}2^{-(a+\ko)}2^{2(j+\ko)/3}2^{3a/2M}\|\LOR\pho_a\|_{\f{2M}{M-1},2M}\|\pht_b\|_{\f{2M}{M-1},\infty}\|\phth_m\|_{\infty,2}
\end{align*}
which yields
$$
2^{-j/M}2^{j-a}2^{3a/2M}\|\LOR\pho_a\|_{\f{2M}{M-1},2M}\|\pht_b\|_{\f{2M}{M-1},\infty}\|\phth_m\|_{\infty,2}
$$
when summed over $\kt\leq\ko$, $\ko\gg-j$, and
$$
2^{-j/M}2^{j-a}2^{3a/2M}\|\LOR\pho_a\|_{\f{2M}{M-1},2M}\|\pht_b\|_{\f{2M}{M-1},\infty}(2^{j-m}\|L\phth_m\|_{\infty,2}+2^{-m}\|\dd_t\phth_m\|_{\infty,2})
$$
for $\kt>\ko$ after applying \eqref{P_mF_bound0}.
\end{proof}

\bigskip
\subsection{Showing that $P_0(HWM_1(\phi))=error$}\label{HWM1_chapter}
\hfill\\
The goal of this subsection is the following
\begin{proposition}\label{HWM1_prop}
We have
$$
P_0\bl(HWM_1(\phi))=error
$$
\end{proposition}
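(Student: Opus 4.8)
The plan is to exploit the geometric structure of $HWM_1(\phi)=\Phip[(\dphi)(\phi\cdot\dphi)]$. Since $\phi\in\Sp^2$ one has $\Phip=\mathrm{Id}-\phi\otimes\phi$ and $\phi\cdot\phi\equiv1$, so $\dhalf(\phi\cdot\phi)=0$ and
\[
\phi\cdot\dphi=\tfrac12\big(\phi\cdot\dphi+\dphi\cdot\phi-\dhalf(\phi\cdot\phi)\big)=:\tfrac12\,Q(\phi,\phi),\qquad Q(f,g):=f\dhalf g+g\dhalf f-\dhalf(fg).
\]
The symbol of $Q$ is $|\xi|+|\eta|-|\xi+\eta|$, which (together with its rescaled derivatives) obeys the pointwise bounds required of the multiplier $m_k$ in \eqref{Lc} with $k=\min(\ko,\kt)$ on $\mathrm{supp}\,\chi_{\ko}(\xi)\chi_{\kt}(\eta)$; hence $Q(\phi_{\ko},\phi_{\kt})$ is of the form $\Lc_{\min(\ko,\kt)}(\phi_{\ko},\phi_{\kt})$ and, by Lemma \ref{useful_lemma}, behaves like a derivative of the \emph{lower}-frequency factor times the higher-frequency one, leaving the high-frequency factor undifferentiated. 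This is the bilinear incarnation of the geometric cancellation \eqref{geometric_identity_4}. Writing $HWM_1(\phi)=(\dphi)(\phi\cdot\dphi)-(\phi\cdot\dphi)^2\phi$, every term thus contains the ``good'' scalar $\phi\cdot\dphi$ (the tail twice), while the remaining factor $\dphi$ is an ordinary $\na\phi$.

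First I would handle the vector fields. Applying $\bl$ and $\LOR$ and using the Leibniz rule together with the commutators \eqref{commr1}, \eqref{commr3} and the bilinear identities \eqref{L_LL}--\eqref{om_exp}, one distributes $L_n$ and $\omij$ across the three $\phi$-factors; the commutator remainders --- those from $[L_n,\dhalf]=R\dd_t$, $[L_n,P_k]=2^{-k}\dd_t\mathcal{P}_k$ and the extra $\Lc_{k-k_i}(\dd_t\,\cdot\,,\,\cdot\,)$ terms in \eqref{L_LL} --- all carry a spare $\dd_t$ that lands on a factor retaining its effective derivative order, and appear as convolution- or $\Lc$-operator expressions, so they are controlled by Corollary \ref{SIC}. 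This reduces matters to bounding, for each dyadic triple $(k_1,k_2,k_3)$, the $\|\cdot\|_{1,2}$ norm of $P_0$ of $(\dhalf\phi_{k_1})\cdot Q(\phi_{k_2},\phi_{k_3})$ (and of the analogous quintic piece), with at most one $L_n$ and one $\omij$ acting on the factors and absorbable into the $S_k$-norm of $\pl$.

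Next comes the frequency trichotomy forced by the outer $P_0$. If all inputs are $\gtrsim1$ ($\hhh$), no structure is needed: distribute the factors into the non-endpoint Strichartz spaces of \eqref{notation} via Corollary \ref{angular_strichartz_c}, exactly as in the $\hhh$ case of Proposition \ref{removing_trivial_terms}, and sum the resulting geometric series to get $\lesssim C_0^3 c_0\eps^2$. When the low frequency sits inside the scalar $\phi\cdot\dphi$ --- the $\lhh$ and $\lowestlh$ configurations that for wave maps were removed by the normal form, respectively gauge, transformations --- we instead invoke the $\Lc_{\min}$-structure of $Q$: the derivative is moved onto the low-frequency factor and the high-frequency factor inside $Q$ is freed of its derivative, so that a genuinely high factor becomes undifferentiated. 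Such terms are then admissible for plain (angular) Strichartz estimates via Corollary \ref{angular_strichartz_c}; the residual configurations reduce to the $\llowesth$ type, namely $(\dhalf\phi_{k_1})\cdot(\na\phi_{k_2})\cdot\phi_{k_3}$ with $k_1,k_2\lesssim k_3\sim0$, which is precisely the interaction beyond the reach of standard Strichartz in three dimensions. These are handled by the angular-separation trilinear estimate Lemma \ref{SIL} (in the relevant one of its four cases, according to where $L_n$ and $\omij$ land), supplemented by Corollary \ref{SIC} for the commutator errors; crucially, Lemma \ref{SIL} combines the spatial decay from angular separation (Lemma \ref{angular_sep_lem}, cf. the heuristic \eqref{obs}) with the time decay from the Lorentz boosts, and the low frequencies $k_1,k_2$ furnish the summability in those indices.

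The projection tail $-(\phi\cdot\dphi)^2\phi$ is treated identically and is in fact easier, carrying two $Q$-factors' worth of low-frequency gain in every configuration, so no endpoint space is ever needed. The main obstacle throughout is the bookkeeping created by the nonlocality of $\dhalf$: since $L_n$ and $\omij$ commute neither with $\dhalf$ nor with the Littlewood--Paley projections, each commutation spawns a family of error terms which must be recognized as $\Lc$- or convolution-operator expressions that retain the ``derivative-on-the-low-factor'' gain, so that Corollary \ref{SIC} (rather than the bare Lemma \ref{SIL}) applies; checking that the geometric cancellation survives every such manipulation, uniformly in the dyadic parameters, is the technical heart of the argument.
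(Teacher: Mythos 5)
You take a genuinely different but valid route to the paper's result. The paper decomposes the scalar $\phi\cdot\dphi$ asymmetrically into ``derivative-on-lower'' and ``derivative-on-higher'' parts (\eqref{mgmt1}, \eqref{mgmt2}), handling the difficult piece \eqref{mgmt2} through the three-step reduction \eqref{mgmt2a}--\eqref{mgmt2c} that culminates in the flip \eqref{geometric_identity_4}, and it keeps the projection abstract via Lemma \ref{projection_lemma}. You instead symmetrize from the outset, writing $\phi\cdot\dphi=\tfrac12 Q(\phi,\phi)$ with $Q(f,g)=f\dhalf g + g\dhalf f - \dhalf(fg)$, whose bilinear symbol $|\xi|+|\eta|-|\xi+\eta|$ and its rescaled derivatives are $O(2^{\min(k_1,k_2)})$ on each dyadic block; this packages the single underlying geometric fact $P_k(\phi\cdot\phi)=0$ into one $\Lc_{\min}$-type identity rather than a three-step decomposition, and it is conceptually cleaner. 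Both routes then feed into the same machinery: Lemma \ref{SIL} for the $\llowesth$ residuals where $\LOR L$ has fallen on the low factor (which cannot absorb angular derivatives in the non-standard-admissible Strichartz spaces), Corollary \ref{SIC} for the commutator debris of the non-translation-invariant vector fields, and the Fourier-series/translation apparatus \eqref{FS2}, \eqref{translation_bound} to unpack the $\Lc_{\min}$ operators. One genuine trade-off: since \eqref{geometric_identity_4} replaces a low factor by a high one, the paper's $m\leq-10$ regime avoids Lemma \ref{SIL} entirely, whereas your $Q$-gain keeps the low factor (now prefixed by $2^{k_1}$) and still needs it there.

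What is under-justified as written. The derivative bounds $|(2^{k_1}\na_\xi)^i(2^{k_2}\na_\eta)^j q|\lesssim 2^{\min(k_1,k_2)}$ are asserted, not checked: they do hold, but the second $\na_\eta$-derivative requires the cancellation $\na_\eta\hat\eta-\na_\eta\widehat{\xi+\eta}=O(|\xi|/|\eta|^2)$ rather than the crude $O(|\eta|^{-1})$, and without that observation the claim that $Q$ lands in the $\Lc_{\min}$-class of Lemma \ref{useful_lemma} would fail, since the crude bound gives $2^{\max}$. The frequency-envelope bookkeeping that furnishes the factor $\eps$ and the geometric decay in the outer dyadic index $m$ --- precisely the reduction to \eqref{sufficient0} --- is absent, yet without it ``$=error$'' is not established. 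And the quintic piece $-(\phi\cdot\dphi)^2\phi$, which the paper avoids creating by proving Lemma \ref{projection_lemma} once and reusing it, is dismissed in one sentence; it does work out (two $Q$-gains allow two low factors into $L^{2+}_tL^{\infty-}_x$ while the rest sit in $L^\infty_{t,x}$, balancing H\"older's condition $\sum 1/p_i=1$), but that requires checking and is not ``obviously easier'' than the cubic. These are fillable gaps rather than wrong turns, but they are exactly where the technical weight of the proof sits.
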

\bigskip
First note the following Moser-type estimate:
\begin{lemma}\label{Moser}
Let $g:\R^3\rightarrow \R^3$ be a smooth function with bounded derivatives up to order 4. Then for any $k\in\Z$ it holds
$$
\max_{(p,q)\in\mathcal{Q}}2^{(\f{1}{p}+\f{3}{q})k}\|P_k\LOR^{1-\dl(p,q)} L g(\phi)\|_{p,q}\lesssim_{\mathcal{Q}} \|\phi^L\|_S(1+\|\phi^L\|_S)^3
$$
for $\mathcal{Q}$ as in the definition of $S$. Moreover,
$$
\max_{(p,q)\in\mathcal{Q}}2^{(\f{1}{p}+\f{3}{q}-1)k}\|P_k\LOR^{1-\dl(p,q)} \dd_t g(\phi)\|_{p,q}\lesssim_{\mathcal{Q}}\|\phi\|_S(1+\|\phi\|_S)^3
$$
\end{lemma}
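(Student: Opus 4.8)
The plan is to reduce the nonlinear estimate to the multilinear structure already encoded in the $S$-norm by writing $g(\phi)$ via a telescoping/paraproduct decomposition in frequency. First I would set up the standard decomposition: writing $\psi_k := P_k\phi$ and using a telescoping identity, one has $g(\phi) = \sum_{k} \big(g(\phi_{<k+1}) - g(\phi_{<k})\big) = \sum_k \int_0^1 g'(\phi_{<k} + s\psi_k)\,ds \cdot \psi_k$, so that $P_{k_0}g(\phi)$ picks up only the pieces $g'(\phi_{<k}+s\psi_k)\psi_k$ with $k \gtrsim k_0$ (plus the trivial low-frequency piece which is handled directly since $\phi$ is bounded and constant at infinity). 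Then $P_{k_0}Lg(\phi)$, using the Leibniz rule for $L$ and the chain rule for $g'$, becomes a sum of terms in which one factor is $L\psi_k$ or $\psi_k$ and the remaining factors are $L\phi_{<k}$, $\phi_{<k}$, or derivatives of $g$ evaluated at the bounded quantity $\phi_{<k}+s\psi_k$; since $g$ has bounded derivatives up to order $4$ and $\phi$ is bounded in $L^\infty$ (this itself requires a short argument via Bernstein and $\sum_k 2^{k/2}\|\psi_k\|_{L^2_x}$-type control from the $S$-norm, or simply from $\phi\in\Sp^2$ — but here we should get $\|\LOR \phi\|_{L^\infty_tL^\infty_x}\lesssim 1$ from $\|\phi^L\|_S$ plus the commuting vector fields, which is exactly the subtlety flagged in the introduction), the factors involving $g^{(j)}$ contribute only bounded constants up to a possibly-large power, producing the $(1+\|\phi^L\|_S)^3$.

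Next I would distribute the angular derivative: for $(p,q)$ with $\dl(p,q)=0$ there is nothing extra, and for $(p,q)$ with $\dl(p,q)=1$ I would use the Leibniz rule for $\Om_{ij}$ to spread one angular derivative across the factors, together with Lemma \ref{angular_riesz} and Lemma \ref{monotoncity} to pass between $\Om_{ij}$ and $|\Om|$ as needed, and the fact (from Section \ref{ang_com_chap}) that $\Om$ acting on $g^{(j)}(\phi_{<k}+s\psi_k)$ again produces $g^{(j+1)}$ times $\Om$ of the argument. The main factor $P_{k_0}$-localised piece $L\psi_k$ (or $\LOR L\psi_k$) is then estimated in $L^p_tL^q_x$ by the definition of $\|P_k\phi^L\|_{S_k}$, i.e. $\|\LOR^{1-\dl}\na_{t,x}P_k L_n\phi\|_{p,q}\lesssim 2^{-(\f1p+\f3q-1)k}\|P_k\phi^L\|_{S_k}$, noting $\|\psi_k\|_{p,q}\lesssim 2^{-k}\|\na_{t,x}\psi_k\|_{p,q}$ via Bernstein since we are at positive frequency (with the low-frequency tail handled separately); the remaining bounded factors contribute $O\big((1+\|\phi^L\|_S)^3\big)$, and a Bernstein/Hölder argument reconstructs the full $L^p_tL^q_x$ norm. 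Summing over $k\gtrsim k_0$ with the frequency-gap factor $2^{(\f1p+\f3q)(k_0-k)}$ (which is summable since $(\f1p+\f3q)>0$ for admissible pairs) then gives the claimed bound after multiplying by $2^{(\f1p+\f3q)k_0}$; the $\dd_t g(\phi)$ statement is identical but simpler, using $\dd_t g(\phi) = g'(\phi)\dd_t\phi$ and placing $\dd_t\psi_k$ directly via the $S_k$-norm (no $L$, and the shift by $1$ in the exponent $\f1p+\f3q-1$ reflects that we differentiate once).

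The main obstacle is controlling the ``bounded'' factors $g^{(j)}(\phi_{<k}+s\psi_k)$ and especially $\LOR\phi$, $L\phi$ in $L^\infty_{t,x}$ uniformly: one cannot simply invoke $\phi\in\Sp^2$ since the $L_n$ and $\Om_{ij}$ derivatives of $\phi$ are not constrained by the sphere, so one needs a genuine estimate $\|\LOR L\phi\|_{L^\infty_tL^\infty_x}\lesssim \|\phi^L\|_S(1+\|\phi^L\|_S)$ obtained by summing Bernstein's inequality $\|\LOR P_k L_n\phi\|_{L^\infty_{t,x}}\lesssim 2^{3k/2}\|\LOR P_kL_n\phi\|_{L^\infty_tL^2_x}\lesssim 2^{k/2}\|P_k\phi^L\|_{S_k}$ over $k$ — which converges because $\|\phi^L\|_S=\sum_k\|P_k\phi^L\|_{S_k}$ controls a stronger sum than $\sum_k 2^{k/2}\|P_k\phi^L\|_{S_k}$ at high frequency, while at low frequency $2^{k/2}$ is itself summable — and this is precisely the point where working in Besov rather than Sobolev spaces (as flagged in the introduction) is essential. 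Once this $L^\infty$ control is in hand, the rest is bookkeeping of the paraproduct sum.
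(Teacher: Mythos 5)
Your overall strategy is correct and in the same spirit as the paper's proof, namely a paraproduct-style Moser argument in which the bounded factors $g^{(j)}(\cdot)$ contribute only constants and the $S$-norm of the remaining high-frequency factor gives the bound. The paper takes a slightly shorter route: it invokes the scalar Moser estimate in the form $\|P_kg(\phi)\|_{p,q}\simeq 2^{-k}\|P_k(\nabla\phi\cdot g'(\phi))\|_{p,q}\lesssim 2^{-(\f1p+\f3q)k}\|\phi\|_S(1+\|\phi\|_S)$ as a black box, then applies $\Om_{ij}L_n$ via the chain rule, splits the factor carrying the vector fields into $P_{<k-10}$ and $P_{\geq k-10}$ pieces, and reuses the scalar estimate on $g'(\phi)$. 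Your telescoping decomposition $g(\phi)=\sum_k\bigl(g(\phi_{<k+1})-g(\phi_{<k})\bigr)$ is a legitimate alternative and would work; it simply produces more bookkeeping. Both rely on the $L^\infty_{t,x}$ control of $\LOR L\phi$ by $\|\phi^L\|_S$, which you correctly identify as the crucial ingredient.

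There is, however, a concrete calculational slip that you then try to justify with an incorrect summation argument. You claim
$\|\LOR P_kL_n\phi\|_{L^\infty_{t,x}}\lesssim 2^{3k/2}\|\LOR P_kL_n\phi\|_{L^\infty_tL^2_x}\lesssim 2^{k/2}\|P_k\phi^L\|_{S_k}$,
i.e.\ $\|\LOR P_kL_n\phi\|_{\infty,2}\lesssim 2^{-k}\|P_k\phi^L\|_{S_k}$. This is not what the $S_k$-norm gives. From the definition with $(p,q)=(\infty,2)$, $\dl(\infty,2)=0$, one has $2^{k/2}\|\LOR\na_{t,x}P_kL_n\phi\|_{\infty,2}\lesssim\|P_k\phi^L\|_{S_k}$, and since $P_k$ localises, $\|\LOR P_kL_n\phi\|_{\infty,2}\lesssim 2^{-k}\|\LOR\na_{t,x}P_kL_n\phi\|_{\infty,2}\lesssim 2^{-3k/2}\|P_k\phi^L\|_{S_k}$. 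Bernstein then gives $\|\LOR P_kL_n\phi\|_{\infty,\infty}\lesssim\|P_k\phi^L\|_{S_k}$ with no $2^{k/2}$ gain or loss, and the sum over $k$ is simply $\|\phi^L\|_S$. Your subsequent assertion that $\|\phi^L\|_S$ ``controls a stronger sum than $\sum_k 2^{k/2}\|P_k\phi^L\|_{S_k}$ at high frequency'' is false; $\sum_k\|P_k\phi^L\|_{S_k}$ does not dominate a series with an extra factor of $2^{k/2}$. Fortunately the correct Bernstein computation removes the problematic factor entirely, so once you fix the exponent the argument closes without any convergence subtlety.
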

\begin{proof}
We first recall the standard form of this inequality, requiring only bounded derivatives up to second order:
    \begin{equation}\label{no_fields}
    \|P_k g(\phi)\|_{p,q}\simeq\tk\|P_k(\na\phi\cdot g'(\phi))\|_{p,q}\lesssim 2^{-(\f{1}{p}+\f{3}{q})k}\|\phi\|_S(1+\|\phi\|_S)
    \end{equation}
To incorporate the vector fields we apply the chain rule to find (omitting the angular derivative for a radially admissible pair)
    \begin{align}
    P_k\Om_{ij}L_n g(\phi)&=P_k(\omij L_n\phi \cdot g'(\phi))+P_k(L_n\phi\cdot\omij\phi\cdot g''(\phi))\label{big_expand}
    \end{align}
Then since $g'$ satisfies the hypotheses for \eqref{no_fields} we have
    \begin{align*}
        \|P_k(\omij L_n\phi\cdot g'(\phi))\|_{p,q}&\lesssim\|P_{<k-10}\omij L_n\phi\|_{\infty,\infty}\|P_{\sim k}g'(\phi)\|_{p,q}+\|P_{\geq k-10}\omij L_n\phi\|_{p,q}\| g'\|_{\infty}\\
        &\lesssim\|\phi^L\|_S\cdot2^{-(\f{1}{p}+\f{3}{q})k}\|\phi\|_S(1+\|\phi\|_S)+2^{-(\f{1}{p}+\f{3}{q})k}\|\phi^L\|_{S}
    \end{align*}
which is acceptable. A similar argument works for the remaining term in \eqref{big_expand} and the second estimate in the statement can be proved similarly.
\end{proof}

From this lemma we can deduce the following result which effectively allows us to ignore the projection when estimating $P_k(\Phip(\dhalf\phi))$:
\begin{lemma}\label{projection_lemma}
Let $\phi$ such that $\|\phi^L\|_S\leq1$. Then there exists a constant $C_{\mathcal{Q}}>0$ such that
$$
\max_{(p,q)\in\mathcal{Q}}2^{(\f{1}{p}+\f{3}{q}-1)k}\|\LOR^{1-\dl(p,q)} LP_k(\Pi_{\phi^{\perp}}(\dhalf\phi))\|_{p,q}\lesssim_{\mathcal{Q}} \sum_{\ko\in\Z}2^{-C_{\mathcal{Q}}|k-\ko|}\|P_{\ko}\phi^L\|_{\sko}
$$
Under the same conditions we also have
$$
\max_{(p,q)\in\mathcal{Q}}2^{(\f{1}{p}+\f{3}{q}-2)k}\|\dd_t\LOR^{1-\dl(p,q)} P_k(\Phip(\dhalf\phi))\|_{p,q}\lesssim_{\mathcal{Q}} \sum_{\ko\in\Z}2^{-C_{\mathcal{Q}}|k-\ko|}\|\phi_{\ko}\|_{\sko}
$$
\end{lemma}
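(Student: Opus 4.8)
The plan is to expand the projection using that $\phi$ is sphere-valued, so that
$$
\Phip(\dhalf\phi)=\dhalf\phi-(\phi\cdot\dhalf\phi)\phi ,
$$
and to handle the linear piece $\dhalf\phi$ and the cubic piece $F:=(\phi\cdot\dhalf\phi)\phi$ separately. The linear piece is immediate: since $\dhalf$ commutes with $P_k$ we have $LP_k\dhalf\phi=L\dhalf P_k\phi$, and the displayed linear estimate following \eqref{translation_bound} gives $\|\LOR^{1-\dl(p,q)}L\dhalf P_k\phi\|_{p,q}\lesssim 2^{(1-\f1p-\f3q)k}\|P_{\sim k}\pl\|_{S_{\sim k}}$ for $q\neq\infty$ (and for $q=\infty$, where $\dl=1$, the same bound after lowering $q$ by Bernstein), which is already of the claimed form once we dominate $\|P_{\sim k}\pl\|_{S_{\sim k}}$ by $\sum_{\ko}2^{-C|k-\ko|}\|P_{\ko}\pl\|_{S_{\ko}}$. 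Throughout I write $\theta:=\f1p+\f3q$, and I will lean heavily on the fact that radial admissibility forces $\theta\geq\f1p+\f1q>\half$.

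For the cubic piece $F$ I would first commute $L$ past $P_k$, writing $LP_kF=P_k(LF)+[L,P_k]F$ with $[L,P_k]F=2^{-k}\dd_t\mathcal{P}_kF=2^{-k}\mathcal{P}_k\dd_tF$ by \eqref{commr3}. Then, using the Leibniz rule, the relation $[L,\dhalf]=R\dd_t$ from \eqref{commr1} (so that $L$ hitting $\dhalf\phi$ produces only one genuine derivative), and the fact that $\Om$ commutes with $\dhalf$ and $R$, both $\LOR^{1-\dl(p,q)}P_k(LF)$ and $\LOR^{1-\dl(p,q)}2^{-k}\mathcal{P}_k\dd_tF$ become finite sums of trilinear terms $P_k\big(u^{(1)}_j\,v^{(2)}_m\,u^{(3)}_l\big)$ (resp. $2^{-k}\mathcal{P}_k(\cdots)$), where $u^{(1)},u^{(3)}\in\{\phi,L_n\phi\}$, where $v^{(2)}$ is one of $\dhalf\phi$, $\dhalf L_n\phi$, $R_n\dd_t\phi$ (and, in the commutator term only, $\dhalf\dd_t\phi$), and where across the three factors at most one Lorentz boost $L_n$ and at most one angular derivative $\Om_{ij}$ appear in total. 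For each such term I would run a Littlewood--Paley trichotomy in $(j,m,l)$: letting $2^a\geq 2^b$ be the two largest of $2^j,2^m,2^l$, if these are comparable the output $2^k$ ranges over $k\lesssim a$ and one needs genuine off-diagonal gain, whereas if the top frequency is strictly dominant then automatically $k=a+O(1)$. In every case the scheme is Hölder plus Bernstein: route the derivative-bearing high factor through the energy norm $L^\infty_tL^2_x$ (using $\|\na_{t,x}P_a\pl\|_{\infty,2}\lesssim2^{-a/2}\|P_a\pl\|_{S_a}$, hence $\|\dhalf P_a\phi\|_{\infty,2}\lesssim2^{-a/2}\|P_a\pl\|_{S_a}$ and $\|\dhalf\dd_tP_a\phi\|_{\infty,2}\lesssim2^{a/2}\|P_a\pl\|_{S_a}$), route the second factor of frequency $\sim2^a$ (if any) through the admissible norm via $\|P_a\pl\|_{p,q}\lesssim2^{-\theta a}\|P_a\pl\|_{S_a}$, put every remaining low-frequency factor in $L^\infty_tL^\infty_x$ where it costs at most $\|\pl\|_S\leq1$ (after Bernstein), and, when $k<a$, Bernstein the output $P_k$ back up from the Hölder space $L^p_tL^s_x$ with $\f1s=\f1q+\f12$ to $L^p_tL^q_x$ at the cost $2^{3k/2}$.

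I expect the arithmetic to give, in the high$\,\times\,$high regime, a bound of the shape $2^{(1-\theta)k}\,2^{-(\theta-\f12)(a-k)}\|P_a\pl\|_{S_a}$, where the exponent $\theta-\f12>0$ --- exactly the content of the strict radial-admissibility inequality $\f1p+\f1q>\half$ --- supplies the off-diagonal decay, while in the top-dominant case one gets simply $2^{(1-\theta)k}\|P_{\sim k}\pl\|_{S_{\sim k}}$ (here a low $\dhalf\phi$ is summed via $\sum_{l<k}2^l\|P_l\pl\|_{S_l}\lesssim2^k\|\pl\|_S$, and the factor $2^{-k}$ in the commutator term is exactly compensated by the extra $\dd_t$). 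Summing over $(j,m,l)$ --- geometric in $a-k$ from this gain, geometric in the low frequencies --- and dominating each surviving $\|P_a\pl\|_{S_a}$ with $a\gtrsim k$ by $\sum_{\ko}2^{-C|k-\ko|}\|P_{\ko}\pl\|_{S_{\ko}}$ will yield the first estimate. The second estimate I would prove by the same scheme but more easily: $\dd_t$ commutes with $P_k$, $\dhalf$, $\Om_{ij}$ and the angular cut-offs, so no commutator arises; one simply differentiates $\Phip(\dhalf\phi)=\dhalf\phi-(\phi\cdot\dhalf\phi)\phi$ and runs the same trilinear bound with the normalisation $2^{(\theta-2)k}$ (the extra $2^{-k}$ absorbing the extra $\dd_t$-derivative), the right-hand side now involving only the $\|\phi_{\ko}\|_{S_{\ko}}$ since no Lorentz boosts enter.

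The step I expect to fight with is the commutator term $[L,P_k]F=2^{-k}\mathcal{P}_k\dd_tF$: expanding $\dd_tF$ by Leibniz generates a factor $\dhalf\dd_t\phi$ carrying \emph{two} derivatives, and naively this looks like a loss of $2^{a-k}$ when that factor sits at a high frequency $2^a>2^k$. The resolution --- which I want to get right --- is that the output frequency $k$ is strictly below the interacting frequency $a$, so the $P_k$-Bernstein step genuinely gains, and that one is forced to route exactly one of the two high factors through the non-$L^2$ Strichartz norm $L^p_tL^q_x$ (there is no choice: $(p,2)$ is never admissible), whereupon the surplus is recouped precisely because $\theta>\half$. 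Matching up the nonlocal $\dhalf$ against $P_k$, $L_n$ and the angular localisations cleanly is the fiddly bookkeeping; everything else reduces to Hölder and Bernstein applied to the linear estimates of Section \ref{preliminaries_section} together with the Moser-type bound Lemma \ref{Moser}.
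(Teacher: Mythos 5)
Your proposal takes a genuinely different route from the paper's. The paper writes $\Pi_{\phi^{\perp}}(\dhalf\phi)=\dhalf\phi-\big(g(\phi)\cdot\dhalf\phi\big)g(\phi)$ with $g(\phi)=\phi/\|\phi\|$, and treats the quadratic factor $G(\phi)=g(\phi)\,g(\phi)$ as a black box, invoking the Moser estimate Lemma~\ref{Moser} (proved just above) to get $\|\LOR L\,G(\phi)\|_{\infty,\infty}\lesssim1$ and the frequency-localised variant $\|P_{\sim k}\LOR L\,G(\phi)\|_{p,q}\lesssim 2^{-(\f{1}{p}+\f{3}{q})k}$. This reduces the problem to an essentially \emph{bilinear} $\dhalf\phi\times G(\phi)$ interaction, whose low/medium/high trichotomy in the $\dhalf\phi$ frequency produces the envelope $\sum_{\ko}2^{-C|k-\ko|}\|P_{\ko}\phi^L\|_{S_{\ko}}$. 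You instead assume from the outset that $|\phi|\equiv1$, which lets you replace $g(\phi)$ by $\phi$ and run a bare-hands \emph{trilinear} Littlewood--Paley decomposition, bypassing Moser entirely. Arithmetically this does close: your quoted off-diagonal gain $\theta-\half$ is exactly right for the commutator piece $2^{-k}\mathcal{P}_k\dd_t F$ (which carries $\dhalf\dd_t\phi$ at high frequency), and the non-commutator piece is in fact better (gain $\theta+\half$), so all the sums converge and the product of two frequency envelopes is dominated by one envelope times $\|\phi^L\|_S\leq1$ as required. The trade-off is twofold: (i) the Lemma as stated does not assume $\phi$ is sphere-valued --- it only assumes $\|\phi^L\|_S\leq1$ --- so your reduction $\Pi_{\phi^{\perp}}(\dhalf\phi)=\dhalf\phi-(\phi\cdot\dhalf\phi)\phi$ proves a strictly narrower statement than what's asserted (harmless for the paper's one application to an actual half-wave map, but worth flagging); and (ii) the paper's Moser route is more robust because the fully nonlinear $g(\phi)$ cannot be Littlewood--Paley decomposed factorwise --- if the target were not the sphere, or if one later needs this estimate for iterates not lying exactly on the sphere (as in the local theory of Section~\ref{LWP_proof}), the paper's version survives and yours does not.
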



Our final preparation for the proof of Proposition \ref{HWM1_prop} is the following lemma, a variant of Lemma 4.3 \cite{Tao_1}, which allows us to apply the geometric identity \eqref{geometric_identity_4} in a more general setting:
\begin{lemma}\label{moving_loc_lemma}
Let $r\in\Z$, $p,q\geq1$ with $p^{-1}=p_1^{-1}+p_2^{-1}$ and $q^{-1}=q_1^{-1}+q_2^{-1}$. 
It holds
\begin{equation*}
u(x)\cdot P_rv(x)-P_r(u\cdot v)(x)=2^{-r}\int_y\int_{\tht=0}^1(\check{\chi}_r(y)(2^ry)^T)\na u(x-\tht y) v(x-y)dyd\tht
\end{equation*}
from which
\begin{align*}
\|u\cdot P_rv-P_r(u\cdot v)\|_{q}\lesssim2^{-r}\|\na u\|_{q_1}\|v\|_{q_2}
\end{align*}
and more generally
\begin{align*}
    \|\LOR L[u\cdot P_r v-P_r(u\cdot v)]\|_q\lesssim 2^{-r}\|\LOR\na_{t,x}u^L\|_{q_1}\|\LOR v^L\|_{q_2}+2^{-2r}\|\LOR\na u\|_{q_1}\|\LOR\dd_t v\|_{q_2}
\end{align*}
These statements also hold for $\Pp_r$ as in Section \ref{ang_com_chap}.
\end{lemma}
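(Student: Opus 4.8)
The plan is to start from the exact identity and then extract the two estimates by differentiation. For the identity, write $P_r v(x) = \int_y \check\chi_r(y) v(x-y)\,dy$ and use that $\int_y \check\chi_r(y)\,dy = \chi_r(0)\cdot(\text{normalization})$; more precisely, since $u(x)P_rv(x) = \int_y \check\chi_r(y) u(x) v(x-y)\,dy$, we get $u(x)P_rv(x) - P_r(u\cdot v)(x) = \int_y \check\chi_r(y)\,[u(x) - u(x-y)]\,v(x-y)\,dy$, and then the fundamental theorem of calculus $u(x)-u(x-y) = \int_0^1 y^T\na u(x-\theta y)\,d\theta$ gives the claimed formula after writing $y = 2^{-r}(2^r y)$ to match the normalization of $\check\chi_r$. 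The first (undifferentiated) bound then follows immediately: by Minkowski's integral inequality in $y$ and $\theta$, H\"older with exponents $q_1,q_2$, translation invariance of the $L^{q_i}_x$ norms, and the fact that $\|\,\check\chi_r(y)(2^ry)^T\|_{L^1_y}\lesssim 1$ uniformly in $r$ (rescaling $\check\chi_r(y) = 2^{3r}\check\chi(2^ry)$ and changing variables).

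For the vector-field estimate, I would apply $L$ (i.e.\ one of $1, L_1,L_2,L_3$) and $\LOR$ (i.e.\ $1$ or an $\Om_{ij}$) to the integral representation. The point is that both families of operators act cleanly on the representation formula because the integration variables $x-\theta y$, $x-y$ are translates of $x$. For $\Om_{ij}$ we use that it is a first-order operator satisfying the Leibniz rule, and that $\Om_{ij}$ applied to a function of $x-y$ produces $\Om_{ij}$-of-the-argument plus lower-order terms in $y\partial$; schematically $\Om_{ij}^{(x)}[f(x-y)] = (\Om_{ij}f)(x-y) + (\text{terms with } y_\bullet \partial_\bullet f(x-y))$, and those extra $y$'s are absorbed by the decaying kernel at the cost of at most one more power of $2^{-r}$. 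For $L_n = x_n\dd_t + t\dd_{x_n}$ the $t\dd_{x_n}$ part is translation invariant and passes through, while the $x_n$ in $x_n\dd_t$ must be split as $x_n = (x-y)_n + y_n$: the $(x-y)_n$ piece reconstructs $L_n$ acting on the argument, and the $y_n\dd_t$ piece again produces an extra factor absorbed by the kernel, this time hitting $\dd_t v$ (or $\dd_t u$) — this is the origin of the $2^{-2r}\|\LOR\na u\|_{q_1}\|\LOR\dd_t v\|_{q_2}$ term. One must also commute $\na$ (from the $\na u$ already present) past $\Om$ and $L$ using the relations $[\Om,\dd]=\dd$, $[L,\dd]=\dd$ from \eqref{commr1}, which only reshuffles terms of the same type. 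Collecting all contributions and estimating each by Minkowski + H\"older + translation invariance as before, with $\|(2^ry)^j\check\chi_r(y)\|_{L^1_y}\lesssim 1$ for bounded $j$, yields the stated bound.

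The main obstacle — or rather the only place requiring genuine care — is bookkeeping the commutators of $\Om_{ij}$ and $L_n$ with the kernel convolution and with each other, ensuring that every ``extra'' $y$ produced is matched against a factor of $2^r$ from the rescaled kernel (so the net power of $2^{-r}$ is correct: $2^{-r}$ for the $\na u$ terms and $2^{-2r}$ for the $\dd_t v$ terms), and that applying $L$ and $\LOR$ simultaneously does not generate a term with two vector fields on a single factor that cannot be controlled — but since each of $L$, $\LOR$ contributes at most one derivative to each of $u$ and $v$, the worst case is $\LOR L$ distributed one field to $u^L$-type norm and one to $v^L$-type norm, exactly as in the statement. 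The extension to $\Pp_r$ is immediate since the argument used only that $\Pp_r$ is convolution against a kernel of the form $2^{3r}\check{\chi}^{(\Pp)}(2^r\cdot)$ with $\chi^{(\Pp)}$ smooth and compactly supported, so $\|(2^ry)^j\check{\chi}^{(\Pp)}_r(y)\|_{L^1_y}\lesssim_j 1$ still holds.
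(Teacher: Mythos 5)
Your derivation of the integral identity and the basic (undifferentiated) estimate is correct and matches the paper. However, there is a genuine gap in your treatment of the vector-field estimate, specifically in how you handle $\Om_{ij}$.

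When you pass $\Om_{ij}^{(x)}$ inside the integral representation and say the extra $y\cdot\partial$ terms are ``absorbed by the kernel at the cost of at most one more power of $2^{-r}$'', the resulting bound involves a factor of $\|\na v\|_{q_2}$ (from $\Om_{ij}^{(x)}[v(x-y)]$) and of $\|\na^2 u\|_{q_1}$ (from $\Om_{ij}^{(x)}[\na u(x-\theta y)]$). Neither of these appears on the right-hand side of the stated estimate, and neither is controlled by $\|\LOR v^L\|_{q_2}$ or $\|\LOR\na_{t,x}u^L\|_{q_1}$ for general $v$, $u$. These ``bad'' pieces are not separately small; rather, the $y_\bullet\partial_\bullet v$ piece and the $\theta y_\bullet\partial_\bullet\na u$ piece must \emph{combine}, reducing to a first-order $[\Om,\na]u$ term, precisely because $\Om_{ij}$ commutes exactly with the radial Fourier multiplier $P_r$. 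Your write-up does not exhibit this cancellation, and the heuristic as stated proves a weaker (and insufficient) bound. A similar issue would arise in your handling of $L_n$: splitting $x_n = (x-\theta y)_n + \theta y_n$ on the $\dd_t\na u$ piece produces $2^{-2r}\|\dd_t\na u\|\|v\|$, which again is not a term in the statement.

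The paper avoids all of this by working algebraically on the commutator before invoking the integral formula: it first distributes $\Om$ across $u\cdot P_r v - P_r(u\cdot v)$ using $[\Om,P_r]=0$ (so the result is a sum of two commutators of the same shape, one with $\Om u$ and one with $\Om v$), then distributes $L$ across each by the Leibniz rule together with $[L,P_r]=2^{-r}\dd_t\Pp_r$, and only then applies the basic estimate to each of the finitely many resulting commutator expressions. The commutation relations $[\Om,P_r]=0$ and $[L,P_r]=2^{-r}\dd_t\Pp_r$ are exactly the input that makes the spurious $\na v$, $\na^2 u$, and $\dd_t\na u$ terms not appear. Your route can be made to work, but only if you explicitly carry out the cancellation in the $y$-integral rather than naively taking absolute values on the extra $y\cdot\na$ pieces.
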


Henceforth we will use the following shorthand, adding to that introduced in \eqref{notation}:
\begin{gather}
    \begin{aligned}
        &(2+,\infty):=(\f{2M}{M-1},\infty), && \|\LOR P_k\phi^L\|_{2+,\infty}\lesssim 2^{-(\half-\f{1}{2M})k}\|P_k\phi^L\|_{S_k}\\
        &(2+,4+):=(\f{2M}{M-1},\f{4M}{M-1}), && \|\LOR P_k\phi^L\|_{2+,4+}\lesssim 2^{-(\half-\f{1}{2M})k}2^{-3\f{M-1}{4M}k}\|P_k\phi^L\|_{S_k}
    \end{aligned}
\end{gather}

\bigskip

\begin{proof}[Proof of Proposition \ref{HWM1_prop}]
First note that it is sufficient to find some $\dl>\la>0$ such that
\begin{align}
\|\LOR  P_0L[P_m(\Phip(\dhalf\phi))\text{ }(\phi\cdot\dhalf\phi)]\|_{1,2}\lesssim C_0^3\eps^22^{-\dl|m|}\sum_{k\in\Z}2^{-\la|k-m|}c_k\label{sufficient0}
\end{align}
for every $m\in\Z$, provided we then fix $\sg<\la$.

We start by studying \eqref{sufficient0} with $m>-10$. Further decompose
\begin{align}
&\|\LOR  P_0L[P_{m}(\Phip(\dhalf\phi))\text{ }(\phi\cdot\dhalf\phi)]\|_{1,2}\nonumber\\
&\lesssim\|\LOR  L[P_{m}(\Phip(\dhalf\phi))\text{ }\sum_{k\in\Z}(\phi_k\cdot\dhalf\phi_{<k+10})]\|_{1,2}\label{mgmt1}\\
&\quad+\|\LOR  L[P_{m}(\Phip(\dhalf\phi))\text{ }\sum_{j\in\Z}(\phi_{\leq j-10}\cdot\dhalf\phi_j)]\|_{1,2}\label{mgmt2}
\end{align}

We first study \eqref{mgmt1}. For the sum over $k\geq-10$ we use Lemma \ref{projection_lemma} to see that
\begin{align*}
&\sum_{k\geq-10}\|\LOR  L[P_{m}(\Phip(\dhalf\phi))\text{ }(\phi_k\cdot\dhalf\phi_{<k+10})]\|_{1,2}\\
&\lesssim \|\LOR  L(P_{m}(\Phip(\dhalf\phi)))\|_{\infty-,2+}\sum_{k\geq-10}\|\LOR L \pk\|_{2+,\infty-}\|\LOR L\dhalf \phi_{< k+10}\|_{2+,\infty-}\\
&\lesssim 2^{-(\half-\f{2}{M})m}C_0^3\eps^2\sum_{\ko}2^{-\la|m-\ko|}c_{\ko}
\end{align*}
for some $\la>0$.

The case $k<-10$ is handled by a direct application of Lemma \ref{SIL}. For example if $\LOR $ and $L $ both fall on $\phi_k$ we have by point 2 of said lemma that
\begin{align*}
&\sum_{k<-10}\|P_{m}(\Phip(\dhalf\phi))\text{ }(\LOR L \phi_k\cdot\dhalf\phi_{<k+10})\|_{1,2}\\
&\lesssim\sum_{k<-10}\sum_{j<k+10}2^{(\half-\f{1}{2M})(j-k)}C_0^2c_jc_k(\|P_m(\Phip(\dhalf\phi))\|_{\infty,2}\\
&\quad\quad\quad\quad\quad+2^{j-m}\|LP_m(\Phip(\dhalf\phi))\|_{\infty,2}+2^{-m}\|\dd_tP_m(\Phip(\dhalf\phi))\|_{\infty,2})
\end{align*}
which, again thanks to Lemma \ref{projection_lemma}, is bounded by
$$
C_0^3\eps^22^{-m/2}\sum_{k_1}2^{-\la|m-k_1|}c_{k_1}
$$
for some $\la>0$. When $\LOR $ and $L$ distribute in other combinations, we apply the other parts of Lemma \ref{SIL}.

We now turn to \eqref{mgmt2}, in which there are no derivatives falling on the lowest frequency factor. To handle this delicate situation we use the geometric relation \eqref{geometric_identity_4}.
As an example we consider only the case where $L$ and $\Om$ both fall on the product $\phi_{\leq j-10}\cdot\dphi_j$. This case presents the most technical difficulties from interchanging $L$ with the nonlocal derivative $\dhalf$ and the frequency projections. We have 
\begin{align}
&\|P_m(\Phip(\dhalf\phi))\text{ }\sum_{j\in\Z}\LOR L(\phi_{\leq j-10}\cdot\dhalf\phi_j)\|_{1,2}\nonumber\\
&\leq\sum_{j\in\Z}\|P_m(\Phip(\dhalf\phi))\text{ }\LOR L(\phi_{\leq j-10}\cdot\dhalf\phi_j-\dhalf(\phi_{\leq j-10}\cdot\pj))\|_{1,2}\label{mgmt2a}\\
&\hspace{2em}+\|P_m(\Phip(\dhalf\phi))\text{ }\LOR L\dhalf(\phi_{\leq j-10}\cdot\phi_j-P_j(\phi_{\leq j-10}\cdot\phi_{>j-10}))\|_{1,2}\label{mgmt2b}\\
&\hspace{2em}+\|P_m(\Phip(\dhalf\phi))\text{ }\LOR L\dhalf P_j(\phi_{\leq j-10}\cdot\phi_{>j-10})\|_{1,2}\label{mgmt2c}
\end{align}
The idea for handling these terms is that the first two effectively see a derivative moved onto the low frequency factor, and for the third we can apply \eqref{geometric_identity_4}. We start by rewriting \eqref{mgmt2a} as
$$
\sum_{j\in\Z}\sum_{k\leq j-10}\|P_m(\Phip(\dhalf\phi))\text{ }\LOR L\Lc_k(\pk,\pj)\|_{1,2}
$$
which is similar to \eqref{mgmt1}. Indeed when $j\geq-10$ we can bound this using the same estimates as for \eqref{mgmt1} combined with identities \eqref{L_LL}-\eqref{om_exp} and Lemma \ref{useful_lemma}. In the case $j<-10$ we need a small adaptation before applying Lemma \ref{SIL} due to the nonlocal operator $\Lc_k$. As in \eqref{FS2} we expand the corresponding multiplier $m_k$ as a Fourier series to write
\begin{align*}
\Lc_k(\pk,\pj)=\sum_{a,b}c^{(k)}_{a,b}\pk(x-2^{-k}a)\cdot\pj(x-2^{-j}b)
\end{align*}
Since we are applying Lemma \ref{SIL} we must then pay attention to where the vector derivatives fall on this expression. For example if both fall on the low frequency term $\pk$ we have to bound
\begin{align}
    \sum_{\substack{j\leq-10\\k\leq j-10}}\sum_{a,b}|c^{(k)}_{a,b}|\|P_m(\Phip(\dhalf\phi))\ \LOR L(\pk(x-2^{-k}a))\cdot\pj(x-2^{-j}b)\|_{1,2}\label{have_to}
\end{align}
Then by point 1 of the Lemma we have
\begin{align*}
    &\|P_m(\Phip(\dhalf\phi))\ \LOR L(\pk(x-2^{-k}a))\cdot\pj(x-2^{-j}b)\|_{1,2}\\
    &\lesssim2^{-k/M}\|\LOR L(\pk(x-2^{-k}a))\|_{2+,\infty}(2^{3j/2M}\|\LOR(\pj(x-2^{-j}b))\|_{2+,\infty-}+2^{3j\f{M-1}{4M}}\|\pj\|_{2+,4+})\\
    &\hspace{25em}\cdot(\|P_m(\Phip(\dhalf\phi))\|_{\infty,2}+\ldots)\\
    &\lesssim 2^{-k}2^{(\half-\f{1}{2M})(k-j)}\langle a \rangle^2\langle b \rangle ^2C_0^3c_jc_k\cdot 2^{-m/2}\sum_{\ko}2^{-\la|m-\ko|}c_{\ko}
\end{align*}
where we used \eqref{translation_bound} to achieve the final line. This is acceptable when summed as in \eqref{have_to}.

For \eqref{mgmt2b} we first consider $j>-10$, using the commutation relations $[\Om,L]\simeq L$, $[L,\dhalf]\simeq R\dd_t$ and $[\Om,R]\simeq R$ to find
\begin{align}
&\sum_{j>-10}\|P_m(\Phip(\dhalf\phi))\text{ }\LOR L\dhalf(\phi_{\leq j-10}\cdot\phi_j-P_j(\phi_{\leq j-10}\cdot\phi_{>j-10}))\|_{1,2}\nonumber\\
&\lesssim\sum_{j>-10}\|P_m(\Phip(\dhalf\phi))\text{ }\dhalf \LOR  L(\phi_{\leq j-10}\cdot \phi_j-P_j(\phi_{\leq j-10}\cdot\phi_{>j-10}))\|_{1,2}\label{pp'}\\
&\quad\quad\quad+\|P_m(\Phip(\dhalf\phi))\text{ }R\LOR \dd_t(\phi_{\leq j-10}\cdot\phi_j-P_j(\phi_{\leq j-10}\cdot\phi_{>j-10}))\|_{1,2}\label{ppp'}
\end{align}
These terms can both be handled using Lemma \ref{moving_loc_lemma}. For \eqref{pp'} we have
\begin{align*}
    \eqref{pp'}&\lesssim\sum_{j>-10}\|P_m(\Phip(\dhalf\phi))\|_{\infty-,2+}\|\dhalf\LOR L(\phi_{\leq j-10}\cdot \phi_j-P_j(\phi_{\leq j-10}\cdot\phi_{\sim j}))\|_{\f{M}{M-1},M}\\
    &\lesssim\sum_{j>-10}\|P_m(\Phip(\dhalf\phi))\|_{\infty-,2+}\cdot 2^j(2^{-j}\|\LOR\na_{t,x} L\phi_{\leq j-10}\|_{2+,\infty-}\|\LOR L\phi_{\sim j}\|_{2+,\infty-}\\
    &\hspace{21em}+2^{-2j}\|\LOR\na\phi_{\leq j-10}\|_{2+,\infty-}\|\LOR\dd_t\phi_{\sim j}\|_{2+,\infty-})\\
    &\lesssim\sum_{j>-10}2^{-(\half-\f{2}{M})m}\left(\sum_{\ko}2^{-\la|m-\ko|}C_0c_{\ko}\right)\cdot 2^{-2j/M}C_0^2\eps^2
\end{align*}
which is acceptable. \eqref{ppp'} is similar.

For \eqref{mgmt2b} with $j\leq-10$ we will use Lemma \ref{SIL} but must incorporate the cancellation structure via Lemma \ref{moving_loc_lemma}. We write, for $\Phi_j(\xi):=|2^{-j}\xi|\tilde{\chi}_0(2^{-j}\xi)$, $\tilde{\chi}_0$ as in Section \ref{notation_section},
\begin{multline*}
\dhalf(\phi_{\leq j-10}\cdot\phi_j-P_j(\phi_{\leq j-10}\cdot\phi_{>j-10}))\\
=\int_{z,y,\tht}\check{\Phi}_j(y)(2^jz\check{\chi}_j(z))\na\phi_{\leq j-10}(x-y-\tht z)\cdot\phi_{\sim j}(x-y-z)dyd\tht dz
\end{multline*}
Thus by Lemma \ref{SIL}, using the notation $\LOR_x$ and $L_x$ to emphasise that these fields act with respect to the $x$ variable only, we have
\begin{align*}
&\sum_{j\leq-10}\|P_m(\Phip(\dhalf\phi))\ \LOR L\dhalf(\phi_{\leq j-10}\cdot\phi_j-P_j(\phi_{\leq j-10}\cdot\phi_{>j-10}))\|_{1,2}\\
&\lesssim\sum_{j\leq-10}
\int_{z,y,\tht}dyd\tht dz|\check{\Phi}_j(y)(2^jz\check{\chi}_j(z))|\\
&\quad\quad\quad\quad\quad\cdot\|P_m(\Phip(\dhalf\phi))(x)\ \LOR_x L_x[\na \phi_{\leq j-10}(x-y-\tht z)]\cdot\phi_{\sim j}(x-y-z)\|_{L^1_tL^2_x}\\
&\quad+\text{similar terms}\\
&\lesssim\sum_{\substack{j\leq-10\\k\leq j-10}}\int_{z,y,\tht}dyd\tht dz|\check{\Phi}_j(y)(2^jz\check{\chi}_j(z))|\cdot2^{-k/M}\|\LOR_x L_x(\na \pk(x-y-\tht z))\|_{2+,\infty}\\
&\quad\quad\cdot(2^{3j/2M}\|\LOR_x(\phi_{\sim j}(x-y-z))\|_{2+,\infty-}+2^{3j\f{M-1}{4M}}\|\phi_{\sim j}\|_{2+,4+})(\|P_m(\Phip(\dhalf\phi))\|_{\infty,2}+\ldots)\\
&\quad+\text{similar terms}
\end{align*}
Then using \eqref{translation_bound} we bound this by
\begin{align*}
    &\sum_{\substack{j\leq-10\\k\leq j-10}}2^{-m/2}2^{(\half-\f{1}{2M})(k-j)}C_0^3c_jc_k\left(\sum_{\ko}2^{-\la|m-\ko|}c_{\ko}\right)\\
    &\quad\quad\quad\quad\cdot\int_{z,y,\tht}|\check{\Phi}_j(y)(2^jz\check{\chi}_j(z))|\langle 2^k(y+\tht z)\rangle^2\langle 2^j(y+z)\rangle^2dyd\tht dz
\end{align*}
Thanks to the scaling of $\check{\Phi}_j$ and $\check{\chi}_j$, both of which are rapidly decaying, we see that the integral above is $O(1)$ and this term is acceptable.

To complete the case $m>-10$ we need to study \eqref{mgmt2c}. Applying \eqref{geometric_identity_4} and commuting $\LOR  L$ through $\dhalf P_j$ we have
\begin{align}
&\sum_{j\in\Z}\|P_m(\Phip(\dhalf\phi))\text{ }\LOR  L \dhalf P_j(\phi_{\leq j-10}\cdot\phi_{>j-10})\|_{1,2}\nonumber\\
&\lesssim\sum_{j\in\Z}\|P_m(\Phip(\dhalf\phi))\text{ } \dhalf P_j\LOR  L(\phi_{> j-10}\cdot\phi_{>j-10})\|_{1,2}\label{oooo1}\\
&\quad\quad\quad+\|P_m(\Phip(\dhalf\phi))\text{  }RP_j\LOR  \dd_t(\phi_{> j-10}\cdot\phi_{>j-10})\|_{1,2}\label{tttt2}\\
&\quad\quad\quad+\|P_m(\Phip(\dhalf\phi))\text{ }\dhalf (2^{-j}\dd_t\Pp_j)\LOR (\phi_{> j-10}\cdot\phi_{>j-10})\|_{1,2}\label{tttt3}
\end{align}
This is easiest to handle when $j\geq-10$. For instance the sum of \eqref{oooo1} over $j\geq-10$ is bounded by
\begin{align*}
\sum_{j\geq-10}\|P_m(\Phip(\dhalf\phi))\|_{\infty-,2+}\cdot 2^j\|\LOR  L\phi_{>j-10}\|_{2+,\infty-}\|\LOR  L\phi_{>j-10}\|_{2+,\infty-}
\end{align*}
which is fine. The commutator terms \eqref{tttt2} and \eqref{tttt3} correspond to $\hhh$ interactions and can be treated like \eqref{term3}. 

For $j<-10$ we use Corollary \ref{SIC} with $K_j=2^{-j}\dhalf P_j$. 
For \eqref{oooo1} we have
\begin{align*}
&\sum_{j<-10}\| P_m(\Phip(\dhalf\phi))\text{ } \dhalf P_j\LOR  L(\phi_{> j-10}\cdot\phi_{>j-10})\|_{1,2}\\
&\lesssim\sum_{j<-10}\| P_m(\Phip(\dhalf\phi))\text{ } \dhalf P_j(  L\phi_{> j-10}\cdot\LOR L\phi_{>j-10})\|_{1,2}\\
&\lesssim\sum_{\substack{j<-10\\r,s>j-10}}2^j2^{-j/M}\left(2^{3j\f{M-1}{4M}}2^{\f{21}{4M}(r-j)}\|L\phi_{\sim r}\|_{2+,4+}+2^{3r/2M}2^{j-r}\|\LOR L\phi_{\sim r}\|_{2+,\infty-}\right)\\
&\hspace{14em}\cdot\|\LOR L\phi_{\sim s}\|_{2+,\infty}(\|P_m(\Phip(\dhalf\phi))\|_{\infty,2}+\ldots)\\
&\lesssim C_0^3\eps2^{-m/2}\left(\sum_{\substack{j<-10\\r>j-10}}(2^{(\half-\f{1}{2M}+3\f{M-1}{4M}-\f{21}{4M})(j-r)}+2^{(\half-\f{1}{2M}+1-\f{3}{2M})(j-r)})c_r\right)\left(\sum_{\ko}2^{-\la|m-\ko|}c_{\ko}\right)
\end{align*}
which is acceptable. \eqref{tttt2} and \eqref{tttt3} can be treated in the same way, using the additional information that $P_j$ localises the two factors of $\phi_{>j-10}$ to comparable frequencies ($r\sim s$) in order to handle the high frequency time derivative which appears.
This completes the case $m>-10$.

\bigskip

The case $m\leq-10$ is actually easier to handle and we do not need to invoke Lemma \ref{SIL}, since the geometry rules out any $\llh$ interactions. When the lone factor of $\phi$ appears at high frequency ($\geq 2^{-10}$), we refer to \eqref{term1} and \eqref{term2} of Proposition \ref{removing_trivial_terms} for the cases when $\dphi$ appears at high or low frequency respectively. It thus remains to study the case when $\phi$ is at very low frequency. Here we have
\begin{align}
&\|P_0\LOR  L[P_{m}(\Phip(\dhalf\phi))\text{ }(\phi_{\leq-10}\cdot\dhalf\phi_{>-10})]\|_{1,2}\\
&\leq\|P_0\LOR  L[P_{m}(\Phip(\dhalf\phi))\text{ }(\phi_{\leq-10}\cdot\dhalf\phi_{>-10}-\dhalf(\phi_{\leq-10}\cdot\phi_{>-10}))]\|_{1,2}\nonumber\\
&\quad+\|P_0\LOR  L [P_{m}(\Phip(\dhalf\phi))\text{ }\dhalf(\phi_{\leq-10}\cdot\phi_{>-10})]\|_{1,2}\label{TT4}
\end{align}
The first term above is of the form
\begin{align*}
\sum_{j\leq-10}\sum_{k>-10}\|P_0\LOR L[P_{m}(\Phip(\dhalf\phi))\text{ }\Lc_j(\pj,\pk))]\|_{1,2}
\end{align*}
which can be handled like \eqref{term2} from Proposition \ref{removing_trivial_terms} (or directly when $|j-k|\sim 0$).

For the second term in \eqref{TT4} we use \eqref{geometric_identity_4} to replace the low frequency term with a high one. We may also insert a projection $\tilde{P}_0$ before the $\dhalf$ since $m$ is very small and the whole term is restricted to frequency $\sim 2^0$.  We thus bound
\begin{align*}
&\|P_0\LOR  L [P_{m}(\Phip(\dhalf\phi))\text{ }\dhalf(\phi_{\leq-10}\cdot\phi_{>-10})]\|_{1,2}\\
&\lesssim\|\LOR  L P_{m}(\Phip(\dhalf\phi))\text{ }\dhalf\LOR  L(\phi_{>-10}\cdot\phi_{>-10})\|_{1,2}\\
&\quad+\|\LOR P_{m}(\Phip(\dhalf\phi))\text{ }\tilde{P}_0R(\dd_t\phi_{>-10}\cdot\LOR\phi_{>-10})]\|_{1,2}\\
&\quad+\|\LOR P_{m}(\Phip(\dhalf\phi))\text{ }\tilde{P}_0R(\phi_{>-10}\cdot\LOR\dd_t\phi_{>-10})]\|_{1,2}
\end{align*}

The first of these lines is straightforwardly bounded by
\begin{align*}
2^{(\half-\f{1}{M})m}\left(\sum_{\ko\in\Z}2^{-\la|m-\ko|}C_0c_{\ko}\right)\|\LOR  L\phi_{>-10}\|_{2+,\infty-}\|\LOR  L\phi_{>-10}\|_{\infty-,2+}
\end{align*}
For the second and third we must again invoke Corollary \ref{SIC} to see, for example,
\begin{align*}
&\|\LOR P_m(\Phip(\dhalf\phi))\text{ }\tilde{P}_0R(\dd_t\phi_{>-10}\cdot\LOR\phi_{>-10})]\|_{1,2}\\
&\lesssim\sum_{r\sim s>-10}(2^{\f{21}{4M}r}\|\dd_t\phi_r\|_{2+,4+}+2^{3r/2M}2^{-r}\|\LOR\dd_t\phi_r\|_{2+,\infty-})\|\LOR\phi_s\|_{2+,\infty}\\
&\hspace{20em}\cdot(\|\LOR P_m(\Phip(\dhalf\phi))\|_{\infty,2}+\ldots)\\
&\lesssim C_0^3 \eps \cdot 2^{-m/2}\left(\sum_{\ko}2^{-\la|m-\ko|}c_{\ko}\right)\sum_{r>-10}(2^{(\f{21}{4M}-3\f{M-1}{4M}+\f{1}{M})r}+2^{-(1-\f{1}{M})r})c_r
\end{align*}
which is acceptable for $M$ sufficiently large. The third term can be treated identically and this completes the proof.
\end{proof}

\bigskip
\subsection{Showing that $P_0(HWM_2(\phi))=error$}\label{HWM2_chapter}
\hfill\\
In this section we will prove that the remaining nonlocal terms in the forcing are acceptable. We will use the notation
$$
X\lesssim_{a,b}Y
$$
to mean that $X\leq C_{a,b}Y$ where $C_{a,b}$ grows at most polynomially in $a,\,b$. This is specific to both this section and the letters $a,\,b$.
\begin{proposition}\label{HWM2_prop}
We have
$$
P_0\bl(HWM_2(\phi))=error
$$
\end{proposition}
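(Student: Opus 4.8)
The plan is to mirror the structure of the proof of Proposition \ref{HWM1_prop}, reducing everything to showing that for some $\dl>\la>0$ we have
\begin{equation*}
\|\LOR P_0 L[\phi_m\x(\dhalf(\phi\x\dhalf\phi)-\phi\x(-\D)\phi)]\|_{1,2}\lesssim C_0^3\eps^2 2^{-\dl|m|}\sum_{k}2^{-\la|k-m|}c_k
\end{equation*}
for every $m\in\Z$, and then choosing $\sg<\la$. The essential new feature compared to $HWM_1$ is that the cancellation is no longer of the naive form $\phi\cdot\na\phi$ appearing pointwise, but is hidden inside the combination $\dhalf(\phi\x\dhalf\phi)-\phi\x(-\D)\phi$. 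The first step is therefore to perform a Littlewood--Paley trichotomy on the two factors of $\phi$ inside this bracket and to exhibit, via the $\Lc_k$ calculus of \eqref{Lc}--\eqref{FS2} and Lemma \ref{useful_lemma}, that in each frequency regime the bracket is morally $\Lc_k(\phi_a,\phi_b)$ (i.e. of the schematic form $\na\phi\cdot\phi$) with \emph{both} factors at the higher of the two frequencies whenever the naive count would place one at low frequency. Concretely: writing $\phi\x\dhalf\phi=\sum_{a,b}\phi_a\x\dhalf\phi_b$ and similarly for $\phi\x(-\D)\phi$, one checks that when $b\ll a$ (so the outer derivative lands on the higher frequency $a$) the two terms $\dhalf(\phi_a\x\dhalf\phi_b)$ and $\phi_b\x(-\D)\phi_a$ are both $\Lc$-type operators of the form $2^a\,\phi_a\cdot\phi_b$, and their difference is lower order by one derivative on the low factor, i.e. $\Lc_{a}(\dhalf\phi_b,\phi_a)$-type or better. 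When $a\sim b$ there is no issue since both factors are already comparable, and when $a\ll b$ the outer $\dhalf$ or $(-\D)$ hits the genuinely highest frequency so the term is again benign. This is exactly the role played by \eqref{geometric_identity_4}: the identity $P_k(\phi\cdot\phi)=0$, applied after the $\Lc$-reduction, lets us flip $\phi_{<k-10}\cdot\phi_{\geq k-10}$ into $\phi_{\geq k-10}\cdot\phi_{\geq k-10}$.

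Having reduced to schematic $\llh$ / $\lhh$ / $\hhh$ interactions of the product $\phi_m\x\Lc_\bullet(\phi_\bullet,\phi_\bullet)$, the second step is case analysis on the outer frequency $m$, exactly as in the $HWM_1$ proof. For $m>-10$ (the non-differentiated factor $\phi_m$ sitting at high frequency relative to some of the others) one splits according to whether the $\Lc$-product has both its factors at high frequency — in which case $\hhh$- or $\lhh$-type Strichartz estimates in the mixed spaces $(2+,\infty-)$, $(\infty-,2+)$, $(2+,\infty)$ etc.\ from \eqref{notation} suffice, using Lemma \ref{useful_lemma} to pass $\Lc_k$ through a product bound and the commutator identities \eqref{L_LL}--\eqref{om_exp} to distribute the vector fields $L$ and $\LOR$ — or whether one factor is genuinely lowest, in which case we are in the delicate $\llowesth$ regime and must invoke Lemma \ref{SIL} (and, when the $\Lc$-operator is non-local with a convolution kernel as in $\dhalf P_j$, its Corollary \ref{SIC}). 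The translation bound \eqref{translation_bound} handles the Fourier-series shifts $\phi_k(\cdot-2^{-k}a)$ produced by the $\Lc$ expansion \eqref{FS2}, with the $\langle a\rangle^2\langle b\rangle^2$ losses absorbed by the rapidly decaying coefficients $c^{(k)}_{a,b}$. For $m\leq-10$ the geometry forbids any $\llh$ interaction (a low$\cdot$low$\cdot$low piece of the unit-frequency-localised term is impossible), so one only needs the high-frequency cases, which reduce to Strichartz estimates after one more application of \eqref{geometric_identity_4} to move the low factor up; this mirrors the treatment of \eqref{TT4} and the terms \eqref{term1}--\eqref{term2}. The extra subtlety relative to $HWM_1$ is that $HWM_2$ carries \emph{two} copies of $\dhalf$ (one inside, one outside) rather than one, so when the vector fields $L$ are applied one generates, via $[L,\dhalf]\simeq R\dd_t$ in \eqref{commr1}, Riesz-transform and time-derivative error terms at every level of the nesting; these must be tracked but are ultimately controlled by the same estimates, using that frequency localisations force the two high factors to comparable scales so the spare $\dd_t$ can be absorbed (exactly as in the treatment of \eqref{tttt2}--\eqref{tttt3}).

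The main obstacle I expect is bookkeeping rather than a new idea: the combination $\dhalf(\phi\x\dhalf\phi)-\phi\x(-\D)\phi$ must be manipulated carefully into $\Lc$-operator form in each of the frequency regimes, keeping the derivative count honest, before the geometric identity can be applied — and then every subsequent application of $L$ or $\LOR$ has to be pushed through the nested non-local operators using \eqref{commr1}--\eqref{commr3}, \eqref{L_LL}--\eqref{om_exp} and Lemma \ref{moving_loc_lemma}, spawning a proliferation of commutator error terms each of which needs its own (routine) estimate. The genuinely hard analytic input — handling the $\lowestlh$ and $\llowesth$ interactions without the $L^2_tL^\infty_x$ endpoint — is already packaged in Lemmas \ref{SIL} and \ref{angular_sep_lem} and their Corollary \ref{SIC}, so the work here is to verify that the $HWM_2$ terms, after the $\Lc$-reduction and \eqref{geometric_identity_4}, present exactly the schematic form $\phi^{(1)}_j\cdot\phi^{(2)}_k\cdot\phi^{(3)}_m$ (with appropriate derivatives) to which those lemmas apply, and to check the numerology of the geometric/decay gains closes with a small power of $2^{j-k}$ or $2^{j-m}$ to spare once $M$ is taken large enough.
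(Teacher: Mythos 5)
Your proposal captures the right ingredients — the $\Lc_k$ calculus to extract the $\na\phi\cdot\na\phi$ cancellation hidden in $\dhalf(\phi\x\dhalf\phi)-\phi\x(-\D)\phi$, the geometric identity \eqref{geometric_identity_4} to flip low factors to high ones, and Lemma \ref{SIL} / Corollary \ref{SIC} for the $\llowesth$ interactions — but it glosses over what is in fact the most delicate new phenomenon in $HWM_2$, and I think you would hit a wall at precisely the point you dismiss as routine bookkeeping.

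The problem is the interaction between the Leibniz distribution of $L_n$ and $\Om_{ij}$ and the geometric identity. The identity $P_k(\phi_{<k-10}\cdot\phi_{\geq k-10})=-\tfrac12 P_k(\phi_{\geq k-10}\cdot\phi_{\geq k-10})$ is a rigidity statement about the scalar $\phi\cdot\phi$; once you have Leibniz-split a vector derivative onto one factor, so that a term like $\Om_{ij}\phi_{\leq k-10}\cdot\phi_{>k-10}$ appears inside $P_{\sim k}$, the identity no longer applies to that term in isolation. In the $HWM_1$ case this issue never really bites because the outer $\dhalf$ and the projection $\Phip$ can be handled before the identity is needed, but in $HWM_2$ the nested $\dhalf(\cdot\x\dhalf\cdot)$ structure forces one to apply the identity at a depth where the vector fields have already been scattered by the Leibniz rule and by commutation with the nonlocal operators. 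The paper's resolution, in the $(A4)$ analysis of the high-high-high case $k>10$, $j\sim k$, is to pair up the $\Om_{ij}\phi_{\leq k-10}\cdot\phi_{>k-10}$ term from one Leibniz branch with the $\phi_{\leq k-10}\cdot\Om_{ij}\phi_{>k-10}$ term from another so that they re-assemble into $\Om_{ij}(\phi_{\leq k-10}\cdot\phi_{>k-10})$, on which the identity \emph{does} apply; the same delicate re-grouping is needed for the $L_n$ derivatives and across the groups $(A1)$--$(A3)$. You describe this regime ($a\sim b$, both high) as presenting ``no issue'' because both factors are already comparable, but it is in fact where the proof is longest and where a naive term-by-term estimate would fail without the recombination.

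A secondary concern is the reduction you set up. You propose to decompose by the frequency $m$ of the outer factor $\phi_m$ and aim for a bound with a $2^{-\dl|m|}$ gain, mirroring \eqref{sufficient0}. In $HWM_1$ that gain comes from the derivative carried by $P_m(\Phip(\dhalf\phi))$, via Lemma \ref{projection_lemma}. In $HWM_2$ the outer factor $\phi$ carries no derivative whatsoever, so there is no analogous source of $m$-decay, and it is not clear the stated per-$m$ bound is attainable. The paper instead organizes the proof around the frequency $k$ of the \emph{first bracket factor} $\phi_k$ (which does sit next to the derivatives) and splits into $k<-10$, $k\in[-10,10]$, $k>10$; this is not merely a cosmetic difference, since it is what makes the cases in which \eqref{geometric_identity_4} is needed, and the cases in which the Strichartz/angular-separation lemmas suffice, fall cleanly into separate buckets. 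Before you can claim the proof ``mirrors'' $HWM_1$ you would need to justify that your alternative slicing produces the same clean separation and that a $2^{-\dl|m|}$-type decay is genuinely available.
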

\begin{proof}
We decompose
$$
HWM_2(\phi)=\sum_{k\in\Z}\phi\x[\dhalf(\phi_{k}\x\dphi)-(\phi_{ k }\x(-\D)\phi)]
$$
and study the regions $ k <-10$, $ k \in[-10,10]$ and $ k >10$ separately.

\begin{itemize}[leftmargin=*]
\item \underline{$ k <-10$:}
We make the decomposition
\begin{align}
&\|P_0\LOR L[\phi\x[\dhalf(\phi_{ k }\x\dphi)-(\phi_{ k }\x(-\D)\phi)]]\|_{1,2}\nonumber\\
&\leq\|P_0\LOR L[\phi_{< k -10}\x[\dhalf(\phi_{ k }\x\dphi)-(\phi_{ k }\x(-\D)\phi)]]\|_{1,2}\label{HWMI}\\
&\quad+\|P_0\LOR L[\phi_{[ k -10,-15]}\x[\dhalf(\phi_{ k }\x\dphi)-(\phi_{ k }\x(-\D)\phi)]]\|_{1,2}\label{HWMII}\\
&\quad+\|P_0\LOR L[\phi_{\geq-15}\x[\dhalf(\phi_{ k }\x\dphi)-(\phi_{ k }\x(-\D)\phi)]]\|_{1,2}\label{HWMIII}
\end{align}
The last term above is the easiest to handle, splitting
\begin{align*}
\eqref{HWMIII}&\lesssim\|\LOR L[\phi_{\geq-15}\x[\dhalf(\phi_{ k }\x\dphi_{>k+10})-(\phi_{ k }\x(-\D)\phi_{>k+10})]]\|_{1,2}\\
&\quad+\|\LOR L[\phi_{\geq-15}\x[\dhalf(\phi_{ k }\x\dphi_{\leq k+10})-(\phi_{k}\x(-\D)\phi_{\leq k+10})]]\|_{1,2}
\end{align*}
The first term here can be handled like \eqref{term1} upon writing 
\begin{equation}\label{expa}
\dhalf(\phi_{ k }\x\dphi_{>k+10})-(\phi_{ k }\x(-\D)\phi_{>k+10})=\sum_{j>k+10}\Lc_{k+j}(\pk,\pj)
\end{equation}
and using \eqref{L_LL}-\eqref{om_exp}, while the second term can be handled like \eqref{term2}.

Next consider \eqref{HWMII}. This term is of the form $\llowesth$, so we will rely heavily on Lemma \ref{SIL}. Since the outer projection $P_0$ almost passes through the operators $\LOR$ and $L$, the third factor of $\phi$ is restricted to $\pso$. Then using \eqref{FS} to write the commutator expression $\Lc_k(\pk,\dhalf\pso)=\Lc_k(\pk,\pso)$ as a Fourier series, we find
\begin{align*}
    \eqref{HWMII}&\lesssim\sum_{j=k-10}^{-15}\sum_{a,b}|c^{(k)}_{a,b}|\|\LOR L[\pj(x)\x[\pk(x-2^{-k}a)\x\pso(x-b)]]\|_{1,2}
\end{align*}
Then for instance if the derivatives $\LOR L$ both fall on $\pj$ we can apply point 2 of Lemma \ref{SIL} and bound
\begin{align*}
    &\sum_{j=k-10}^{-15}\sum_{a,b}c^{(k)}_{a,b}\|\LOR L\pj(x)\x[\pk(x-2^{-k}a)\x\pso(x-b)]\|_{1,2}\\
    &\lesssim\sum_{j=k-10}^{-15}\sum_{a,b}c^{(k)}_{a,b}2^{-k/M}\|\LOR L\pj\|_{2+,\infty}(2^{3k/2M}\|\LOR(\pk(x-2^{-k}a))\|_{2+,\infty-}+2^{3k\f{M-1}{4M}}\|\pk\|_{2+,4+})\\
    &\hspace{20em}\cdot(\|\pso\|_{\infty,2}+\|L(\pso(x-b))\|_{\infty,2}+\|\dd_t\pso\|_{\infty,2}) 
    \\
    &\lesssim_N\sum_{j=k-10}^{-15}\sum_{a,b}\langle a\rangle^{-N}\japb^{-N}2^{(\half-\f{1}{2M})(k-j)}C_0c_j\cdot \langle a\rangle C_0c_k\cdot \langle b\rangle C_0c_0\\
    &\lesssim C_0^3c_0\sum_{j=k-10}^{-15}2^{(\half-\f{1}{2M})(k-j)}c_jc_k
\end{align*}
which is acceptable when summed over $k<-10$.

To complete the case $k<-10$ it remains to study \eqref{HWMI}. Here there are no derivatives falling on the lowest frequency term, so we must use that $\phi$ lies on the sphere. Observe that the third factor of $\phi$ is restricted to frequency $\sim 2^0$ by the outer projection and write
\begin{align*}
\dhalf(\phi_k\x\dhalf\pso)-\phi_k\x(-\D)\pso&=\Lc_{ k }(\phi_k,\pso)
\end{align*}
to find
\begin{align}
\eqref{HWMI}&\leq\sum_{a,b}|c_{a,b}^{( k )}|\|\LOR  L[\phi_{< k -10}(x)\x[\phi_k(x-2^{-k}a)\x\pso(x-b)]]\|_{1,2}\label{big_sum_2}
\end{align}
We then invoke the vector identity
\begin{equation}\label{vector_id}
a\x(b\x c)=b(a\cdot c)-c(b\cdot a)
\end{equation}
to rewrite
\begin{align}
&\|\LOR  L[\phi_{< k -10}(x)\x[\phi_k(x-2^{- k }a)\x\pso(x-b)]]\|_{1,2}\nonumber\\
&\leq\|\LOR  L[\phi_k(x-2^{- k }a) \text{ }\phi_{< k -10}(x)\cdot\pso(x-b)]\|_{1,2}\nonumber\\
&\quad+\|\LOR  L[\pso(x-b)\text{ }\phi_{< k -10}(x)\cdot\phi_k(x-2^{- k }a)]\|_{1,2}\label{two_dots}
\end{align}
Let's start with the first term. In order to use \eqref{geometric_identity_4} we need the two terms in the dot product to be evaluated at the same point, so write
$$
\phi_{<k-10}(x)=\phi_{< k -10}(x-b)+\int_0^1b\cdot \na\phi_{<k-10}(x-\tht b)d\tht
$$
Putting the integral expression into \eqref{two_dots} we get a term of the form $\hll$ which is easily handled. Indeed, borrowing the factor of $2^{ k }$ from $c^{( k )}_{a,b}$, we may write
\begin{align*}
&2^{ k }\|\LOR L[\phi_k(x-2^{- k }a) \text{ }(\int_0^1b\cdot \na\phi_{< k -10}(x-\tht b)d\tht)\cdot\pso(x-b)]\|_{1,2}\\
&\lesssim_b 2^{ k }\int_0^1\|\LOR L(\phi_k(x-2^{- k }a))\|_{2+,\infty-}\|\LOR L(\na\phi_{<k-10}(x-\tht b))\|_{2+,\infty-}\|\LOR L(\pso(x-b))\|_{\infty-,2+}\\
&\lesssim_{a,b} C_0^3\sum_{j< k -10}\int_0^1c_jc_{ k }c_02^{(\half-\f{1}{M})(j+k)}\langle 2^j\tht b\rangle^2d\tht
\end{align*}
which is acceptable when summed as in \eqref{big_sum_2} and over $k<-10$.

We then come to
\begin{align}
&2^{ k }\|\LOR L[\phi_k(x-2^{-k}a)(\phi_{< k -10}\cdot\pso)(x-b)]\|_{1,2}\nonumber\\
&\lesssim2^{ k }\|\LOR L[\phi_k(x-2^{-k}a)(\phi_{[ \kmt,-10]}\cdot\pso)(x-b)]\|_{1,2}\nonumber\\
&\quad+2^{ k }\|\LOR L[\phi_k(x-2^{-k}a)(\phi_{\leq-10}\cdot\pso)(x-b)]\|_{1,2}\label{jk}
\end{align}
The first of these terms can be handled by a straightforward application of Lemma \ref{SIL}. For the second we use \eqref{geometric_identity_4} to bound
\begin{align*}
&2^{ k }\|\LOR L[\phi_k(x-2^{-k}a)(\phi_{\leq-10}\cdot\pso)(x-b)]\|_{1,2}\\
&\lesssim2^{ k }\|\LOR L[\phi_k(x-2^{-k}a)(\phi_{\leq-10}\cdot\pso-P_{\sim0}(\phi_{\leq-10}\cdot\phi_{\sim0}))(x-b)]\|_{1,2}\\
&\quad+2^{ k }\|\LOR L[\phi_k(x-2^{-k}a)P_{\sim0}(\phi_{>-10}\cdot\phi_{>-10})(x-b)]\|_{1,2}
\end{align*}
The first line is easy to handle using Lemma \ref{moving_loc_lemma} to move a derivative onto the low frequency term, so it remains to consider the second line. From the estimate
\begin{align*}
    &2^{ k }\|\LOR L[\phi_k(x-2^{-k}a)P_{\sim0}(\phi_{>-10}\cdot\phi_{>-10})(x-b)]\|_{1,2}\\
    &\lesssim2^{ k }\|\LOR L(\phi_k(x-2^{-k}a))\|_{2+,\infty-}\|\LOR L (P_{\sim0}(\phi_{>-10}\cdot\phi_{>-10})(x-b))\|_{\f{2M}{M+1},\f{2M}{M-1}}
\end{align*}
it remains to show that
\begin{equation}\label{jojo}
\|\LOR L (P_{\sim0}(\phi_{>-10}\cdot\phi_{>-10})(x-b))\|_{\f{2M}{M+1},\f{2M}{M-1}}\lesssim_b C_0^2\eps^2
\end{equation}
First permuting the vector derivatives and the translation by $b$ we find
\begin{align}
    \|\LOR L (P_{\sim0}(\phi_{>-10}\cdot\phi_{>-10})(x-b))\|_{\f{2M}{M+1},\f{2M}{M-1}}
    &\lesssim_b \|\LOR L P_{\sim0}(\phi_{>-10}\cdot\phi_{>-10})\|_{\f{2M}{M+1},\f{2M}{M-1}}\nonumber\\
    &\quad+\|\LOR P_{\sim0}(\dd_t\phi_{>-10}\cdot\phi_{>-10})\|_{\f{2M}{M+1},\f{2M}{M-1}}\label{aligne}
\end{align}
For the first term we commute $\LOR L$ and $P_0$ to see
\begin{align*}
    \|\LOR L P_{\sim0}(\phi_{>-10}\cdot\phi_{>-10})\|_{\f{2M}{M+1},\f{2M}{M-1}}&\lesssim\|P_{\sim 0}\LOR L(\pgt\cdot\pgt)\|_{\f{2M}{M+1},\f{2M}{M-1}}\\
    &\quad+\|\Pp_{\sim 0}\LOR \dd_t(\pgt\cdot\pgt)\|_{\f{2M}{M+1},\f{2M}{M-1}}\\
    &\lesssim\|\LOR L\pgt\|_{2+,\infty-}\|\LOR L\pgt\|_{\infty-,2+}\\
    &\quad+\|\LOR\dd_t\pgt\|_{\infty-,2+}\|\LOR\pgt\|_{2+,\infty-}
\end{align*}
which is as required. This calculation also covered the second term in \eqref{aligne} so \eqref{jojo} is shown, completing the study of \eqref{jk}.

It remains to study the second term in \eqref{two_dots}. Again we write
\begin{equation}\label{intt}
\phi_{<k-10}(x)=\phi_{<k-10}(x-2^{- k }a)+2^{- k }a\cdot\int_0^1\na\phi_{<k-10}(x-2^{- k }a\tht)d\tht
\end{equation}
For the term involving the integral we have
\begin{align*}
&\|\LOR L[\pso(x-b)(2^{- k }a\cdot\int_0^1\na\phi_{<k-10}(x-2^{- k }a\tht)d\tht)\cdot\phi_k(x-2^{- k }a)]\|_{1,2}\\
&\lesssim_a2^{-k}\int_0^1\|\LOR L[\pso(x-b)\na\phi_{<k-10}(x-2^{- k }a\tht)\cdot\phi_k(x-2^{- k }a)]\|_{1,2}d\tht
\end{align*}
Remembering that we can absorb the $2^{- k }$ into $c^{( k )}_{a,b}$ from \eqref{big_sum_2}, we see that this can be treated directly using Lemma \ref{SIL} after splitting $\LOR L$ over the three factors.

For the remaining part of $\phi_{<k-10}$ we use the geometry to bound
\begin{align}
&\|\LOR L[\pso(x-b)(\phi_{<k-10}\cdot\phi_k)(x-2^{- k }a)]\|_{1,2}\nonumber\\
&\lesssim\|\LOR L[\pso(x-b)(\phi_{<k-10}\cdot\phi_k-P_k(\phi_{<k-10}\cdot\phi_{\geq\kmt}))(x-2^{- k }a)]\|_{1,2}\label{plp22}\\
&\quad+\|\LOR L[\pso(x-b)P_k(\phi_{\geq\kmt}\cdot\phi_{\geq\kmt})(x-2^{- k }a)]\|_{1,2}\label{plp21}
\end{align}
First consider \eqref{plp21}. We consider only the more difficult case where $\LOR L$ falls on the $P_{ k }$. By a series of calculations as in \eqref{aligne}, we reduce to studying terms of the form
\begin{align*}
    \|\pso(x-b)(\Pp_{ k }\LOR L(\phi_{\geq\kmt}\cdot\phi_{\geq\kmt}))(x-2^{- k }a)\|_{1,2}
\end{align*}
and
\begin{align*}
    2^{- k }\|\pso(x-b)(\Pp_{ k }\LOR \dd_t(\phi_{\geq\kmt}\cdot\phi_{\geq\kmt}))(x-2^{- k }a)\|_{1,2}
\end{align*}
We restrict our attention to the more delicate second term, as the first can be treated similarly. Considering for example the case in which the angular and time derivative fall on different factors, we use Corollary \ref{SIC} to bound
\begin{align*}
    &2^{- k }\|\pso(x-b)(\Pp_{ k }(\dd_t\phi_{\geq\kmt}\cdot\LOR\phi_{\geq\kmt}))(x-2^{- k }a)\|_{1,2}\\
    &\lesssim2^{- k }\sum_{r,s\geq\kmt}\|\pso(x-b+2^{- k }a)\Pp_{ k }(\dd_t\phi_r\cdot\LOR\phi_s)(x)]\|_{1,2}\\
    &\lesssim2^{- k }\sum_{r\sim s\geq\kmt}2^{- k /M}(2^{3 k \f{M-1}{4M}}2^{\f{21}{4M}(r- k )}\|\dd_t\phi_r\|_{2+,4+}+2^{ k -r}2^{3r/2M}\|\LOR\dd_t\phi_r\|_{2+,\infty-})\\ 
    &\hspace{8em}\cdot\|\LOR\phi_s\|_{2+,\infty}(\|\pso\|_{\infty,2}+2^{ k }\|L(\pso(x-b+2^{- k }a))\|_{\infty,2}+\|\dd_t\pso\|_{\infty,2})\\
    &\lesssim_{a,b} 2^{- k }C_0^3\eps c_0\sum_{r\geq k -10}(2^{(3\f{M-1}{4M}-\f{21}{4M}-\f{1}{M})( k -r)}+2^{(1-\f{1}{M})( k -r)})c_r
\end{align*}
which is acceptable when multiplied by $c^{( k )}_{a,b}$ and summed over $a,b$ and $ k <-10$. Note that here the gain of $2^{ k }$ before the factor $L(\pso(x-b+2^{- k }a))$ was necessary in order to cancel out the loss from the translation by $2^{ k }a$.

For \eqref{plp22} we expand
$$
\phi_{<k-10}\cdot\phi_k-P_k(\phi_{<k-10}\cdot\phi_{\geq k-10})=2^{- k }\int_0^1\int_y(2^{ k }y\check{\chi}_{ k }(y))\na\phi_{<k-10}(x-\tht y)\phi_{\sim k }(x-y)dyd\tht
$$
It therefore remains to bound
\begin{align*}
    \sum_{a,b}2^{- k }|c^{( k )}_{a,b}|\int_{y,\tht}|2^{ k }y\check{\chi}_{ k }(y)|\|\LOR L[\pso(x-b) \na\phi_{<k-10}(x-2^{- k }a-\tht y)\phi_{\sim k }(x-2^{- k }a-y)]\|_{1,2}dyd\tht
\end{align*}
which can be handled using Lemma \ref{SIL}.

\bigskip

\item\underline{$ k >10$:}
We study
$$
\sum_{j\in\Z}\|P_0\LOR L[\phi\x[\dhalf(\phi_{ k }\x\dphi_j)-(\phi_{ k }\x(-\D)\phi_j)]]\|_{1,2}
$$
If the third factor of $\phi$ is restricted to $j< k -10$, then the first $\phi$ is restricted to $\phi_{\sim k }$ by the outer projection $P_0$. The term is therefore of type $\hhl$ and may be treated as \eqref{term1} upon writing
$$
\dhalf(\phi_{ k }\x\dphi_j)-(\phi_{ k }\x(-\D)\phi_j)=\Lc_{ k }(\phi_k,\dhalf\pj)
$$
and appealing to the identities \eqref{L_LL}-\eqref{om_exp}.

When $j> k +10$, again the first factor is restricted to $\phi_{\sim j}$ and this may be treated similarly.

It thus remains to study $j\sim k $. Here there is nothing to be gained by cancellation so we split the term up into its two parts:
\begin{align}
&\|P_0\LOR L[\phi\x[\dhalf(\phi_{ k }\x\dphi_{\sim k })-(\phi_{ k }\x(-\D)\phi_{\sim k })]]\|_{1,2}\nonumber\\
&\leq\underbrace{\|P_0\LOR L[\phi\x\dhalf(\phi_{ k }\x\dphi_{\sim k })]\|_{1,2}}_{(A)}+\underbrace{\|P_0\LOR L[\phi\x(\phi_{ k }\x(-\D)\phi_{\sim k })]\|_{1,2}}_{(B)}\label{2_parts}
\end{align}
We first study (A). This term presents some more complications due to its nonlocal expression, however it also has the advantage that when the remaining $\phi$ is at low frequency, the outer derivative $\dhalf$ is acting at frequency $\sim1$. First split $\phi$ into low and high frequencies:
\begin{align*}
(A)&\leq\underbrace{\|P_0\LOR L[\phi_{\leq-10}\x\dhalf(\phi_{ k }\x\dphi_{\sim k })]\|_{1,2}}_{(A)_{\leq-10}}\\
&\quad+\underbrace{\|P_0\LOR L[\phi_{>-10}\x\dhalf(\phi_{ k }\x\dphi_{\sim k })]\|_{1,2}}_{(A)_{>-10}}
\end{align*}
Here $(A)_{>-10}$ is of type $\hhh$ so can be handled like \eqref{term3} using the radially admissible Strichartz spaces.

$(A)_{\leq-10}$ is of the form $\lhh$ so must be handled using the geometry. In this case it is especially important to keep track of how the vector derivatives are falling as the commutator terms can rapidly cause a build up of derivatives if treated too crudely.

To clarify the calculations we then fix a particular $\Om_{ij}$ and $L_n$ (the inhomogeneous parts of $\LOR$ and $L$ are easier to handle) and make a very precise decomposition. Note that we are free to switch the order of $\omij$ and $L_n$ up to a term of the same form. Writing $\tilde{P}_0\dhalf=\mathcal{P}_0$, a radial operator, and using the Leibniz rule, we have
\begin{equation*}
\begin{aligned}
P_0L_n\Om_{ij}[\plt\x\dhalf(\phi_k\x\dhalf\phi_{\sim k })]&=P_0[L_n\Om_{ij}\plt\x\mathcal{P}_0(\phi_k\x\dhalf\phi_{\sim k })]\\
    &\quad+P_0[L_n\plt\x\mathcal{P}_0(\Om_{ij}\phi_k\x\dhalf\phi_{\sim k })]\\
    &\quad+P_0[L_n\plt\x\mathcal{P}_0(\phi_k\x\dhalf\Om_{ij}\phi_{\sim k })]\\
    &\quad+P_0[\Om_{ij}\plt\x\mathcal{P}_0(L_n\phi_k\x\dhalf\phi_{\sim k })]\\
    &\quad+P_0[\plt\x\mathcal{P}_0(L_n\Om_{ij}\phi_k\x\dhalf\phi_{\sim k })]\\
    &\quad+P_0[\plt\x\mathcal{P}_0(L_n\phi_k\x\dhalf\Om_{ij}\phi_{\sim k })]\\
    &\quad+P_0[\Om_{ij}\plt\x\mathcal{P}_0(\phi_k\x L_n\dhalf \phi_{\sim k })]\\
    &\quad+P_0[\plt\x\mathcal{P}_0(\Om_{ij}\phi_k\x L_n\dhalf \phi_{\sim k })]\\
    &\quad+P_0[\plt\x\mathcal{P}_0(\phi_k\x L_n\Om_{ij}\dhalf \phi_{\sim k })]\\
    &\quad+P_0[\Om_{ij}\plt\x\mathcal{P}'_0\dd_t(\phi_k\x\dhalf\phi_{\sim k })]\\
    &\quad+P_0[\plt\x\mathcal{P}'_0\dd_t(\Om_{ij}\phi_k\x\dhalf\phi_{\sim k })]\\
    &\quad+P_0[\plt\x\mathcal{P}'_0\dd_t(\phi_k\x\dhalf\Om_{ij}\phi_{\sim k })]\\
\end{aligned}
\begin{aligned}
&\left.\vphantom{\begin{aligned}
P_0L_n\Om_{ij}[\plt\x\dhalf(\phi_k\x\dhalf\phi_{\sim k })]&=P_0[L_n\Om_{ij}\plt\x\mathcal{P}_0(\phi_k\x\dhalf\phi_{\sim k })]\\
    &\quad+P_0[L_n\plt\x\mathcal{P}_0(\Om_{ij}\phi_k\x\dhalf\phi_{\sim k })]\\
    &\quad+P_0[L_n\plt\x\mathcal{P}_0(\phi_k\x\dhalf\Om_{ij}\phi_{\sim k })]\\
  \end{aligned}}\right\rbrace\quad\text{(A1)}\\
  &\left.\vphantom{\begin{aligned}
    &\quad+P_0[\Om_{ij}\plt\x\mathcal{P}_0(L_n\phi_k\x\dhalf\phi_{\sim k })]\\
    &\quad+P_0[\plt\x\mathcal{P}_0(L_n\Om_{ij}\phi_k\x\dhalf\phi_{\sim k })]\\
    &\quad+P_0[\plt\x\mathcal{P}_0(L_n\phi_k\x\dhalf\Om_{ij}\phi_{\sim k })]\\
  \end{aligned}}\right\rbrace\quad\text{(A2)}\\
  &\left.\vphantom{\begin{aligned}
    &\quad+P_0[\Om_{ij}\plt\x\mathcal{P}_0(\phi_k\x L_n\dhalf \phi_{\sim k })]\\
    &\quad+P_0[\plt\x\mathcal{P}_0(\Om_{ij}\phi_k\x L_n\dhalf \phi_{\sim k })]\\
    &\quad+P_0[\plt\x\mathcal{P}_0(\phi_k\x L_n\Om_{ij}\dhalf \phi_{\sim k })]\\
  \end{aligned}}\right\rbrace\quad\text{(A3)}\\
  &\left.\vphantom{\begin{aligned}
    &\quad+P_0[\Om_{ij}\plt\x\mathcal{P}'_0\dd_t(\phi_k\x\dhalf\phi_{\sim k })]\\
    &\quad+P_0[\plt\x\mathcal{P}'_0\dd_t(\Om_{ij}\phi_k\x\dhalf\phi_{\sim k })]\\
    &\quad+P_0[\plt\x\mathcal{P}'_0\dd_t(\phi_k\x\dhalf\Om_{ij}\phi_{\sim k })]\\
  \end{aligned}}\right\rbrace\quad\text{(A4)}\\
\end{aligned}
\end{equation*}
In the above $\mathcal{P}_0'$ is another operator of the type described in Section \ref{ang_com_chap}, which may not be radial.

We start by considering the group (A4) which is the most difficult since there is an additional derivative falling on the high frequency terms. Consider first the case where $\dd_t$ falls on $\phi_k$. Writing the operator $\mathcal{P}'_0$ as an explicit convolution by some $\mathcal{K}_0$, we have for the first line
\begin{align}
&\Om_{ij} \phi_{\leq-10}\x \mathcal{P}'_0( \dd_t\phi_{ k }\x\dphi_{\sim k })\nonumber\\
&=\Om_{ij}\plt(x)\x\int_y\mathcal{K}_0(y)(\dd_t\phi_k(x-y)\x\dphi_{\sim k }(x-y))dy\nonumber\\
&=\int_y\mathcal{K}_0(y)\dd_t\phi_k(x-y)\text{ }\Om_{ij}\plt(x)\cdot\dphi_{\sim k }(x-y)dy\label{fs1}\\
&\quad-\int_y\mathcal{K}_0(y)\dphi_{\sim k }(x-y)\text{ }\Om_{ij}\plt(x)\cdot\dd_t\phi_k(x-y)dy\label{fs2}
\end{align}
We must then split
$$
\Om_{ij}\plt(x)=(\Om_{ij}\plt)(x-y)+y\cdot\int_{\tht=0}^1\na(\Om_{ij}\plt)(x-\tht y)d\tht
$$
Putting the integral term into \eqref{fs1} we find
\begin{align*}
&\left\|\int_y\mathcal{K}_0(y)\dd_t\phi_k(x-y)\text{ }(y\cdot\int_{\tht=0}^1\na(\Om_{ij}\plt)(x-\tht y)d\tht)\cdot\dphi_{\sim k }(x-y)dy\right\|_{1,2}\\
&\lesssim\int_y\int_0^1|\mathcal{K}_0(y)||y|\|\dd_t\phi_k\|_{9,\f{10}{3}}\|\na\Om_{ij}\plt\|_{\f{18}{7},\infty}\|\dphi_{\sim k }\|_{2,5}d\tht dy
\end{align*}
which is acceptable when summed over $ k >10$ since the factor of $|y|$ is absorbed by the kernel $\mathcal{K}_0$. 
The same argument works for \eqref{fs2}, and for the corresponding terms in the second and third lines of (A4).

For \eqref{fs1} it therefore remains to consider
\begin{align*}
&\int_y\mathcal{K}_0(y)[\dd_t\phi_k\text{ }\Om_{ij}\plt\cdot\dphi_{\sim k }](x-y)dy
\end{align*}
The corresponding terms from the second and third lines of (A4) are
\begin{align}
&\int_y\mathcal{K}_0(y)[\Om_{ij}\dd_t\phi_k\text{ }\plt\cdot\dphi_{\sim k }](x-y)dy\label{a22}
\end{align}
and
\begin{align}
&\int_y\mathcal{K}_0(y)[\dd_t\phi_k\text{ }\plt\cdot\dhalf\Om_{ij}\phi_{\sim k }](x-y)dy\label{a23}
\end{align}
and we have to bound the sum of these in $L^1_tL^2_x$.

In order to use \eqref{geometric_identity_4} we rewrite $\plt=\phi_{\leq \kmt}-\phi_{[-10, \kmt]}$. Then for the high frequency part we can again use a bound as for \eqref{term3} to see e.g.
\begin{align*}
&\|\dd_t\phi_k\text{ }\Om_{ij}\phi_{[-10, \kmt]}\cdot\dphi_{\sim k }\|_{1,2}\lesssim\|\dd_t\phi_k\|_{9,\f{10}{3}}\|\Om_{ij}\phi_{[-10, \kmt]}\|_{\f{18}{7},\infty} \|\dphi_{\sim k }\|_{2,5}
\end{align*}
which is acceptable. The same argument works for \eqref{a22} and \eqref{a23}.

For the low frequency part $\plkmt$ we want to use \eqref{geometric_identity_4}. We have
\begin{align}
&\dd_t\phi_k\text{ }\Om_{ij}\phi_{\leq \kmt}\cdot\dphi_{\sim k }\nonumber\\
&=\dd_t\phi_k\text{ }(\Om_{ij}\phi_{\leq \kmt}\cdot\dphi_{\sim k }-\dhalf(\Om_{ij}\phi_{\leq \kmt}\cdot\phi_{\sim k }))\label{pp1}\\
&\quad+\dd_t\phi_k\text{ }\dhalf(\Om_{ij}\phi_{\leq \kmt}\cdot\phi_{\sim k }-P_{\sim k }(\Om_{ij}\phi_{\leq \kmt}\cdot\phi_{> \kmt}))\label{pp2}\\
&\quad+\dd_t\phi_k\text{ }\dhalf P_{\sim k }(\Om_{ij}\phi_{\leq \kmt}\cdot\phi_{> \kmt})\label{pp3}
\end{align}
For the first line we use Lemma \ref{useful_lemma} to bound
\begin{align*}
\|\eqref{pp1}\|_{1,2}&\lesssim\sum_{j\leq \kmt}\|\dd_t\phi_k\text{ }\Lc_j(\Om_{ij}\pj,\phi_{\sim k })\|_{1,2}\\
&\lesssim\sum_{j\leq \kmt}2^j\|\dd_t\phi_k\|_{2+,\infty-} \|\Om_{ij}\pj\|_{2+,\infty-} \|\phi_{\sim k }\|_{\infty-,2+}
\end{align*}
and for the second line Lemma \ref{moving_loc_lemma} to find
\begin{align*}
\|\eqref{pp2}\|_{1,2}&\lesssim\|\dd_t\phi_k\|_{2+,\infty-} 2^{ k }2^{- k }\|\na\Om_{ij}\plkmt\|_{2+,\infty-} \|\phi_{> \kmt}\|_{\infty-,2+}
\end{align*}
Both of these bounds are acceptable, and we can treat the corresponding parts of \eqref{a22} and \eqref{a23} in the same way.

We at last come to the interesting part, \eqref{pp3}. We want to use \eqref{geometric_identity_4}, but are obstructed by the presence of the $\Om_{ij}$. The solution is to combine this term with the corresponding part of \eqref{a23}. We have
\begin{align*}
&\dd_t\phi_k\text{ }\dhalf P_{\sim k }(\Om_{ij}\phi_{\leq \kmt}\cdot\phi_{> \kmt})+\dd_t\phi_k\text{ }\dhalf P_{\sim k }(\phi_{\leq \kmt}\cdot\Om_{ij}\phi_{> \kmt})\\
&=\dd_t\phi_k\text{ }\dhalf \Om_{ij}P_{\sim k }(\phi_{\leq \kmt}\cdot\phi_{> \kmt})\\
&=-\half\dd_t\phi_k\text{ }\dhalf \Om_{ij}P_{\sim k }(\phi_{> \kmt}\cdot\phi_{> \kmt})
\end{align*}
We then bound
\begin{align*}
\|\dd_t\phi_k\text{ }\dhalf \Om_{ij}P_{\sim k }(\phi_{> \kmt}\cdot\phi_{> \kmt})\|_{1,2}&\lesssim2^{ k }\|\dd_t\phi_k\|_{2+,\infty-} \|\Om_{ij}\phi_{> \kmt}\|_{2+,\infty-} \|\phi_{> \kmt}\|_{\infty-,2+}
\end{align*}
which is acceptable. The corresponding term in \eqref{a22} can be handled similarly on its own.

The term \eqref{fs2} can be handled in the same way, using the Leibniz rule on the time-derivative in place of Lemma \ref{useful_lemma}.

To complete the study of (A4) we have to consider the case where the time derivative falls on $\dhalf\phi_{\sim k }$ instead of $\pk$. In this case the argument carries through identically until it comes to handling the term analogous to \eqref{pp3},
\begin{align*}
\phi_k\text{ }\dhalf P_{\sim k }(\Om_{ij}\phi_{\leq \kmt}\cdot\dd_t\phi_{> \kmt})
\end{align*}
with similar contributions
\begin{align}
\Om_{ij}\phi_k\text{ }\dhalf P_{\sim k }(\phi_{\leq \kmt}\cdot\dd_t\phi_{> \kmt})\label{a222}
\end{align}
and
\begin{align}
\phi_k\text{ }\dhalf P_{\sim k }(\phi_{\leq \kmt}\cdot\dd_t\Om_{ij}\phi_{> \kmt})\label{a232}
\end{align}
from the second and third lines of (A4). First note that if the derivative were instead on $\phi_{\lkmt}$ we would be fine in all three cases, for instance
\begin{align*}
\|\phi_k\text{ }\dhalf P_{\sim k }(\Om_{ij}\dd_t\phi_{\leq \kmt}\cdot\phi_{> \kmt})\|_{1,2}\lesssim\|\phi_k\|_{2+,\infty-} 2^{ k }\|\Om_{ij}\dd_t\phi_{\leq \kmt}\|_{2+,\infty-} \|\phi_{> \kmt}\|_{\infty-,2+}
\end{align*}
It therefore remains to study
\begin{gather*}
\phi_k\text{ }\dhalf P_{\sim k }\dd_t(\Om_{ij}\phi_{\leq \kmt}\cdot\phi_{> \kmt})\\
\Om_{ij}\phi_k\text{ }\dhalf P_{\sim k }\dd_t(\phi_{\leq \kmt}\cdot\phi_{> \kmt})\\
\phi_k\text{ }\dhalf P_{\sim k }\dd_t(\phi_{\leq \kmt}\cdot\Om_{ij}\phi_{> \kmt})
\end{gather*}
Combining the first and last terms and using \eqref{geometric_identity_4} we bound
\begin{align*}
&\|\phi_k\text{ }\dhalf P_{\sim k }\dd_t(\Om_{ij}\phi_{\leq \kmt}\cdot\phi_{> \kmt})+\phi_k\text{ }\dhalf P_{\sim k }\dd_t(\phi_{\leq \kmt}\cdot\Om_{ij}\phi_{> \kmt})\|_{1,2}\\
&=\|\phi_k\text{ }\dhalf \dd_t\Om_{ij}P_{\sim k }(\phi_{> \kmt}\cdot\phi_{> \kmt})\|_{1,2}\\
&\lesssim2^{ k }\|\phi_k\|_{2+,\infty-} \|\dd_t\LOR\phi_{> \kmt}\|_{\infty-,2+}\|\LOR\phi_{>\kmt}\|_{2+,\infty-}
\end{align*}
which is acceptable when summed over $ k >10$. The middle term can be dealt with in the same way on its own. This completes the analysis for (A4).

The groups (A1), (A2) and (A3) must be treated simultaneously in order to preserve the structure for \eqref{geometric_identity_4}. In all cases, we can work as for (A4) up to the decomposition \eqref{pp1}-\eqref{pp3}. At this point for (A1) we will be studying
\begin{gather*}
\phi_k\text{ }L_n\Om_{ij}\phi_{\leq \kmt}\cdot\dphi_{\sim k }\\
\Om_{ij}\phi_k\text{ }L_n\phi_{\leq \kmt}\cdot\dphi_{\sim k }\\
\phi_k\text{ }L_n\phi_{\leq \kmt}\cdot \dhalf\Om_{ij}\phi_{\sim k }
\end{gather*}
for the first, second and third lines respectively. For (A2) we will have
\begin{gather*}
L_n\phi_k\text{ }\Om_{ij}\phi_{\leq \kmt}\cdot\dphi_{\sim k }\\
L_n\Om_{ij}\phi_k\text{ }\phi_{\leq \kmt}\cdot\dphi_{\sim k }\\
L_n\phi_k\text{ }\phi_{\leq \kmt}\cdot\dhalf \Om_{ij}\phi_{\sim k }
\end{gather*}
and for (A3)
\begin{gather*}
\phi_k\text{ }\Om_{ij}\phi_{\leq \kmt}\cdot L_n\dphi_{\sim k }\\
\Om_{ij}\phi_k\text{ }\phi_{\leq \kmt}\cdot L_n\dphi_{\sim k }\\
\phi_k\text{ }\phi_{\leq \kmt}\cdot L_n\Om_{ij}\dphi_{\sim k }
\end{gather*}
(as well as a second set of easier terms from the expansion of the cross product).
Adding these nine terms together and reversing the Leibniz rule on $\Om_{ij}$ and $L_n$ this comes to
\begin{align*}
L_n\Om_{ij}[\phi_k\text{ }\phi_{\leq \kmt}\cdot\dphi_{\sim k }]
\end{align*}
which we can split up as
\begin{align}
&L_n\Om_{ij}[\phi_k\text{ }(\phi_{\leq \kmt}\cdot\dphi_{\sim k }-\dhalf(\phi_{\leq \kmt}\cdot\phi_{\sim k }))]\nonumber\\
&+L_n\Om_{ij}[\phi_k\text{ 
}\dhalf(\phi_{\leq \kmt}\cdot\phi_{\sim k }-P_{\sim k }(\phi_{\leq \kmt}\cdot\phi_{> \kmt}))]\nonumber\\
&+L_n\Om_{ij}[\phi_k\text{ }\dhalf P_{\sim k }(\phi_{\leq \kmt}\cdot\phi_{> \kmt})]\label{kpk}
\end{align}
The first term is of the form
\begin{align*}
\sum_{j\leq \kmt}\Om_{ij}L_n[\phi_k \Lc_j(\pj,\phi_{\sim k })]
\end{align*}
which can be treated using \eqref{L_LL}-\eqref{om_exp} and placing $\phi_k$ and $\pj$ into $L^{2+}_tL^{\infty-}_x$, and $\phi_{\sim k }$ into $L^{\infty-}_tL^{2+}_x$.

For the second term, we use Lemma \ref{moving_loc_lemma} to write, for example when $L_n\Om_{ij}$ falls on the difference term
\begin{align*}
&\|\phi_k\text{ }L_n\Om_{ij}\dhalf(\phi_{\leq \kmt}\cdot\phi_{\sim k }-P_{\sim k }(\phi_{\leq \kmt}\cdot\phi_{> \kmt}))\|_{1,2}\\
&\lesssim\|\phi_k\|_{\f{2M}{M-1},2M}\left\|2^{-k}L_n\Om_{ij}\dhalf \int_{y,\tht}(2^{ k }y\check{\chi}_{ k }(y))\na\phi_{\leq \kmt}(x-\tht y)\phi_{\sim k }(x-y) dyd\tht\right\|_{\f{2M}{M+1},\f{2M}{M-1}}\\
&\lesssim 2^{- k }2^{-(\half+\f{1}{M}) k }C_0c_{ k }\int_{y,\tht}|2^{ k }y\check{\chi}_{ k }(y)|\left\|L_n\Om_{ij}\dhalf[\na\phi_{\leq \kmt}(x-\tht y)\phi_{\sim k }(x-y) ]\right\|_{\f{2M}{M+1},\f{2M}{M-1}}dyd\tht
\end{align*}
where
\begin{align*}
&\left\|L_n\Om_{ij}\dhalf[\na\phi_{\leq \kmt}(x-\tht y)\phi_{\sim k }(x-y) ]\right\|_{\f{2M}{M+1},\f{2M}{M-1}}\\
&\lesssim 2^k\|L\LOR(\na\phi_{\leq\kmt}(x-\tht y))\|_{2+,\infty-}\|L\LOR(\phi_{\sim k}(x-y))\|_{\infty-,2+}\\
&\quad+\|\LOR(\dd_t\na\phi_{\leq k-10}(x-\tht y))\|_{2+,\infty-}\|\LOR(\phi_{\sim k}(x-y))\|_{\infty-,2+}\\
&\quad+\|\LOR(\na\phi_{\leq k-10}(x-\tht y))\|_{2+,\infty-}\|\LOR(\dd_t\phi_{\sim k}(x-y))\|_{\infty-,2+}
\end{align*}
all of which are acceptable using \eqref{translation_bound}, because $|y|$ behaves like $2^{-k}$ in the integral.

The third term of \eqref{kpk} can be treated using \eqref{geometric_identity_4} and the commutation relation between $L_n$ and $\dhalf P_{\sim k }$. We place $\phi_k$ into $L^{2+}_tL^{\infty-}_x$, one of the high frequency factors into $L^{2+}_tL^{\infty-}_x$ and the other into $L^{\infty-}_tL^{2+}_x$, in particular the one accompanied by a $\dd_t$ when this arises from $[L_n,\dhalf P_{\sim k }]$.

To conclude the case $ k >10$, it remains to consider (B). When $\phi$ appears at high frequency this term is again easily handled like \eqref{term3}. In the low frequency case, $\plt$, the term can be treated analogously to the group (A4) which also contains two high frequency derivatives, but with significant simplifications.

\bigskip

\item\underline{$ k \in[-10,10]$:} This time we consider
$$
\|P_0\LOR L[\phi\x(\dhalf(\pso\x\dphi)-(\pso\x(-\D)\phi))]\|_{1,2}
$$
This term is easiest to handle when the outer factor of $\phi$ is at high frequency. Indeed we have
\begin{align*}
   &\|P_0\LOR L[\phi_{>-10}\x(\dhalf(\pso\x\dphi)-(\pso\x(-\D)\phi))]\|_{1,2} \\
   &\lesssim\|\LOR L[\phi_{>-10}\x(\dhalf(\pso\x\dphi_{\leq20})-(\pso\x(-\D)\phi_{\leq20}))]\|_{1,2} \\
   &\quad+\|\LOR L[\phi_{>-10}\x(\dhalf(\pso\x\dphi_{>20})-(\pso\x(-\D)\phi_{>20}))]\|_{1,2}
\end{align*}
Upon carefully commuting $\LOR L$ through the operators $\dhalf$, we can bound the first line above by placing $\pgt$ into $L^{2+}_tL^{\infty-}_x$, $\pso$ into $L^{\infty-}_tL^{2+}_x$ and $\phi_{<20}$ also into $L^{2+}_tL^{\infty-}_x$, without needing to use the cancellation structure. For the second line we do need the cancellation, since we cannot handle two derivatives falling on a high frequency factor, so bound this by
\begin{align*}
    \sum_{j>20}\|\LOR L[\pgt\x\Lc_0(\pso,\dhalf\pj)]\|_{1,2}
\end{align*}
which can be dealt with by placing $\pgt$ and $\pso$ into $L^{2+}_tL^{\infty-}_x$ and $\pj$ into $L^{\infty-}_tL^{2+}_x$.

The case $\plt$ is more delicate. Note that in this case the final factor of $\phi$ is also restricted to frequency $\lesssim1$. First suppose it is at frequency $\sim1$. Write
$$
\tilde{P}_0(\dhalf(\pso\x\dphi_{\sim0})-(\pso\x(-\D)\phi_{\sim0}))=\Lc_0(\pso,\pso)
$$
Then using \eqref{FS2} we have
\begin{align*}
    &\|P_0\LOR L[\plt\x(\dhalf(\pso\x\dphi_{\sim0})-(\pso\x(-\D)\phi_{\sim0}))]\|_{1,2}\\
    &\lesssim\sum_{a,b}|c_{a,b}^{(0)}|\|\LOR L[\plt(x)\x(\pso(x-a)\x\pso(x-b))]\|_{1,2}\\
    &\lesssim\sum_{a,b}|c_{a,b}^{(0)}|\|\LOR L[\pso(x-a)\text{ }\plt(x)\cdot\pso(x-b)]\|_{1,2}+\text{similar term}
\end{align*}
We can then replace $\plt(x)$ with $\plt(x-b)$ up to an integral term of the form $\hhl$. 
Using Lemma \ref{moving_loc_lemma} we can then exchange $\plt\cdot\pso$ for $\tilde{P}_0(\plt\cdot\pgt)\simeq\tilde{P}_0(\pgt\cdot\pgt)$, and bound
\begin{align*}
    &\|\LOR L[\pso(x-a)\ \tilde{P}_0(\pgt\cdot\pgt)(x-b)]\|_{1,2}\\
    &\lesssim_{a,b}\|\LOR L\pso\|_{2+,\infty-}(\|\LOR L(\pgt\cdot\pgt)\|_{\f{2M}{M+1},\f{2M}{M-1}}+\|\LOR(\dd_t\pgt\cdot\pgt)\|_{\f{2M}{M+1},\f{2M}{M-1}})
\end{align*}
which can be handled by placing one of the high frequency factors (the differentiated one in the second case) into $L^{\infty-}_tL^{2+}_x$ and the other into the other into $L^{2+}_tL^{\infty-}_x$.

We now consider the case where the third factor of $\phi$ is at low frequency, say $\leq 2^{-20}$. We start by writing
$$
\dhalf(\pso\x\dphi_j)-(\pso\x(-\D)\phi_j)=\Lc_j(\pso,\pj)
$$
for $j<-20$. Then we have
\begin{align}
    &\|P_0 \LOR L [\plt\x(\dhalf(\pso\x\dphi_{<-20})-(\pso\x(-\D)\phi_{<-20}))]\|_{1,2}\nonumber\\
    &\lesssim\sum_{j<-20}\sum_{a,b}|c_{a,b}^{(j)}|\|P_0\LOR L[\pso(x-a)\text{ }\plt(x)\cdot\pj(x-2^{-j}b)]\|_{1,2}\label{pi0}\\
    &\quad+\sum_{j<-20}\sum_{a,b}|c_{a,b}^{(j)}|\|P_0\LOR L[\pj(x-2^{-j}b)\text{ }\plt(x)\cdot\pso(x-a)]\|_{1,2}\label{pi1}
\end{align}
We will study the first line above, the second being similar (in fact significantly easier). In order to use \eqref{geometric_identity_4}, we split $\plt$ into $\phi_{[j-10,-10]}+\phi_{<j-10}$. The first component here is handled by a straightforward application of Lemma \ref{SIL}, so we are left to study
\begin{align*}
    &\sum_{j<-20}\sum_{a,b}|c_{a,b}^{(j)}|\|P_0\LOR L[\pso(x-a)\text{ }\phi_{<j-10}(x)\cdot\pj(x-2^{-j}b)]\|_{1,2}
\end{align*}
We first replace $\phi_{<j-10}(x)$ with $\phi_{<j-10}(x-2^{-j}b)$ up to an acceptable integrable term of the form $\llowesth$. 
 We are then left with
\begin{align*}
    &\sum_{j<-20}\sum_{a,b}|c_{a,b}^{(j)}|\|P_0\LOR L[\pso(x-a)\text{ }(\phi_{<j-10}\cdot\pj)(x-2^{-j}b)]\|_{1,2}
\end{align*}
Similarly to before, we can replace $\phi_{<j-10}\cdot\pj$ with $P_j(\phi_{<j-10}\cdot\phi_{\geq j-10})$ up to the term 
\begin{align*}
    2^{-j}|c_{a,b}^{(j)}|\int_{y,\tht}|(2^jy)^T\check{\chi}_j(y)|\|\LOR L[\pso(x-a)\text{ }\na\phi_{<j-10}(x-2^{-j}b-\tht y)\cdot\phi_{\sim j}(x-2^{-j}b-y)]\|_{1,2}dyd\tht
\end{align*}
which is also of type $\llowesth$ and can be handled using Lemma \ref{SIL}. We can then finally invoke \eqref{geometric_identity_4} to bound
\begin{align}
    &\sum_{j<-20}\sum_{a,b}|c_{a,b}^{(j)}|\|P_0\LOR L[\pso(x-a)\text{ }P_j(\phi_{<j-10}\cdot\phi_{\geq j-10})(x-2^{-j}b)]\|_{1,2}\nonumber\\
    &\lesssim\sum_{r\sim s\geq j-10}\sum_{j<-20}\sum_{a,b}|c_{a,b}^{(j)}|\|P_0\LOR L[\pso(x-a)\text{ }P_j(\phi_r\cdot\phi_s)(x-2^{-j}b)]\|_{1,2}\label{pkpk}
\end{align}
This term can be handled using Corollary \ref{SIC}. For example, when $\LOR$ and $L$ both fall on $P_j$, we have (up to some terms which are symmetric in $r$ and $s$)
\begin{align*}
    &2^j\|\pso(x-a)\text{ }\LOR L(P_j(\phi_r\cdot\phi_s)(x-2^{-j}b))\|_{1,2}\\
    &\lesssim_b \|\pso(x-a+2^{-j}b)\Pp_j(\LOR \dd_t\phi_r\cdot\phi_s)(x)\|_{1,2}\\
    &\quad+\|\pso(x-a+2^{-j}b)\Pp_j( \dd_t\phi_r\cdot\LOR\phi_s)(x)\|_{1,2}\\
    &\quad+2^j\|\pso(x-a+2^{-j}b)\Pp_j(\LOR L\phi_r\cdot\phi_s)(x)\|_{1,2}\\
    &\quad+2^j\|\pso(x-a+2^{-j}b)\Pp_j(L\phi_r\cdot\LOR \phi_s)(x)\|_{1,2}\\
    &\lesssim2^{-j/M}2^{3j\f{M-1}{4M}}2^{\f{21}{4M}(s-j)}\|\phi_s\|_{2+,4+}\|\LOR\dd_t\phi_r\|_{2+,\infty}C_0c_0\\
    &\quad+2^{-j/M}2^{j-s}2^{3s/2M}\|\LOR\phi_s\|_{2+,\infty-}\|\LOR\dd_t\phi_r\|_{2+,\infty}C_0c_0\\
    &\quad+2^{-j/M}2^{3j\f{M-1}{4M}}2^{\f{21}{4M}(r-j)}\|\dd_t\phi_r\|_{2+,4+}\|\LOR\phi_s\|_{2+,\infty}C_0c_0\\
    &\quad+2^{-j/M}2^{j-r}2^{3r/2M}\|\LOR\dd_t\phi_r\|_{2+,\infty-}\|\LOR\phi_s\|_{2+,\infty}C_0c_0\\
    &\quad+2^j(\text{same terms with }L \text{ instead of }\dd_t)\\
    &\lesssim (2^{[-\f{1}{M}+3\f{M-1}{4M}-\f{21}{4M}](j-r)}+2^{(1-\f{1}{M})(j-r)})C_0^3c_0c_rc_s
\end{align*}
which is acceptable when summed as in \eqref{pkpk}. This concludes the study of \eqref{pi0}.

\end{itemize}
\end{proof}
\section{Normal Forms}\label{normal_forms_chapter}
The goal of this section is to perform a series of normal transformations to reduce the second and third terms on the right hand side of equation (\ref{new_wm_eqn}) to $error$.

\subsection{Low-high-high term}
To handle the third term, we make the transformation
\begin{align*}
\psi^L\mapsto\tilde{\psi}^L:=\psi^L+\half(\D_1) \quad\text{ for }\quad (\D_1)=
\begin{pmatrix}
\D_1^0\\
\D_1^{1,1}+\D_1^{1,2}+\D_1^{1,3}\\
\vdots\\
\D_1^{3,1}+\D_1^{3,2}+\D_1^{3,3}
\end{pmatrix}
\end{align*}
with
\begin{gather*}
\D_1^0:=P_0(\phi_{\leq-10}\phi_{>-10}^T\phi_{>-10})\\
\D_1^{n,1}:=P_0((L_n\phi)_{\leq-10}\phi_{>-10}^T\phi_{>-10})\\
\D_1^{n,2}:=P_0(\phi_{\leq-10}(L_n\phi)_{>-10}^T\phi_{>-10})\\
\D_1^{n,3}:=P_0(\phi_{\leq-10}\phi_{>-10}^T(L_n\phi)_{>-10})
\end{gather*}

We start by showing that this transformation is bounded in the following sense:
\begin{proposition}\label{D1_bounded_prop}
For $(\D_1)$ as above, it holds
\begin{equation*}
\|(\D_1)\|_{S_0}\lesssim C_0^2\eps c_0
\end{equation*}
and
\begin{equation*}
\|\LOR (\D_1)[0]\|_{\dot{H}^{3/2}\x \dot{H}^{1/2}}\lesssim c_0
\end{equation*}
\end{proposition}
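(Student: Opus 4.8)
The plan is to estimate the quadratic/cubic multilinear expression $(\Delta_1)$ term by term, using the fact that the low-frequency factor sits at frequency $\leq 2^{-10}$ while the two high-frequency factors sit at frequency $>2^{-10}$, and that the whole expression is localised to frequency $\sim 2^0$ by $P_0$. The $S_0$ bound is the main content; the initial-data bound will follow by evaluating the same estimates at $t=0$ and using that the relevant Strichartz norms control the $\dot H^{3/2}\times\dot H^{1/2}$ data norm via the linear estimate (Theorem \ref{linear_estimate}), or more directly by a product estimate in $\dot H^s$-type spaces.

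For the $S_0$ bound, recall (\ref{norm}) that $\|(\Delta_1)\|_{S_0}$ is a maximum over $(p,q)\in\mathcal{Q}$ of weighted norms $2^{(\frac1p+\frac3q-1)\cdot 0}\|\LOR^{1-\delta(p,q)}\nabla_{t,x}P_0(\Delta_1)^L\|_{p,q}$, so since $k=0$ there is no dyadic weight and we simply need to bound $\|\LOR\nabla_{t,x}P_0(\Delta_1)\|_{p,q}$ (and its $L$-variants) for each admissible pair, in particular for $(\infty,2)$ and the non-standard pairs $(2+,\infty-)$, $(\infty-,2+)$ from (\ref{notation}). The key structural observation is that each summand of $(\Delta_1)$ has exactly one undifferentiated low-frequency factor $\phi_{\leq-10}$ (or $(L_n\phi)_{\leq-10}$) and two high-frequency factors, and there is one spatial derivative available from $\nabla_{t,x}$ and at most one angular derivative from $\LOR$; these distribute across the three factors by the Leibniz rule. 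Whenever a derivative or $\LOR$ lands on a high-frequency factor we can place that factor in $L^{\infty-}_tL^{2+}_x$ (using $\|\LOR\nabla P_k\phi^L\|_{\infty-,2+}\lesssim 2^{(1-3/2+2/M)k}\|P_k\phi^L\|_{S_k}$ and summing $2^{(1-3/2)k}$ over $k>-10$, which converges), while the other high-frequency factor and the low-frequency factor go into $L^{2+}_tL^{\infty-}_x$ via (\ref{notation}), summing $\sum_{k>-10}2^{-(1/2+1/M)k}\|P_k\phi^L\|_{S_k}\lesssim C_0\epsilon$ and $\sum_{k\leq -10}2^{(1/2-1/M)k}\|P_k\phi^L\|_{S_k}\lesssim C_0\epsilon$ respectively. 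In the worst case where both a spatial derivative and $\LOR$ land on the same high-frequency factor, that factor absorbs two angular-type derivatives which is still controlled by $S$ (the norm allows one $\LOR$ together with $\nabla$); the resulting product estimate then closes with two factors of $C_0\epsilon$ times one $C_0c_0$, giving $\lesssim C_0^3\epsilon^2 c_0 \lesssim C_0^2\epsilon c_0$ after absorbing one $C_0\epsilon\ll1$. The $L_n$-decorated pieces $\Delta_1^{n,i}$ are handled identically: the extra $L_n$ is simply carried along inside the $S$-norm of whichever factor it decorates, and the commutator $[L_n,P_0]=2^{0}\partial_t\mathcal{P}_0$ produces only lower-order contributions of the same shape (cf. (\ref{commr3})), as does $[\Om,P_k]$.

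For the initial-data estimate $\|\LOR(\Delta_1)[0]\|_{\dot H^{3/2}\times\dot H^{1/2}}\lesssim c_0$, the cleanest route is to note that $(\Delta_1)$ is frequency-localised to $\sim 2^0$, so $\|\LOR(\Delta_1)(0)\|_{\dot H^{3/2}}\sim\|\LOR(\Delta_1)(0)\|_{L^2_x}$ and $\|\LOR\partial_t(\Delta_1)(0)\|_{\dot H^{1/2}}\sim\|\LOR\partial_t(\Delta_1)(0)\|_{L^2_x}$; both are then controlled by the same multilinear product estimates as above evaluated at a fixed time, placing the high-frequency factors in $L^\infty_x$-based spaces and one factor in $L^2_x$, and using the smallness of $\phi[0]$ in the critical norm (\ref{small1}) together with $\|\LOR P_k\phi[0]\|_{\dot H^{3/2}\times\dot H^{1/2}}\leq c_k$ and the frequency-envelope property $\sum_j 2^{-\sigma|j|}c_j\lesssim c_0$. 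Concretely, $\|P_0(\phi_{\leq-10}\phi_{>-10}^T\phi_{>-10})(0)\|_{L^2}\lesssim \|\phi_{\leq-10}(0)\|_{L^\infty}\big(\sum_{k>-10}\|\phi_k(0)\|_{L^2}\big)\big(\sum_{k>-10}\|\phi_k(0)\|_{L^\infty}\big)$ and one uses Bernstein's inequality to convert the $L^\infty$ sums into convergent sums of $c_k$'s, yielding $\lesssim \epsilon^2 c_0 \lesssim c_0$; the $\partial_t$ and $\LOR$ versions are the same with $\partial_t\phi$ governed by $\dot H^{1/2}$-data smallness and one $\Om$ or $L_n$ distributed by Leibniz.

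The main obstacle I anticipate is purely bookkeeping: tracking the many ways $\nabla_{t,x}$, $\Om_{ij}$ and (for the $\Delta_1^{n,i}$) the $L_n$ distribute over the three factors, and verifying that in \emph{every} such case at least one high-frequency factor can be spared to sit in $L^{\infty-}_tL^{2+}_x$ so that the dyadic sum $\sum_{k>-10}2^{-k/2}(\ldots)$ converges — i.e. confirming there is never a configuration forcing two high-frequency factors simultaneously into an $L^2_x$-type space at frequency $\sim 2^0$ with no summable gain. Since there are only two high-frequency factors and the outer $P_0$ forces their frequencies to be comparable and $\gtrsim 1$, one always has the needed room, but making this airtight across all derivative placements is where the care lies. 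The commutation of $L_n$ with $\dhalf$ does not arise here (no $\dhalf$ appears in $(\Delta_1)$), which is a simplification compared to the $HWM$ terms.
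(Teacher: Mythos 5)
Your $S_0$ bound is sound and follows essentially the same lines as the paper's: distribute $\nabla_{t,x}$ and $\LOR$ by Leibniz over the three factors, place the two high-frequency factors in complementary Strichartz spaces, and absorb one $C_0\epsilon$. The small differences in choice of exponents ($L^{\infty-}_tL^{2+}_x$ versus $L^\infty_tL^2_x$, etc.) do not matter.

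There is, however, a genuine gap in your treatment of the initial-data bound for the $L_n$-decorated pieces $\Delta_1^{n,i}$. You write that ``the $\partial_t$ and $\LOR$ versions are the same with $\partial_t\phi$ governed by $\dot H^{1/2}$-data smallness and one $\Om$ or $L_n$ distributed by Leibniz,'' as if the estimate closes purely from the data norms. But $\partial_t(L_n\phi)(0) = x_n\partial_t^2\phi(0) + \partial_{x_n}\phi(0)$, and $\partial_t^2\phi(0) = \Delta\phi(0) + \Box\phi(0)$ is \emph{not} controlled by $\phi[0]$ alone: one must substitute the equation for $\Box\phi(0)$ and estimate $\|\LOR P_k(x_n\cdot \Box\phi)(0)\|_{L^2}$, where $\Box\phi$ contains the full wave-maps and half-wave-maps nonlinearities, including the nonlocal $HWM_2$ piece where the multiplier $x_n$ interacts non-trivially with $\dhalf$. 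This is precisely why the paper inserts a separate claim proving $\|\LOR P_k(x_n\cdot\Box\phi)(0)\|_{L^2}\lesssim 2^{-k/2}\epsilon c_k$ by iterating the equation and carefully threading $x_n$ through each nonlinear term (using that $x_n$ scales like an inverse derivative, and expanding $\Lc_k$ as a Fourier series for the $HWM_2$ terms). Without this step, the bound $\|\LOR\partial_t(\Delta_1^{n,1})(0)\|_{\dot H^{1/2}}\lesssim c_0$ does not follow from the hypotheses, because $\|\LOR\partial_t(L_n\phi)_{\leq -10}(0)\|_\infty$ is not a data quantity. Your observation that ``the commutation of $L_n$ with $\dhalf$ does not arise here (no $\dhalf$ appears in $(\Delta_1)$)'' is thus misleading: $\dhalf$ indeed does not appear in $(\Delta_1)$ explicitly, but it re-enters the moment you iterate the equation to control $\partial_t^2\phi(0)$, and that step is unavoidable for these terms.

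Your remark that the main anticipated difficulty is ``purely bookkeeping'' of derivative placements therefore misses the real subtlety: the bookkeeping of the $S_0$ estimate is indeed routine, but the initial-velocity estimate for the $L_n$ components requires an auxiliary nonlinear estimate on $x_n\Box\phi(0)$ that your proposal neither states nor proves.
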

\begin{proof}
By Bernstein's inequality we have
\begin{align*}
\|\D_1^0\|_{S_0}\simeq&\max_{\mathcal{Q}}\|\LOR^{1-\dl(p,q)} \na_{t,x}P_0(\phi_{\leq-10}\phi_{>-10}^T\phi_{>-10})\|_{p,q}
\lesssim\max_{\mathcal{Q}}\|\LOR  \na_{t,x}(\phi_{\leq-10}\phi_{>-10}^T\phi_{>-10})\|_{p,2}
\end{align*}
When the derivative falls on a high frequency term we have, noting that $p\neq2$ for $(p,q)\in\mathcal{Q}$,
\begin{align*}
\|\LOR (\plt \na_{t,x}\pgt^T\pgt)\|_{p,2}
\lesssim\|\LOR \plt\|_{\infty,\infty}\|\LOR \na_{t,x}\pgt\|_{\infty,2}\|\LOR \pgt\|_{p,\infty}
\lesssim C_0^2\eps c_0
\end{align*}
and in the same way
\begin{align*}
\|\LOR (\na_{t,x}\plt \pgt^T\pgt)\|_{p,2}
\lesssim\|\LOR \na_{t,x}\plt\|_{\infty,\infty}\|\LOR \pgt\|_{\infty,2}\|\LOR \pgt\|_{p,\infty}
\lesssim C_0^2\eps c_0
\end{align*}
The argument for the remaining $\D_1^{n,i}$ ($n,i=1,2,3$) is identical.

We now show the bound on the initial data. Recall the smallness assumption \eqref{small1}:
\begin{equation}
\|\LOR P_k\phi[0]\|_{\dot{H}^{3/2}\x\dot{H}^{1/2}}+\|\LOR (x\cdot\na)P_k\phi[0]\|_{\dot{H}^{3/2}\x\dot{H}^{1/2}}\leq c_k
\end{equation}
It immediately follows that
\begin{align*}
\|\LOR (\D^0_1)(0)\|_{\dot{H}^{3/2}}\lesssim\|\LOR \plt(0)\|_\infty\|\LOR \pgt(0)\|_\infty\|\LOR \pgt(0)\|_2\ll c_0
\end{align*}
with a similar argument for the initial velocity.

The terms involving $L$ are slightly more complicated. The initial bound in $\dot{H}^{3/2}$ presents no particular difficulties, however to study the initial velocity we have to iterate the equation. We consider only $\D^{n,1}_1$ as an example, in which case we have
\begin{align}
\|\LOR \dd_t(\D_1^{n,1})(0)\|_{\dot{H}^{1/2}}
&\lesssim \|\LOR \dd_t(L_n\phi)_{\leq-10}(0)\|_\infty \|\LOR \pgt(0)\|_\infty\|\LOR \pgt(0)\|_2\nonumber\\
&\quad+\|\LOR (L_n\phi)_{\leq-10}(0)\|_\infty \|\LOR \dd_t\pgt(0)\|_2\|\LOR \pgt(0)\|_\infty\nonumber\\
&\lesssim \eps^2\|\LOR \dd_t(L_n\phi)_{\leq-10}(0)\|_\infty+\eps^2c_0\label{jiji}
\end{align}
where we used
$$
\|\LOR (L_n\phi)_{\leq-10}(0)\|_\infty\lesssim\sum_{k\leq-10}2^{3k/2}\|\LOR P_k(x_n\dd_t\phi(0))\|_2\lesssim\sum_{k\leq-10}\|\LOR \langle x\cdot\na\rangle P_k\dd_t\phi(0)\|_{\dot{H}^{1/2}}\lesssim\eps
$$
To bound the term involving $\dd_t(L_n\phi)$ we need to refer back to the equation. Indeed, the necessary bound will follow and the proof will be complete given the following claim.

\begin{claim}\label{claim1}
Let $k\in\Z$. It holds
\begin{equation}\label{x_nbox}
\|\LOR  P_k(x_n \cdot \Box \phi)(0)\|_{L^2_x}\lesssim 2^{-k/2} \eps c_k
\end{equation}
for all $n=1,2,3$. It follows that
\begin{align*}
\|\LOR \dd_tP_k(L_n\phi)(0)\|_2
\lesssim\|\LOR P_k(x_n\dd_t^2+\dd_{x_n})\phi(0)\|_2\lesssim 2^{-k/2} c_k
\end{align*}
\end{claim}
\begin{proof}[Proof of claim \ref{claim1}]
By scaling it suffices to consider $k=0$.

We start with the wave maps source terms, placing high frequency factors are placed into $L^2$ and others into $L^\infty$. If a high frequency derivative is forced into $L^\infty$, we let it absorb the multiplier $x_n$ which scales like an inverse derivative. Explicitly, we have
\begin{align*}
\|\LOR P_0(x_n(\phi \dau\phi^T\dad\phi)(0))\|_{L^2}&\lesssim \|\LOR  (x_n(\phi\dau\phi_{\leq-10}^T\dad\phi_{>-10})(0)\|_{L^2}\\
&\quad+\|\LOR  (x_n(\phi\dau\phi_{>-10}^T\dad\phi_{>-10})(0)\|_{L^2}\\
&\quad+\|\LOR  (x_n(\phi\dau\phi_{\leq-10}^T\dad\phi_{\leq-10})(0)\|_{L^2}\\
&\lesssim \|\LOR P_0 \phi(0)\|_{\infty}\|\LOR \dau\phi_{\leq-10}(0)\|_{\infty}\|\LOR (x_n\dad\phi_{>-10}(0))\|_{2}\\
&\quad+\|\LOR P_0 \phi(0)\|_{\infty}\|\LOR \dau\phi_{>-10}(0)\|_{2}\|\LOR (x_n\dad\phi_{>-10}(0))\|_{\infty}\\
&\quad+\|\LOR P_0 (x_n\phi_{\sim0}(0))\|_{2}\|\LOR \dau\phi_{\leq-10}(0)\|_{\infty}\|\LOR \dad\phi_{\leq-10}(0)\|_{\infty}\\
&\lesssim\eps c_0
\end{align*}
The first half-wave maps terms, $HWM_1(\phi)$, can be treated similarly.
To study $HWM_2$ we decompose
\begin{align*}
\|\LOR  P_0(x_n \cdot HWM_2(\phi))(0)\|_{L^2_x}&\lesssim
\sum_{j,k\in\Z}\|\LOR  P_0(x_n \cdot [\phi\x(\dhalf(\phi_{ k }\x\dhalf\phi_j)-\phi_{ k }\x\D\phi_j)])(0)\|_{L^2_x}
\end{align*}
Note that if $j\gg k$ or $k\gg j$ we can write
$$
\dhalf(\phi_{ k }\x\dhalf\phi_j)-\phi_{ k }\x\D\phi_j=\Lc_{k+j}(\pk,\pj)
$$
In this symmetric form we see that it suffices to consider $j\gg k$. Starting with $k\geq-10$ we use the Fourier expansion \eqref{FS2} and find
\begin{align*}
&\sum_{\substack{ k \geq-10\\j\gg k }}\|\LOR  P_0(x_n \cdot (\phi\x\Lc_{j+k}(\pk,\pj)))(0)\|_{L^2_x}\\
&\lesssim\sum_{\substack{ k \geq-10\\j\gg k }}\sum_{a,b\in\Z^3}|c^{(k+j)}_{a,b}|\|\LOR  (x_n\cdot \phi\x(\pk (x-2^{- k }a)\x\pj(x-2^{-j}b)))\|_{L^2_x}\\
&\lesssim\sum_{\substack{ k \geq-10\\j\gg k }}\sum_{a,b\in\Z^3}|c^{(k+j)}_{a,b}|\|\LOR \phi(0)\|_\infty\|\LOR  (\pk (x-2^{- k }a))(0)\|_\infty\|\LOR (x_n\cdot\pj(x-2^{-j}b))(0)\|_2\\
&\lesssim\sum_{\substack{k\geq-10\\j\gg k }}2^{j+k}\cdot c_k\cdot2^{-5j/2} c_j\lesssim\eps c_0
\end{align*}
The case $k<-10$ can be handled similarly upon further localising the outer factor of $\phi$ to low and high frequencies.

It remains to study the case $j\simeq  k $. When this frequency is low we have
\begin{align*}
&\sum_{ k <-10}\|\LOR  P_0(x_n \cdot [\phi\x(\dhalf(\phi_{ k }\x\dhalf\phi_{\sim k })-\phi_{ k }\x\D\phi_{\sim k })])(0)\|_{L^2_x}\\
&\simeq\sum_{ k <-10}\|\LOR  P_0(x_n \cdot [\phi_{\sim0}\x(\dhalf(\phi_{ k }\x\dhalf\phi_{\sim k })-\phi_{ k }\x\D\phi_{\sim k })])(0)\|_{L^2_x}\\
&\lesssim\sum_{k <-10}\|\LOR (x_n\cdot\phi_{\sim0})(0)\|_2\cdot 2^{2 k }\|\LOR  \pk (0)\|_\infty\|\LOR \phi_{\sim k }(0)\|_\infty\lesssim \eps^2 c_0
\end{align*}
and when it is high we have
\begin{align*}
&\sum_{ k \geq-10}\|\LOR  P_0(x_n \cdot [\phi\x(\dhalf(\phi_{ k }\x\dhalf\phi_{\sim k })-\phi_{ k }\x\D\phi_{\sim k })])(0)\|_{L^2_x}\\
&\lesssim\sum_{ k \geq-10}\|\LOR  P_0(\phi\x  x_n\dhalf(\phi_{ k }\x\dhalf\phi_{\sim k }))(0)\|_{L^2_x}+\|\LOR  P_0(\phi \x ((x_n\phi_{ k })\x\D\phi_{\sim k })(0)\|_{L^2_x}
\end{align*}
Interchanging the $x_n$ and $\dhalf$ up to a term involving a Riesz transform this is seen to be acceptable upon placing $\phi$ and $\psk$ into $L^\infty$ and the remaining factor into $L^2$.
This completes the proof of the claim.
\end{proof}
\end{proof}

We now show that this transformation reduces the equations to the form
\begin{equation}\label{po_eqn}
\Box\tilde{\psi}_0=-2\plt\dad\plt^T\dau\psi_0-2[P_0(\plt\dad\plt^T\dau\phi_{>-10})-\plt\dad\plt^T\dau\psi_0]+error
\end{equation}
and
\begin{align}
\Box\tilde{\psi}_n&=-2(L_n\phi)_{\leq-10}\dd_\alpha\phi_{\leq-10}^T\dd^\al\psi_0\nonumber\\
&\quad\quad-2\phi_{\leq-10}\dd_\alpha(L_n\phi)_{\leq-10}^T\dd^\al\psi_0\nonumber\\
&\quad\quad-2\phi_{\leq-10}\dd_\alpha\phi_{\leq-10}^T\dd^\al\psi_n\nonumber\\
&\quad-2[P_0((L_n\phi)_{\leq-10}\dd_\al\phi_{\leq-10}^T\dd^\al\phi_{>-10})-(L_n\phi)_{\leq-10}\dd_\alpha\phi_{\leq-10}^T\dd^\al\psi_0]\nonumber\\
&\quad\quad-2[P_0(\phi_{\leq-10}\dd_\al(L_n\phi)_{\leq-10}^T\dd^\al\phi_{>-10})-\phi_{\leq-10}\dd_\alpha(L_n\phi)_{\leq-10}^T\dd^\al\psi_0]\nonumber\\
&\quad\quad-2[P_0(\phi_{\leq-10}\dd_\al\phi_{\leq-10}^T\dd^\al(L_n\phi)_{>-10})-\phi_{\leq-10}\dd_\alpha\phi_{\leq-10}^T\dd^\al\psi_n]\nonumber\\
&\quad+error\label{po_eqn_n}
\end{align}
Indeed, clearly
$$
\Box\po_0=\Box\psi_0+\half P_0\Box(\plt\pgt^T\pgt)
$$
where
\begin{align}
P_0\Box(\plt\pgt^T\pgt)=&P_0(\Box\plt \text{ }\pgt^T\pgt)\label{111}\\
&+4P_0(\dau\plt \text{ } \dad\pgt^T\pgt)\label{222}\\
&+2P_0(\plt  \text{ }(\Box\pgt)^T\pgt)\label{333}\\
&+2P_0(\plt  \text{ }\dau\pgt^T\dad\pgt))\label{444}
\end{align}
We then need to show that $\eqref{111}=\eqref{222}=\eqref{333}=error$, since \eqref{444} cancels the (LHH) term in the equation for $\psi_0$, \eqref{new_wm_eqn}. Arguing similarly for $\tilde{\psi}_n$ we see that \eqref{po_eqn} and \eqref{po_eqn_n} follow from the following proposition.
\begin{proposition}\label{big_error_prop}
Denote
\begin{gather*}
T_1(\vp^{(1)},\vp^{(2)},\vp^{(3)}):=P_0(\Box\vp^{(1)}_{\leq-10} \text{ }(\vp^{(2)}_{>-10})^T\vp^{(3)}_{>-10})\\
T_2(\vp^{(1)},\vp^{(2)},\vp^{(3)}):=P_0(\dau\vp^{(1)}_{\leq-10} \text{ } \dad(\vp^{(2)}_{>-10})^T\vp^{(3)}_{>-10})\\
T_3(\vp^{(1)},\vp^{(2)},\vp^{(3)}):=P_0(\vp^{(1)}_{\leq-10}  \text{ }(\Box\vp^{(2)}_{>-10})^T\vp^{(3)}_{>-10})
\end{gather*}
Then it holds
$$
T_1(\vp^{(1)},\vp^{(2)},\vp^{(3)})=T_2(\vp^{(1)},\vp^{(2)},\vp^{(3)})=T_3(\vp^{(1)},\vp^{(2)},\vp^{(3)})=error
$$
for any of 
$$
(\vp^{(1)},\vp^{(2)},\vp^{(3)})\in\{(\phi,\phi,\phi),(L_n\phi,\phi,\phi),(\phi,L_n\phi,\phi),(\phi,\phi,L_n\phi) \}
$$
\end{proposition}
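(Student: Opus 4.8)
The plan is to exploit the fact that $T_1$, $T_2$, $T_3$ all have a paradifferential structure with the Box-undifferentiated (or lightly differentiated) factor appearing at low frequency $\leq 2^{-10}$, while the two remaining factors sit at frequency $>2^{-10}$. Since the whole expression is localised to unit frequency by the outer $P_0$, at least one of the high-frequency factors must in fact be at frequency $\sim 2^0$, and the two high factors must be at comparable frequencies. This is precisely the $\hhl$ (or $\hll$ when $\Box$ lands on a low factor turns it into a source term) configuration from Section \ref{discarding}, for which standard Strichartz estimates suffice --- crucially, \emph{no} cancellation structure or commuting-vector-field machinery is needed here, since there are at most two derivatives, and they never both fall on the same high-frequency factor in a way that cannot be absorbed.

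The key point in each case is to substitute the equation \eqref{eqn1.2} for the $\Box$-term. For $T_1$ and $T_3$ we replace $\Box\vp^{(i)}$ (with $\vp^{(i)}\in\{\phi, L_n\phi\}$) by the forcing $F:=-\phi\,\dau\phi^T\dad\phi + HWM(\phi)$, respectively $L_nF$, which by the Leibniz rule and the identity $L_n(\dad\phi^T\dau\phi)=2\dad(L_n\phi)^T\dau\phi$ (already used in the proof of Proposition \ref{removing_trivial_terms}) is itself a sum of trilinear terms of the schematic form $\phi\,\na\phi^L\,\na\phi^L + HWM$-type terms. Thus $T_1$ and $T_3$ become (five- or six-linear) expressions whose worst factor is $\na\phi^L$ or $\dhalf\phi^L$ at various frequencies. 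In $T_1$ the low frequency localisation $P_{\leq-10}$ on the $\Box$-factor forces the whole quintilinear object to have three of its five factors at very low frequency and two at frequency $\sim 2^0$; one distributes $\LOR$ across all factors using the Leibniz rule and the monotonicity of angular Sobolev spaces (Lemma \ref{monotoncity}), places the two high factors into $L^{\infty-}_tL^{2+}_x$ and $L^{2+}_tL^{\infty-}_x$ via the pairs in \eqref{notation}, and the low factors into $L^{2+}_tL^{\infty-}_x$ or $L^\infty_{t,x}$ (via Bernstein), summing the resulting geometric series in the low frequencies. The $HWM$-contributions to $\Box\vp^{(i)}$ are handled identically, using Lemma \ref{useful_lemma} to treat the $HWM_2$-type operators $\Lc_k$ as $\dd\phi\cdot\phi$ and Lemma \ref{projection_lemma} for the $\Pi_{\phi^\perp}$ in $HWM_1$; again the frequency budget is comfortable because the outer $P_0$ and the inner $P_{\leq-10}$ leave the two $\vp^{(2)},\vp^{(3)}$ factors at frequency $\sim 2^0$.

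For $T_2$ there is no $\Box$ to substitute, so one argues directly: $\dau\vp^{(1)}_{\leq-10}$ has a genuine (one-)derivative on a very-low-frequency factor, which is harmless, while $\dad(\vp^{(2)}_{>-10})^T$ and $\vp^{(3)}_{>-10}$ are a $\na\phi^L$ and a $\phi^L$ at comparable frequencies $\gtrsim 2^{-10}$ (and one of them $\sim 2^0$). Splitting into the cases $\vp^{(2)}$ at frequency $\sim 2^0$ versus $\vp^{(3)}$ at frequency $\sim 2^0$, in each case one places the frequency-$\sim 2^0$ factor into $L^{\infty-}_tL^{2+}_x$, the other high factor into $L^{2+}_tL^{\infty-}_x$, and $\dau\vp^{(1)}_{\leq-10}$ into $L^{2+}_tL^{\infty-}_x$ after Bernstein, distributing $\LOR$ via Leibniz and monotonicity; the low-frequency sum converges geometrically. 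Since $\vp^{(1)}$, $\vp^{(2)}$, $\vp^{(3)}$ range only over $\phi$ and $L_n\phi$, every factor lives in $S$ with the bound $\|P_k\phi^L\|_{S_k}\lesssim C_0 c_k$ from the bootstrap assumption \eqref{assumption1}, so each trilinear (or higher) estimate closes with a factor $C_0^3 c_0 \eps^2$ (or $C_0^5 c_0 \eps^4$ for the substituted terms), which is an acceptable error.

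\textbf{Main obstacle.} The only genuine subtlety is bookkeeping: when $\Box$ is substituted into $T_1$ or $T_3$, the resulting terms are five- or six-linear and involve the nonlocal operators from $HWM(\phi)$, so one must be careful that the nonlocal derivatives $\dhalf$ and the operators $\Lc_k$ interact benignly with the frequency localisations --- this is exactly the kind of complication already confronted in Section \ref{HWM_chapter}, and the tools there (the Fourier-series expansion \eqref{FS2}, Lemma \ref{useful_lemma}, and the commutation identities \eqref{L_LL}--\eqref{om_exp}) suffice. A secondary point is the interaction of $L_n$ with the Littlewood-Paley projections, generating the $2^{-j}\dd_t\Pp_j$ commutator terms of \eqref{commr3}; but since $\Pp_j$ here acts at frequency $\sim 2^0$ (for the high factors) or at a harmless very low frequency, these commutators cost nothing. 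No step requires the delicate Lemma \ref{SIL} or the angular separation estimate, because the dangerous $\llh$ and $\lowestlh$ interactions are precisely the ones \emph{not} present once the Box-factor is forced to very low frequency while the other two are pinned near frequency $1$.
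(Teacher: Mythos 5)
Your proposal is essentially correct and matches the paper's route: iterate the equation for the $\Box$-terms (equivalently, prove the Strichartz bound $\|\LOR P_k\Box\phi\|_{2+,\infty-}\lesssim 2^{(\f{3}{2}-\f{1}{M})k}C_0c_k$) and close with H\"older in the $(2+,\infty-)$ / $(\infty-,2+)$ pairs, observing that the dangerous $\lowestlh$ interactions are absent here so neither Lemma~\ref{SIL} nor the angular-separation machinery is needed. One minor imprecision: the two factors $\vp^{(2)}_{>-10},\vp^{(3)}_{>-10}$ need only live at \emph{comparable} frequencies $\gtrsim 1$ (not necessarily $\sim 2^0$), and after substituting the equation the three sub-factors of $P_{\leq-10}\Box\vp^{(1)}$ can individually be at high frequency as long as they cancel down to $\leq 2^{-10}$; this does not affect the argument because the Strichartz sums over all such frequency choices converge geometrically.
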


\begin{proof}
For simplicity, we will only prove the statement for $(\phi,\phi,\phi)$ and the other cases follow in the same way.\footnote{The only difference when including the factors of $L_n$ comes in estimating the half-wave maps terms upon iterating the equation. Here one must simply pay a little attention when exchanging $L$ with the operators $\dhalf$ and $P_j$, however this causes no problems thanks to the commutation relations of Section \ref{ang_com_chap}.} Let's start with $T_1$. We have
\begin{align*}
&\|\LOR P_0(\Box\phi_{\leq-10}\phi_{>-10}^T\pgt)\|_{1,2}\lesssim\|\LOR \Box\phi_{\leq-10}\|_{2+,\infty-} \|\LOR \pgt\|_{\infty-,2+} \|\LOR \pgt\|_{2+,\infty-}
\end{align*}
so it suffices to show the following:
\begin{claim}
It holds
\begin{equation}\label{box_estimate}
\|\LOR  P_k\Box\phi\|_{\f{2M}{M-1},2M}\lesssim 2^{(\f{3}{2}-\f{1}{M})k}C_0c_k
\end{equation}
\end{claim}
\begin{proof}[Proof of claim]
It again suffices to consider $k=0$. By the usual frequency decomposition and Bernstein's inequality we have
\begin{align}
\|\LOR P_0(\phi\dau\phi^T\dad\phi)\|_{\f{2M}{M-1},2M}&\lesssim\|\LOR P_0(\phi\dau\phi_{\leq-10}^T\dad\phi_{>-10})\|_{\f{2M}{M-1},2}\label{T1}\\
&\quad+\|\LOR P_0(\phi\dau\phi_{>-10}^T\dad\phi_{>-10})\|_{\f{2M}{M-1},\f{2M}{M+1}}\label{T2}\\
&\quad+\|\LOR P_0(\phi_{\sim0}\dau\phi_{\leq-10}^T\dad\phi_{\leq-10})\|_{\f{2M}{M-1},2M}\label{T5}
\end{align}
Always placing the lone $\phi$ into $L^\infty_{t,x}$ we have
\begin{align*}
\eqref{T1}\lesssim\|\LOR \dau\plt\|_{\f{2M}{M-1},\infty} \|\LOR \dad\phi_{>-10}\|_{\infty,2}
\lesssim C_0^2 c_0^2
\end{align*}
Likewise
\begin{align*}
\eqref{T2}\lesssim\|\LOR \dau\pgt\|_{\f{4M}{M-1},\f{4M}{M+1}} \|\LOR \dad\pgt\|_{\f{4M}{M-1},\f{4M}{M+1}}
\lesssim C_0^2 c_0^2
\end{align*}
and lastly
\begin{align*}
\eqref{T5}\lesssim\|\LOR \phi_{\sim0}\|_{\f{2M}{M-1},2M} \|\LOR \dau\plt\|_{\infty,\infty} \|\LOR \dad\plt\|_{\infty,\infty}\lesssim C_0^3c_0^3
\end{align*}

The first half-wave maps terms can be treated in the same way and the remaining such terms can be handled analogously upon incorporating the modifications as in the proof of Claim \ref{claim1}.
\end{proof}

For $T_2$ we have
\begin{align*}
&\|\LOR P_0(\dau\plt\dad\pgt^T\pgt)\|_{1,2}\\
&\lesssim\|\LOR \dau\plt\|_{2+,\infty-}\|\LOR \dad\pgt\|_{\infty-,2+}  \|\LOR \pgt\|_{2+,\infty-}\lesssim C_0^3c_0^3
\end{align*}

Lastly, for $T_3$ we must expand the wave operator within the overall expression. Starting with the wave maps source terms we have
\begin{align}
\|\LOR P_0(&\plt P_{>-10}(\phi\dad\phi^T\dau\phi)^T\pgt)\|_{1,2}\nonumber\\
\lesssim\sum_{k>-10}&\|\LOR P_0(\plt P_k(\phi\dad\phi_{\leq k-10}^T\dau\phi_{> k-10})^T\phi_{\sim k})\|_{1,2}\label{tt1}\\
\quad\quad+&\|\LOR P_0(\plt P_k(\phi\dad\phi_{> k-10}^T\dau\phi_{> k-10})^T\phi_{\sim k})\|_{1,2}\label{tt2}\\
\quad\quad+&\|\LOR P_0(\plt P_k(\phi_{\sim k}\dad\phi_{\leq k-10}^T\dau\phi_{\leq k-10})^T\phi_{\sim k})\|_{1,2}\label{tt5}
\end{align}
which can be treated like \eqref{term1}, \eqref{term3} and \eqref{term2} respectively. The $HWM_1(\phi)$ terms are analogous.
For $HWM_2$ we have
\begin{align*}
\sum_{k>-10}\|\LOR P_0(&\plt P_k(HWM_2(\phi))^T\phi_{\sim k})\|_{1,2}\nonumber\\
\lesssim\sum_{k>-10}\sum_{l,j}\|\LOR &( P_k(\phi\x(\dhalf(\phil\x\dhalf\pj)-\phil\x\D\pj))^T\cdot\psk)\|_{1,2}
\end{align*}
First consider $j\gg l$, $l\geq k-10$, comparable to \eqref{tt2}. Using the expansion \eqref{om_exp} we have
\begin{align*}
&\sum_{k>-10}\sum_{\substack{l\geq k-10\\j\gg l}}\|\LOR ( P_k(\phi\x(\dhalf(\phil\x\dhalf\pj)-\phil\x\D\pj))^T\cdot\psk)\|_{1,2}\\
&\simeq\sum_{k>-10}\sum_{\substack{l\geq k-10\\j\gg l}}\|\LOR P_k(\phi\x\Lc_{l+j}(\phil,\pj))^T\cdot\LOR\psk)\|_{1,2}\\
&\lesssim\sum_{k>-10}\sum_{\substack{l\geq k-10\\j\gg l}}2^{l+j}\|\LOR \phil\|_{9,10/3}\|\pj\|_{2,5}\|\LOR \psk\|_{18/7,\infty}+2^{l+j}\|\phil\|_{2,5}\|\LOR \pj\|_{9,10/3}\|\LOR \psk\|_{18/7,\infty}\\
&\lesssim C_0^3\eps^2c_0
\end{align*}
as required. When $l<k-10$ the term behaves like \eqref{tt1} and we have
\begin{align*}
&\sum_{k>-10}\sum_{\substack{l< k-10\\j\gg l}}\|\LOR ( P_k(\phi\x(\dhalf(\phil\x\dhalf\pj)-\phil\x\D\pj))^T\cdot\psk)\|_{1,2}\\
&\lesssim\sum_{k>-10}\sum_{\substack{l< k-10\\j\gg l}}2^{l+j}\|\LOR \phil\|_{2+,\infty-} \|\LOR \pj\|_{\infty-,2+} \|\LOR \psk\|_{2+,\infty-}\lesssim C_0^3\eps^2 c_0
\end{align*}

The case $l\gg j$ can be treated in the same way. When $j\simeq l< k-10$ we are in the regimen of \eqref{tt5} and have
\begin{align*}
&\sum_{k>-10}\sum_{l< k-10}\|\LOR ( P_k(\phi\x(\dhalf(\phil\x\dhalf\phi_{\sim l})-\phil\x\D\phi_{\sim l}))^T\cdot\psk)\|_{1,2}\\
&\lesssim\sum_{k>-10}\sum_{l< k-10}2^{2l}\|\LOR  \phil\|_{2+,\infty-} \|\LOR \phi_{\sim l}\|_{2+,\infty-} \|\LOR \psk\|_{\infty-,2+}\lesssim C_0^3\eps^2c_0
\end{align*}
and finally if $j\simeq l\geq k-10$ we refer to \eqref{tt2} and find
\begin{align*}
&\sum_{k>-10}\sum_{l\geq k-10}\|\LOR ( P_k(\phi\x(\dhalf(\phil\x\dhalf\phi_{\sim l})-\phil\x\D\phi_{\sim l}))^T\cdot\psk)\|_{1,2}\\
&\lesssim\sum_{k>-10}\sum_{l\geq k-10}2^{2l}\|\LOR  \phi_{\sim l}\|_{9,10/3} \|\phi_{\sim l}\|_{2,5} \|\LOR \psk\|_{18/7,\infty}\lesssim C_0^3\eps^2c_0
\end{align*}
which completes the proof.
\end{proof}

\subsection{Low-low-high error term}
Write
\begin{multline}\label{diff_integral}
P_0(\phi_{\leq-10}\dd_\al\phi_{\leq-10}^T\dd^\al\phi_{>-10})-\phi_{\leq-10}\dd_\alpha\phi_{\leq-10}^T\dd^\al\psi_0\\
=-\tilde{P}_0\int_0^1\int_y\check{\chi}_0(y)y^T \na_x(\plt(x- \tht y)\dad\plt^T(x- \tht y))\dau\pso(x-y)d \tht dy
\end{multline}
This splits into two terms by Leibniz's rule: one where the derivative $\na_x$ falls on the non-differentiated term, and one where it falls on the $\dad\plt$. The first such term is unproblematic:
\begin{multline*}
\left\|\tilde{P}_0\int_0^1\int_y\check{\chi}_0(y)y^T \na_x\plt(x- \tht y\text{ })\dad\plt^T(x- \tht y)\dau\pso(x-y)d \tht dy\right\|_{1,2}\\
\lesssim\int_0^1\int_y|y\check{\chi}_0(y)|\, \|\na_x\plt\|_{2+,\infty-} \|\dad\plt\|_{2+,\infty-} \|\dau\pso\|_{\infty-,2+} dyd \tht\lesssim C_0^3\eps^2c_0
\end{multline*}
so that
\begin{multline}\label{int_p1_eq}
\Box\tilde{\psi}_0=-2\plt\dad\plt^T\dau\psi_0\\
+2\tilde{P}_0\int_0^1\int_y\check{\chi}_0(y)y^T \plt(x- \tht y)\text{ }\na_x\dau\plt^T(x- \tht y)\dad\pso(x-y)d \tht dy+error
\end{multline}
and similar for the $\tilde{\psi}_n$. This motivates our second transformation 
\begin{align*}
\tilde{\psi}^L\mapsto\Phi^L:=\tilde{\psi}^L-(\D_2)
\quad\text{ for }\quad
\D_2=
\begin{pmatrix}
\D_2^0\\
\D_2^{1,1}+\D_2^{1,2}+\D_2^{1,3}\\
\vdots\\
\D_2^{3,1}+\D_2^{3,2}+\D_2^{3,3}
\end{pmatrix}
\end{align*}
with
\begin{gather*}
\D_2^0:=\tilde{P}_0\int_0^1\int_y\check{\chi}_0(y)y^T\plt(x- \tht y)\na_x\plt^T(x- \tht y)\pso(x-y)d \tht dy\\
\D_2^{n,1}:=\tilde{P}_0\int_0^1\int_y\check{\chi}_0(y)y^T(L_n\phi)_{\leq-10}(x- \tht y)\na_x\plt^T(x- \tht y)\pso(x-y)d \tht dy\\
\D_2^{n,2}:=\tilde{P}_0\int_0^1\int_y\check{\chi}_0(y)y^T\plt(x- \tht y)\na_x(L_n\phi)_{\leq-10}^T(x- \tht y)\pso(x-y)d \tht dy\\
\D_2^{n,3}:=\tilde{P}_0\int_0^1\int_y\check{\chi}_0(y)y^T\plt(x- \tht y)\na_x\plt^T(x- \tht y)(L_n\phi)_{\sim0}(x-y)d \tht dy
\end{gather*}
Henceforth we drop the $\tilde{P}_0$ since it does not affect the calculations.

As usual write $\Phi^L=(\Phi_0,\Phi_1,\Phi_2,\Phi_3)$ and start by noting the boundedness of the transformation, the proof of which is very similar to Proposition \ref{D1_bounded_prop} and so omitted.
\begin{proposition}\label{LLH_NT}
It holds
\begin{equation*}
\|(\D_2)\|_{S_0}\lesssim C_0^2\eps c_0
\end{equation*}
and moreover
$$
\|\LOR (\D_2)[0]\|_{\dot{H}^{3/2}\x \dot{H}^{1/2}}\lesssim c_0
$$
\end{proposition}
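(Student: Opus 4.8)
The plan is to treat $(\D_2)$ componentwise as a trilinear expression of $(\text{low})\,\na(\text{low})\,(\text{high})$ type dressed with two harmless translations, and to estimate it exactly as $(\D_1)$ was estimated in Proposition \ref{D1_bounded_prop}. Since each component $\D_2^0,\D_2^{n,1},\D_2^{n,2},\D_2^{n,3}$ is supported at frequency $\sim 2^0$, Bernstein's inequality reduces the $S_0$ estimate to
\[
\|\D_2^\bullet\|_{S_0}\lesssim\max_{(p,q)\in\mathcal{Q}}\|\LOR\na_{t,x}\D_2^\bullet\|_{L^p_tL^2_x}.
\]
As $\na_{t,x}$ and the fields in $\LOR$ commute with the $\tht$- and $y$-integrals, one distributes $\LOR\na_{t,x}$ over the three factors by the Leibniz rule, places the frequency-$\sim 1$ factor (or whichever factor inherits the most derivatives) into $L^2_x$, the two frequency-$\leq 2^{-10}$ factors into $L^\infty_{t,x}$, and uses H\"older in time with radially admissible exponents. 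The two low-frequency dyadic sums each contribute a factor $\lesssim C_0\eps$ (using the $\ell^1$-smallness $\|c\|_{\ell^1}\lesssim\eps$ of the frequency envelope), the high-frequency factor a factor $\lesssim C_0c_0$ (by the bootstrap hypothesis \eqref{assumption1}), and one lands at the crudely sufficient bound $C_0^3\eps^2c_0\lesssim C_0^2\eps c_0$, valid since $\eps$ is small depending on $C_0$.

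The translations are the only genuinely new feature, and dealing with them is where I expect the (mild) difficulty to lie. Each factor enters as $v_k(\cdot-\tht y)$ or $v_k(\cdot-y)$ with $k\leq-10$ or $k\sim 0$; commuting $\Om_{ij}$, $\na_\mu$, or the $L_n$ built into a factor $(L_n\phi)_k$ through such a translation produces the translated field plus correction terms weighted by a power of $|y|$. These are controlled by the translation bound \eqref{translation_bound}, which costs a factor $\langle 2^k\tht y\rangle^2\lesssim\langle y\rangle^2$ per translated factor; since $\check{\chi}_0$ is Schwartz, $\int_0^1\!\int_y|y^T\check{\chi}_0(y)|\,\langle y\rangle^{6}\,dy\,d\tht=O(1)$ and all the weights are absorbed. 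Along the way one uses the commutation relations \eqref{commr1}--\eqref{commr3} --- e.g. $\Om_{ij}(L_n\phi)=L_n(\Om_{ij}\phi)$ plus a linear combination of the $L_m\phi$ --- to guarantee that no factor of $\phi$ ever carries two Lorentz boosts, so that everything stays within reach of the $S$-norm.

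Finally, the initial-data estimate follows exactly as in Proposition \ref{D1_bounded_prop}. The $\dot{H}^{3/2}$ bound on $(\D_2)(0)$ is, at unit frequency, an $L^2_x$ bound, immediate from the smallness of $\|\LOR\phi[0]\|_{\dot{H}^{3/2}\x\dot{H}^{1/2}}$ and $\|\LOR(x\cdot\na)\phi[0]\|_{\dot{H}^{3/2}\x\dot{H}^{1/2}}$ together with the boundedness of the $y$-integral. For the $\dot{H}^{1/2}$ bound on $\dd_t(\D_2)(0)$, the only delicate terms are those in which $\dd_t$ lands on a factor $(L_n\phi)_{\leq-10}$ or $(L_n\phi)_{\sim 0}$; there one uses $\dd_t(L_n\phi)(0)=\dd_{x_n}\phi(0)+x_n\Box\phi(0)+x_n\Delta\phi(0)$ and invokes Claim \ref{claim1}, which gives $\|\LOR\dd_tP_k(L_n\phi)(0)\|_{L^2}\lesssim 2^{-k/2}c_k$, the remaining pieces being covered by \eqref{small1}; summing the resulting dyadic series yields the claimed bound.
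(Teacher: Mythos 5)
Your argument follows the same route as the paper's (omitted) proof: distribute $\LOR\na_{t,x}$ by Leibniz, place the frequency-$\sim 1$ factor $\pso$ into $L^\infty_tL^2_x$ (this choice is forced regardless of where derivatives land — a low-frequency factor in $L^2_x$ would not sum), put the $\phi_{\leq -10}$ factors into $L^\infty_{t,x}$ or $L^p_tL^\infty_x$, absorb the translations via the $O(1)$-size of $\int|y^T\check{\chi}_0(y)|\langle y\rangle^N\,dy$, and for the initial velocity invoke Claim~\ref{claim1}. This is precisely the reduction to the $\D_1$ case that the paper intends when it writes that the proof is ``very similar to Proposition~\ref{D1_bounded_prop} and so omitted,'' so no further comparison is needed.
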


\bigskip

This time we show that the transformation reduces the equations to the form
\begin{equation}\label{pt_eqn}
\Box\Phi_0=-2\plt\dad\plt^T\dau\psi_0+error
\end{equation}
and
\begin{align}
\Box\Phi_n&=-2(L_n\phi)_{\leq-10}\dd_\alpha\phi_{\leq-10}^T\dd^\al\psi_0\nonumber\\
&\quad\quad-2\phi_{\leq-10}\dd_\alpha(L_n\phi)_{\leq-10}^T\dd^\al\psi_0\nonumber\\
&\quad\quad-2\phi_{\leq-10}\dd_\alpha\phi_{\leq-10}^T\dd^\al\psi_n+error\label{pt_eqn_n}
\end{align}

Observe that
\begin{align}
\Box(\D_2^0)&=\Box \left(\int_0^1\int_y\check{\chi}_0(y)y^T\plt(x- \tht y)\na_x\plt^T(x- \tht y)\pso(x-y)d \tht dy\right)\nonumber\\
&=\int_0^1\int_y\check{\chi}_0(y)y^T\Box\plt(x- \tht y)\na_x\plt^T(x- \tht y)\pso(x-y)d \tht dy\label{1111}\\
&\quad+\int_0^1\int_y\check{\chi}_0(y)y^T\plt(x- \tht y)\Box\na_x\plt^T(x- \tht y)\pso(x-y)d \tht dy\label{2222}\\
&\quad+\int_0^1\int_y\check{\chi}_0(y)y^T\plt(x- \tht y)\na_x\plt^T(x- \tht y)\Box\pso(x-y)d \tht dy\label{3333}\\
&\quad+2\int_0^1\int_y\check{\chi}_0(y)y^T\dau\plt(x- \tht y)\dad\na_x\plt^T(x- \tht y)\pso(x-y)d \tht dy\label{4444}\\
&\quad+2\int_0^1\int_y\check{\chi}_0(y)y^T\dau\plt(x- \tht y)\na_x\plt^T(x- \tht y)\dad\pso(x-y)d \tht dy\label{5555}\\
&\quad+2\int_0^1\int_y\check{\chi}_0(y)y^T\plt(x- \tht y)\dau\na_x\plt^T(x- \tht y)\dad\pso(x-y)d \tht dy\label{6666}
\end{align}
and similar expressions for the $\D_2^{n,i}$.
The final term \eqref{6666} cancels with the integral expression in equation \eqref{int_p1_eq}, so we must show the following:
\begin{proposition}
We have
$$\eqref{1111}=\ldots=\eqref{5555}=error$$
and the same holds when any one factor of $\phi$ in the expressions \eqref{1111},$\ldots$,\eqref{5555} is replaced by $L_n\phi$ ($n=1,2,3$).
\end{proposition}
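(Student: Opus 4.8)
The proposition asserts that each of the six displayed integral terms $\eqref{1111}$--$\eqref{5555}$ (and their variants with one $\phi$ replaced by $L_n\phi$) is an acceptable error. All six have the schematic structure $\int_0^1\int_y\check\chi_0(y)\,y^T\,(\text{factor}_1)(x-\tht y)\,(\text{factor}_2)(x-\tht y)\,(\text{factor}_3)(x-y)\,d\tht\,dy$, where the first two factors are localised to frequency $\leq -10$ and the third to frequency $\sim 2^0$; the total number of derivatives distributed across the three factors is at most two (one from the explicit derivatives $\dau,\dad,\na_x$, and one hidden in $\Box$ when present, or in the definition of the $S$-norm which always carries a $\na_{t,x}$). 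Since $|y\check\chi_0(y)|$ is a rapidly decaying weight, the $y$ and $\tht$ integrals contribute only an $O(1)$ factor after we use the translation bound $\eqref{translation_bound}$ to control translated copies of $\phi^L$; so it suffices to bound the integrand pointwise in $y,\tht$ in $L^1_tL^2_x$, uniformly up to the weight $\langle 2^k y\rangle^2\langle 2^k\tht y\rangle^2$-type losses which are absorbed by $\check\chi_0$. Throughout I would commute the outer $\LOR$ and $L$ through the (radial or near-radial) operators as in Section \ref{ang_com_chap}, picking up only lower-order commutator terms of the same or better form.

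\textbf{The routine cases.} The terms $\eqref{2222}$ and $\eqref{4444}$, in which $\Box$ falls on a low-frequency factor, are of the form $\lllow$ after localisation and are the easiest: place the low-frequency factor carrying the most derivatives into $L^{2+}_tL^{\infty-}_x$, the other low-frequency factor into $L^{2+}_tL^{\infty-}_x$, and the unit-frequency factor into $L^{\infty-}_tL^{2+}_x$, using that $\|\LOR\Box\phi_{\leq-10}^L\|$ type quantities are controlled by Claim $\eqref{box_estimate}$ (here $\Box\phi^L$ is itself expressed via the equation $\eqref{new_wm_eqn}$, $\eqref{new_wm_eqn_n}$, whose forcing is quadratic/cubic and small; one iterates the equation exactly as in the proof of Claim \ref{claim1} and Proposition \ref{big_error_prop}). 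The terms $\eqref{4444}$, $\eqref{5555}$ where both extra derivatives land on the two low-frequency factors are of type $\lllow$ and handled identically. The term $\eqref{1111}$ is precisely of the $T_1$-type from Proposition \ref{big_error_prop} (modulo the harmless integral and translation), and $\eqref{4444}$, $\eqref{5555}$, $\eqref{2222}$ overlap with the $T_2$, $T_3$ patterns there; one reuses those estimates verbatim. In every such subcase, since at least two of the three factors sit at frequency $\leq 2^{-10}$ and the exponents $\tpim$ and $\imtp$ from $\eqref{notation}$ are summable over low frequencies with the correct powers, the resulting bound is $\lesssim C_0^3\eps^2 c_0$, as required.

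\textbf{The main obstacle.} The genuinely delicate term is $\eqref{3333}$, where $\Box$ falls on the unit-frequency factor $\pso$. Here $\Box\pso = P_0\bl(\text{cubic forcing})$ must be substituted from $\eqref{new_wm_eqn}$/$\eqref{new_wm_eqn_n}$, producing quartic and quintic terms with one factor at unit frequency carrying (morally) a $\na$; the resulting interaction with the two low-frequency derivatives $\na_x\plt$, $\plt$ is of type $\lowestlh$ or $\llowesth$ and hence is exactly the situation that cannot be reached by standard Strichartz estimates. The plan is to peel off the forcing of $\Box\pso$ term by term: for the wave-maps source terms one uses the reductions already established (Proposition \ref{removing_trivial_terms} plus the normal-form/gauge arguments, which show these are $error$ after transformation — but here they appear \emph{inside} a product with two extra low-frequency factors, so one must rather apply Lemma \ref{SIL} and Corollary \ref{SIC} directly to the quintic term, distributing $\LOR$ and $L$ and invoking the angular separation estimate Lemma \ref{angular_sep_lem}); for the half-wave-maps forcing $HWM(\phi)$ one recalls from Propositions \ref{HWM1_prop} and \ref{HWM2_prop} that these are already $error$, and the extra low-frequency factors only improve the estimate, so one repeats those arguments with the geometric identity $\eqref{geometric_identity_4}$ and Lemma \ref{moving_loc_lemma} where needed. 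The bookkeeping — keeping track of which of $\LOR$, $L$ lands on which of the (now five) factors so that no single factor is forced to absorb two $L$'s or more than its share of angular derivatives, exactly as in the discussion following Lemma \ref{SIL} — is the main source of length, but it introduces no new ideas beyond those in Sections \ref{HWM_chapter} and \ref{normal_forms_chapter}. Since the whole expression is localised to frequency $\sim 2^0$ and the two spectator low-frequency factors furnish summable gains of the form $2^{(\half-\f1M)k}$, one closes with a bound $\lesssim C_0^5\eps^4 c_0 \ll C_0^3\eps^2 c_0$, which is acceptable.
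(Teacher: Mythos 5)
\textbf{There is a genuine conceptual gap, centred on your treatment of \eqref{3333}.}

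Your identification of the routine cases is broadly correct, but you have misdiagnosed \eqref{3333} as ``the main obstacle'' requiring Lemma \ref{SIL}, Corollary \ref{SIC}, and the angular separation estimate. It does not, and the paper handles it by direct Strichartz estimates. The error is in your frequency bookkeeping: you reason as if the expression, after substituting the cubic forcing for $\Box\pso$, becomes a ``$\lowestlh$''-type trilinear interaction with two nuisance spectators. In fact the two spectator factors $\plt$ and $\na_x\plt$ are a \emph{bonus}, not a burden. The quintic product contains exactly two non-differentiated factors (the spectator $\plt$ and the lone $\phi$ inside the forcing) which both go into $L^\infty_{t,x}$, and three differentiated factors ($\na_x\plt$ at very low frequency, and the two derivative factors inside the forcing). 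For the worst-case $\llh$ interaction inside $P_{\sim 0}(\phi\,\dad\phi\,\dau\phi)$, the paper places $\na_x\plt$, $\dad\plt$ into $(2+,\infty-)=(\frac{2M}{M-1},2M)$ and $\dau\pgt$ into $(\infty-,2+)=(M,\frac{2M}{M-2})$; the time exponents satisfy $\frac{M-1}{2M}+\frac{M-1}{2M}+\frac{1}{M}=1$ and the space exponents give $L^2_x$ exactly. The reason the ``true'' $\lowestlh$ term in the paper (arising after the gauge transformation in Section~\ref{cancellation_chapter}) needs the angular machinery is that there is no third spectator to absorb a time-Hölder slot; here there is.

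Two secondary issues compound this. First, you propose to reuse the already-proved ``$\text{forcing}=error$'' statements (Propositions \ref{HWM1_prop}, \ref{HWM2_prop}, etc.) inside \eqref{3333}. But those are $L^1_tL^2_x$ bounds, while here you need a $(2+,\infty-)$-type bound on the forcing (as in the Claim giving \eqref{box_estimate}), because the forcing now appears as one factor in a product rather than alone. Second, the plan of doing the normal form and gauge transformations \emph{inside} the quintic term is incoherent: those transformations act on $\psi^L$ and cannot be applied to a factor of a product. The correct move is the paper's: iterate the equation once, decompose the cubic forcing of $\Box\pso$ into its frequency interactions, and bound each quintic sub-case by plain Strichartz, with the half-wave-maps contributions handled by the same $\Lc$-operator calculus used in Proposition \ref{big_error_prop} and Claim \ref{claim1}.
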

\begin{proof}
We will neglect the angular and vector derivatives in what follows in order to reduce notation. We also drop the transpose symbols.

Let's start with \eqref{1111}. Using the estimate \eqref{box_estimate} on $\Box\phi$ and the $L^1$ boundedness of the kernel $\check{\chi}_0(y)y $ we have
\begin{align*}
\|\eqref{1111}\|_{1,2}&\lesssim\|\Box\plt\|_{2+,\infty-}\|\na_x\plt\|_{2+,\infty-}\|\pso\|_{\infty-,2+}\lesssim C_0^3\eps c_0
\end{align*}
which is acceptable. For \eqref{2222} we again have to iterate the equation. For the wave maps source terms we have
\begin{align}
&\left\|\int_0^1\int_y\check{\chi}_0(y)y\, (\plt\cdot\na_xP_{\leq-10}(\phi\dad\phi\dau\phi) )(x- \tht y)\pso(x-y)d \tht dy\right\|_{1,2}\nonumber\\
&\lesssim\sum_{k\leq-10}\left\|\int_0^1\int_y\check{\chi}_0(y)y\, (\plt\cdot\na_xP_k(\phi\dad\phi_{\leq k-10}\dau\phi_{>k-10}) )(x- \tht y)\pso(x-y)d \tht dy\right\|_{1,2}\label{p1}\\
&\quad+\left\|\int_0^1\int_y\check{\chi}_0(y)y\, (\plt\cdot\na_xP_k(\phi\dad\phi_{>k-10}\dau\phi_{>k-10}) )(x- \tht y)\pso(x-y)d \tht dy\right\|_{1,2}\label{p2}\\
&\quad+\left\|\int_0^1\int_y\check{\chi}_0(y)y\, (\plt\cdot\na_xP_k(\psk\dad\phi_{\leq k-10}\dau\phi_{\leq k-10}) )(x- \tht y)\pso(x-y)d \tht dy\right\|_{1,2}\label{p3}
\end{align}
Placing all the low frequency factors of $\phi$ into $L^\infty_{t,x}$ we bound the above by
\begin{align*}
&\sum_{k\leq-10}2^k\|\dad\plkmt\|_{2+,\infty-}\|\dau\phi_{>k-10}\|_{\infty-,2+}\|\pso\|_{2+,\infty-}\\
&\quad\quad\quad+2^k\|\dad\pgkmt\|_{2,5} \|\dau\pgkmt\|_{9,10/3} \|\pso\|_{18/7,\infty}\\
&\quad\quad\quad+2^k\|\psk\|_{\infty,\infty}\|\dad\plkmt\|_{2+,\infty-}\|\dau\plkmt\|_{2+,\infty-}\|\pso\|_{\infty-,2+}\\
&\lesssim C_0^3\eps c_0
\end{align*}
The half-wave maps terms are treated analogously using methods as in the proof of Proposition \ref{big_error_prop}.

We now turn to \eqref{3333}. Considering the wave maps portion of $\Box\phi$ we have
\begin{align*}
&\left\|\int_0^1\int_y\check{\chi}_0(y)y\, (\plt\na_x\plt )(x- \tht y)P_{\sim0}(\phi\dad\pgt\dau\pgt)(x-y)d \tht dy\right\|_{1,2}\\
&\quad+\left\|\int_0^1\int_y\check{\chi}_0(y)y\, (\plt\na_x\plt )(x- \tht y)P_{\sim0}(\phi\dad\plt\dau\pgt)(x-y)d \tht dy\right\|_{1,2}\\
&\quad+\left\|\int_0^1\int_y\check{\chi}_0(y)y\, (\plt\na_x\plt )(x- \tht y)P_{\sim0}(\phi\dad\plt\dau\plt)(x-y)d \tht dy\right\|_{1,2}\\
&\lesssim\|\na_x\plt\|_{\f{18}{7},\infty}\|\dad\pgt\|_{2,5} \|\dau\pgt\|_{9,\f{10}{3}}\\
&\quad+\|\na_x\plt\|_{2+,\infty-} \|\dad\plt\|_{2+,\infty-} \|\dau\pgt\|_{\infty-,2+}\\
&\quad+\|\na_x\plt\|_{\infty,\infty} \|\pso\|_{\infty-,2+} \|\dad\plt\|_{2+,\infty-} \|\dau\plt\|_{2+,\infty-}\\
&\lesssim C_0^3\eps c_0
\end{align*}
Again half-wave maps terms are analogous.

Lastly,
\begin{align*}
\|\eqref{4444}\|_{1,2}\lesssim \|\dau\plt\|_{2+,\infty-} \|\dad\na_x\plt\|_{2+,\infty-}\|\pso\|_{\infty-,2+}\lesssim C_0^3\eps c_0
\end{align*}
while
\begin{align*}
\|\eqref{5555}\|_{1,2}\lesssim \|\dau\plt\|_{2+,\infty-} \|\na_x\plt\|_{2+,\infty-}\|\dad\pso\|_{\infty-,2+}\lesssim C_0^3\eps c_0
\end{align*}
\end{proof}

We can now recast the equations in an even more distilled form. Denote
\begin{align*}
A_\al:=-\plt\dad\plt^T; && A_\al^n:=-(( L_n\phi )_{\leq-10}\dad\plt^T+\plt\dad ( L_n\phi )_{\leq-10}^T)
\end{align*}
Then writing $A_\al^n$ for the block vector $(A_\al^1,A_\al^2,A_\al^3)$ and $\mathbf{I}_3$ the block $3\x 3$ identity matrix (so a $9\x9$ matrix), we arrive at
\begin{align}\label{matrix_eqn}
\Box\Phi^L=2
\begin{pmatrix}
A_\al & 0\\
A_\al^n & A_\al\mathbf{I}_3
\end{pmatrix}
\dau\psi^L+error
\end{align}
with
\begin{align}\label{Phi_bound}
\|\Phi^L\|_{S_0}\leq\|\psi^L\|_{S_0}+C_0^3\eps c_0\lesssim C_0c_0
\end{align}

Henceforth we will use the notation
\begin{align*}
\mathbf{A}^L_\al:=\begin{pmatrix}
A_\al & 0\\
A_\al^n & A_\al\mathbf{I}_3
\end{pmatrix}
\end{align*}

\section{The gauge transformation}\label{approx_PT_chapter}
In this chapter, we perform a nonlinear transformation to cancel out the remaining nontrivial term in the equation above. Our construction is a simplification of that in \cite{Tao_1} (possible due to our working in Besov rather than Sobolev spaces).

Fix a large integer $N$ \textit{depending on T}. Define the matrix field $\Uu$ by
$$
\Uu:=\mathbf{I}_4+\sum_{-N<k\leq-10}\Uu_k
$$
Here $\mathbf{I}_4$ is the block $4\x4$ identity matrix and the $\Uu_k$ are defined inductively by
\begin{equation*}
\Uu_k:=
\begin{pmatrix}
-\phi_{<k}\phi_k^T && 0\\
-(\phi_{<k}( L_n\phi )_k^T+( L_n\phi )_{<k}\phi_k^T)  && -\phi_{<k}\phi_k^T \cdot \mathbf{I}_3
\end{pmatrix}
\Uu_{<k}
\end{equation*}
and
$$
\Uu_{<k}:=\mathbf{I}_4+\sum_{-N<k'<k}\Uu_{k'}
$$
This $\Uu$ is constructed so as to almost satisfy $\dd_\alpha \Uu=\Aa^L_\al \Uu$ and so cancel out the troublesome frequency interactions remaining in our equation. This will be discussed in the next chapter.

One may verify inductively that $\Uu_{<k}$ has frequency support on $\{|\xi|\lesssim 2^k\}$, so $\Uu$ has frequency support on $\{|\xi|\lesssim1\}$. Moreover $\Uu$ is of the form
$$
\Uu=
\begin{pmatrix}
U&&0\\
U^n &&U\mathbf{I}_3
\end{pmatrix}
$$
for a $3\x3$ matrix $U$ and a block vector $U^n=(U^1,U^2,U^3)$ with each $U^i$ a $3\x3$ matrix. The same lower triangular structure of couse holds for $\Uu_k$, $\Uu_{<k}$ and $\Uu^{-1}$, the existence of which is shown in the following proposition.
\begin{proposition}\label{big_prop}
For $C_0\gg1$ fixed, $\eps(C_0)$ sufficiently small, the matrix $\Uu$ is a perturbation of the identity,
\begin{equation}\label{orthogonality}
\|\LOR (\Uu-\mathbf{I}_4)\|_{\infty,\infty},\|\LOR \dd_t(\Uu-\mathbf{I}_4)\|_{\infty,\infty}\lesssim C_0\eps
\end{equation}
and is invertible with
\begin{equation}\label{Uinfty}
\|\LOR \Uu\|_{\infty,\infty},\|\LOR (\Uu^{-1})\|_{\infty,\infty}\lesssim1
\end{equation}
Moreover, for any admissible pair $(p,q)\in\mathcal{Q}$ with $1-\f{1}{p}-\f{3}{q}>\sg>0$, we have
\begin{equation}
\|\LOR^{1-\dl(p,q)} \dd_\alpha \Uu\|_{p,q}, \|\LOR^{1-\dl(p,q)} \dd_\alpha (\Uu^{-1})\|_{p,q}\lesssim_{p,q} C_0\eps\label{49}
\end{equation}
for each $\alpha=0,\ldots,3$. More precisely,
\begin{equation}\label{duk_bound}
\|\LOR^{1-\dl(p,q)}  \dad  \Uu_K\|_{p,q}\lesssim_{p,q} 2^{(1-\f{1}{p}-\f{3}{q})K}C_0c_K
\end{equation}
for all $-N<K\leq-10$.
\end{proposition}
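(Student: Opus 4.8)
The three bounds \eqref{orthogonality}, \eqref{Uinfty}, \eqref{49} all follow by dyadic summation from \eqref{duk_bound} together with its $L^\infty_{t,x}$ counterparts, so the task reduces to proving, by strong induction on $K$ running from $-N+1$ up to $-10$, the three estimates
\[
\|\LOR\Uu_K\|_{\infty,\infty}\lesssim C_0c_K,\qquad \|\LOR\dd_t\Uu_K\|_{\infty,\infty}\lesssim 2^{K}C_0c_K,\qquad \|\LOR^{1-\dl(p,q)}\dad\Uu_K\|_{p,q}\lesssim_{p,q}2^{(1-\f1p-\f3q)K}C_0c_K,
\]
the last one for every $\al=0,\dots,3$ and every $(p,q)\in\mathcal{Q}$ with $1-\f1p-\f3q>\sg$. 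Summing these over $-N<K'<K$ and using $\sum_{K'}c_{K'}\lesssim\eps$ (for the first) and the local constancy $c_{K'}\lesssim 2^{\sg|K-K'|}c_K$ together with $1-\f1p-\f3q>\sg$ (for the third, so that the geometric sum is dominated by its top term) gives the induction hypotheses $\|\LOR\Uu_{<K}\|_{\infty,\infty}\lesssim1$, $\|\LOR\dd_t\Uu_{<K}\|_{\infty,\infty}\lesssim C_0\eps$, and $\|\LOR^{1-\dl(p,q)}\dad\Uu_{<K}\|_{p,q}\lesssim_{p,q}2^{(1-\f1p-\f3q)K}C_0c_K$; in particular $\Uu_{<K}$ stays a perturbation of the identity throughout the recursion, which is exactly the automatic almost-orthogonality noted in the introduction.

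The inputs from the bootstrap hypothesis \eqref{assumption1} and the definition of the $S_k$-norm are: for $n=0,\dots,3$, $j\le-10$, and $(p,q)\in\mathcal{Q}$,
\[
\|\LOR^{1-\dl(p,q)}\na_{t,x}P_jL_n\phi\|_{p,q}\lesssim_{p,q}2^{(1-\f1p-\f3q)j}C_0c_j,\quad \|\LOR P_jL_n\phi\|_{\infty,\infty}\lesssim C_0c_j,\quad \|\LOR\dd_t P_jL_n\phi\|_{\infty,\infty}\lesssim 2^{j}C_0c_j,
\]
the second and third obtained from the first at $(p,q)=(\infty,2)$ by Bernstein's inequality (preceded, for the second, by dividing by the frequency); summing over $j<K$ then controls the low-frequency pieces $\phi_{<K}$, $(L_n\phi)_{<K}$ (by $\lesssim C_0\eps$ in $L^\infty_{t,x}$, and $\lesssim 2^KC_0c_K$ in $L^\infty_{t,x}$ after a $\dd_t$). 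For the inductive step, write $\Uu_K=B_K\Uu_{<K}$, where $B_K$ is the $4\times4$ block matrix whose nonzero entries are $-\phi_{<K}\phi_K^T$ (in the diagonal blocks) and $-(\phi_{<K}(L_n\phi)_K^T+(L_n\phi)_{<K}\phi_K^T)$ (lower-left block).

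The estimate on $\dad\Uu_K$ is then pure Leibniz bookkeeping: distribute $\dad$ and the $\Om_{ij}$ inside $\LOR$ over the factors $\phi_K$ (or $(L_n\phi)_K$), $\phi_{<K}$ (or $(L_n\phi)_{<K}$), and $\Uu_{<K}$, place the \emph{differentiated} factor into $L^p_tL^q_x$ and the two undifferentiated ones into $L^\infty_{t,x}$. When the derivative lands on the frequency-$2^K$ factor one uses the first estimate above directly, getting $2^{(1-\f1p-\f3q)K}C_0c_K$; when it lands on $\phi_{<K}$ one sums $\sum_{j<K}2^{(1-\f1p-\f3q)j}C_0c_j\lesssim 2^{(1-\f1p-\f3q)K}C_0c_K$, the convergence again using $1-\f1p-\f3q>\sg$ and local constancy; when it lands on $\Uu_{<K}$ one invokes the induction hypothesis $\|\LOR^{1-\dl(p,q)}\dad\Uu_{<K}\|_{p,q}\lesssim 2^{(1-\f1p-\f3q)K}C_0c_K$. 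In every case the two undifferentiated factors contribute only bounded constants (in fact a gain $\lesssim C_0\eps$ from each low-frequency piece), so the surplus powers of $C_0\eps$ are absorbed once $\eps=\eps(C_0)$ is small; this closes \eqref{duk_bound}. The $L^\infty_{t,x}$ estimates on $\Uu_K$ and $\dd_t\Uu_K$ come from the same decomposition, now putting every factor into $L^\infty_{t,x}$ and using the last two bounds of the display above together with $\|\LOR\Uu_{<K}\|_{\infty,\infty}\lesssim1$, $\|\LOR\dd_t\Uu_{<K}\|_{\infty,\infty}\lesssim C_0\eps$. (When $\dl(p,q)=1$ the argument is identical, with no angular derivatives to track.) All constants are manifestly uniform in $N$ and $T$.

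Finally, invertibility of $\Uu$ and the bounds \eqref{Uinfty}, \eqref{orthogonality} for $\ui$ follow from the Neumann series $\ui=\sum_{m\ge0}(\mathbf{I}_4-\Uu)^m$, which converges in $L^\infty_{t,x}$ because $\|\Uu-\mathbf{I}_4\|_{\infty,\infty}\lesssim C_0\eps\ll1$ by \eqref{orthogonality}; distributing $\LOR$ and $\dd_t$ over the $m$ factors by the Leibniz rule and using \eqref{orthogonality} on each factor bounds the $m$-th term by $O((C_0\eps)^m)$. The estimate on $\dad\ui$ in \eqref{49} then comes from $\dad\ui=-\ui(\dad\Uu)\ui$ together with \eqref{Uinfty}, the bound \eqref{49} for $\dad\Uu$, and Leibniz for the angular derivative. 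The only real obstacle is the combinatorial bookkeeping of the nested induction — checking that each dyadic geometric series has a strictly positive top exponent, so that it is dominated by its $K'=K$ term — rather than any single hard estimate; the structural content of the proposition, the almost-orthogonality of $\Uu$, is a free consequence of the summability ($\ell^1$) of the frequency envelope.
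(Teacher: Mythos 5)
Your proof is correct and takes essentially the same route as the paper: inductively bound the dyadic pieces $\Uu_K$ (respectively the cumulative sums $\Uu_{<K}$) via the recursive definition, distribute derivatives by Leibniz placing the differentiated factor in $L^p_tL^q_x$ and the rest in $L^\infty_{t,x}$, sum the resulting geometric series using $\beta=1-\f1p-\f3q>\sg>0$ and local constancy of the frequency envelope, and then handle $\Uu^{-1}$ by Neumann series plus the identity $\dd_\al(\Uu^{-1})=-\Uu^{-1}(\dd_\al\Uu)\Uu^{-1}$. The only cosmetic difference is that the paper phrases the induction directly on the cumulative quantities $\|\LOR\Uu_{<K}\|_{\infty,\infty}$, $\|\LOR^{1-\dl}\dd_\al\Uu_{<K}\|_{p,q}$ (with the dyadic estimate appearing as the key intermediate step), whereas you prove the dyadic bounds first and sum — the underlying estimates and convergence arguments are identical.
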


\begin{proof}
We first show (\ref{Uinfty}).
We will show by induction that $$\|\LOR \Uu_{<K}\|_{\infty,\infty}\leq2$$
for all $-N\leq K\leq-9$. Since $\Uu=\Uu_{<-9}$ this proves the first part of (\ref{Uinfty}). When $K=-N$ this is clearly true, so suppose it holds for all $k$ below some fixed $K>-N$. Then for any $-N<k<K$ we have, for $\phi^L$ as in \eqref{pl},
\begin{align}
\|\LOR \Uu_k\|_{\infty,\infty}
\lesssim\|\LOR P_{<k}\phi^L\|_{\infty,\infty}\|\LOR P_k\phi^L\|_{\infty,\infty}\|\LOR \Uu_{<k}\|_{\infty,\infty}\lesssim C_0c_k\label{uk_bound}
\end{align}
It follows that
$$
\|\LOR \Uu_{<K}\|_{\infty,\infty}\lesssim1+\sum_{-N<k<K}C_0c_k\lesssim 2
$$
for $\eps(C_0)$ sufficiently small.  Note that the first part of \eqref{orthogonality} is also a consequence of \eqref{uk_bound}, and it follows that $\Uu$ is invertible with
$$
\|\Uu^{-1}\|_{\infty,\infty}=\|\mathbf{I}_4+(\mathbf{I}_4-\Uu)+(\mathbf{I}_4-\Uu)^2+\ldots\|_{\infty,\infty}\lesssim 2
$$
for $\eps(C_0)$ sufficiently small as required.

Using the relation $\Uu^{-1}\Uu=\mathbf{I}_4$, we can express the angular derivatives of $\Uu^{-1}$ in terms of those of $\Uu$:
\begin{equation*}
\Om_{ij}(\Uu^{-1})=-\Uu^{-1}(\Om_{ij}\Uu)\Uu^{-1}
\end{equation*}
from which
$$
\|\LOR (\Uu^{-1})\|_{\infty,\infty}\lesssim\|\LOR  \Uu\|_{\infty,\infty}\lesssim1
$$
completing the proof of (\ref{Uinfty}).

We now turn to (\ref{49}). Note that this immediately implies the second part of (\ref{orthogonality}), and also that the second part of (\ref{49}) follows from the first thanks to the identity $\dd_\al(\Uu^{-1})=-\Uu^{-1}\cdot\dad \Uu\cdot \Uu^{-1}$.

We will show by induction that
\begin{equation}\label{54}
\|\LOR^{1-\dl(p,q)} \dd_\alpha \Uu_{<K}\|_{p,q}\lesssim 2^{\beta K}C_0c_K
\end{equation}
for all $-N\leq K\leq-9$. Here $\beta=\beta_{p,q}=1-\f{1}{p}-\f{3}{q}$.

For $K=-N$, the claim is trivial. Now suppose that $K>-N$ and the claim has been proven for all smaller $k$. By differentiating the formula defining $\Uu_{K-1}$ we have (neglecting the factor of $\LOR^{1-\dl(p,q)}$ which plays no role)
\begin{align*}
\| \dd_\alpha \Uu_{K-1}\|_{p,q}\lesssim&\| \dd_\alpha \phi^L_{<K-1}\|_{p,q}\| \phi^L_{K-1}\|_{\infty,\infty}\| \Uu_{<K-1}\|_{\infty,\infty}\\
&+\|  \phi^L_{<K-1}\|_{\infty,\infty}\| \dd_\alpha  \phi^L_{K-1}\|_{p,q}\| \Uu_{<K-1}\|_{\infty,\infty}\\
&+\| \phi^L_{<K-1}\|_{\infty,\infty}\| \phi^L_{K-1}\|_{\infty,\infty}\| \dd_\alpha \Uu_{<K-1}\|_{p,q}\lesssim2^{\beta K}C_0c_K
\end{align*}
and the claim follows.
\end{proof}

We can now use this proposition to transform our equation (\ref{matrix_eqn}) into a form in which the only non-trivial term in the forcing cancels. We make the transformation $\Phi^L=\Uu w^L$, so 
$$
\Box\Phi^L=(\Box \Uu)w^L+2\dd_\al \Uu\dd^\al w^L+\Uu\Box w^L
$$
Setting this equal to the right hand side of equation (\ref{matrix_eqn}) and multiplying on the left by $\Uu^{-1}$ we obtain
\begin{equation*}
\Box w^L=-2\Uu^{-1}(\dd_\al \Uu\dau w^L-\Aa ^L_\al\dau\psi^L)-\Uu^{-1}(\Box \Uu)\Uu^{-1}\Phi^L+\Uu^{-1}(error)
\end{equation*}
Note that $\Uu^{-1}(error)=error$ by (\ref{Uinfty}), so we don't need to worry about the final term above.

In order to make use of the fact that $\dd_\al \Uu\simeq \Aa_\al^L \Uu$, we go back a step and use that $\Phi^L=\psi^L+\half(\D_1)-(\D_2)$ to decompose $w^L$ as
$$
w^L=\underbrace{\Uu^{-1}\psi^L}_{w^L_1}+\underbrace{\half \Uu^{-1}(\D_1)}_{w^L_2}-\underbrace{\Uu^{-1}(\D_2)}_{w^L_3}
$$
In particular, since $\dau\psi^L=(\dau \Uu)w^L_1+\Uu\dau w^L_1$, we can write
\begin{align*}
\Box w^L&=-2\Uu^{-1}(\dd_\al \Uu-\Aa ^L_\al\Uu)\dd^\al w^L_1+2\Uu^{-1}\Aa ^L_\al(\dau \Uu)w^L_1-2\Uu^{-1}\dad \Uu\dau(w^L_2+w^L_3)\\
&\quad-\Uu^{-1}(\Box \Uu)\Uu^{-1}\Phi^L+error
\end{align*}

The remainder of this section will be dedicated to showing that the second, third and fourth terms above are all of the form $error$. The remaining term will be studied in Section \ref{cancellation_chapter}.

\begin{proposition}
\begin{equation}\label{101}
\ui \Aa ^L_\al(\dau \Uu)w^L_1=error
\end{equation}
and
\begin{equation}\label{102}
\ui \dad \Uu\dau(w^L_2+w^L_3)=error
\end{equation}
\end{proposition}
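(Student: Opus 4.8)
The plan is to reduce both estimates to applications of the $S$-norm bounds together with the already-established properties of $\Uu$ and of the correction terms $(\D_1)$, $(\D_2)$. For \eqref{101}, the key observation is that $\Aa^L_\al$ is a bilinear expression in which every factor is localised to frequency $\lesssim 2^{-10}$ (it is built from $\plt$ and $(L_n\phi)_{\leq-10}$), while $\dau\Uu$ carries the frequency-envelope decay \eqref{duk_bound}: writing $\dau\Uu=\sum_{-N<K\leq-10}\dau\Uu_K$ we have $\|\LOR^{1-\dl(p,q)}\dau\Uu_K\|_{p,q}\lesssim 2^{(1-\f1p-\f3q)K}C_0c_K$. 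So I would first commute $\LOR L$ through the product using the Leibniz rule (the vector fields act on $\Aa^L_\al$, on $\Uu$, and on $w^L_1=\Uu^{-1}\psi^L$), noting that $\|\LOR L \Uu\|_{\infty,\infty},\|\LOR L\Uu^{-1}\|_{\infty,\infty}\lesssim1$ by \eqref{Uinfty}–\eqref{orthogonality} and the analogue for $L$ (which follows from the commutation relations \eqref{commr1}–\eqref{commr3}, exactly as in Proposition \ref{big_prop}). Then in the term $\ui \Aa^L_\al(\dau\Uu)w^L_1$ I would put $w^L_1$ (which inherits $\|w^L_1\|_{S_0}\lesssim C_0c_0$ from $\|\Uu^{-1}\|_{\infty,\infty}\lesssim1$ and $\|\psi^L\|_{S_0}\lesssim C_0 c_0$) into $L^\infty_tL^{2}_x$ via $(\infty,2)\in\mathcal{Q}$, the differentiated factor $\dau\Uu_K$ into $L^{2}_tL^{\infty-}_x$ using $(2+,\infty-)$, and the low-frequency bilinear $\Aa^L_\al$ into $L^{2}_tL^{\infty-}_x$; summing the resulting geometric series $\sum_K 2^{(\half+\f1M)K}C_0c_K\cdot C_0^2\eps \cdot C_0c_0$ over $K\leq-10$ gives $C_0^3\eps^2 c_0$, which is $error$.

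For \eqref{102} the structure is even more favourable: here $\dad\Uu$ again carries the decay \eqref{duk_bound}, $\ui$ is bounded on $L^\infty_{t,x}$, and the only new ingredient needed is that $w^L_2=\half\Uu^{-1}(\D_1)$ and $w^L_3=\Uu^{-1}(\D_2)$ satisfy $\|w^L_2\|_{S_0},\|w^L_3\|_{S_0}\lesssim C_0^2\eps c_0$, which is immediate from Propositions \ref{D1_bounded_prop} and \ref{LLH_NT} combined with \eqref{Uinfty} (and the $L$-analogues). So $\dau(w^L_2+w^L_3)$ can be placed in $L^\infty_tL^2_x$ with norm $\lesssim C_0^2\eps c_0$, while $\dad\Uu=\sum_K\dad\Uu_K$ goes into $L^2_tL^{\infty-}_x$ with the summable bound $2^{(\half+\f1M)K}C_0 c_K$, and $\ui\in L^\infty_{t,x}$; the product then lies in $L^1_tL^2_x$ with norm $\lesssim \sum_K 2^{(\half+\f1M)K}C_0c_K\cdot C_0^2\eps c_0\lesssim C_0^3\eps^2 c_0$. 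One must also check, as always, that after distributing $\LOR L$ no factor is forced to carry more angular derivatives than it can absorb; since $\Uu$, $\Uu^{-1}$, $(\D_1)$, $(\D_2)$ all obey bounds with $\LOR L$ already included, and $\dad\Uu_K$ only needs $\LOR^{1-\dl}$ at the radially admissible exponent we use, this causes no difficulty.

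The main obstacle, such as it is, is purely bookkeeping: \eqref{101} and \eqref{102} involve non-translation-invariant vector fields acting on products containing the non-local data of $\Uu$ (through $\Uu^{-1}$ and through the frequency cutoffs $\phi_{<k}$), so when $L_n$ falls on a Littlewood–Paley piece one generates the commutator terms $2^{-k}\dd_t\mathcal{P}_k$ of \eqref{commr3}. These are harmless — the extra $2^{-k}$ is more than compensated by the frequency-envelope gain in \eqref{duk_bound} — but they must be tracked through the inductive structure of $\Uu$. I would handle this exactly as in the proof of Proposition \ref{big_prop}: prove the $L$-decorated analogue of \eqref{duk_bound}, namely $\|\LOR^{1-\dl(p,q)}L\dad\Uu_K\|_{p,q}\lesssim 2^{(1-\f1p-\f3q)K}C_0 c_K$, by the same induction on $K$ using the Leibniz rule, \eqref{commr3}, and the $S$-bounds on $\phi^L$; once that is in hand both \eqref{101} and \eqref{102} follow by the Hölder-in-Strichartz estimates sketched above. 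No new analytic idea is required beyond what has already been deployed for the error terms in Sections \ref{discarding} and \ref{normal_forms_chapter}.
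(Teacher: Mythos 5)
Your overall strategy — H\"older's inequality in Strichartz norms together with the bounds \eqref{duk_bound}, \eqref{49}, \eqref{Uinfty} — is exactly the one the paper uses. However, the specific exponent choice does not close the estimate. You place two factors in $L^{2+}_tL^{\infty-}_x=L^{2M/(M-1)}_tL^{2M}_x$ and the third in $L^\infty_tL^2_x$. The resulting time exponents sum to $\frac{M-1}{2M}+\frac{M-1}{2M}+0=1-\frac1M<1$ and the spatial exponents sum to $\frac1{2M}+\frac1{2M}+\frac12=\frac12+\frac1M>\frac12$. Neither of these embeds in $L^1_tL^2_x$ on $[0,T]\times\R^3$: the time deficit would cost a $T^{1/M}$ factor (which the $T$-independent error definition forbids), and the spatial excess fails outright on unbounded $\R^3$. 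The fix is to take the third factor in the non-endpoint pair $(\infty-,2+)=(M,\frac{2M}{M-2})$ rather than $(\infty,2)$; then the exponents sum to exactly $(1,\frac12)$, and this is precisely what the paper does by estimating $\|\psi^L\|_{M,\frac{2M}{M-2}}$ (resp. $\|\pgt^L\|_{M,\frac{2M}{M-2}}$). Minor: the summability factor in \eqref{duk_bound} at $(2+,\infty-)$ is $2^{(\frac12-\frac1M)K}$, not $2^{(\frac12+\frac1M)K}$ (though since $K\le-10$ both converge).

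For \eqref{102} there is a second gap: the claim that $\|w^L_2\|_{S_0},\|w^L_3\|_{S_0}\lesssim C_0^2\eps c_0$ is ``immediate'' is too quick. First, $w^L_2=\frac12\Uu^{-1}(\D_1)$ is not localised to unit frequency (the Neumann series for $\Uu^{-1}$ is not compactly supported in frequency), so $S_0$ is not directly applicable. Second, even granting a suitable modification, multiplication by $\Uu^{-1}$ is not a priori bounded on $S$-type norms; the paper deliberately avoids proving such an algebra property at this stage, instead distributing $\dau$ over $\Uu^{-1}$ and $(\D_1)$ (resp.\ $(\D_2)$) via the Leibniz rule and estimating each resulting product term by term with exactly-closing Strichartz exponents. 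Your high-level plan is sound, but both of these points need to be patched before the argument is complete.
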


\begin{proof}
Again using \eqref{Uinfty} we may neglect the $\ui$. We will also work entirely in standard Strichartz spaces so neglect the angular derivatives.

To bound (\ref{101}) note that by definition of $\Aa ^L_\al$ we have
$$
\|\Aa ^L_\al\|_{\f{2M}{M-1},2M}\lesssim\|\phi^L_{\leq-10}\|_{\infty,\infty}\|\dad\phi^L_{\leq-10}\|_{\f{2M}{M-1},2M} \lesssim C_0\eps
$$
Hence by \eqref{49} it holds
$$
\|\Aa ^L_\al(\dau \Uu) \ui \psi^L\|_{1,2}\lesssim \|\Aa ^L_\al\|_{\f{2M}{M-1},2M}\|\dau \Uu\|_{\f{2M}{M-1},2M}\|\psi^L\|_{M,\f{2M}{M-2}}\lesssim C_0^3\eps^2 c_0
$$
as required.

\bigskip

We now turn to (\ref{102}). 
Let's start with $w_2^L$. Heuristically, we can write $w_2^L\simeq \Uu^{-1}P_0(\phi^L_{\leq-10}(\pgt^L)^T\pgt^L)$. Therefore
\begin{align}
\|\dad \Uu\dau w_2^L\|_{1,2}
\lesssim&\|\dad \Uu \dau(\Uu^{-1})\text{ }P_0(\plt^L(\pgt^L)^T\pgt^L)\|_{1,2}\label{dui}\\
&+\|\dad \Uu\text{ }\Uu^{-1}P_0\dau(\plt^L(\pgt^L)^T\pgt^L)\|_{1,2}\label{ndui}
\end{align}
where
\begin{align*}
\eqref{dui}\lesssim\|\dad \Uu\|_{\f{2M}{M-1},2M}\|\dau(\ui)\|_{\infty,\infty}\|\plt^L\|_{\infty,\infty}\|\pgt^L\|_{\f{2M}{M-1},2M}\|\pgt^L\|_{M,\f{2M}{M-2}}
\lesssim C_0^3\eps^2 c_0 
\end{align*}
On the other hand, for (\ref{ndui}) we have
\begin{align*}
\eqref{ndui}\lesssim&\|\dad \Uu\text{ } \ui P_0(\dau\plt^L(\pgt^L)^T\pgt^L)\|_{1,2}\\
&+\|\dad \Uu\text{ } \ui P_0(\plt^L(\dau\pgt^L)^T\pgt^L)\|_{1,2}\\
\lesssim&\|\dad \Uu\|_{\infty,\infty}\|\dau\plt^L\|_{\f{2M}{M-1},2M}\|\pgt^L\|_{\f{2M}{M-1},2M}\|\pgt^L\|_{M,\f{2M}{M-2}}\\
&+\|\dad \Uu\|_{\f{2M}{M-1},2M}\|\plt^L\|_{\infty,\infty}\|\dau \pgt^L\|_{M,\f{2M}{M-2}}\|\pgt^L\|_{\f{2M}{M-1},2M}\\
\lesssim&C_0^3\eps^2 c_0
\end{align*}

\bigskip

Next we study
$$
w_3^L\simeq- \Uu^{-1}\int_0^1\int_y\check{\psi_0}(y)y^T\,\plt^L(x- \tht y)\na_x\plt^L(x- \tht y)^T\pso^L(x-y)d \tht dy
$$
We have
\begin{align*}
&\left\|\dau \Uu\dad(\ui) \int_0^1\int_y\check{\psi_0}(y)y^T\,\plt^L(x- \tht y)\na_x\plt^L(x- \tht y)^T\pso^L(x-y)d \tht dy\right\|_{1,2}\\
\lesssim&\|\dau \Uu\|_{\f{2M}{M-1},2M}\|\dad(\ui)\|_{\infty,\infty}\|\plt^L\|_{\infty,\infty}\|\na\plt^L\|_{\f{2M}{M-1},2M}\|\pso^L\|_{M,\f{2M}{M-2}}
\end{align*}
which is acceptable. On the other hand,
\begin{align*}
&\left\|\dau \Uu\text{ }\ui \dad\int_0^1\int_y\check{\psi_0}(y)y^T\,\plt^L(x- \tht y)\na_x\plt^L(x- \tht y)^T\pso^L(x-y)d \tht dy\right\|_{1,2}\\
\lesssim&\|\dau \Uu\|_{\f{2M}{M-1},2M}\|\dad\plt^L\|_{\infty,\infty}\|\na\plt^L\|_{\f{2M}{M-1},2M}\|\pso^L\|_{M,\f{2M}{M-2}}\\
&+\|\dau \Uu\|_{\f{2M}{M-1},2M}\|\plt^L\|_{\infty,\infty}\|\dad\na\plt^L\|_{\f{2M}{M-1},2M}\|\pso^L\|_{M,\f{2M}{M-2}}\\
&+\|\dau \Uu\|_{\f{2M}{M-1},2M}\|\plt^L\|_{\infty,\infty}\|\na\plt^L\|_{\f{2M}{M-1},2M}\|\dad\pso^L\|_{M,\f{2M}{M-2}}
\end{align*}
which is also acceptable.
\end{proof}

\bigskip

\begin{proposition}
$$
\ui(\Box \Uu)\ui\Phi^L=error
$$
\end{proposition}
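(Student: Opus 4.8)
The goal is to show $\ui(\Box\Uu)\ui\Phi^L=error$. Since $\ui$ and $\Uu$ are perturbations of the identity (Proposition \ref{big_prop}), and $\|\Phi^L\|_{S_0}\lesssim C_0c_0$ by \eqref{Phi_bound}, the whole statement reduces to showing $\|\LOR\Box\Uu\cdot(\text{stuff bounded in }L^\infty_{t,x}\text{ or }S_0)\|_{1,2}\lesssim C_0^3\eps^2 c_0$. The plan is to exhibit a suitable estimate for $\Box\Uu$ in a non-endpoint Strichartz space and then Hölder against the good factors. Concretely, $\Box\Uu=\sum_{-N<k\leq-10}\Box\Uu_k$ and by the Leibniz rule applied to $\Uu_k=\begin{pmatrix}-\phi_{<k}\phi_k^T&0\\-(\phi_{<k}(L_n\phi)_k^T+(L_n\phi)_{<k}\phi_k^T)&-\phi_{<k}\phi_k^T\mathbf{I}_3\end{pmatrix}\Uu_{<k}$ we get terms in which $\Box$ falls either on a factor $\phi^L$ (at frequency $<k$ or $\sim k$) or on $\Uu_{<k}$. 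For the first type we insert the bound \eqref{box_estimate} on $\Box\phi$ (and its $L$-analogue, noting the footnote in Proposition \ref{big_error_prop} that exchanging $L$ with $\dhalf$ and $P_j$ is harmless), placing that factor into $L^{2M/(M-1)}_tL^{2M}_x$, the remaining $\phi^L$ factor into $L^{2M/(M-1)}_tL^{\infty-}_x$ or $L^{M}_tL^{2M/(M-2)}_x$, and $\Uu_{<k}$ into $L^\infty_{t,x}$ via \eqref{Uinfty}; summing the resulting geometric series in $k\leq-10$ against the local constancy of $c$ gives $C_0^3\eps^2 c_0$. For the terms where $\Box$ hits $\Uu_{<k}$ we use $\Box\Uu_{<k}=\sum_{k'<k}\Box\Uu_{k'}$ and recurse, or more cleanly establish first (by the same induction as \eqref{54}) a direct estimate
\begin{equation}\label{box_U_bound}
\|\LOR^{1-\dl(p,q)}\Box\Uu_{<K}\|_{p,q}\lesssim 2^{(2-\f{1}{p}-\f{3}{q})K}C_0c_K
\end{equation}
for $2-\f{1}{p}-\f{3}{q}>0$, reducing the general case to the base case above.

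The cleanest route is therefore to prove \eqref{box_U_bound} by induction on $K$ (paralleling the proof of \eqref{54} in Proposition \ref{big_prop}), differentiating the recursion twice with the wave operator, distributing $\Box$ by Leibniz, and in each term using either \eqref{box_estimate} (when $\Box$ lands on a high-frequency $\phi^L$), or the already-established $S_0$-type bounds $\|\dad\phi^L_j\|_{p,q}\lesssim 2^{(1-1/p-3/q)j}C_0c_j$ and $\|\phi^L_{<k}\|_{\infty,\infty}\lesssim1$, or the inductive hypothesis \eqref{box_U_bound} at scale $<K$. The mixed second-derivative term $\dau\phi^L_{<K}\dad\phi^L_{K}$-type contributions are handled purely in Strichartz spaces (two factors each carrying one derivative, one factor of $\Uu_{<k}$ in $L^\infty_{t,x}$), exactly as in the proof that \eqref{101}--\eqref{102} are errors. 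The half-wave maps contributions to $\Box\phi$ arising through \eqref{box_estimate} are already packaged inside that claim, so no new work on $HWM_1,HWM_2$ is needed here.

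With \eqref{box_U_bound} in hand, I would finish by writing $\ui(\Box\Uu)\ui\Phi^L$, bounding $\ui$ in $L^\infty_{t,x}$ twice via \eqref{Uinfty}, decomposing $\Box\Uu=\sum_{-N<k\leq-10}\Box\Uu_k$ and $\Phi^L=\sum_j P_j\Phi^L$, and estimating
\begin{equation*}
\|\LOR\Box\Uu_k\cdot P_j\Phi^L\|_{1,2}\lesssim\|\LOR\Box\Uu_k\|_{2+,\infty-}\|\LOR P_j\Phi^L\|_{\infty-,2+}\lesssim 2^{(\f32-\f1M)k}C_0c_k\cdot 2^{-(\f32-\f2M)j}\|P_j\Phi^L\|_{S_j},
\end{equation*}
where the outer $P_0$-localisation (from $\Phi^L=P_0\phi^L+\ldots$, all terms at frequency $\lesssim1$) forces $j\sim0$ and hence a clean geometric sum over $k\leq-10$. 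By \eqref{Phi_bound} this is $\lesssim C_0^3\eps^2c_0$, which is an acceptable error once $\eps(C_0)$ is small.

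\textbf{Main obstacle.} The only genuine subtlety is bookkeeping the commutators of the Lorentz boosts $L_n$ and angular derivatives $\Om_{ij}$ with $\Box$, $\dhalf$ and the Littlewood--Paley projections inside the recursive definition of $\Uu_k$ — i.e. justifying that $\LOR L\Box\Uu_k$ really does obey \eqref{box_U_bound} and not merely a weaker bound with extra derivative losses. This is controlled by the commutation relations \eqref{commr1}--\eqref{commr3} ($L$ commutes with $\Box$; $[L_n,\mathcal{P}_j]=2^{-j}\dd_t\mathcal{P}_j$; $[\Om,\mathcal{P}_j]=\mathcal{P}_j'$) exactly as in the footnote to Proposition \ref{big_error_prop}, but one must check the $2^{-j}\dd_t$ losses from $[L_n,P_j]$ are absorbed by the frequency gains $2^{(2-1/p-3/q)K}$ — which they are, since $K\leq-10$ and every such commutator is paired with a factor already localised near frequency $2^K$. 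Everything else is a routine Hölder-and-sum argument of the type already carried out repeatedly in Sections \ref{HWM_chapter} and \ref{normal_forms_chapter}.
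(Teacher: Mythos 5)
Your overall scheme—a standalone bound on $\Box\Uu_{<K}$, then H\"older against $\Phi^L$—is a plausible variation, but the final H\"older step as written does not balance, and this is a genuine gap rather than a bookkeeping slip. You place $\Box\Uu_k$ in $(2+,\infty-)=(\tfrac{2M}{M-1},2M)$ and $\Phi^L$ in $(\infty-,2+)=(M,\tfrac{2M}{M-2})$; the time exponents give $\tfrac{M-1}{2M}+\tfrac1M=\tfrac{M+1}{2M}\neq1$ and the space exponents give $\tfrac1{2M}+\tfrac{M-2}{2M}=\tfrac{M-1}{2M}\neq\tfrac12$, so the product lands in $L^{\frac{2M}{M+1}}_tL^{\frac{2M}{M-1}}_x$, not $L^1_tL^2_x$. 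For a bilinear H\"older into $L^1_tL^2_x$ with one factor of $\Phi^L$ in a Strichartz space, the other factor must sit in the \emph{dual} exponent pair: either $\Box\Uu$ in $(\tfrac{2M}{M+1},\tfrac{2M}{M-1})$ against $\Phi^L$ in $(\tfrac{2M}{M-1},2M)$, or $\Box\Uu$ in $(\tfrac{M}{M-1},M)$ against $\Phi^L$ in $(M,\tfrac{2M}{M-2})$. Neither of these is Strichartz-admissible, so your proposed \eqref{box_U_bound} (phrased with $\LOR^{1-\dl(p,q)}$ and ``the same induction as \eqref{54}'', which is built for admissible pairs) does not directly apply; the induction for such a non-admissible product norm would itself require, for the term $\phi^L_{<K-1}\,\Box\phi^L_{K-1}\,\Uu_{<K-1}$, an estimate on $\Box\phi^L_{K-1}$ in a \emph{dual} Strichartz space, which is not what the paper's $\eqref{box_estimate}$ provides.

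This is exactly the reason the paper avoids a standalone norm on $\Box\Uu$. Its induction runs on the full product, showing $\|\LOR(\Box\Uu_{<K}\,\ui\Phi^L)\|_{1,2}\leq 2^{K/M}C_0^2\eps c_0$ inductively over $K$. When $\Box$ falls on $\Uu_{<K-1}$ the induction hypothesis is invoked directly on the product; when $\Box$ falls on $\phi^L_{K-1}$ (or $\phi^L_{<K-1}$) the paper proves and uses a separate claim, $\|\LOR\Box\phi^L_{K-1}\|_{\frac{2M}{M+1},\frac{2M}{M-1}}\lesssim 2^{K/M}C_0^2\eps c_K$, in the dual Strichartz pair, which H\"olders cleanly against $\|\LOR\Phi^L\|_{\frac{2M}{M-1},2M}$ to give $L^1_tL^2_x$; the remaining $\dau\phi^L\dad\phi^L\Uu$-type terms are handled trilinearly as in the proof that \eqref{101}--\eqref{102} are errors. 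Your proposal does not mention this dual-Strichartz claim on $\Box\phi^L$ at all and instead invokes $\eqref{box_estimate}$, which lives in $(\tfrac{2M}{M-1},2M)$ and cannot be pushed to the dual pair without a new proof.

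To repair your approach you would need to (i) replace the target space for $\Box\Uu_{<K}$ by the dual-Strichartz pair $(\tfrac{2M}{M+1},\tfrac{2M}{M-1})$, (ii) prove the corresponding dual-Strichartz estimate on $\Box\phi^L_{K-1}$ (exactly the claim inside the paper's proof), and (iii) observe that this pair, though not Strichartz-admissible, arises as a H\"older product of two admissible Strichartz pairs so that the Leibniz-plus-Strichartz inductive step closes. Once rewritten this way your argument and the paper's become essentially the same induction, merely packaged as a standalone norm on $\Box\Uu_{<K}$ rather than on the product. The commutator bookkeeping for $L_n$ and $\Om_{ij}$ that you flag as the ``main obstacle'' is indeed routine and is not where the difficulty lies; the real issue is the choice of exponent pairs.
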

\begin{proof}
We still ignore the $\ui$, however in this case we cannot simply neglect the angular derivatives.
We will show inductively that
$$
\|\LOR (\Box \Uu_{<K}\ui\Phi^L)\|_{1,2}\leq 2^{K/M}C_0^2\eps c_0
$$
for all $-N\leq K\leq-9$, for $M$ sufficiently large.

The claim is trivial for $K=-N$, so suppose it is true up to $K-1\geq-N$. Observe that
\begin{align}
\Box\Uu_{K-1}\simeq&\phi^L_{<K-1}\phi^L_{K-1}\Box\Uu_{<K-1}+\phi^L_{<K-1}  \Box\phi^L_{K-1}\Uu_{<K-1}\nonumber\\
&+\Box\phi^L_{<K-1}\phi^L_{K-1}\Uu_{<K-1}+\dad\phi^L_{<K-1}\dau\phi^L_{K-1}\Uu_{<K-1}\nonumber\\
&+\dad\phi^L_{<K-1}\phi^L_{K-1}\dau\Uu_{<K-1}+\phi^L_{<K-1}\dad\phi^L_{K-1}\dau\Uu_{<K-1}\label{box_U}
\end{align}
The last three terms are the easiest to handle. For instance, using \eqref{Phi_bound} to bound $\Phi$ we have
\begin{align*}
&\|\LOR (\dad\phi^L_{<K-1}\dau\phi^L_{K-1}\Uu_{<K-1}  \cdot\ui\Phi^L)\|_{1,2}\\&\lesssim\|\LOR \dad\phi^L_{<K-1}\|_{\f{2M}{M-1},2M}\|\LOR \dau\phi^L_{K-1}\|_{\f{2M}{M-1},2M}\|\LOR \Phi^L\|_{M,\f{2M}{M-2}}\\
&\lesssim2^{(1-\f{2}{M})K}C_0^3\eps^2 c_0
\end{align*}
which is more than we need. The last 2 terms of \eqref{box_U} can be bounded in the same way.

We now study the terms of \eqref{box_U} involving the wave operator. The first one will be bounded using the induction hypothesis so let's start with the second term. We have to bound
\begin{align*}
\|\LOR (\plko^L\Box\pkmo^L\Uu_{<K-1} \cdot \ui\Phi^L)\| _{1,2}\lesssim\|\LOR (\Box\pkmo^L)\|_{\f{2M}{M+1},\f{2M}{M-1}}\|\LOR \Phi^L\|_{\f{2M}{M-1},2M}
\end{align*}
We then need the following claim.
\begin{claim}
$$
\|\LOR (\Box\pkmo^L)\|_{\f{2M}{M+1},\f{2M}{M-1}}\lesssim 2^{k/M}C_0^2\eps  c_K
$$
\end{claim}
\begin{proof}[Proof of claim]
By scaling, it suffices to prove the claim for $K=1$, so study $\Box\psi^L$ (see \eqref{Boxpsi}).
First consider the wave maps part of $\Box \psi^L$. The action of $L$ does not play an important role here, so for simplicity we only study $\Box\phi$. Using Bernstein's inequality to lower the exponent for the high frequency interactions, we have
\begin{align*}
\|\LOR P_0(\phi\dau\phi^T\dad\phi)\|_{\f{2M}{M+1},\f{2M}{M-1}}&\lesssim\|\LOR P_0(\phi\dau\phi_{>-10}^T\dad\phi_{>-10})\|_{\f{2M}{M+1},\f{2M}{M-1}}\\
&\quad+\|\LOR P_0(\phi\dau\phi_{\leq-10}^T\dad\phi_{>-10})\|_{\f{2M}{M+1},\f{2M}{M-1}}\\
&\quad+\|\LOR P_0(\phi\dau\phi_{\leq-10}^T\dad\phi_{\leq-10})\|_{\f{2M}{M+1},\f{2M}{M-1}}\\
&\lesssim \|\LOR  \dau\phi_{>-10}\|_{\f{4M}{M-1},\f{4M}{M+1}}\|\dau\pgt\|_{\f{4M}{M+3},\f{4M}{M+1}}\\
&\quad+\|\LOR  \dau\plt\|_{\f{4M}{M+3},\f{4M}{M-3}}\|\LOR  \dau\pgt\|_{\f{4M}{M-1},\f{4M}{M+1}}\\
&\quad+\|\LOR  \dau\plt\|_{\f{4M}{M+1},\f{4M}{M-1}}\|\LOR  \dad\plt\|_{\f{4M}{M+1},\f{4M}{M-1}}\\
&\lesssim C_0^2\eps c_0
\end{align*}
provided $M$ is sufficiently large and $\sg$ sufficiently small. The half-wave maps terms can be treated similarly.
\end{proof}
We therefore have
$$
\|\LOR (\plko^L\Box\pkmo^L\Uu_{<K-1} \cdot \ui\Phi^L)\| _{1,2}
\lesssim 2^{K/M}C_0^3\eps^2 c_0
$$
Similarly,
\begin{align*}
\| \LOR (\Box\plko^L\pkmo^L\Uu_{<K-1} \cdot \ui\Phi^L)\|_{1,2}
\lesssim\sum_{J<K-1}2^{J/M}C_0^2\eps c_0\lesssim 2^{K/M}C_0^3\eps^2 c_0
\end{align*}
and lastly by the induction hypothesis we have
\begin{align*}
\| \LOR (\plko^L  \pkmo^L  \Box\Uu_{<K-1} \cdot\ui\Phi^L)\|_{1,2}&\lesssim\|\LOR  \pkmo^L\|_{\infty,\infty}\|\LOR (\Box\Uu_{<K-1}\cdot\ui\Phi^L)\|_{1,2}\\
&\lesssim C_0\eps \cdot2^{K/M}C_0^2\eps c_0\lesssim2^{K/M}C_0^3\eps^2 c_0
\end{align*}

Therefore, letting $D$ denote the sum of the implicit constants above and using (\ref{box_U}), we have
\begin{align*}
\|\LOR (\Box \Uu_{<K}\cdot\ui\Phi^L)\|_{1,2}&\leq\|\LOR (\Box \Uu_{<K-1}\cdot\ui\Phi^L)\|_{1,2}+D\cdot 2^{K/M}C_0^3\eps^2 c_0\\
&\leq 2^{K/M}C_0^2\eps c_0(2^{-1/M}+DC_0\eps)
\end{align*}
Hence choosing $\eps\ll(DC_0)^{-1}$ completes the induction.
\end{proof}

\bigskip

Combining these two propositions with the equation previously obtained for $w^L$, we arrive at
\begin{equation*}
\Box w^L=-2\ui(\dd_\al \Uu-\Aa ^L_\al \Uu)\dd^\al w^L_1+error
\end{equation*}

\section{The (very low-low-high) cancellation}\label{cancellation_chapter}
In this chapter, we will show that
\begin{equation}\label{difficult=error}
\ui(\dd_\al \Uu-\Aa ^L_\al \Uu)\dd^\al w^L_1=error,
\end{equation}
finally doing away with the difficult $\lowestlh$ frequency interactions in the wave maps source term.

As usual we may neglect the $\ui$.
The first step in proving (\ref{difficult=error}) is to use the telescoping identity
\begin{align*}
\sum_{-N<k\leq-10}(\Aa^L_{\alpha,\leq k}\Uu_{\leq k}-\Aa^L_{\alpha,<k}U_{<k})
=&\Aa^L_\alpha \Uu-\Aa^L_{\alpha,\leq-N}
\end{align*}
where
$$
\Aa^L_{\alpha,\leq k}:=\Aa^L_{\alpha,<k+1}:=
\begin{pmatrix}
-\phi_{\leq k}\dd_\alpha \phi_{\leq k}^T & 0\\
-(\phi_{\leq k}\dad( L_n\phi )_{\leq k}^T+( L_n\phi )_{\leq k}\dd_\alpha \phi_{\leq k}^T)  & -\phi_{\leq k}\dd_\alpha \phi_{\leq k}^T \,\mathbf{I}_3
\end{pmatrix}
$$
(so $\Aa^L_\al=\Aa^L_{\al,\leq-10}$) to write
\begin{equation}\label{difference}
\dd_\alpha \Uu-\Aa^L_\alpha U=\left[\sum_{-N<k\leq-10}\dd_\alpha \Uu_k-(\Aa^L_{\alpha,\leq k}\Uu_{\leq k}-\Aa^L_{\alpha,<k}\Uu_{<k})\right]-\Aa^L_{\alpha,\leq-N}
\end{equation}

We first show that the $\Aa^L_{\alpha,\leq-N}$ part is acceptable:
\begin{lemma}\label{A_alpha_lem}
$$
\Aa^L_{\alpha,\leq-N} \dd^\al w^L_1=error
$$
\end{lemma}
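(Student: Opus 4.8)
\textbf{Proof plan for Lemma \ref{A_alpha_lem}.}

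The key point is that $\Aa^L_{\alpha,\leq-N}$ is a frequency-localised expression built from $\phi^L_{\leq-N}$ and $\dd_\alpha\phi^L_{\leq-N}$, and $N$ is chosen very large (depending on $T$). So the plan is to exploit the fact that the frequency support $\{|\xi|\lesssim 2^{-N}\}$ gives us both a Bernstein gain and many powers of $c_k$ to sum, which will beat the implicit loss coming from working on the finite time interval $[0,T]$. First I would recall that $w^L_1=\Uu^{-1}\psi^L$, so by \eqref{Uinfty} the factor $\ui$ can be dropped and $\dd^\al w^L_1$ can be written (via $\dd^\al(\Uu^{-1})=-\Uu^{-1}\dd^\al\Uu\,\Uu^{-1}$) as a sum of $\Uu^{-1}\dd^\al\psi^L$ and a term with the derivative on $\Uu^{-1}$; the latter is handled exactly as in the previous proposition using \eqref{49}, so the main term to bound is $\Aa^L_{\alpha,\leq-N}\,\dd^\al\psi^L$, i.e. something morally of the form $\phi^L_{\leq-N}\,\dd_\alpha\phi^L_{\leq-N}\,\dd^\al\psi^L$ (with the lower-triangular $L_n$-variants estimated identically, since $L_n$ just redistributes among the three factors using the Leibniz rule).

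Next I would estimate this trilinear term in $\|\LOR\,\cdot\,\|_{1,2}$ by placing the two very-low-frequency factors $\phi^L_{\leq-N}$, $\dd_\alpha\phi^L_{\leq-N}$ into $L^{2+}_tL^{\infty-}_x$ and the unit-frequency factor $\dd^\al\psi^L$ (which is $\simeq\dd^\al P_0\phi^L$, localised to frequency $\sim 1$) into $L^{\infty-}_tL^{2+}_x$, using the Leibniz rule to spread the angular derivative $\Om_{ij}$ over the three factors and the monotonicity of angular Sobolev spaces. This gives
\[
\|\LOR(\phi^L_{\leq-N}\,\dd_\alpha\phi^L_{\leq-N}\,\dd^\al\psi^L)\|_{1,2}\lesssim\Big(\sum_{k\leq-N}\|\LOR P_k\phi^L\|_{2+,\infty-}\Big)\Big(\sum_{l\leq-N}\|\LOR \dd_\alpha P_l\phi^L\|_{2+,\infty-}\Big)\|\LOR\dd^\al\psi^L\|_{\infty-,2+},
\]
and by the $S$-norm bounds \eqref{notation} each of the first two sums is $\lesssim\sum_{k\leq-N}2^{-(\frac12-\frac1{2M})k}\cdot 2^{k}\cdot\dots$ — more precisely, using $\|\LOR P_k\phi^L\|_{2+,\infty-}\lesssim 2^{-(\frac12+\frac1M)k}C_0c_k$ and $\|\LOR\dd_\alpha P_l\phi^L\|_{2+,\infty-}\lesssim 2^{(\frac12-\frac1M)l}C_0c_l$ together with the almost-geometric decay of the frequency envelope $c$ (local constancy with rate $\sg\ll 1$), the sum over $k,l\leq-N$ converges and is bounded by $2^{-\dl N}C_0^2\eps^2 c_0$ for some $\dl>0$ depending on $M$ and $\sg$.

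The one subtlety, and the step I expect to require the most care, is the interaction of the Lorentz field $L$ with the very-low-frequency projection: since $[L_n,P_k]=2^{-k}\dd_t\Pp_k$, the action of an $L_n$ on $\phi^L_{\leq-N}$ a priori costs a factor $2^{-k}\sim 2^{N}$, which is enormous. This is precisely why the problem is stated on a \emph{finite} interval $[0,T]$ and why $N$ is taken depending on $T$: one uses H\"older in time on the $[0,T]$-restricted norm to convert that bad factor, or more cleanly, one works directly with $\phi^L=L\phi$ (which has the fields already applied) rather than commuting $P_k$ past $L_n$, so that $\|\LOR P_k\phi^L\|$ is controlled by $\|P_k\phi^L\|_{S_k}\lesssim C_0c_k$ without any commutator loss — this is exactly the reason the function space was set up in \eqref{pl} to apply the $L_n$ directly to the solution. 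Concretely, one chooses $N=N(T)$ large enough that $2^{-\dl N}$ times the (at most) polynomial-in-$T$ losses from the finitely many H\"older-in-time steps is $\ll\eps$; since everything reduces to a finite sum of such trilinear pieces, the total is $\lesssim C_0^3\eps^2 c_0\ll\eps\cdot(\text{the }error\text{ threshold})$, giving $\Aa^L_{\alpha,\leq-N}\dd^\al w^L_1=error$ as claimed.
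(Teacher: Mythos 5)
Your proposed trilinear estimate does not close: placing the \emph{undifferentiated} very-low-frequency factor $\phi^L_{\leq-N}$ into $L^{2+}_tL^{\infty-}_x$ costs a factor $2^{-(\frac12+\frac1M)k}$ at frequency $k$, which for $k\leq -N$ grows like $2^{(\frac12+\frac1M)|k|}$. Combined with the at-most-polynomial decay $c_k\lesssim 2^{-\sg k}c_0$ of the frequency envelope, the sum $\sum_{k\leq-N}\|\LOR P_k\phi^L\|_{2+,\infty-}\lesssim\sum_{k\leq-N}2^{-(\frac12+\frac1M+\sg)k}C_0c_0$ \emph{diverges} as a sum over $k\to-\infty$, and in particular is not $\lesssim 2^{-\dl N}\eps$ as you claim. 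Unlike the differentiated factor (whose $S$-norm bound carries the compensating $2^{+(\frac12-\frac1M)l}$), there is nothing here to save the low-frequency tail, and since both factors sit below $-N$ there is no way to trade roles. The informal ``$2^{-(\frac12-\frac1{2M})k}\cdot 2^k$'' you write for the first sum is not a bound that any Strichartz pair in $\mathcal{Q}$ actually supplies for an undifferentiated low-frequency piece.

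The paper avoids this entirely by not putting the very-low-frequency part into any finite-$p$ Strichartz space at all. The whole matrix $\Aa^L_{\alpha,\leq-N}$ is estimated in $L^1_tL^\infty_x$, using H\"older in time on the finite interval $[0,T]$ to pass to $L^\infty_tL^\infty_x$ and pick up the factor $T$, and then both factors are placed in $L^\infty_{t,x}$: the undifferentiated one satisfies $\|\LOR\phi^L_{\leq-N}\|_{\infty,\infty}\lesssim C_0\eps$ (no low-frequency gain, but no loss either), while the \emph{differentiated} one satisfies $\|\LOR\dd_\alpha\phi^L_{\leq-N}\|_{\infty,\infty}\lesssim\sum_{k\leq-N}2^kC_0c_k\lesssim 2^{-(1-\sg)N}C_0\eps$ by Bernstein, producing the decisive $2^{-N}$. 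The remaining factor $\dd^\al w^L_1$ is put into $L^\infty_tL^2_x$ via the identity $\dd^\al w^L_1=\Uu^{-1}\dd^\al\psi^L-\Uu^{-1}(\dd^\al\Uu)\Uu^{-1}\psi^L$ together with \eqref{Uinfty} and \eqref{49}, giving $\lesssim C_0c_0$. The product $T\cdot 2^{-N}\cdot C_0^2\eps c_0$ is then made acceptable by taking $N=N(T,C_0)$ large. Your instinct that H\"older in time is the mechanism for exploiting the finite interval is correct, but you attribute the need to the commutator $[L_n,P_k]$ --- which the paper already neutralises by applying $L_n$ to the solution \emph{before} frequency-localising (that is the whole point of working with $\phi^L$, cf. \eqref{pl}) --- rather than to the genuine issue, namely the low-frequency divergence of $L^{2+}_tL^{\infty-}_x$ norms.
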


\begin{proof}
We have
\begin{equation*}
\|\LOR  (\Aa^L_{\alpha,\leq-N} \dd^\al w^L_1)\|_{1,2}
\lesssim\|\LOR  \Aa^L_{\alpha,\leq-N}\|_{1,\infty}\|\LOR  \dd^\al w^L_1\|_{\infty,2}
\end{equation*}
Then
\begin{align*}
\|\LOR \Aa^L_{\alpha,\leq-N}\|_{1,\infty}\lesssim T\|\LOR  \phi^L_{\leq -N}\|_{\infty,\infty}\|\LOR  \dd_\al\phi^L_{\leq -N}\|_{\infty,\infty}\lesssim 2^{-N}TC_0\eps
\end{align*}
and by the identity
\begin{equation}\label{dw1}
\dd^\al w^L_1=\Uu^{-1}\dd^\al\psi^L-\Uu^{-1}(\dd^\al\Uu)\Uu^{-1}\psi^L
\end{equation}
also
\begin{equation}\label{w_bound}
\|\LOR  \dd^\al w^L_1\|_{\infty,2}\lesssim C_0c_0
\end{equation}
The result is now immediate upon taking $N(T,C_0)$ sufficiently large.
\end{proof}

Next we study the sum in (\ref{difference}). It is here that we observe the critical cancellation of the $\plk\dau\pk^T$ terms. Indeed, as in \cite{Tao_1} we may write
\begin{align}\label{exp1}
\dad\Uu_k&=
\begin{pmatrix}
-\dad\phi_{<k}\phi_k^T&0\\
-(\dad\phi_{<k}( L_n\phi )^T_k+\dad( L_n\phi )_{<k}\phi_k^T) & -\dad\phi_{<k}\pk^T\,\mathbf{I}_3
\end{pmatrix}
\Uu_{<k}\nonumber\\
&\quad+
\begin{pmatrix}
-\phi_{<k}\dad\phi_k^T&0\\
-(\phi_{<k}\dad( L_n\phi )^T_k+( L_n\phi )_{<k}\dad\phi_k^T) & -\phi_{<k}\dad\pk^T\,\mathbf{I}_3
\end{pmatrix}
\Uu_{<k}\nonumber\\
&\quad+
\begin{pmatrix}
-\phi_{<k}\phi_k^T&0\\
-(\phi_{<k}( L_n\phi )^T_k+( L_n\phi )_{<k}\phi_k^T) & -\phi_{<k}\pk^T\,\mathbf{I}_3
\end{pmatrix}
\dad\Uu_{<k}
\end{align}
and
\begin{align}\label{exp2}
\Aa^L_{\alpha,\leq k}\Uu_{\leq k}-\Aa^L_{\alpha,<k}\Uu_{<k}&=
\begin{pmatrix}
-\phi_{< k}\dd_\alpha \phi_{ k}^T & 0\\
-(\phi_{< k}\dad( L_n\phi )_{ k}^T+( L_n\phi )_{< k}\dd_\alpha \phi_{ k}^T)  & -\phi_{< k}\dd_\alpha \phi_{ k}^T \,\mathbf{I}_3
\end{pmatrix}
\Uu_{<k}\nonumber\\
&\quad+
\begin{pmatrix}
-\phi_{ k}\dd_\alpha \phi_{\leq k}^T & 0\\
-(\phi_{ k}\dad( L_n\phi )_{\leq k}^T+( L_n\phi )_{ k}\dd_\alpha \phi_{\leq k}^T)  & -\phi_{ k}\dd_\alpha \phi_{\leq k}^T \,\mathbf{I}_3
\end{pmatrix}
\Uu_{<k}\nonumber\\
&\quad+
\begin{pmatrix}
-\phi_{\leq k}\dd_\alpha \phi_{\leq k}^T & 0\\
-(\phi_{\leq k}\dad( L_n\phi )_{\leq k}^T+( L_n\phi )_{\leq k}\dd_\alpha \phi_{\leq k}^T)  & -\phi_{\leq k}\dd_\alpha \phi_{\leq k}^T \,\mathbf{I}_3
\end{pmatrix}
\Uu_{k}
\end{align}
Crucially, the second line in \eqref{exp1} and the first line in \eqref{exp2} cancel and we are left with
\begin{align*}
\dd_\alpha \Uu_k-(\Aa^L_{\alpha,\leq k}\Uu_{\leq k}-\Aa^L_{\alpha,<k}\Uu_{<k})=&
\begin{pmatrix}
-\dad\phi_{<k}\phi_k^T&0\\
-(\dad\phi_{<k}( L_n\phi )^T_k+\dad( L_n\phi )_{<k}\phi_k^T) & -\dad\phi_{<k}\pk^T\,\mathbf{I}_3
\end{pmatrix}
\Uu_{<k}\\
&\quad+
\begin{pmatrix}
-\phi_{<k}\phi_k^T&0\\
-(\phi_{<k}( L_n\phi )^T_k+( L_n\phi )_{<k}\phi_k^T) & -\phi_{<k}\pk^T\,\mathbf{I}_3
\end{pmatrix}
\dad\Uu_{<k}\\
&\quad-
\begin{pmatrix}
-\phi_{ k}\dd_\alpha \phi_{\leq k}^T & 0\\
-(\phi_{ k}\dad( L_n\phi )_{\leq k}^T+( L_n\phi )_{ k}\dd_\alpha \phi_{\leq k}^T)  & -\phi_{ k}\dd_\alpha \phi_{\leq k}^T\,\mathbf{I}_3
\end{pmatrix}
\Uu_{<k}\\
&\quad-
\begin{pmatrix}
-\phi_{\leq k}\dd_\alpha \phi_{\leq k}^T & 0\\
-(\phi_{\leq k}\dad( L_n\phi )_{\leq k}^T+( L_n\phi )_{\leq k}\dd_\alpha \phi_{\leq k}^T) & -\phi_{\leq k}\dd_\alpha \phi_{\leq k}^T \,\mathbf{I}_3
\end{pmatrix}
\Uu_{k}
\end{align*}
\eqref{difficult=error} is therefore implied by the following result.
\begin{lemma}\label{key_lemma}
\begin{align}
&\sum_{-N<k\leq-10} 
\begin{pmatrix}
\dad\phi_{<k}\phi_k^T&0\\
(\dad\phi_{<k}( L_n\phi )^T_k+\dad( L_n\phi )_{<k}\phi_k^T) & \dad\phi_{<k}\pk^T\,\mathbf{I}_3
\end{pmatrix}
\Uu_{<k}\,\dau w_1^L
=error
\label{1k=error'}\\
&\sum_{-N<k\leq-10}
\begin{pmatrix}
\phi_{<k}\phi_k^T&0\\
(\phi_{<k}( L_n\phi )^T_k+( L_n\phi )_{<k}\phi_k^T) & \phi_{<k}\pk^T\,\mathbf{I}_3
\end{pmatrix}
\dad\Uu_{<k}\,\dau w_1^L
=error
\label{2k=error'}\\
&\sum_{-N<k\leq-10}
\begin{pmatrix}
\phi_{ k}\dd_\alpha \phi_{\leq k}^T & 0\\
(\phi_{ k}\dad( L_n\phi )_{\leq k}^T+( L_n\phi )_{ k}\dd_\alpha \phi_{\leq k}^T)  & \phi_{ k}\dd_\alpha \phi_{\leq k}^T\,\mathbf{I}_3
\end{pmatrix}
\Uu_{<k}\,\dau w_1^L
=error
\label{3k=error'}\\
&\sum_{-N<k\leq-10}
\begin{pmatrix}
\phi_{\leq k}\dd_\alpha \phi_{\leq k}^T & 0\\
(\phi_{\leq k}\dad( L_n\phi )_{\leq k}^T+( L_n\phi )_{\leq k}\dd_\alpha \phi_{\leq k}^T)  & \phi_{\leq k}\dd_\alpha \phi_{\leq k}^T \,\mathbf{I}_3
\end{pmatrix}
\Uu_{k}\,\dau w_1^L
=error
\label{4k=error'}
\end{align}
\end{lemma}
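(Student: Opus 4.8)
\textbf{Proof proposal for Lemma \ref{key_lemma}.} The overarching strategy is that each of the four terms has a factor of the form (low)$\cdot$(low)$\cdot$(high), where ``high'' means $\dau w_1^L$ (which is morally $\na\psi_0$ localised near frequency $\sim1$), one ``low'' factor carries a derivative or is $\Uu_{<k}$-like (frequency $\lesssim2^k$), and the \emph{other} ``low'' factor --- namely $\pk$, $\phi_k$, $(L_n\phi)_k$, or $\pk^T$ --- is a single dyadic piece at frequency $2^k$ which plays the role of the ``lowest'' factor. Thus every summand is of type $\lowestlh$, and these are exactly the interactions that Lemma \ref{SIL} (and its Corollary \ref{SIC}) were designed to handle. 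I expect \eqref{1k=error'} and \eqref{4k=error'} to be the representative hard cases: the former because $\dad\phi_{<k}$ must still be resolved into dyadic pieces $\dad\phi_l$ with $l<k$, and we must sum the resulting bilinear-in-$k,l$ bound; the latter because $\Uu_k$ is itself a trilinear object, and we must first expand it (or use the pointwise bounds \eqref{uk_bound}, \eqref{duk_bound} from Proposition \ref{big_prop}) to reduce to a genuinely trilinear estimate.

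\textbf{Step 1 (set-up).} By scaling reduce to unit frequency for the high factor, and recall from the previous section that $w_1^L=\Uu^{-1}\psi^L$, so $\dau w_1^L=\Uu^{-1}\dau\psi^L-\Uu^{-1}(\dau\Uu)\Uu^{-1}\psi^L$ (equation \eqref{dw1}); using \eqref{Uinfty} and \eqref{49} we may replace $\dau w_1^L$ by $\dau\psi^L$ up to an acceptable error, and by Proposition \ref{main_prop}/\eqref{assumption1} we have $\|\LOR L\dau\psi_0\|_{\infty,2}\lesssim C_0c_0$ together with the full $S_0$-norm bounds on $\psi^L$. Likewise, using \eqref{orthogonality} we may discard the factor $\Uu_{<k}$ (resp.\ the outer $\Uu^{-1}$), paying only $\|\LOR\Uu_{<k}\|_{\infty,\infty}\lesssim1$, \emph{except} that when a derivative $\LOR L$ lands on $\Uu_{<k}$ (relevant only in \eqref{2k=error'}) we must instead invoke \eqref{duk_bound}/\eqref{54}, which gives $\|\LOR^{1-\dl}\dad\Uu_{<k}\|_{p,q}\lesssim2^{\beta k}C_0c_k$ and is more than enough decay in $k$.

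\textbf{Step 2 (apply Lemma \ref{SIL}).} For \eqref{1k=error'}, write $\dad\phi_{<k}=\sum_{l<k}\dad\phi_l$, so a generic summand is $\dad\phi_l\cdot\phi_k\cdot\dau\psi_{\sim0}$ (and the analogous terms with $(L_n\phi)$ inserted, which change nothing since those derivatives are absorbed into $S$). Since $l<k\lesssim0$ and $l\lesssim0$, Lemma \ref{SIL} applies directly: distributing the vector fields $\LOR$ and $L$ over the three factors and choosing the appropriate one of the four cases (case 1 or 2 when $L$ avoids $\psi_{\sim0}$, cases 3 or 4 when $L$ hits $\psi_{\sim0}$), each summand is bounded by
\begin{align*}
2^{-l/M}2^{c(k-l)}\,\|\cdot\|_{S_l}\|\cdot\|_{S_k}\|\psi^L_{\sim0}\|_{S_{\sim0}}\lesssim2^{(\frac12-\frac1{2M})(l-k)}\,2^{(\cdots)k}\,C_0^3c_lc_kc_0
\end{align*}
for a suitable positive exponent, exactly as in the model computation \eqref{example}; summing the geometric series in $l<k$ and then in $-N<k\leq-10$ (where $c_k$ has already summed to $O(\eps)$) yields $\lesssim C_0^3\eps^2c_0$, which is $error$. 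For \eqref{3k=error'} the structure is identical but with the single dyadic piece $\phi_k$ (or $(L_n\phi)_k$) playing the lowest-frequency role and $\dad\phi_{\leq k}$ (resp.\ $\dad(L_n\phi)_{\leq k}$) the intermediate one; again split $\dad\phi_{\leq k}=\sum_{l\leq k}\dad\phi_l$ and apply Lemma \ref{SIL}, noting that now the roles of the two low factors may be swapped, but the lemma explicitly permits arbitrary orderings and products. Term \eqref{2k=error'} is handled the same way once $\dad\Uu_{<k}$ is controlled via \eqref{duk_bound} as in Step 1.

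\textbf{Step 3 (the genuinely trilinear term \eqref{4k=error'}).} Here $\Uu_k$ is not a single Littlewood--Paley piece, so I would expand it by its defining recursion $\Uu_k=(-\phi_{<k}\phi_k^T+\cdots)\Uu_{<k}$, absorb the $\Uu_{<k}$ factor as in Step 1, and thereby reduce to estimating $\phi_{\leq k}\cdot\dad\phi_{\leq k}\cdot(\phi_{<k}\phi_k)\cdot\dau\psi_{\sim0}$ --- a product with four small factors, three of them at frequency $\lesssim2^k$. Collapsing $\phi_{\leq k}\cdot\phi_{<k}$ into an $L^\infty_{t,x}$ factor (bounded by $\|\LOR\plt^L\|_{\infty,\infty}\lesssim C_0\eps$ uniformly) leaves a trilinear term $\dad\phi_{\leq k}\cdot\phi_k\cdot\dau\psi_{\sim0}$ to which Lemma \ref{SIL} again applies, now with an extra harmless factor of $\eps$. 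The book-keeping of how $\LOR$ and $L$ distribute over four factors is the fiddliest part, but since at most two vector fields act and each of the low factors other than $\pk$ can always absorb one while staying in $L^\infty_{t,x}$, no new difficulty arises; the sum over $k$ converges for the same reason as before.

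\textbf{Main obstacle.} The conceptual content is entirely in Lemma \ref{SIL}/Corollary \ref{SIC}, so the residual difficulty is purely organisational: one must carefully enumerate, for each of the four displayed sums, every way the two vector-field operators $\omij$ and $L_n$ can land (on the derivative-bearing low factor, on $\pk$, on $\psi_{\sim0}$, or on $\Uu_{<k}$), and in the case where $L_n$ hits $\psi_{\sim0}$ use the $|t|^{-1}$-type gain (cases 3--4 of Lemma \ref{SIL}, i.e.\ the $2^{j-m}\|L\phi^{(3)}_m\|_{\infty,2}+2^{-m}\|\dd_t\phi^{(3)}_m\|_{\infty,2}$ terms) rather than hitting $\psi_{\sim0}$ with a second Lorentz boost. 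Once the correct case of Lemma \ref{SIL} is selected in each instance, the resulting exponent in $2^{(\cdots)(l-k)}$ is strictly positive for $M$ large, and summing over $l$ and then over the dyadic range $-N<k\leq-10$ gives the claimed $error$ bound uniformly in $N$ (indeed independently of $N$, so the $T$-dependence of $N$ does not intrude here --- that was already dealt with in Lemma \ref{A_alpha_lem}).
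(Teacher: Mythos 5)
Your overall strategy is essentially the paper's: recognise that every term is of $\lowestlh$ type, expand $\dau w_1^L$ via \eqref{dw1}, discard bounded matrix factors in $L^\infty_{t,x}$, decompose the remaining low factor into Littlewood--Paley pieces, and apply Lemma \ref{SIL} (choosing the case according to where $\LOR$ and $L$ land), with \eqref{4k=error'} requiring a preliminary expansion of $\Uu_k$. That is exactly what the paper does, including the scalar/componentwise reduction exploiting the block lower-triangular structure. I have two substantive comments.

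First, the assertion in Step 1 that ``we may replace $\dau w_1^L$ by $\dau\psi^L$ up to an acceptable error'' using \eqref{Uinfty}--\eqref{49} is not correct as stated. The piece $\Uu^{-1}(\dau\Uu)\Uu^{-1}\psi^L$ cannot be discarded by a direct Strichartz argument: after multiplying by $\dad\phi_{<k}\phi_k^T\Uu_{<k}$ one still has two low-frequency factors with only one derivative between them, so the interaction is again $\lowestlh$ and needs the full Lemma \ref{SIL} machinery. The paper handles it by the same argument as the main piece, noting that $\psi^L$ at unit frequency plays the role of $\dau\psi^L$. The fix is cheap --- just run the Lemma \ref{SIL} argument on both pieces --- but as written the proposal claims something false.

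Second, for \eqref{2k=error'} you propose to control $\dad\Uu_{<k}$ directly through \eqref{duk_bound} and feed that into Lemma \ref{SIL}, whereas the paper proves a claim by induction on the recursion defining $\Uu_{<j}$ (the quantity $R(j):=\|\LOR\phi_k\cdot\dad U_{<j}\cdot\dau\psi\|_{1,2}$). Your route does in fact close: $(2+,\infty-)=(\tfrac{2M}{M-1},2M)$ has $\tfrac1p+\tfrac1q=\tfrac12$ so $\dl(p,q)=0$, and $\beta=1-\tfrac1p-\tfrac3q=\tfrac12-\tfrac1M>0$, so \eqref{duk_bound} does give $\|\LOR\dad\Uu_j\|_{2+,\infty-}\lesssim2^{(\frac12-\frac1M)j}C_0c_j$, which is exactly the norm Lemma \ref{SIL} wants for the ``lowest'' factor; the resulting exponent $2^{(\frac12-\frac1{2M})(j-k)}$ matches the paper's. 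So this is a legitimately different (and arguably slicker) organisation. The paper's induction is slightly safer because it never applies Lemma \ref{SIL} to the composite object $\dad\Uu_j$ at all, only to true Littlewood--Paley pieces of $\phi$, sidestepping any worries about which Strichartz pairs carry the angular derivative in \eqref{54}.

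Two smaller omissions: (i) you never flag that the $2^{-m}\|\dd_t\phi^{(3)}_m\|_{\infty,2}$ term in Lemma \ref{SIL}, applied with $\phi^{(3)}_m=\dd^0\psi=\dd_t\psi$, produces $\|\dd_t^2\psi\|_{\infty,2}$, which is not directly in the $S$-norm and must be controlled by iterating the equation (the paper's display \eqref{dd_tpsi}); this is a genuine extra step. (ii) Your citation of ``Proposition \ref{main_prop}/\eqref{assumption1}'' for the a priori bounds is circular --- the bound comes from the bootstrap hypothesis \eqref{assumption1} in Proposition \ref{reduced_main_prop} only, not from the conclusion you are proving.
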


\begin{proof}\hfill
\begin{enumerate}[leftmargin=*]
\item Proof of \eqref{1k=error'}, \eqref{3k=error'}. These two inequalities are to all intents and purposes the same, so we consider only \eqref{1k=error'}. Recall that $\dau w_1^L=\Uu^{-1}\dau\psi^L-\Uu^{-1}(\dau\Uu) \Uu^{-1}\psi^L$ where $\Uu^{-1}$ has the form
$$
\Uu^{-1}=
\begin{pmatrix}
\bigast & 0\\
\bigast & \bigast
\end{pmatrix}
$$
and $\Uu_{<k}$ has the same structure. We first study the part of $\dau w_1^L$ involving $\dau\psi^L$. Write
$$
\Uu_{<k}\Uu^{-1}=
\begin{pmatrix}
U_1 & 0\\
U_2 & U_1\mathbf{I}_3
\end{pmatrix}
$$
Then expanding the matrix product we have
\begin{align*}
&\begin{pmatrix}
\dad\phi_{<k}\phi_k^T&0\\
(\dad\phi_{<k}( L_n\phi )^T_k+\dad( L_n\phi )_{<k}\phi_k^T) & \dad\phi_{<k}\pk^T\,\mathbf{I}_3
\end{pmatrix}
\Uu_{<k}\Uu^{-1}\dau \psi^L\\
&=
\begin{pmatrix}
\dad\phi_{<k}\phi_k^T U_1& 0\\
(\dad\phi_{<k}( L_n\phi )^T_k+\dad( L_n\phi )_{<k}\phi_k^T) U_1+\dad\phi_{<k}\pk^T U_2 & \dad\phi_{<k}\pk^T U_1\,\mathbf{I}_3
\end{pmatrix}
\begin{pmatrix}
\dau\psi\\
\dau\psi^L_n
\end{pmatrix}
\end{align*}

Further expanding this product, it remains to study the following:
\medskip
\begin{enumerate}
\item $\dad\phi_{<k}\pk^T U_i \dau\psi$, $i=1,2$.
\item $\dad\phi_{<k}( L_n\phi )^T_k U_1\dau\psi$
\item $\dad( L_n\phi )_{<k}\phi_k^T U_1\dau\psi$
\item $\dad\phi_{<k}\pk^T U_1\dau\psi_n^L$
\end{enumerate}
\medskip
For the rest of this proof we will treat all functions as scalars, even though they are really vector or matrix fields, by working componentwise. This reduction is possible since none of the arguments that follow rely on any geometric structure and in particular Lemma \ref{SIL} held for scalar functions (see remark at end of said lemma). In this spirit, since all the $\LOR  U_i$ are bounded in $L^\infty_{t,x}$ (by Proposition \ref{big_prop}) we may ignore these terms in the above expressions. What's left is treated by direct application of Lemma \ref{SIL}.

Starting with (a), we easily reduce to the following three terms:
\begin{align}
\|\LOR (\dad\phi_{<k}\pk \dau\psi)\|_{1,2}&\lesssim\|\LOR \dad\phi_{<k}\cdot\pk \cdot\dau\psi\|_{1,2}\label{AaA}\\
&\quad+\|\dad\phi_{<k}\cdot\LOR \pk \cdot\dau\psi\|_{1,2}\label{BbB}\\
&\quad+\|\dad\phi_{<k}\cdot\pk \cdot\LOR \dau\psi\|_{1,2}\label{CcC}
\end{align}
For \eqref{AaA} we apply point 1 of Lemma \ref{SIL} as in \eqref{example} (recalling that $\psi$ is at unit frequency) to see
\begin{align*}
\|\LOR \dad\plk\cdot\pk\cdot\dau\psi\|_{1,2}
\lesssim \sum_{j<k}2^{(\half-\f{1}{2M})(j-k)}C_0^2c_jc_k(C_0c_0+\|\dd_t\dau\psi\|_{\infty,2})
\end{align*}
and we have to do just a little work to bound $\|\dd_t\dau\psi\|_{\infty,2}$ in the case $\al=0$. We use the equation to find
$$
\|\dd_t^2\psi\|_{\infty,2}\leq\|\D \psi\|_{\infty,2}+\|\LOR \Box P_0\phi\|_{\infty,2}
$$
where for example
\begin{align}
\| P_0(\phi\dau\phi^T\dad\phi)\|_{\infty,2}&\lesssim\| P_0(\phi\dau\phi_{>-10}^T\dad\phi_{>-10})\|_{\infty,1}+\| P_0(\phi\dau\phi_{\leq-10}^T\dad\phi_{>-10})\|_{\infty,2}\nonumber\\
&\quad+\| P_0(\phi\dau\phi_{\leq-10}^T\dad\phi_{\leq-10})\|_{\infty,2}\nonumber\\
&\lesssim\|\dau\phi_{>-10}\|_{\infty,2}\|\dau\phi_{>-10}\|_{\infty,2}+\|\dau\phi_{\leq-10}\|_{\infty,\infty}\|\dau\phi_{>-10}\|_{\infty,2}\nonumber\\
&\quad+\|\dau\phi_{\leq-10}\|_{\infty,4}\|\dau\phi_{\leq-10}\|_{\infty,4}\nonumber\\
&\ll C_0 c_0\label{dd_tpsi}
\end{align}
Bounding the half-wave maps terms similarly we find 
$$
\|\LOR \dad\plk\cdot\pk\cdot\dau\psi\|_{1,2}\lesssim\sum_{j<k}2^{(\half-\f{1}{2M})(j-k)}C_0^3c_jc_kc_0
$$
which is acceptable when summed over $k\leq-10$.

\eqref{CcC} can be handled in the same way and for \eqref{BbB} we use Point 2 of Lemma \ref{SIL}. (b) and (c) can be treated identically to (a), and (d) is similar upon using points 3 and 4 of Lemma \ref{SIL} rather than 1 and 2 respectively.

The remaining part $(\Uu^{-1}\dau\Uu \Uu^{-1})\psi^L$ of $\dau w^L_1$ can be treated in the same way, since $\Uu^{-1}\dau\Uu \Uu^{-1}$ has the same block structure as $\Uu_{<k}\Uu^{-1}$ and $\psi^L$ is at unit frequency so behaves like $\dau\psi^L$.

\item Proof of \eqref{2k=error'}. Expanding $\dau w_1^L$ as before and restricting to the term $\Uu^{-1}\dau\psi^L$ for simplicity, we have to consider
$$
\left\|\LOR \left[\begin{pmatrix}
\phi_{<k}\phi_k^T&0\\
(\phi_{<k}( L_n\phi )^T_k+( L_n\phi )_{<k}\phi_k^T) & \phi_{<k}\pk^T\,\mathbf{I}_3
\end{pmatrix}
\dad\Uu_{<k}\Uu^{-1}\dau \psi^L
\right]\right\|_{1,2}
$$
Generally speaking, the argument for this term is similar to the previous one, using Lemma \ref{SIL} and the lower triangular structure of the matrices involved to limit the interactions. For the sake of presentation, we will only consider the top left component of the expression above,
$$
\|\LOR (\phi_{<k} \pk^T\dad U_{<k}U^{-1}\dau\psi)\|_{1,2}
$$
We will also restrict to the case where the angular derivative falls on $\pk$, the other cases bring similar. Note that by placing $\phi_{<k}$ and $U^{-1}$ into $L^\infty_{t,x}$ it suffices to consider
$$
\|\LOR \pk\cdot\dad U_{<k}\cdot\dau\psi\|_{1,2}
$$
(working componentwise).
We proceed by induction. Set
$$
R(j):=\|\LOR \phi_k\cdot\dad U_{<j}\cdot\dau \psi\|_{1,2}
$$
\begin{claim}
For all $-N\leq j\leq k$ it holds
$$
R(j)\lesssim 2^{(\half-\f{1}{2M})(j-k)}C_0^3c_jc_kc_0
$$
\end{claim}
\begin{proof}[Proof of claim]
The claim is trivial for $j=-N$ so suppose it is true up to some fixed $-N<j\leq k$.
By definition of $U_j$ we have
\begin{align}
R(j)\leq R(j-1)&+\|\LOR \phi_k\cdot \phi_{<j-1}\phi_{j-1}^T\dad U_{<j-1}\cdot\dau \psi\|_{1,2}\label{Rj11}\\
&+\|\LOR \phi_k\cdot \phi_{<j-1}\dad\phi_{j-1}^TU_{<j-1}\cdot\dau \psi\|_{1,2}\label{Rj22}\\
&+\|\LOR \phi_k\cdot \dad\phi_{<j-1}\phi_{j-1}^TU_{<j-1}\cdot\dau \psi\|_{1,2}\label{Rj33}
\end{align}
For \eqref{Rj11} we pull out $\|\phi_{<j-1}\phi_{j-1}^T\|_{L^\infty_{t,x}}$ to find 
\begin{align*}
\eqref{Rj11}&\lesssim C_0c_{j-1} R(j-1)
\end{align*}
using the induction hypothesis.

For \eqref{Rj22} we place $U_{<j-1}$ and $\phi_{<j-1}$ into $L^\infty_{t,x}$ and apply part 2 of Lemma \ref{SIL} in conjunction with \eqref{dd_tpsi} to bound
\begin{align*}
\|\LOR \phi_k\cdot \phi_{<j-1}\dad\phi_{j-1}^T U_{<j-1}\cdot\dau \psi\|_{1,2}
\lesssim \|\LOR \phi_k\cdot \dad\phi_{j-1}\cdot\dau \psi\|_{1,2}
\lesssim2^{(\half-\f{1}{2M})(j-k)}C_0^3c_jc_kc_0
\end{align*}

Similarly, for \eqref{Rj33} we have 
\begin{align*}
\|\LOR \phi_k\cdot \dad\phi_{<j-1}\phi_{j-1}^TU_{<j-1}\cdot\dau \psi\|_{1,2}
&\lesssim C_0c_j\sum_{l<j-1}\|\LOR \phi_k\cdot \dad\phi_l\cdot\dau \psi\|_{1,2}\\
&\lesssim C_0c_j\sum_{l<j-1}2^{(\half-\f{1}{2M})(l-k)}C_0^3c_lc_kc_0\\
&\lesssim 2^{(\half-\f{1}{2M})(j-k)}C_0^3c_jc_kc_0
\end{align*}

We deduce that, for some constant $D>0$,
\begin{align*}
R(j)&\leq (1+D\cdot C_0\eps)R(j-1)+D\cdot2^{(\half-\f{1}{2M})(j-k)}C_0^3c_j c_kc_0
\end{align*}
and the claim follows upon taking $\eps(C_0)$ sufficiently small.
\end{proof}

With this claim in hand, we have
\begin{align*}
\sum_{-N<k\leq-10}\|\LOR  \pk\cdot\dad U_{<k}\cdot\dau\psi\|_{1,2}&\lesssim\sum_{-N<k\leq-10}C_0^3 c_k^2\eps
\end{align*}
which is as required.

\item Proof of \eqref{4k=error'}. This is another straightforward application of Lemma \ref{SIL}. We again focus only on the top left component of the term, that is
$$
\phi_{\leq k}\dad\phi_{\leq k}^T \cdot U_k \cdot \dau w_1
$$
Expand
$$
U_k:=-
\phi_{<k}\phi_{k}^TU_{<k}
$$
and place $\plk$, $U_{<k}$ and the other $\phi_{\leq k}$ appearing in the term into $L^\infty_{t,x}$ to reduce to bounding
$$
\sum_{-N<k\leq-10}\|\LOR  (\dad\phi_{\leq k}\cdot\pk\cdot\dau w_1)\|_{1,2}
$$
Upon expanding $\dau w_1=U^{-1}\dau\psi+U^{-1}\dau U U^{-1} \psi$ as before, one sees that this can be treated via a direct application of Lemma \ref{SIL} as in part (1) of this proof.
\end{enumerate}
\end{proof}

\section{Putting it all together}\label{summing up}
We have succeeded in reducing our equation to 
\begin{equation*}
\Box w^L=error
\end{equation*}
for $w^L$ defined through $\Phi^L=\Uu w^L$. In order to exploit the linear estimate, we need to check that we still have the correct smallness on the initial data.
\begin{proposition}\label{data_prop}
Let $\phi[0]$ satisfy assumption \eqref{small1}. Then 
$$
\|\LOR\tilde{P}_0w^L[0]\|_{\dot{H}^{3/2}\times\dot{H}^{1/2}}\lesssim c_0
$$
\end{proposition}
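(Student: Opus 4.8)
The plan is to unwind the definition $\Phi^L=\Uu w^L$, so that $w^L=\Uu^{-1}\Phi^L$ and hence
$$
w^L[0]=\Uu^{-1}[0]\,\Phi^L[0]=\Uu^{-1}(0)\Phi^L(0),\quad \dd_t w^L(0)=\Uu^{-1}(0)\dd_t\Phi^L(0)-\Uu^{-1}(0)(\dd_t\Uu)(0)\Uu^{-1}(0)\Phi^L(0).
$$
Recalling further that $\Phi^L=\tilde\psi^L-(\D_2)=\psi^L+\tfrac12(\D_1)-(\D_2)$ and $\psi^L=\tilde P_0\phi^L$, the plan is to estimate each piece separately and combine. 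First I would bound the contribution of $\psi^L[0]$: since $\psi^L=\tilde P_0\phi^L$ this is just $\|\LOR \tilde P_0\phi^L[0]\|_{\dot H^{3/2}\times\dot H^{1/2}}$, which by definition of $\phi^L$ in \eqref{pl} is controlled by $\|\LOR P_{\sim0}\phi[0]\|_{\dot H^{3/2}\times\dot H^{1/2}}+\max_n\|\LOR P_{\sim0}(L_n\phi)[0]\|_{\dot H^{3/2}\times\dot H^{1/2}}$; the first term is $\lesssim c_0$ by \eqref{small1}, and the second is handled exactly as in the proof of Theorem \ref{linear_estimate} (its corollary) and Claim \ref{claim1}: writing $L_n\phi(0)=x_n\dd_t\phi(0)$ and $\dd_t(L_n\phi)(0)=x_n\dd_t^2\phi(0)+\dd_{x_n}\phi(0)=x_n(\Delta\phi(0)+\Box\phi(0))+\dd_{x_n}\phi(0)$, the first two contributions are bounded by $\|\LOR(x\cdot\na)P_{\sim0}\phi[0]\|_{\dot H^{3/2}\times\dot H^{1/2}}\lesssim c_0$ via \eqref{small1} and the $x_n\Box\phi(0)$ term by \eqref{x_nbox}, which gives $\lesssim c_0$ at unit frequency.

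Next I would handle $(\D_1)$ and $(\D_2)$. These are exactly the quantities whose initial-data bounds were already established: Proposition \ref{D1_bounded_prop} gives $\|\LOR(\D_1)[0]\|_{\dot H^{3/2}\times\dot H^{1/2}}\lesssim c_0$ and Proposition \ref{LLH_NT} gives $\|\LOR(\D_2)[0]\|_{\dot H^{3/2}\times\dot H^{1/2}}\lesssim c_0$. So $\|\LOR\Phi^L[0]\|_{\dot H^{3/2}\times\dot H^{1/2}}\lesssim c_0$. It then remains to transfer this bound through $\Uu^{-1}$. Since $\Phi^L$ and hence $w^L$ are localised to frequency $\sim 2^0$ (all the ingredients are), the homogeneous Sobolev norms at unit frequency are comparable to $L^2_x$ norms of $\LOR$ applied to the function and its time derivative, so it suffices to show $\|\LOR\tilde P_0w^L(0)\|_{L^2_x}+\|\LOR\tilde P_0\dd_tw^L(0)\|_{L^2_x}\lesssim c_0$. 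Using $w^L=\Uu^{-1}\Phi^L$, distribute $\LOR$ by the Leibniz rule; the factors hitting $\Uu^{-1}$ or $\dd_t\Uu^{-1}$ are placed in $L^\infty_x$ and controlled by \eqref{orthogonality}--\eqref{Uinfty} (together with the identity $\dd_t(\Uu^{-1})=-\Uu^{-1}(\dd_t\Uu)\Uu^{-1}$), while the factor hitting $\Phi^L$ or $\dd_t\Phi^L$ carries the $L^2_x$ norm and is bounded by the previous paragraph. One must also observe that $\Uu$ (being frequency-supported in $\{|\xi|\lesssim1\}$) effectively commutes with $\tilde P_0$ up to harmless tails, so inserting/removing the outer projection causes no difficulty.

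The main obstacle is really bookkeeping rather than a genuine estimate: one has to be careful that the $L^\infty_x$ bounds on $\Uu^{-1}(0)$ and $(\dd_t\Uu)(0)$ from Proposition \ref{big_prop} are available \emph{at} $t=0$ (they are, since \eqref{orthogonality}--\eqref{Uinfty} are stated in $L^\infty_tL^\infty_x$), and that the time derivative of $w^L$ genuinely only sees \emph{one} derivative on each factor --- so no second time derivative of $\Uu$ or $\Phi^L$ ever appears, and in particular we never need to iterate the equation here beyond what Claim \ref{claim1} already provides. Assembling the three bounded pieces $\psi^L[0]$, $(\D_1)[0]$, $(\D_2)[0]$ through the bounded transformation $\Uu^{-1}$ then yields $\|\LOR\tilde P_0w^L[0]\|_{\dot H^{3/2}\times\dot H^{1/2}}\lesssim c_0$, which is the claim.
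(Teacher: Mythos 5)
Your proposal is correct and follows essentially the same route as the paper's own proof: reduce through $\Uu^{-1}$ using the boundedness from Proposition~\ref{big_prop}, strip off $(\D_1)$ and $(\D_2)$ via Propositions~\ref{D1_bounded_prop} and~\ref{LLH_NT}, and bound the remaining $\psi^L[0]$ directly, invoking \eqref{x_nbox} for the $x_n\Box\phi(0)$ contribution to $\dd_t(L_n\phi)(0)$. The only difference is presentational — you spell out the Leibniz distribution of $\LOR$ over $\Uu^{-1}\Phi^L$ and the frequency-localisation bookkeeping, which the paper leaves implicit.
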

\begin{proof}
By \eqref{Uinfty} and \eqref{49} it suffices to show the corresponding bound on $\Phi^L$, which by Propositions \ref{D1_bounded_prop} and \ref{LLH_NT} further reduces to
$$
\|\LOR\psi^L[0]\|_{\dot{H}^{3/2}\times\dot{H}^{1/2}}\lesssim c_0
$$
In the absence of $L$ the bound is immediate. Then for $n\in\{1,2,3\}$ we have
\begin{align*}
\|\LOR P_0(L_n\phi)[0]\|_{\dot{H}^{3/2}\times\dot{H}^{1/2}}&\lesssim\|\LOR P_0(x_n\dd_t\phi(0))\|_{L^2}+\|\LOR P_0(x_n\dd_t^2\phi(0))\|_{L^2}+\|\LOR P_0(\dd_{x_n}\phi(0))\|_{L^2}\\
&\lesssim c_0+\|\LOR P_0(x_n\Box\phi(0))\|_{L^2}
\end{align*}
which is acceptable thanks to \eqref{x_nbox}.
\end{proof}

Remember that our actual goal is to bound $\psi^L$. By Propositions \ref{D1_bounded_prop} and \ref{LLH_NT} we see that
\begin{align}
\|\psi^L\|_{S_0}\lesssim \|\Phi^L\|_{S_0}+\|(\D_1)\|_{S_0}+\|(\D_2)\|_{S_0}\lesssim\|\Uu w^L\|_{S_0}+C_0^2\eps c_0\label{final_eqn}
\end{align}
where by \eqref{Uinfty} and \eqref{49},
$$
\|\Uu w^L\|_{S_0}\lesssim\|w^L\|_{S_0}+\max_{\mathcal{Q}}\|\LOR^{1-\dl(p,q)}w^L\|_{p,q}=:\|w^L\|_{\tilde{S}_0}
$$

Using the linear estimate, Theorem \ref{linear_estimate}, we are now almost done, modulo the fact that $w^L$ is not quite at unit frequency. To get around this, use that $w^L \simeq \Phi^L$ by writing
$$
w^L=\Phi^L-(\Uu-\mathbf{I}_4)w^L
$$
where $\Phi^L=\tilde{P}_0(\Phi^L)$. Then since $\tilde{S}_0$ is equivalent to $S_0$ at unit frequency, we can use Theorem \ref{linear_estimate} to bound
\begin{align*}
\|\tilde{P}_0w^L\|_{\tilde{S}_0}
\lesssim\|\LOR w^L[0]\|_{\dot{H}^{3/2}\x\dot{H}^{1/2}}+C_0^3c_0\eps
\lesssim c_0
\end{align*}
upon taking $\eps(C_0)$ sufficiently small.
On the other hand by Proposition \ref{big_prop} we have
\begin{align*}
\|(1-\tilde{P}_0)w^L\|_{\tilde{S}_0}\lesssim \|(\Uu-\mathbf{I}_4)w^L\|_{\tilde{S}_0}
\lesssim C_0\eps \|w^L\|_{\tilde{S}_0}
\end{align*}

We have thus found $\|w^L\|_{\tilde{S}_0}\lesssim c_0+C_0\eps \|w^L\|_{\tilde{S}_0}$ and taking $\eps(C_0)$ sufficiently small deduce that 
$$
\|w^L\|_{\tilde{S}_0}\lesssim c_0
$$
Plugging this into \eqref{final_eqn} completes the proof of Proposition \ref{reduced_main_prop}, and hence of the global existence of $\phi$.

\section{Proof of local wellposedness}\label{LWP_proof}
This section is devoted to the proof of Theorem \ref{LWP_thm}. The argument is a combination of the scheme from \cite{KS} with standard methods for studying subcritical wave maps (see for instance \cite{KlSe,klainerman1996estimates,selberg1999multilinear,geba_grillakis}), however we run into various technical issues which lengthen the presentation. In the \hyperref[ss1]{first subsection}, we prove the local wellposedness of the differentiated half-wave maps equation \eqref{eqn1.2}, and in the \hyperref[ss2]{second subsection} we prove that this solution indeed solves the original half-wave maps equation for compatible initial data.

Throughout this section, $p\in\Sp^2$ is fixed.

\subsection{Local Wellposedness of the Differentiated Equation \eqref{eqn1.2}.}\label{ss1}
We start by outlining the argument. We will work in the subcritical function space $\xst_1$ defined by the norm\footnote{
As before we say $\phi$
in $\xst_1$ when $\phi\in p+\xst_1$ and write $\|\phi\|_{\xst}$ to meant $\|\phi-p\|_{\xst_1}$. We have a similar statement for $B^s_{2,1}$.}
$$
\|\phi\|_{\xst_1}:=\sum_{k\geq0}\|\pk\|_{\xst}:=\sum_{k\geq0}\|\langle |\tau|+|\xi|\rangle^s\langle||\tau|-|\xi||\rangle^\tht\tilde{\F}(\pk)(\tau,\xi)\|_{L^2_{\tau,\xi}}
$$
for $3/2+\nu>s>3/2$, $\tht>1/2$ and $s-3/2>\tht-1/2$. Here $\tilde{\F}$ denotes the spacetime Fourier transform, and henceforth we denote $\phi_0:=\F^{-1}(\chi(\xi)\hat{\phi}(\xi))$ the low frequency portion of $\phi$, for $\chi$ as in Section \ref{notation_section}. Note that $\xst_1$ controls the Besov norm (see, for example, Proposition 2.7, \cite{geba_grillakis}):
$$
\|\phi\|_{L^\infty_tB^s_{2,1}}+\|\dd_t\phi\|_{L^\infty_t B^{s-1}_{2,1}}\lesssim\|\phi\|_{\xst_1}
$$

The iteration argument, inspired by the scheme of \cite{KS}, is then as follows.
\begin{enumerate}
\item Set $\phi^{(0)}=p$, the limit of the initial data at infinity.
\item Construct $\phi^{(1)}$ as the local solution to the wave maps equation by iteration in the space $\xst_1$. This solution lies on the sphere, satisfies $\|\phi^{(1)}\|_{\xst_1}\leq 2C\|\phi[0]\|_{B^s_{2,1}\x B^{s-1}_{2,1}}$ (which may be large), and has minimal time of existence depending only on $\|\phi[0]\|_{B^s_{2,1}\x B^{s-1}_{2,1}}$. By standard persistence of regularity arguments, it can be seen that $\phi^{(1)}$ is smooth.
\item Suppose that for all $1\leq j<J$ we have found smooth $\phi^{(j)}\in \xst_1$ solving
\begin{equation*}
\begin{cases}
\Box\puj=-\puj\dau\puj\dad\puj+\Pi_{\puj_{\perp}}(HWM(\pujmo))\\
\puj[0]=\phi[0]
\end{cases}
\end{equation*}
on some interval $[0,T(\|\phi[0]\|_{B^s_{2,1}\x B^{s-1}_{2,1}})]$ which lies on the sphere and satisfies $\|\phi^{(j)}\|_{\xst_1}\leq 2C\|\phi[0]\|_{B^s_{2,1}\x B^{s-1}_{2,1}}$ ($j=1$ is done).
\item Construct $\phi^{(J)}$ as the local solution to 
\begin{equation}\label{it_eqn}
\begin{cases}
\Box\puJ=-\puJ\dau\puJ\dad\puJ+\tilde{\Pi}_{\puJ_{\perp}}(HWM(\puJmo))\\
\puJ[0]=\phi[0]
\end{cases}
\end{equation}
on the same time interval with $\|\puJ\|_{\xst_1}\leq 2C\|\phi[0]\|_{B^s_{2,1}\x B^{s-1}_{2,1}}$. Here
\begin{equation}\label{modified_projection}
\tilde{\Pi}_{{\tp}_{\perp}}(\phi):=\phi-(\phi\cdot g(\tp))g(\tp)
\end{equation}
for $g$ a smooth, compactly supported version of $\tp/\|\tp\|$ equal to that function for $\|\tp\|\simeq1$ but vanishing in a neighbourhood of the origin.
We make this modification since the subcritical argument assumes no smallness on $\puJ$ and there is nothing to stop it crossing the origin, at which point the projection operator is non-smooth.

Note that we must evaluate the half-wave maps terms in $\puJmo$ rather than $\puJ$ in order to control this part of the forcing, since we do not yet know that $\puJ$ lies on the sphere.
\item To close the iteration we must show that $\puJ$ lies on the sphere. We can only show this for the true projection $\Pi_{\puJ_{\perp}}$ (by the same argument as in \cite{KS}), as opposed to the modified version above. We therefore need $\|\puJ-p\|_\infty\ll1$ so that the two projections coincide, and this follows from the assumed smallness of the data in the critical space. In particular, after constructing each iterate $\puJ$ we use the main argument of this paper (Chapters \ref{reduction_chap}-\ref{cancellation_chapter}) to show that the iterate remains small in the critical space, and so in $p+L^\infty_{t,x}$ as required.\footnote{Two very minor adaptations are required in Chapters \ref{reduction_chap}-\ref{cancellation_chapter} to handle the current situation. Firstly, \eqref{it_eqn} involves both $\puJ$ and $\puJmo$, however this causes no issues since the wave maps and half-wave maps source terms are treated wholly independently in the main argument, and we may assume iteratively that \eqref{result} already holds for $\puJmo$. The second adaptation is that the terms $HWM_2$ are now accompanied by a projection which must be taken into account in the steps where one iterates the equation (e.g. in Section \ref{normal_forms_chapter}), however this is easily seen to be unproblematic using Moser estimates.}
\item Having constructed the sequence $\puJ$, we take the limit $J\to\infty$ in $\xst_1$ to obtain a solution $\phi\in\xst_1$ solving the half-wave maps equation on the interval $[0,T(\|\phi[0]\|_{B^s_{2,1}\x B^{s-1}_{2,1}})]$. The higher regularity of the solution follows by standard arguments.
\end{enumerate}

\bigskip

To carry out the above argument, we must establish the necessary estimates for the iteration steps (2), (4) and (6). The following linear estimate shows that we must control the forcing in the space $\xstmo_1$.
\begin{lemma}[Linear Estimate, see e.g. Theorems 2.9 and 2.10, \cite{geba_grillakis}]\label{lin_lem}
Fix $\phi[0]=(\phi_0,\phi_1)\in B^s_{2,1}\x B^{s-1}_{2,1}$ and define the solution operator 
\begin{equation*}
\Phi(F):=p+\eta(t)\left(S_{-p}(\phi[0])-\int_0^t\f{\sin((t-s)\rd)}{\rd}F(s)ds\right)
\end{equation*}
for 
$$
S_{-p}(\phi[0])(t):=\cos(t\sqrt{-\D})(\phi_0-p)+\f{\sin(t\rd)}{\rd}\phi_1
$$
and $\eta:\R\to[0,1]$ a smooth, compactly supported cut-off function equal to 1 on $[-1,1]$. Hence $\Phi(F)$ solves the wave equation with data $\phi[0]$ and forcing $F$ on the interval $[-T,T]$.
It holds
$$
\|\Phi(F)\|_{\xst_1}\lesssim\|\phi[0]\|_{B^s_{2,1}\x B^{s-1}_{2,1}}+\|F\|_{\xstmo_1}
$$
\end{lemma}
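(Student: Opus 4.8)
The statement to prove is the linear estimate of Lemma \ref{lin_lem}: bounding $\|\Phi(F)\|_{\xst_1}$ by $\|\phi[0]\|_{B^s_{2,1}\times B^{s-1}_{2,1}}$ plus $\|F\|_{\xstmo_1}$. This is a completely standard fact about $X^{s,\theta}$ spaces for the wave equation, and the proof is entirely classical — I would simply cite the references already invoked (Theorems 2.9 and 2.10 of \cite{geba_grillakis}), but since the statement asks for a plan, here is the route.

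\begin{proof}[Proof sketch of Lemma \ref{lin_lem}]
The plan is to split $\Phi(F)$ into the homogeneous (free evolution) part $p+\eta(t)S_{-p}(\phi[0])$ and the Duhamel part $-\eta(t)\int_0^t\frac{\sin((t-s)\rd)}{\rd}F(s)\,ds$, and estimate each in $\xst_1$ separately, working frequency-by-frequency in the Littlewood--Paley decomposition so that the Besov summation over $k\geq 0$ comes for free from the definition of $\xst_1$ once we have the corresponding $\xst$ bound on each $\pk$.

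For the homogeneous part, the key point is that the spacetime Fourier transform of $\cos(t\rd)f$ (and similarly $\frac{\sin(t\rd)}{\rd}g$) is supported on the light cone $|\tau|=|\xi|$, so multiplication by the fixed smooth cutoff $\eta(t)$ smears this out only by an $O(1)$ amount in $\tau$; concretely $\widetilde{\F}(\eta(t)\cos(t\rd)f)(\tau,\xi) = \frac{1}{2}(\hat\eta(\tau-|\xi|)+\hat\eta(\tau+|\xi|))\hat f(\xi)$, and since $\hat\eta$ is Schwartz the weight $\langle||\tau|-|\xi||\rangle^\theta$ is absorbed, leaving $\|\eta(t)\cos(t\rd)P_kf\|_{\xst}\lesssim 2^{sk}\|P_kf\|_{L^2_x}$; summing over $k\geq0$ and doing the analogous computation for the $\phi_1$ term gives $\|\eta(t)S_{-p}(\phi[0])\|_{\xst_1}\lesssim\|\phi[0]\|_{B^s_{2,1}\times B^{s-1}_{2,1}}$ (the constant $p$ contributes nothing to the homogeneous norm as it is annihilated by the spacetime Fourier support away from the origin — or one simply works with $\phi-p$ throughout, per the footnote convention).

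For the Duhamel term, the standard argument is to write $\eta(t)\int_0^t\frac{\sin((t-s)\rd)}{\rd}F(s)\,ds$ and reduce, via a decomposition of $F$ into pieces that are spacetime-frequency localized near and far from the cone, to the two model estimates: (i) for $F$ with $||\tau|-|\xi||\gtrsim 1$ one can literally invert the symbol $\tau^2-|\xi|^2$ of $\Box$, picking up a gain of $\langle||\tau|-|\xi||\rangle^{-1}$, which translates the $\theta-1$ weight on $F$ into the $\theta$ weight on $\Phi(F)$ (this is where $\theta>1/2$, so that $\theta-1<0$ but the dual exponent still lies in the admissible range, and the condition $s-3/2>\theta-1/2$, is used), and (ii) for the remaining ``resonant'' part near the cone one uses the Hausdorff--Young / energy-type bound $\|\eta(t)\int_0^t\frac{\sin((t-s)\rd)}{\rd}G(s)ds\|_{L^\infty_tL^2_x}\lesssim\|G\|_{L^1_tL^2_x}$ together with the embedding $\xstmo\hookrightarrow L^1_tL^2_x$ near the cone (valid precisely because $\theta-1<-1/2$... wait, $\theta>1/2$ gives $\theta-1>-1/2$, so this step instead uses $\theta-1/2 < s - 3/2$ to trade a small power of frequency for the $L^1_t$ integrability, exactly as in \cite{geba_grillakis}). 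Assembling (i) and (ii) and summing the Littlewood--Paley pieces over $k\geq 0$ yields $\|\Phi(F)-p-\eta(t)S_{-p}(\phi[0])\|_{\xst_1}\lesssim\|F\|_{\xstmo_1}$.

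The only mildly delicate point — the ``main obstacle,'' such as it is — is the bookkeeping in the Duhamel estimate when the time cutoff $\eta(t)$ does not commute cleanly with the formula $\int_0^t$, i.e. showing that replacing the sharp-in-time Duhamel operator by its $\eta$-truncated version does not destroy the $X^{\theta}$ weight; this is handled, as usual, by writing the truncated Duhamel operator in the frequency domain as a sum of a ``main term'' (symbol $\approx (\tau^2-|\xi|^2)^{-1}$ acting on $\widetilde\F(\eta(t)F)$, after a harmless modification of $F$ off the cone) plus error terms coming from $\partial_t\eta$, all of which are controlled by the Schwartz decay of $\hat\eta$ and absorbed into the homogeneous-type estimate already proved. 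Since every ingredient is verbatim from \cite{geba_grillakis} (Theorems 2.9, 2.10) under the stated assumptions $3/2<s<3/2+\nu$, $\theta>1/2$, $s-3/2>\theta-1/2$, we refer the reader there for the routine details.
\end{proof}
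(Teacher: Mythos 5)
The paper offers no proof of this lemma at all: it is stated with a direct citation to Theorems 2.9 and 2.10 of \cite{geba_grillakis}, and that is the ``proof.'' Your sketch correctly identifies the result as entirely standard and reproduces the standard argument from the cited reference — split into homogeneous and Duhamel parts, use the Schwartz decay of $\hat\eta$ to absorb the modulation weight on the free evolution, invert $\tau^2-|\xi|^2$ away from the cone, and handle the near-cone piece via an $L^1_tL^2_x$-type bound — so in substance you agree with the paper. The only blemish is the visible mid-sentence self-correction about the sign of $\theta-1$ relative to $-1/2$; in a final write-up you should state the correct condition cleanly rather than narrate the confusion, but since the endpoint you land on (trading the deficit $\theta-1>-1/2$ against the room afforded by $s-3/2>\theta-1/2$) is right, the argument itself is sound.
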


\subsubsection{Subcritical Multilinear Estimates}
The following proposition contains the multilinear estimates needed for the iteration argument. We note that the result for the wave maps source terms is considered standard, however we were unable to find a proof in the literature including the necessary gain in $T^\eps$, so we provide a proof for completeness.

Using this proposition in conjunction with the linear estimate Lemma \ref{lin_lem} in the scheme outlined at the beginning of this section, the proof of local wellposedness is complete. Henceforth denote $s=3/2+s'$, $\tht=1/2+\tht'$ for $0<\tht'<s'<\nu$.
\begin{proposition}\label{all_ML_estimates}
Fix $0<T<1$ and set $\eta_T(t):=\eta(T^{-1}t)$ for $\eta$ as in Lemma \ref{lin_lem}. For $\phi,\,\tp\in\xst_1$, define (suppressing the dependence on $\tp$),
\begin{gather}
    WM(\phi)=-\phi \dd^\al\phi\dad\phi\label{tt1}\\
    HWM_1(\phi)=\widetilde{\Pi}_{\tilde{\phi}^{\perp}}(\du)\,(\phi\cdot\du)\label{tt2}\\
    HWM_2(\phi)=\wip[\phi\x(\dhalf(\phi\x{\dhalf\phi})-\phi\x(-\D)\phi)]\label{tt3}
\end{gather}
with $\widetilde{\Pi}_{\tilde{\phi}_{\perp}}(\phi)$ as in \eqref{modified_projection}. Then there exists $\eps(s',\tht')>0$ and a function $C(\|\tp\|_{\xst_1})$ growing polynomially in $\|\tp\|_{\xst_1}$ such that for $\Tt$ any of the trilinear terms \eqref{tt1}-\eqref{tt3} it holds
\begin{equation}\label{ML_estimates}
    \|\eta_T\cdot\Tt(\phi)\|_{\xstmo_1}\lesssim C(\|\tp\|_{\xst_1})\, T^{\eps}(1+\|\phi\|_{\xst_1})\|\phi\|_{\xst_1}^2
\end{equation}
We also have the difference estimates
\begin{align*}
    &\|\eta_T(\Tt(\phi^{(1)})-\Tt(\phi^{(2)}))\|_{\xstmo_1}\\
    &\hspace{0.5em}\lesssim C(\|\tp^{(1)}\|_{\xst_1},\|\tp^{(2)}\|_{\xst_1}) T^{\eps}(\|\phi^{(1)}-\phi^{(2)}\|_{\xst_1}(1+\max_j\|\phi^{(j)}\|_{\xst_1})\max_j\|\phi^{(j)}\|_{\xst_1}\\
    &\hspace{14em}+\|\tp^{(1)}-\tp^{(2)}\|_{\xst_1}(1+\max_j\|\phi^{(j)}\|_{\xst_1})\max_j\|\phi^{(j)}\|_{\xst_1}^2)
\end{align*}
for all $i$ and a similar function $C$.
\end{proposition}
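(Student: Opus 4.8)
The estimate \eqref{ML_estimates} for $\Tt$ applied to a single function will be proved by a standard dyadic Littlewood--Paley decomposition of the three factors, and the main work is to handle the three frequency configurations $\llh$, $\lhh$ and $\hhh$ (the $\hhl$ and $\hll$ cases being symmetric to the first two, and $\lll$ vanishing since all terms are localised to positive frequencies). For the wave maps term \eqref{tt1} the input is the bilinear $X^{s,\tht}$ estimates for the d'Alembertian (as in \cite{klainerman1996estimates,geba_grillakis}): in the subcritical range $s>3/2$, $\tht>1/2$, $s-3/2>\tht-1/2$ one has $\|\pk^{(1)}\dd^\al\pj^{(2)}\dd_\al\phi^{(3)}_l\|_{\xstmo}$ bounded by a product of $X^{s,\tht}$ norms with a favourable dyadic weight in $\max(k,j,l)$, and this weight is summable precisely because we are above scaling. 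To extract the gain $T^\eps$ one inserts the cutoff $\eta_T$ and uses the elementary fact (see e.g.\ \cite{geba_grillakis}) that multiplication by $\eta_T$ maps $X^{s',\tht'}\to X^{s',\tht''}$ with norm $O(T^{\tht'-\tht''})$ for $\tht''<\tht'$; since we have slack $s-3/2>\tht-1/2$ we may afford to lower the $\tht$-index slightly on one factor, producing the $T^\eps$ at the cost of a harmless constant. The Moser-type tail in $(1+\|\phi\|_{\xst_1})$ and the polynomial dependence $C(\|\tp\|_{\xst_1})$ come from expanding $g(\tp)$ as a power series (it is smooth and compactly supported away from the origin, hence satisfies Moser estimates in $X^{s,\tht}$, which is an algebra for $s>3/2$, $\tht>1/2$).

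For the half-wave maps contributions \eqref{tt2} and \eqref{tt3} the strategy is to mimic the critical argument of Sections \ref{HWM1_chapter}--\ref{HWM2_chapter}, but now with the much easier subcritical bookkeeping: one no longer needs the angular multipliers, the commuting vector fields, or Lemma \ref{angular_sep_lem}, only plain Strichartz estimates in $\xst_1$, since we are buying room with the subcritical exponent. Concretely, for $HWM_1$ I would write $\wip(\du)(\phi\cdot\du)$, expand the projection via its power series in $g(\tp)$, and in the dangerous $\lhh$ and $\lowestlh$ configurations apply the geometric identity \eqref{geometric_identity_4} exactly as in Lemma \ref{moving_loc_lemma} and Proposition \ref{HWM1_prop} to flip the low-frequency factor to high frequency, after which the remaining $\hhh$-type term is controlled by Strichartz. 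For $HWM_2$ one decomposes $\phi\x(\dhalf(\phi_k\x\dhalf\phi)-\phi_k\x(-\D)\phi)=\Lc_{k+j}(\phi_k,\phi_j)$ when the two inner frequencies are separated, uses Lemma \ref{useful_lemma} (Lemma 3.1 of \cite{KS}) to treat $\Lc$ like $\dd\phi_k\cdot\phi_j$, and handles the balanced-frequency case directly; again \eqref{geometric_identity_4} and the vector identity \eqref{vector_id} dispense with the genuinely low-frequency interactions. In every subcase the dyadic sums converge because of the positive power of $2^{-s'|k-j|}$ gained from being subcritical, and the $T^\eps$ is obtained by the same $\eta_T$-truncation argument as above.

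The difference estimate is proved by the same decompositions applied to the multilinear identity
\[
\Tt(\phi^{(1)})-\Tt(\phi^{(2)})=\Tt(\phi^{(1)}-\phi^{(2)},\phi^{(1)},\phi^{(1)})+\Tt(\phi^{(2)},\phi^{(1)}-\phi^{(2)},\phi^{(1)})+\Tt(\phi^{(2)},\phi^{(2)},\phi^{(1)}-\phi^{(2)}),
\]
where $\Tt(\cdot,\cdot,\cdot)$ denotes the multilinear form polarising $\Tt$ (for the terms $WM$ and $HWM_2$ this is literal; for $HWM_1$ and the projection factors in $HWM_2$ one also differentiates the smooth function $g$, producing the terms with $\|\tp^{(1)}-\tp^{(2)}\|_{\xst_1}$ via the mean value theorem $g(\tp^{(1)})-g(\tp^{(2)})=(\tp^{(1)}-\tp^{(2)})\cdot\int_0^1 g'(\tp^{(2)}+\sigma(\tp^{(1)}-\tp^{(2)}))d\sigma$, which is again handled by Moser estimates in the algebra $\xst_1$). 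Since all the estimates above are multilinear in the three slots, applying the single-function bound to each term of the polarisation immediately yields the stated difference estimate with the same $\eps$ and a constant $C(\|\tp^{(1)}\|_{\xst_1},\|\tp^{(2)}\|_{\xst_1})$ growing polynomially.

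\textbf{Main obstacle.} The only genuinely delicate point is extracting the $T^\eps$ gain while keeping all dyadic sums absolutely convergent: the $\eta_T$-truncation costs $\tht$-regularity, so one must verify that the slack $s-3/2>\tht-1/2$ is enough to absorb this loss \emph{simultaneously} with the summation over the three dyadic frequencies in the worst $\hhh$ and $\lowestlh$ configurations. I expect this to require choosing $\eps$ strictly smaller than $\min(s',\tht',s'-\tht')$ and distributing the lost regularity onto the factor carrying the smallest frequency; once the exponent arithmetic is arranged this way the rest is routine.
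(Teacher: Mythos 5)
Your overall strategy — dyadic decomposition, bilinear $X^{s,\tht}$ estimates plus the null form for $WM$, Moser/projection estimates and the geometric identity \eqref{geometric_identity_4} for the HWM terms, polarisation for the differences, and dropping the angular machinery at subcritical regularity — matches the paper's. But the one point you yourself flag as the delicate one, the extraction of $T^{\eps}$, is where the proposal has a genuine gap.

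You propose to get $T^{\eps}$ by the standard $X^{s,b}$ time-localisation lemma, ``$\eta_T : X^{s',\tht'} \to X^{s',\tht''}$ with norm $O(T^{\tht'-\tht''})$.'' That lemma requires $-\tfrac12 < \tht'' < \tht' < \tfrac12$. Here the input factors sit in $X^{s,\tht}$ with $\tht>\tfrac12$, and the target space is $X^{s-1,\tht-1}$ with $\tht-1<-\tfrac12$; both indices are outside the range in which the lemma yields a power of $T$. In that regime the cutoff map is only uniformly bounded (this is precisely the third statement of Lemma \ref{stan_lem} in the paper, which deliberately asserts no $T$ gain), so lowering the modulation index of one factor does not produce the $T^{\eps}$ you need. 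The paper obtains $T^{\eps}$ by an entirely different route: it splits the trilinear sum over dyadic triples into the set $\Ss_\ast$ (both high frequencies above $T^{-1}$ and the low input genuinely below them) and its complement. Off $\Ss_\ast$ it wins $T^{\eps}$ either via H\"older in time, placing $\eta_T$ into $L^M_t$ with $\|\eta_T\|_{L^M_t}\lesssim T^{1/M}$, or via Lemma \ref{T_lem}, $\|P^{(t)}_l\eta_T\|_{L^p_t}\lesssim T^{1/p}(2^lT)^{-N}$, when the time-frequency of the output is large, or via the elementary observation $2^{\kt}T\le1\Rightarrow 2^{\eps\kt}\lesssim T^{-\eps}$. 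On $\Ss_\ast$, after invoking the null form (for $WM$) or the geometric identity (for $HWM_1$, $HWM_2$) one gets a factor $2^{-s'\min\{\kt,\kth\}}$ from the bilinear estimate Lemma \ref{big_lem}, and since $2^{\kt}T, 2^{\kth}T>1$ this converts into $T^{s'/2}$ times a summable weight. You would need to replace your $\eta_T$-truncation step with a decomposition of this kind; the slack $s'>\tht'$ is spent on \emph{frequency} weights and on choosing $M$ (equivalently the Strichartz exponents) correctly, not on lowering a modulation index.

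One smaller caveat: your polarisation identity for the difference estimate literally applies to $WM$ and $HWM_2$, but for $HWM_1$ (and for the outer $\wip$ in $HWM_2$) the dependence on $\tp$ is nonlinear through $g(\tp)$, so you also need the fundamental-theorem-of-calculus expansion $g(\tp^{(1)})-g(\tp^{(2)})=(\tp^{(1)}-\tp^{(2)})\int_0^1 g'(\cdots)\,d\sigma$ combined with the Moser estimate \eqref{xst_moser}; you do mention this, so this is noted for completeness rather than as a gap.
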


We restrict to proving the multilinear estimates \eqref{ML_estimates}, the difference estimates being similar. We will constantly use the following well-known transferred Strichartz estimate, see for example Proposition 26 \cite{burzio} for a proof.
\begin{lemma}[Strichartz embedding]\label{str_lem}
Let $p,q\geq2$, $\f{1}{p}+\f{1}{q}\leq\f{1}{2}$, $(p,q)\neq(2,\infty)$. Then for any $\tht>1/2$, $s=3/2+s'>3/2$ it holds
\begin{equation}
\|P_k\phi\|_{p,q}\lesssim_{p,q} 2^{-(\f{1}{p}+\f{3}{q}+s')k}\|P_k\phi\|_{\xst}\label{str_lem}
\end{equation}
for all $k>0$.
\end{lemma}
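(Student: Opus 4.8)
The statement is the standard transference of Strichartz estimates to the $X^{s,\theta}$ spaces, specialized to $n=3$, with the $2^{-s'k}$ loss coming from the extra $s'$ in the regularity index. The plan is to reduce the $X^{s,\theta}$ bound to the free-wave Strichartz inequality via the standard "transference principle", then track the frequency weights to extract the dyadic factor. First I would recall the classical Strichartz estimate for the homogeneous wave equation in three dimensions: for $(p,q)$ wave-admissible with $\tfrac1p+\tfrac1q\le\tfrac12$, $(p,q)\ne(2,\infty)$, one has $\|e^{\pm it\sqrt{-\Delta}}P_k f\|_{L^p_tL^q_x}\lesssim 2^{(\f32-\f1p-\f3q)k}\|P_kf\|_{L^2_x}$ (this is exactly the $|\Omega|^0$ case of Theorem \ref{angular_strichartz}, or classical). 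Note the exponent: $\tfrac32-\tfrac1p-\tfrac3q = \tfrac12-(\tfrac1p+\tfrac3q-1)$, and since for $k>0$ the inhomogeneous and homogeneous Littlewood--Paley pieces agree, there is no low-frequency issue.

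The transference step: write $\psi$ via its spacetime Fourier representation. Decompose $\widetilde{\F}(P_k\psi)(\tau,\xi)$ according to whether $\tau$ is near $|\xi|$ or near $-|\xi|$ (the two light cones) — say $\widetilde{\F}(P_k\psi) = g_+(\tau,\xi)+g_-(\tau,\xi)$ supported in $\pm\tau>0$ respectively. For the "$+$" piece, change variables $\tau = |\xi|+\lambda$ and write
\[
P_k\psi(t,x) = \int_{\R} e^{it\lambda}\left(\int_{\R^3} e^{i(x\cdot\xi + t|\xi|)} \chi_k(\xi)\, g_+(|\xi|+\lambda,\xi)\,d\xi\right)d\lambda
= \int_\R e^{it\lambda}\, e^{it\sqrt{-\Delta}} P_k f_\lambda(x)\, d\lambda,
\]
where $\widehat{f_\lambda}(\xi) := \chi_k(\xi)\,g_+(|\xi|+\lambda,\xi)$. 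Then by Minkowski's inequality in $\lambda$ followed by the free Strichartz estimate applied to each $f_\lambda$,
\[
\|P_k\psi\|_{L^p_tL^q_x} \le \int_\R \|e^{it\sqrt{-\Delta}}P_kf_\lambda\|_{L^p_tL^q_x}\,d\lambda
\lesssim 2^{(\f32-\f1p-\f3q)k}\int_\R \|P_kf_\lambda\|_{L^2_x}\,d\lambda.
\]
Now $\|P_kf_\lambda\|_{L^2_x}^2 = \int \chi_k(\xi)^2 |g_+(|\xi|+\lambda,\xi)|^2 d\xi$, and by Cauchy--Schwarz in $\lambda$ against the weight $\langle\lambda\rangle^{-2\theta'}$ (which is integrable since $\theta'>0$, i.e. $\theta>1/2$),
\[
\int_\R \|P_kf_\lambda\|_{L^2_x}\,d\lambda \lesssim_\theta \left(\int_\R \langle\lambda\rangle^{2\theta'}\|P_kf_\lambda\|_{L^2_x}^2\,d\lambda\right)^{1/2}
\lesssim 2^{-s'k}\left\|\langle|\tau|+|\xi|\rangle^s\langle||\tau|-|\xi||\rangle^\theta \widetilde{\F}(P_k\psi)\right\|_{L^2_{\tau,\xi}},
\]
where in the last step I used that on the support of $\chi_k$ one has $\langle|\tau|+|\xi|\rangle \sim 2^k$ (for $k>0$, since $\tau = |\xi|+\lambda$ with $|\xi|\sim 2^k$ dominates when $|\lambda|\lesssim 2^k$, and for $|\lambda|\gg 2^k$ the weight $\langle\lambda\rangle^\theta$ is even larger so the bound only improves), hence $\langle|\tau|+|\xi|\rangle^s = \langle|\tau|+|\xi|\rangle^{3/2}\langle|\tau|+|\xi|\rangle^{s'}\gtrsim 2^{s'k}\langle|\tau|+|\xi|\rangle^{3/2}\gtrsim 2^{s'k}$, and $\langle||\tau|-|\xi||\rangle = \langle|\lambda|\rangle$ on the $+$ cone. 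Combining the two displays gives the claimed factor $2^{-(\f1p+\f3q+s')k} = 2^{(\f32-\f1p-\f3q)k}\cdot 2^{-s'k}\cdot 2^{-\f32 k}$ — wait, I should recompute: $2^{(\f32-\f1p-\f3q)k}\cdot 2^{-s'k} = 2^{-(\f1p+\f3q-\f32+s')k}$, and since we also gain a factor $2^{-\f32 k}$ only if we insist $\langle|\tau|+|\xi|\rangle^{3/2}$ be pulled out too; in fact to land exactly on $2^{-(\f1p+\f3q+s')k}$ I should instead bound $\langle|\tau|+|\xi|\rangle^s \gtrsim 2^{(3/2+s')k} = 2^{(s'+3/2)k}$, which cancels both the $2^{+3/2 k}$ implicit and leaves $2^{-(\f1p+\f3q+s')k}$ after combining with the $2^{(\f32-\f1p-\f3q)k}$ from Strichartz — this bookkeeping is the one routine point to get right. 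The "$-$" piece is handled identically with $e^{-it\sqrt{-\Delta}}$.

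The main (and only genuine) obstacle is simply the careful weight-counting: making sure that the power of $2^k$ extracted from $\langle|\tau|+|\xi|\rangle^s$ on the support of $\chi_k$ combines correctly with the $2^{(\f32-\f1p-\f3q)k}$ from the free Strichartz estimate to produce exactly $2^{-(\f1p+\f3q+s')k}$, and that the $\langle||\tau|-|\xi||\rangle^\theta$ factor with $\theta>1/2$ is exactly what licenses the $L^1_\lambda \hookrightarrow L^2_\lambda$-with-weight step. There is no subtlety in the transference itself — it is the textbook argument — and the restriction $(p,q)\ne(2,\infty)$ is inherited verbatim from the underlying free-wave estimate. I would present the $+$ cone computation in full and remark that the $-$ cone is symmetric.
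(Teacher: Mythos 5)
Your argument is the standard transference proof, and it is essentially the proof the paper relies on: the paper does not prove Lemma \ref{str_lem} itself but cites Proposition 26 of \cite{burzio}, which runs exactly the foliation-over-modulation argument you describe (split the spacetime Fourier transform into the two characteristic half-spaces, substitute $\tau=\pm|\xi|+\lambda$, apply Minkowski in $\lambda$ and the free Strichartz estimate to each $e^{\pm it\sqrt{-\Delta}}f_\lambda$, then Cauchy--Schwarz in $\lambda$ using $\theta>1/2$, and finally use $|\xi|\sim2^k$, $k>0$, to pull $\langle|\tau|+|\xi|\rangle^s\gtrsim2^{(3/2+s')k}$ out of the norm). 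So the route is correct and coincides with the cited proof.

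Two small corrections to your write-up. First, in the Cauchy--Schwarz step the weight must be $\langle\lambda\rangle^{-2\theta}$, not $\langle\lambda\rangle^{-2\theta'}$: with $\theta'=\theta-\tfrac12$ small (as in the paper's choice of parameters), $\int_{\R}\langle\lambda\rangle^{-2\theta'}\,d\lambda$ diverges, whereas $2\theta>1$ is precisely the hypothesis making $\int_{\R}\langle\lambda\rangle^{-2\theta}\,d\lambda$ finite; correspondingly the right-hand side should carry $\langle\lambda\rangle^{2\theta}=\langle||\tau|-|\xi||\rangle^{2\theta}$, which is what matches the $X^{s,\theta}$ weight. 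Second, the exponent bookkeeping you hesitated over is clean if stated once: the free estimate gives the gain $2^{(\frac32-\frac1p-\frac3q)k}$, and on the support of $\chi_k$ with $k>0$ one has $\langle|\tau|+|\xi|\rangle^{s}\gtrsim2^{sk}=2^{(\frac32+s')k}$, so dividing by this weight and multiplying the two factors yields $2^{(\frac32-\frac1p-\frac3q)k}\cdot2^{-(\frac32+s')k}=2^{-(\frac1p+\frac3q+s')k}$, exactly as claimed; there is no separate factor of $2^{-\frac32k}$ to track, and no loss from frequencies with $|\tau|\gg2^k$ since the bound $\langle|\tau|+|\xi|\rangle\gtrsim2^k$ only improves there.
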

It follows that the norm $S^s$ defined as the $\ell^1_{k\geq0}$ sum over
\begin{align*}
\|\phi_k\|_{S^s_k}:=\max_{(p,q)}2^{(\f{1}{p}+\f{3}{q}-1+s')k}\|\na_{t,x}\phi_k\|_{p,q}, \quad\quad \|\phi_0\|_{S^s_0}:=\max_{\substack{(p,q),\\\f{1}{p}+\f{3}{q}<1-s'}}\|\na_{t,x}\phi_0\|_{p,q}+\|\phi_0\|_{\infty,\infty}
\end{align*}
is controlled by the $\xst_1$ norm whenever the maxima are taken over a finite number of standard Strichartz pairs (taking slight care including the $\na_t$ at high modulations):
\begin{align*}
\|\phi_k\|_{S^s_k}\lesssim\|\phi_k\|_{\xst}, \quad\quad \|\phi_0\|_{S^s_0}\leq 1+\|\phi_0\|_{\xst}
\end{align*}
The restriction on $(p,q)$ in the low frequency case results from the fact that \eqref{str_lem} holds only at frequencies localised away from the origin. 

We start by recording the following key bilinear estimates. Such estimates first appeared in \cite{KlSe}, however the reader may consult Theorems 2.11 and 2.12 of \cite{geba_grillakis} for a textbook proof of (1)-(2). For the third estimate see Lemma 2.11, \cite{tao_book}.
\begin{lemma}[Bilinear estimates]\label{stan_lem}
Let $s'>\tht'>0$. Then the following hold \cite{KlSe}:
\begin{enumerate}
\item $\|\vp\cdot\phi\|_{X^{s,\tht}}\lesssim\|\vp\|_{\xst}\|\phi\|_{\xst}$
\item $\|\vp\cdot F\|_{X^{s-1,\tht-1}}\lesssim\|\vp\|_{\xst}\|F\|_{\xstmo}$
\end{enumerate}
Moreover, for $\tilde{s}\in\R$, $-\half<\tilde{\tht}<\half$, it holds
$$
\|\eta_T\vp\|_{X^{\tilde{s},\tilde{\tht}}}\lesssim_\eta\|\vp\|_{X^{\tilde{s},\tilde{\tht}}}
$$
uniformly in $T\in(0,1)$.
\end{lemma}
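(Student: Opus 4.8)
These are by now classical bilinear estimates for wave-type Bourgain spaces, the first two going back to \cite{KlSe} and the time-localisation being a standard fact (see e.g. Lemma 2.11 of \cite{tao_book}); I indicate the scheme.

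\textbf{Estimates (1) and (2).} The plan is to reduce both to a trilinear frequency-space integral and run a dyadic case analysis. By spacetime Plancherel and duality, (1) is equivalent to the trilinear bound $\left|\int_{\R^{1+3}}\varphi\,\phi\,w\right|\lesssim\|\varphi\|_{\xst}\|\phi\|_{\xst}\|w\|_{X^{-s,-\tht}}$, and (2) to the same bound with $\|\varphi\|_{\xst}\|F\|_{\xstmo}\|w\|_{X^{1-s,1-\tht}}$ on the right. Decompose each of the three factors into Littlewood--Paley pieces in spatial frequency ($|\xi|\sim 2^{k_i}$) and modulation pieces ($\langle||\tau|-|\xi||\rangle\sim 2^{j_i}$): the spatial frequencies must obey the usual triangle relation (so one of the configurations high$\cdot$high$\to$low or high$\cdot$low$\to$high holds), and on the support of the convolution the modulations obey the resonance inequality $\langle||\tau|-|\xi||\rangle\lesssim\max\!\big(\langle||\tau_1|-|\xi_1||\rangle,\langle||\tau_2|-|\xi_2||\rangle,2^{k_{\min}}\big)$. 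The hypotheses $\tht>\tfrac12$ and $s-\tht>1$ — i.e. $\tht'>0$ and $s'>\tht'$, which already force $s>\tfrac32$ — are exactly what make the resulting dyadic sums converge.

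In the ``off-diagonal'' cases (output frequency equal to the largest input frequency, or one modulation weight much larger than the others) it suffices to pair the two factors carrying the most room, bound their $L^2_{t,x}$ product by Hölder through a transferred Strichartz norm — in $n=3$ the available admissible endpoint is $L^4_tL^4_x$, $\|P_k\psi_j\|_{L^4_{t,x}}\lesssim 2^{k/2}2^{j/2}\|P_k\psi_j\|_{L^2_{t,x}}$, together with Bernstein to put a low-frequency factor in $L^\infty_{t,x}$ and the energy bound $\|P_k\psi_j\|_{L^\infty_tL^2_x}\lesssim 2^{j/2}\|P_k\psi_j\|_{L^2_{t,x}}$ — and then sum the geometric series in the remaining dyadic parameters. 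The genuinely non-perturbative case is when both inputs sit at comparable high frequency $N=2^{k}$ and, lying essentially on the same sheet of the light cone, force the output modulation to be $\sim N$ as well; here one must invoke the sharp bilinear $L^2$ wave-cone estimate $\|\psi^{(1)}_{k_1,j_1}\psi^{(2)}_{k_2,j_2}\|_{L^2_{t,x}}\lesssim 2^{k_{\min}}2^{j_{\min}/2}\|\psi^{(1)}_{k_1,j_1}\|_{L^2_{t,x}}\|\psi^{(2)}_{k_2,j_2}\|_{L^2_{t,x}}$ (Klainerman--Machedon/Foschi--Klainerman, $n=3$), after which the net power of $N$ is $N^{1+\tht-s}=N^{\tht'-s'}$, which decays precisely because $s'>\tht'$. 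Estimate (2) is identical, now with exponents $\tht-1<0$ on two factors, which the resonance inequality and duality accommodate. This high$\cdot$high step is where I expect the main work to lie.

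\textbf{The time-localisation estimate.} Since $\|\cdot\|_{\xst_1}$ is summed over spatial Littlewood--Paley blocks and $\eta_T$ is independent of $x$, it suffices to treat a single block $\vp=P_k\vp$. Using $|\tau|+|\xi|\le 2|\xi|+||\tau|-|\xi||$ one has (for $\tilde s\ge 0$; the case $\tilde s<0$ is easier) $\langle|\tau|+|\xi|\rangle^{\tilde s}\langle||\tau|-|\xi||\rangle^{\tilde\tht}\sim\langle\xi\rangle^{\tilde s}\langle||\tau|-|\xi||\rangle^{\tilde\tht}+\langle||\tau|-|\xi||\rangle^{\tilde s+\tilde\tht}$, so $\|\vp\|_{X^{\tilde s,\tilde\tht}}$ is comparable to the sum of a piece with purely spatial weight $\langle\xi\rangle^{\tilde s}$ and modulation weight $\langle||\tau|-|\xi||\rangle^{\tilde\tht}$ and a piece with modulation weight $\langle||\tau|-|\xi||\rangle^{\tilde s+\tilde\tht}$. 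The spatial weight $\langle\xi\rangle^{\tilde s}$ commutes with multiplication by $\eta_T(t)$; and writing the modulation-weighted space on each sheet of the cone as $e^{\pm it|\xi|}$ times an $H^b_t$-space — a conjugation that again commutes with $\eta_T$ — the estimate reduces to the one-dimensional fact that multiplication by $\eta_T$ is bounded on $H^b_t(\R)$ \emph{uniformly in} $T\in(0,1)$ whenever $|b|<\tfrac12$ (here $b=\tilde\tht$, respectively $b=\tilde s+\tilde\tht$, both of which are $<\tfrac12$ in absolute value in the range in which the lemma is used). For $0\le b<\tfrac12$ this comes from the Gagliardo characterisation $\|g\|_{\dot H^b_t}^2\sim\iint|g(t)-g(t')|^2|t-t'|^{-1-2b}\,dt\,dt'$: splitting $|\eta_T(t)\vp(t)-\eta_T(t')\vp(t')|\le\|\eta\|_\infty|\vp(t)-\vp(t')|+|\vp(t')|\,|\eta_T(t)-\eta_T(t')|$, the first term is $\lesssim\|\vp\|_{\dot H^b_t}$ and the second is handled by Hölder against $\big(\int|\eta_T(t)-\eta_T(t')|^2|t-t'|^{-1-2b}\,dt\big)^{1/2}$ and the borderline Sobolev embedding $\dot H^b(\R)\hookrightarrow L^{2/(1-2b)}$, whereupon the powers of $T$ cancel exactly; the case $b<0$ then follows by duality. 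The subtlety here is the uniformity in $T$: the crude bound on the multiplier $\widehat{\eta_T}$ only gives a $T^{-|b|}$ loss, and one genuinely needs the endpoint Sobolev embedding (equivalently, the sharp form of Tao's Lemma 2.11) to remove it.
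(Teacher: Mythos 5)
The paper does not actually prove Lemma \ref{stan_lem}: estimates (1) and (2) are quoted from \cite{KlSe} (with a textbook proof in Theorems 2.11--2.12 of \cite{geba_grillakis}), and the time-localisation bound is quoted from Lemma 2.11 of \cite{tao_book}. Your outline reconstructs precisely the arguments of those references --- duality, dyadic decomposition in spatial frequency and modulation, the resonance relation, Strichartz/Bernstein for the off-diagonal interactions and the bilinear cone estimate for the high$\times$high case in (1)--(2); and, for the cutoff bound, reduction to the one-dimensional fact that multiplication by $\eta_T$ is bounded on $H^b_t$ uniformly in $T\in(0,1)$ for $|b|<\tfrac{1}{2}$. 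So the approach is the right one; just be aware that what you have written is a roadmap rather than a complete proof: the high$\times$high interaction with low output frequency (where the output modulation is pushed up to the input frequency) and the exponent bookkeeping in (2), where two of the three factors in the dual trilinear form carry the negative exponent $\tht-1$, are exactly where the cited references spend their effort, and you defer both.

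Your second paragraph contains a genuinely useful refinement worth keeping. Because the elliptic weight in $\xst$ is $\langle|\tau|+|\xi|\rangle^{\tilde{s}}$ rather than $\langle\xi\rangle^{\tilde{s}}$, at high modulation the norm behaves like an $H^{\tilde{s}+\tilde{\tht}}$ norm in the modulation variable, so uniformity in $T$ genuinely requires $|\tilde{s}+\tilde{\tht}|<\tfrac{1}{2}$ in addition to $|\tilde{\tht}|<\tfrac{1}{2}$: testing against $u=\psi(t)e^{ix\cdot\xi_0}\phi(x)$ at unit frequency gives $\|\eta_T u\|_{X^{\tilde{s},\tilde{\tht}}}/\|u\|_{X^{\tilde{s},\tilde{\tht}}}\gtrsim T^{\frac{1}{2}-(\tilde{s}+\tilde{\tht})}$, so the statement as printed, with arbitrary $\tilde{s}\in\R$, is too generous. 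This is harmless for the paper, which only invokes the bound with $(\tilde{s},\tilde{\tht})=(s-1,\tht-1)$, i.e.\ $\tilde{s}+\tilde{\tht}=s'+\tht'<\tfrac{1}{2}$, exactly the range you single out --- but your condition is a needed hypothesis, not just a convenience of your proof. Your uniform-in-$T$ argument for the one-dimensional bound (Gagliardo seminorm, the endpoint embedding $\dot H^{b}(\R)\hookrightarrow L^{2/(1-2b)}$, duality for $b<0$) is correct; the one step to make honest is the reduction to a single sheet of the cone, since multiplication by $\eta_T$ does not preserve the sign of $\tau$: one should split $u=u_{+}+u_{-}$ and control the cross-terms using the rapid decay of $\widehat{\eta_T}$, as in the standard treatment.
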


We also need estimates to control the projection from the half-wave maps terms in the $\xst$ spaces. We introduce the notation
$$
Q_kF:=\tilde{\F}^{-1}(\chi_k(||\tau|-|\xi||)\tilde{\F}(F)(\tau,\xi))
$$
which decomposes the modulation of a function on dyadic scales. We again use $Q_0F:=\tilde{\F}^{-1}(\chi(||\tau|-|\xi||)\tilde{\F}(F)(\tau,\xi))$ to cover the low modulations. Observe the following modulation Bernstein-type estimate:
\begin{equation}\label{MB}
\|Q_jP_k\vp\|_{p,q}\lesssim 2^{3({\half}-\f{1}{q})k}2^{(\half-\f{1}{p})j}\|P_kQ_j\vp\|_{2,2}\quad\quad (p,q\geq2)\tag{MB}
\end{equation}

\begin{lemma}[Subcritical projection estimate]\label{sc_projection_lemma}
Fix $0<s'<1/4$ and let $\phi,\,\tp\in S^s$. Let $g:\R^3\rightarrow \R^3$ be smooth with bounded derivatives and $(p,q)$ an admissible pair. There exists a constant $C(\|\phi\|_{S^s})$ growing polynomially in $\|\phi\|_{S^s}$ such that for any $k>0$ we have
\begin{gather}
\|P_kg(\tp)\|_{p,q}+\tk\|P_k\dd_t(g(\tp))\|_{p,q}\lesssim C(\|\tp\|_{S^s}) 2^{-(\f{1}{p}+\f{3}{q}+s')k}\label{sc_moser}
\end{gather}
and
\begin{equation}\label{something}
\|P_k(\widetilde{\Pi}_{\tp^{\perp}}(\dhalf \phi))\|_{p,q}\lesssim C(\|\tp\|_{S^s})2^{(1-\f{1}{p}-\f{3}{q}-s')k} \sum_{\ko\geq0}2^{-\sg|k-\ko|}\|P_{\ko}\phi\|_{\Sko^s}
\end{equation}
for some $\sg(s',p,q)>0$. The second estimate \eqref{something} also holds for $k=0$.
\end{lemma}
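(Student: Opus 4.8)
\textbf{Proof plan for Lemma \ref{sc_projection_lemma}.}

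The plan is to treat the two estimates in sequence, deriving the second from the first together with the bilinear/modulation machinery of this section. For the Moser-type bound \eqref{sc_moser}, I would first reduce to $k>0$ (as stated) and write $g(\tp)=g(p)+\tilde g(\tp)$ where $\tilde g$ vanishes at $p$, so that $P_k g(\tp)=P_k\tilde g(\tp)$ and we may use $\tk P_k(\na\phi\cdot\tilde g'(\tp))$ as in the proof of Lemma \ref{Moser}. The argument is the subcritical analogue of Lemma \ref{Moser}: decompose the low-frequency factor, put it in $L^\infty_{t,x}$ via $\|\phi_0\|_{S^s_0}\lesssim 1+\|\phi_0\|_{\xst}$ and $\|\phi_k\|_{S^s_k}\lesssim\|\phi_k\|_{\xst}$, put the high-frequency factor in $L^p_tL^q_x$, and iterate the paraproduct to absorb all derivatives of $g$, using that $\tp\in S^s$ controls finitely many Strichartz norms. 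The extra $s'$ in the exponent of $2^{-(\f1p+\f3q+s')k}$ is exactly the subcritical gain built into $S^s$, and the time-derivative version follows the same way, noting $\tk P_k\dd_t(g(\tp))\simeq\tk P_k(\dd_t\tp\cdot\tilde g'(\tp))$ and that $\na_{t,x}$ is what $S^s$ controls. The summation over the paraproduct frequencies converges since $p,q\geq2$ forces the relevant exponent $2-\f1p-\f3q\geq0$, exactly as in \eqref{no_fields}.

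For \eqref{something}, I would follow the structure of Lemma \ref{projection_lemma}: use the explicit formula $\wip(\dhalf\phi)=\dhalf\phi-(\dhalf\phi\cdot g(\tp))g(\tp)$ with $G(\tp):=g(\tp)g(\tp)$, so $P_k\wip(\dhalf\phi)=P_k\dhalf\phi-P_k((\dhalf\phi)\cdot G(\tp))$. The first term is immediate since $\|P_k\dhalf\phi\|_{p,q}\lesssim 2^{(1-\f1p-\f3q)k}\|\phi_k\|_{S^s_k}\cdot 2^{-s'k}$, with the $2^{-s'k}$ leakage absorbed into the $2^{-\sg|k-\ko|}$ sum by taking $\sg<s'$. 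For the bilinear term, split $G(\tp)$ into frequencies $\gg k$, $\sim k$, $\ll k$. In the $\sim k$ and $\gg k$ cases the output frequency $\sim 2^k$ forces one factor (or $G$ itself, hence by \eqref{sc_moser} applied to $G=g^2$) to be at frequency $\gtrsim 2^k$: one places $\dhalf\phi$ at frequency $\ko$ into $L^p_tL^q_x$ (with the extra $2^{-s'\ko}$), the high-frequency piece of $G$ into $L^\infty_tL^2_x$ via \eqref{sc_moser} (losing $2^{-(3/2+s')\max}$), and Bernstein on the output, summing over $\ko$ with the geometric decay $2^{-\sg|k-\ko|}$ coming from the $s'$ gap. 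In the $\ll k$ case, $\dhalf\phi$ is frozen at frequency $\sim 2^k$, and \eqref{sc_moser} on $G$ gives $\sum_{\ko<k}2^{\ko-k}$, again summable. The modulation Bernstein inequality \eqref{MB} is available if one needs to convert $\xst$ norms at low modulation into $L^p_tL^q_x$ norms where the Strichartz embedding Lemma \ref{str_lem} does not directly apply, though I expect to route everything through $S^s$ and not need it here. The $k=0$ case is handled by the same decomposition without the dyadic $2^{-s'k}$ gains, exactly as in Lemma \ref{projection_lemma}.

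The main obstacle is bookkeeping the frequency-envelope-style decay $2^{-\sg|k-\ko|}$ uniformly: one must be careful that every time a derivative of $g$ is iterated (to reach $g'$, $g''$, etc.\ in the Moser expansion) the subcritical $s'$-gain from $S^s$ is not entirely consumed, so that a strictly positive $\sg$ survives. This forces the constraint $s'<1/4$ (so that $2-\tfrac1p-\tfrac3q$ and the various exponents appearing in the paraproduct iteration stay non-negative with room to spare) and the choice $\sg=\sg(s',p,q)$ small relative to $s'$. Everything else is a routine adaptation of Lemmas \ref{Moser} and \ref{projection_lemma} to the $S^s$ scale, using Lemma \ref{str_lem}, \eqref{MB} where needed, and the polynomial dependence of the Moser constants on $\|\tp\|_{S^s}$.
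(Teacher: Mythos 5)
The paper does not actually supply a proof of this lemma; it states only that the argument "is analogous to that in the critical case, see Lemmas \ref{Moser} and \ref{projection_lemma}", which is precisely the recipe you lay out, so your plan matches the paper's intended approach. One small slip: after writing $P_kg(\tp)=P_k\tilde g(\tp)$, the differentiated expression should read $\tk P_k(\na\tp\cdot\tilde g'(\tp))$ rather than $\tk P_k(\na\phi\cdot\tilde g'(\tp))$ — the derivative falls on $\tp$, not on the separate function $\phi$ that appears later in the projection estimate.
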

We omit the proof of this Lemma which is analogous to that in the critical case, see Lemmas \ref{Moser} and \ref{projection_lemma}.
From these estimates we can deduce similar bounds in the $\xst$ spaces, discussed further in Appendix \ref{appendixA}.
\begin{lemma}\label{xsb_projection_estimate}
Let $0<s'<1/4$, $\phi,\,\tp\in \xst_1$. Then for $(j,k)\neq(0,0)$ we have the Moser inequality
\begin{equation}\label{xst_moser}
\|P_kQ_j g(\tp)\|_{\xst}\lesssim C(\|\tp\|_{\xst_1})
\end{equation}
and the projection estimate
\begin{equation}\label{xst_proj_estimate}
\|P_k Q_j(\widetilde{\Pi}_{\tilde{\phi}^{\perp}}\dphi)\|_{\xst}\lesssim C(\|\tp\|_{\xst_1})\left(2^k\sum_{k'\geq 0}2^{-\sg|k-k'|}\|\phi_{k'}\|_{\xst}+\dl_{j\gg k}\cdot2^j\sum_{k'\gtrsim j}2^{-\sg|j-k'|}\|\phi_{k'}\|_{\xst}\right)
\end{equation}
which also holds for $(j,k)=(0,0)$. Here $\dl_{j\gg k}=1$ if $j\geq k+20$ and $0$ otherwise, and $C(\|\tp\|_{\xst_1})$ is a constant which grows polynomially in $\|\tp\|_{\xst_1}$.

Furthermore, the projections are continuous on $\xstmo_1$: for $j,\, k\geq0$ it holds
\begin{equation}\label{contp}
    \|P_kQ_j \wip F\|_{\xstmo}\lesssim C(\|\tp\|_{\xst_1})\sum_{r\geq0}\sum_{l\geq0}2^{-\sg(|r-k|+|l-j|)}\|P_rQ_lF\|_{\xstmo}
\end{equation}
\end{lemma}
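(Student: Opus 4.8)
The final statement to prove is Lemma~\ref{xsb_projection_estimate}, comprising three estimates: the Moser inequality \eqref{xst_moser}, the projection estimate \eqref{xst_proj_estimate}, and the continuity of the projection on $\xstmo_1$, \eqref{contp}. All three are the $X^{s,\tht}$-space analogues of the physical-space Strichartz estimates already established in Lemma~\ref{sc_projection_lemma}, so the plan is to transfer those bounds by exploiting the structure of the $\xst$ norm as a weighted $L^2$ on the spacetime Fourier side, together with the modulation localisation.

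The plan for \eqref{xst_moser}: write $\|P_kQ_j g(\tp)\|_{\xst}\simeq \langle\tau,\xi\rangle$-weight times $\langle||\tau|-|\xi||\rangle^\tht$-weight in $L^2_{\tau,\xi}$, and note that on the support of $P_kQ_j$ these weights are comparable to $2^{s\max(k,j)}2^{\tht j}$ (roughly). The key point is that $Q_j$ forces a modulation of size $\sim 2^j$; when $j\gg k$ this means $g(\tp)$ oscillates in time at a rate not inherited from its spatial frequency, which one controls by putting one or more time-derivatives on $g(\tp)$ and invoking the second half of \eqref{sc_moser} (the bound on $\tk\|P_k\dd_t g(\tp)\|_{p,q}$), iterated as needed. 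Concretely I would use the modulation Bernstein estimate \eqref{MB} to pass from $\|P_kQ_j\cdot\|_{2,2}$ (which is essentially the $\xst$ ingredient up to the weights) to a Strichartz norm $\|P_kQ_j\cdot\|_{p,q}$, and then bound the latter using \eqref{sc_moser} after noting that applying $\dd_t$ costs $2^j$ on $Q_j$ but is compensated by the factor of $\tk$ gained from Bernstein and the structure of $g(\tp)$ which, being a nonlinear function of a wave, has good time-derivative bounds. The bookkeeping with the two Littlewood--Paley/modulation cutoffs and the excluded corner $(j,k)=(0,0)$ is the fiddly part but is routine. This is entirely parallel to the proof of Lemma~\ref{Moser} in the critical setting.

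The plan for \eqref{xst_proj_estimate}: expand $\wip\dphi = \dphi - (g(\tp)\cdot\dphi)\,g(\tp)$ exactly as in the proof of Lemma~\ref{projection_lemma} / \eqref{something}, decompose the bilinear factor $g(\tp)\cdot\dphi$ into high-high, high-low and low-high frequency pieces, and for each piece pass to the $\xst$ norm. The new feature compared to \eqref{something} is the extra term $\dl_{j\gg k}\cdot 2^j\sum_{k'\gtrsim j}\ldots$: when the output modulation $2^j$ is much larger than the output frequency $2^k$, the modulation must be supplied by the two input factors, which by the geometry of the light cone forces at least one input to have frequency $\gtrsim 2^j$ (this is the standard observation that $||\tau|-|\xi||\lesssim$ the frequency of the factor when the two pieces add constructively, so a large output modulation demands a large input frequency). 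I would make this precise using the bilinear estimates of Lemma~\ref{stan_lem} together with \eqref{MB}, separating the regime $j\lesssim k$ (where \eqref{xst_moser} for $g(\tp)$ plus Lemma~\ref{stan_lem}(1)--(2) suffices and gives the first sum) from $j\gg k$ (where one first localises an input to modulation or frequency $\gtrsim 2^j$ and sums geometrically, yielding the $\dl_{j\gg k}$ term). The frequency envelope $\sg$ is inherited from the $\sg$ in \eqref{something} and the exponential gains in the Littlewood--Paley sums.

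The plan for \eqref{contp}: this is the simplest of the three. Write $\wip F = F - (g(\tp)\cdot F)\,g(\tp)$; the identity term contributes $P_kQ_j F$ directly (the $r=k$, $l=j$ term in the sum with $\sg$-decay trivially satisfied). For the bilinear term $(g(\tp)\cdot F)g(\tp)$ I would use Lemma~\ref{stan_lem}(2) in the form $\|\vp\cdot G\|_{\xstmo}\lesssim\|\vp\|_{\xst}\|G\|_{\xstmo}$ applied twice (treating $g(\tp)$ as the $\xst$ factor via \eqref{xst_moser} and $F$ as the $\xstmo$ factor), combined with the Littlewood--Paley/modulation localisation of $g(\tp)$ to produce the $2^{-\sg(|r-k|+|l-j|)}$ off-diagonal decay; the decay comes from the fact that $g(\tp)$ is essentially frequency- and modulation-localised near the origin with rapidly decaying tails, so the output $(k,j)$ is close to the $F$-input $(r,l)$.

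The main obstacle I expect is the second estimate \eqref{xst_proj_estimate}, specifically the careful treatment of the $\dl_{j\gg k}$ high-modulation term: one must correctly identify, using the cone geometry, that a large output modulation in the product $g(\tp)\cdot\dphi$ can only arise when $\dphi$ itself carries frequency $\gtrsim 2^j$, and then make the resulting frequency-envelope sum close. This is a standard but delicate $X^{s,\tht}$ bilinear manoeuvre (cf.\ the null-form / high-modulation analysis in \cite{KlSe,geba_grillakis}); everything else is a transcription of the critical-space arguments already given in Sections~\ref{HWM1_chapter}--\ref{HWM2_chapter}. Since these estimates are standard in spirit and the proof is a routine adaptation, I would state the lemma and refer the reader to the analogous critical-case proofs (Lemmas~\ref{Moser} and \ref{projection_lemma}) for the details, spelling out only the light-cone geometry point behind the $\dl_{j\gg k}$ term.
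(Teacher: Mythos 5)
Your proposal identifies the right central mechanism for \eqref{xst_moser}: since $P_kQ_j$ restricts to temporal frequency $\sim 2^j$ when $j\gg k$, one can divide by $2^j$ and put a $\dd_t$ on $g(\tp)$, then iterate, and feed the resulting products into the physical-space Moser bound \eqref{sc_moser}. The expansion $\wip F = F - (F^TG(\tp))g(\tp)$ for the projection estimates, and the off-diagonal decay coming from the frequency/modulation tail of $g(\tp)$ in \eqref{contp}, are also what the paper does (the paper in fact writes out only \eqref{xst_moser} in detail and says the other two follow from this expansion).

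One technical slip in your description: you propose to use the modulation--Bernstein estimate \eqref{MB} to pass from $\|P_kQ_j\,\cdot\,\|_{2,2}$ to a Strichartz norm, but \eqref{MB} goes the other way --- it bounds $\|Q_jP_k\vp\|_{p,q}$ in terms of $\|P_kQ_j\vp\|_{2,2}$, and there is no reverse inequality of that shape. What the paper actually does after the time-differentiations is estimate the resulting $L^2_{t,x}$ norm of a product directly by H\"older, e.g.\ $\|P_kQ_j[\dd_t\tp\cdot g'(\tp)]\|_{2,2}\lesssim\|\dd_t\tp\|_{p_1,q_1}\|g'(\tp)\|_{p_2,q_2}$ with the exponents chosen near $2$, then invokes \eqref{sc_moser} and the Strichartz embedding Lemma~\ref{str_lem}. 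This is a detail of implementation rather than a gap in the approach, but as written the step involving \eqref{MB} would not go through; you should replace it by the direct H\"older argument. The bookkeeping in the $k\ll j$ regime is also heavier than you suggest --- the paper splits into $Q_{\gtrsim j}/Q_{\ll j}$ pieces of $\dd_t\tp$, differentiates in time a second and sometimes third time, and tracks several sub-cases according to the spatial frequency of the intermediate factors; this is where most of the length of the actual proof lies.
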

\begin{remark}\label{red-remark}\emph{
    The continuity property \eqref{contp} allows us to neglect the outer projection $\wip$ appearing in $HWM_2$ when proving  \eqref{ML_estimates}. Henceforth, we therefore redefine
    \begin{equation*}
    HWM_2(\phi):=\phi\x(\dhalf(\phi\x{\dhalf\phi})-\phi\x(-\D)\phi)
    \end{equation*}}
\end{remark}

To prove Proposition \ref{all_ML_estimates} we will use the frequency decompositions
$$
\eta_T\cdot WM(\phi)=-\sum_{\ko,\kt,\kth\geq0}\eta_T \underbrace{(\phi_{k_1} \dd^\al\phi_{k_2}\dad\phi_{k_3})}_{WM_{\ko,\kt,\kth}(\phi)},
$$
$$
\eta_T\cdot HWM_1(\phi)=\sum_{\ko,\kt,\kth\geq0}\eta_T \underbrace{(P_{\kt}(\widetilde{\Pi}_{\tilde{\phi}^{\perp}}\du)\,(\pko\cdot\dphi_{\kth}))}_{HWM_{1;\ko,\kt,\kth}(\phi)},
$$
and
$$
\eta_T\cdot HWM_2(\phi)=\sum_{\ko,\kt,\kth\geq0}\eta_T \underbrace{(\pko\x[\dhalf(\pkt\x{\dhalf\pkth})-\pkt\x(-\D)\pkth])}_{HWM_{2;\ko,\kt,\kth}(\phi)}
$$
We will first deal with all but the $\lhh$ interactions, for which we can get by using Strichartz estimates, the gain in $T$ coming from H\"older's inequality. For this we need the following straightforward bound which tells us that multiplication by a time cut-off does not affect the geometry of the interactions in a serious way.
\begin{lemma}\label{T_lem}
Let $2\leq p\leq\infty$, $l\geq0$. Then it holds
$$
\|P_l^{(t)}\eta_T\|_{L^p_t}\lesssim_{\eta,N} T^{1/p}(2^lT)^{-N}
$$
for any $N>0$. Here $P_l^{(t)}$ is the projection to temporal frequency $\sim 2^l$.
\end{lemma}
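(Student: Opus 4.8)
The plan is to reduce the estimate to a single scaling identity together with the rapid decay of the Fourier transform of the fixed Schwartz function $\eta$ (recall $\eta\in C^\infty_c(\R)$ from Lemma \ref{lin_lem}). The key point is that localising $\eta_T(t)=\eta(T^{-1}t)$ to temporal frequency $\sim2^l$ is, after undoing the rescaling, the same as localising $\eta$ itself to frequency $\sim2^lT$; since $\hat\eta$ is Schwartz, its piece at frequency $\sim R$ decays faster than any power of $R$ once $R\gtrsim1$, and this is exactly the source of the factor $(2^lT)^{-N}$.

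Concretely, I would first record the trivial bound $\|\eta_T\|_{L^p_t}=T^{1/p}\|\eta\|_{L^p_t}$, which follows from the change of variables $t\mapsto T^{-1}t$, and hence $\|P_l^{(t)}\eta_T\|_{L^p_t}\lesssim_\eta T^{1/p}$ by the uniform $L^p_t$-boundedness of the projections $P_l^{(t)}$. This already proves the lemma in the regime $2^lT\lesssim1$, where $(2^lT)^{-N}\gtrsim1$, so it remains to gain the decay factor when $2^lT\gtrsim1$. For this I would pass to the Fourier side: writing the symbol of $P_l^{(t)}$ as a smooth bump $m_l(\tau)$ adapted to $|\tau|\sim2^l$, we have $\widehat{\eta_T}(\tau)=T\hat\eta(T\tau)$, and the substitution $u=T\tau$ in the inversion formula gives
\[
(P_l^{(t)}\eta_T)(t)=\frac{1}{2\pi}\int m_l(\tau)\,T\hat\eta(T\tau)\,e^{it\tau}\,d\tau=(Q_{2^lT}\eta)(T^{-1}t),
\]
where $Q_R$ is the Fourier multiplier with symbol $m_l(T^{-1}\,\cdot\,)$, a smooth cut-off to $|u|\sim R$. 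Therefore $\|P_l^{(t)}\eta_T\|_{L^p_t}=T^{1/p}\|Q_{2^lT}\eta\|_{L^p_s}$, and it suffices to show $\|Q_R\eta\|_{L^p_s}\lesssim_N R^{-N}$ for $R\gtrsim1$.

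This last bound follows from $|\hat\eta(u)|\lesssim_M\langle u\rangle^{-M}\lesssim_M R^{-M}$ on $|u|\sim R$: one estimates $\|Q_R\eta\|_{L^\infty_s}\le\tfrac{1}{2\pi}\|\widehat{Q_R\eta}\|_{L^1_u}\lesssim_M R^{1-M}$ and, via Plancherel, $\|Q_R\eta\|_{L^2_s}\lesssim_M R^{1/2-M}$, and interpolating in $p\in[2,\infty]$ yields $\|Q_R\eta\|_{L^p_s}\lesssim_M R^{1-1/p-M}$, which is $\lesssim_N R^{-N}$ upon choosing $M=N+1$. Combining with the scaling identity of the previous paragraph gives $\|P_l^{(t)}\eta_T\|_{L^p_t}\lesssim_N T^{1/p}(2^lT)^{-N}$ when $2^lT\gtrsim1$, and together with the trivial bound in the complementary range this proves the lemma. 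There is no genuine obstacle here; the only point requiring a little care is the bookkeeping between the two regimes $2^lT\lesssim1$ and $2^lT\gtrsim1$, and the observation that the claimed estimate is (trivially) consistent in the former since then $(2^lT)^{-N}\ge1$.
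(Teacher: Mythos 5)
Your argument is correct and is essentially the proof the paper gives (in compressed form): both pass to the Fourier side, exploit the scaling $\widehat{\eta_T}(\tau)=T\hat\eta(T\tau)$ and the Schwartz decay of $\hat\eta$ on the region $|T\tau|\sim 2^lT$, establish the endpoint bounds at $p=2$ (Plancherel) and $p=\infty$ (Hausdorff--Young), and interpolate. Your reorganization — isolating the scaling identity $\|P_l^{(t)}\eta_T\|_{L^p_t}=T^{1/p}\|Q_{2^lT}\eta\|_{L^p}$ up front and handling the regime $2^lT\lesssim 1$ separately by the trivial bound — is a cleaner bookkeeping of the same computation, not a different method.
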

Most of the frequency interactions are then handled in the following proposition.
\begin{proposition}\label{loc_prop1}
    Fix $T>0$. Set
    $$
    \Ss_{\ast}:=\{(\ko,\kt,\kth)\in\mathbb{N}_{\geq0}^3:2^{\kt}T>1,\ 2^{\kth}T>1,\ \ko<\max\{\kt,\kth\}-10\}
    $$
    Then there exist $\eps>0$ such that for any $\phi,\,\tp\in \xst_1$ it holds
    $$
    \|\sum_{(\ko,\kt,\kth)\notin\Ss_{\ast}}\eta_T\cdot\Tt_{\ko,\kt,\kth}(\phi)\|_{\xstmo_1}\lesssim C(\|\tp\|_{\xst_1})\,T^{\eps}(1+\|\phi\|_{\xst_1})\|\phi\|_{\xst_1}^2
    $$
    for $\Tt\in\{WM,HWM_1,HWM_2\}$ and $C(\|\tp\|_{\xst_1})$ as in Lemma \ref{xst_proj_estimate}.
\end{proposition}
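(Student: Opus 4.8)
The plan is to reduce \eqref{ML_estimates} for the excluded index set to a finite number of multilinear Strichartz estimates, extracting the gain $T^\eps$ from H\"older in time via Lemma \ref{T_lem}. First I would invoke the linear estimate Lemma \ref{lin_lem}, which says it suffices to control each trilinear term in $\xstmo_1$; then, since $\xstmo_1$ is an $\ell^1$ Besov-type sum over dyadic frequencies, I would further reduce (using Lemma \ref{stan_lem}(2) and the standard fact that $\|\cdot\|_{\xstmo}$ is dominated by $\|\cdot\|_{L^1_tL^2_x}$ after localisation, or more precisely by a high-modulation/low-modulation split) to bounding each $\|\eta_T\cdot\Tt_{\ko,\kt,\kth}(\phi)\|_{L^1_tL^2_x}$ with an exponentially decaying dependence on the frequency separation so that the sum over $(\ko,\kt,\kth)\notin\Ss_\ast$ converges. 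The complement $\Ss_\ast^c$ decomposes into three regimes: (i) the non-differentiated factor is \emph{not} the lowest, i.e. $\ko\geq\max\{\kt,\kth\}-10$, which includes the $\hhh$, $\hhl$, $\hll$ and $\llh$-with-$\phi$-at-top cases; (ii) the ``small-frequency'' regime where $2^{\kt}T\leq1$ or $2^{\kth}T\leq1$, where the frequency is so low relative to $1/T$ that Lemma \ref{T_lem} forces $\eta_T$ onto low temporal modes and we simply gain powers of $T$ directly. In regime (i), distributing the derivatives and placing factors into the Strichartz pairs encoded in $S^s$ (via Lemma \ref{str_lem} and the embedding $\|\phi_k\|_{S^s_k}\lesssim\|\phi_k\|_{\xst}$), there is always at least one high-frequency factor carrying a derivative that can go into $L^2_tL^5_x$ while the remaining factors go into $L^{9/2-}_tL^{10/3}_x$-type spaces or $L^\infty_{t,x}$, and H\"older in time against $\|\eta_T\|_{L^{r}_t}\lesssim T^{1/r}$ supplies $T^\eps$; the factor $(1+\|\phi\|_{\xst_1})$ absorbs the extra $\phi$ from the non-differentiated slot in $WM$ and the $g(\tp)$-dependence in $HWM_1$ is controlled by Lemma \ref{xst_moser}.

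For the half-wave maps terms one additionally needs the projection estimate \eqref{xst_proj_estimate} to bound $P_{\kt}(\wip\dphi)$ (for $HWM_1$) in the Strichartz spaces with the stated exponentially-decaying frequency dependence — here the $\dl_{j\gg k}$ term is harmless since it only improves decay — and, for $HWM_2$, the operator $\Lc_{\ko+\kt}$ / the commutator structure from Lemma \ref{useful_lemma} (or simply the pointwise multiplier bounds on $\dhalf(\pkt\x\dhalf\pkth)-\pkt\x(-\D)\pkth$ when $\kt\not\sim\kth$) lets us treat $HWM_2$ as a term of the form $2^{\kt+\kth}\pko\,\pkt\,\pkth$, which is then handled exactly as in regime (i) with one high-frequency factor placed in $L^2_tL^5_x$; note that by Remark \ref{red-remark} we may ignore the outer projection. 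Crucially, the $\lhh$ interaction with $\ko<\max\{\kt,\kth\}-10$ and both high frequencies $\gtrsim 1/T$ is \emph{precisely} $\Ss_\ast$ and is deliberately excluded here — those are the terms that require the normal-form/cancellation machinery and are deferred to the next proposition. The main obstacle will be the bookkeeping in regime (i) for $HWM_2$: one must carefully track the $2^{\kt+\kth}$ loss from the two derivatives against the off-diagonal gains in the Strichartz embedding, and verify that in every sub-case the exponent of the worst frequency is strictly negative (this forces the slight $s'>\tht'>0$ room and the use of the $L^2_tL^5_x$ endpoint-avoiding pair $(9,\tfrac{10}{3})$ etc.), together with checking that the Moser and projection estimates \eqref{xst_moser}, \eqref{xst_proj_estimate} indeed produce summable frequency envelopes when fed into the $\ell^1$ sum defining $\xstmo_1$.

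I would organise the write-up as: (1) reduce to $L^1_tL^2_x$ estimates on $\eta_T\cdot\Tt_{\ko,\kt,\kth}$ with frequency-gain, citing Lemmas \ref{lin_lem}, \ref{stan_lem}; (2) dispose of the small-frequency regime $2^{\kt}T\leq1$ or $2^{\kth}T\leq1$ using Lemma \ref{T_lem}; (3) treat $WM$ in regime (i), splitting into the standard frequency cases and always isolating a differentiated high-frequency factor in $L^2_tL^5_x$; (4) treat $HWM_1$ identically, using Lemma \ref{xst_proj_estimate} for the projected factor and Lemma \ref{xst_moser} for $g(\tp)$; (5) treat $HWM_2$ by first rewriting the bracket via the multiplier/commutator bounds (Lemma \ref{useful_lemma}, equations \eqref{L_LL}--\eqref{om_exp} not being needed here since no Lorentz boosts are present) and then repeating the argument of step (3). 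Throughout, the $T^\eps$ is harvested by writing $\eta_T=\eta_T\cdot\eta_{2T}$ and estimating one copy in $L^{1/\eps}_t$ via $\|\eta_T\|_{L^{1/\eps}_t}\lesssim T^\eps$, which is compatible with all the H\"older splits above since we never need the full $L^\infty_t$ strength of $\eta_T$.
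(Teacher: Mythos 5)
Your high-level strategy — decompose by frequency and modulation, convert each piece to a product of Strichartz norms, and harvest $T^{\eps}$ from H\"older in time against $\eta_T$ — is essentially the approach the paper takes. But there are two concrete errors in the details that would block the argument if carried out as stated.

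First, the pairs $(2,5)$ and $(9/2^-,10/3)$ you want to use lie \emph{outside} the admissible range of Lemma \ref{str_lem}: for $(2,5)$ one has $\tfrac{1}{2}+\tfrac{1}{5}=\tfrac{7}{10}>\tfrac{1}{2}$, and similarly for $(9/2^-,10/3)$. These pairs are only available in the main (critical) argument of the paper, where angular regularity underwrites the extended Strichartz range via Sterbenz's estimates; in the subcritical $\xst$ framework of Section 9 there is no angular regularity hypothesis, and the embedding $\xst\hookrightarrow L^p_tL^q_x$ only holds for classically admissible $(p,q)$, i.e.\ $\tfrac{1}{p}+\tfrac{1}{q}\le\tfrac{1}{2}$. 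The paper therefore runs the whole argument on pairs like $(\infty,2)$, $\bigl(\tfrac{2M}{M-1},\infty\bigr)$, $\bigl(\tfrac{2M}{M-2},\infty\bigr)$; you would need to replace your $L^2_tL^5_x$ slot by one of these and re-balance the exponents. In the regime $\ko\ge\max\{\kt,\kth\}-10$ this is painless because the lone $\phi$-factor carries the highest frequency and can go into $L^\infty_tL^2_x$, so no high-frequency derivative is ever forced into a low-$q$ slot.

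Second, your description of the small-frequency regime $2^{\kt}T\le1$ (or $2^{\kth}T\le1$) misidentifies where the power of $T$ comes from. Lemma \ref{T_lem} quantifies the \emph{decay} of $\|P_l^{(t)}\eta_T\|_{L^p_t}$ in $(2^lT)^{-N}$ when $2^lT\gg1$; when $2^lT\le1$ that bound is vacuous and does not ``force $\eta_T$ onto low temporal modes.'' The actual mechanism in the paper is a crude H\"older giving $\|\eta_T\|_{L^1_t}\lesssim T$, paired with the trivial numerical trade $2^{\kt}\lesssim T^{-1}$ to cancel the frequency loss (so that the net gain is $T^{\tfrac{1}{2}(s'-\tht')}$, which is positive because $s'>\tht'$). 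Lemma \ref{T_lem} is invoked separately, only to kill the contribution of high temporal modes of $\eta_T$ in the high-modulation subcase. These are two different devices and you have conflated them.

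Two smaller remarks. Your opening step of invoking Lemma \ref{lin_lem} is misplaced: that lemma converts a forcing bound into a solution bound and is used \emph{after} the multilinear estimates when setting up the iteration, not inside their proof. And ``$\|\cdot\|_{\xstmo}$ is dominated by $\|\cdot\|_{L^1_tL^2_x}$'' is false when $\tht>\tfrac{1}{2}$ (that embedding needs $\tht<\tfrac{1}{2}$); the modulation decomposition you mention in passing is not an optional refinement but the place where most of the work lies — each output modulation $j$ must be sub-divided according to which of the four factors (including $\eta_T$) carries it, and the sum over $j$ only closes after this sub-division. The paper's proof is essentially your plan with these corrections made.
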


\begin{proof}
Throughout this proof any implicit constants may depend polynomially on $\|\tp\|_{\xst_1}$. We will only prove the estimates for the wave maps terms, $WM$, the other terms being entirely analogous using Lemma \ref{xsb_projection_estimate} and the fact that 
$$
\|\dhalf(\pkt\x{\dhalf\pkth})-\pkt\x(-\D)\pkth\|_{p,q}\lesssim 2^{\kt+\kth}\|\pkt\|_{p_1,q_1}\|\pkth\|_{p_2,q_2}
$$
for all $\kt,\,\kth\geq0$ (see Lemma \ref{useful_lemma}) and conjugate triples $p^{-1}=p_1^{-1}+p_2^{-1}$, $q^{-1}=q_1^{-1}+q_2^{-1}$.

Fix $k\geq0$ and consider
$$
\sum_{(\ko,\kt,\kth)\notin\Ss_{\ast}}\|P_k(\eta_T\cdot WM_{\ko,\kt,\kth}(\phi))\|_{\xstmo}
$$
We start with the case $\ko\geq\max\{\kt,\kth\}-10$. Note that in this case the whole term is restricted to frequency $P_{\lesssim\ko}$ so we must have $\ko\gtrsim k$. We then consider different cases for the modulation.
\begin{itemize}[leftmargin=*]
\item $P_{\lesssim\ko}Q_{\lesssim\ko}$: Using that $k\lesssim \ko$ followed by the modulation-Bernstein estimate and H\"older's inequality we have
\begin{align*}
&\|P_kQ_{\lesssim\ko}(\eta_T\phi_{k_1}\dau\phi_{k_2}\dad\phi_{k_3})\|_{\xstmo}\\
&\lesssim 2^{(s-1)\ko}\sum_{j\lesssim\ko}2^{(\tht-1)j}2^{j/2}\|\eta_T\|_M\|\phi_{k_1}\|_{\infty,2}\|\dau\phi_{k_2}\|_{\f{2M}{M-1},\infty}\|\dad\phi_{k_3}\|_{\f{2M}{M-1},\infty}\\
&\lesssim T^\f{1}{M} 2^{(s-1)\ko}2^{\tht'\ko}\cdot 2^{-s\ko}\|\phi_{k_1}\|_{\xst}\cdot2^{(\half+\f{1}{2M}-s')(\kt+\kth)}\|\phi_{k_2}\|_{\xst}\|\phi_{k_3}\|_{\xst}\\
&\lesssim T^\f{1}{M} 2^{-(2s'-\tht'-\f{1}{M})\ko}\|\phi_{k_1}\|_{\xst}\|\phi_{k_2}\|_{\xst}\|\phi_{k_3}\|_{\xst}
\end{align*}
We may then take, for example, $\f{1}{M}=\tht'$, $s'>\tht'$ so bound this by 
\begin{align*}
T^{\tht'}2^{-2(s'-\tht')(\ko-k)}&2^{-2(s'-\tht')k}\|\phi_{k_1}\|_{\xst}\|\phi_{k_2}\|_{\xst}\|\phi_{k_3}\|_{\xst}
\end{align*}
which is summable over $\kt\,\kth\leq\ko-10$, $\ko\gtrsim k$, $k\geq0$ as required.
\bigskip

\item  $\sum_{l\gg\ko}P_{\lesssim\ko}Q_l$: In this case one of the four factors must be at modulation (or frequency in the case of $\eta_T$) at least comparable to $2^l$. We study each option separately.
\begin{enumerate}[leftmargin=*]
\item $(P_{\gtrsim l}\eta_T)\,\phi_{k_1}\dau\phi_{k_2}\dad\phi_{k_3}$: Here we use Lemma \ref{T_lem} to see that
\begin{align*}
&\sum_{l\gg\ko}\|P_kQ_l((P_{\gtrsim l}^{(t)}\eta_T)\phi_{k_1}\dau\phi_{k_2}\dad\phi_{k_3})\|_{\xstmo}\\
&\lesssim \sum_{l\gg\ko}2^{(s'+\tht')l}\|P_{\gtrsim l}^{(t)}\eta_T\|_M\|\phi_{k_1}\|_{\infty,2}\|\dau\phi_{k_2}\|_{\f{2M}{M-2},\infty}\|\dau\phi_{k_3}\|_{\infty,\infty}\\
&\lesssim \sum_{l\gg\ko}2^{(s'+\tht')l}\cdot T^{1/M}(2^lT)^{-N}\cdot 2^{-s\ko}\|\phi_{k_1}\|_{\xst}\cdot 2^{(\half+\f{1}{M}-s')\kt}\|\phi_{k_2}\|_{\xst}\cdot 2^{(1-s')\kth}\|\phi_{k_3}\|_{\xst}
\end{align*}
We may then take, e.g., $N=s'+\tht'+\f{1}{2M}$ and $1/2M=s'+2\tht'$ to bound this by
\begin{align*}
&T^{\f{1}{2M}-s'-\tht'}2^{-\ko/2M}2^{-s\ko}2^{(\half+\f{1}{M}-s')\ko}2^{(1-s')\ko}\|\phi_{k_1}\|_{\xst}\|\phi_{k_2}\|_{\xst} \|\phi_{k_3}\|_{\xst}\\
&\lesssim T^{\tht'}2^{-2(s'-\tht')(\ko-k)}2^{-2(s'-\tht')k}\|\phi_{k_1}\|_{\xst}\|\phi_{k_2}\|_{\xst} \|\phi_{k_3}\|_{\xst}
\end{align*}
which is acceptable.
\bigskip

\item $\eta_T\,(Q_{\gtrsim l}\phi_{k_1})\dau\phi_{k_2}\dad\phi_{k_3}$: This time we place $Q_{\gtrsim l}\pko$ directly into $\xst$ to find
\begin{align*}
&\sum_{l\gg\ko}\|P_kQ_l(\eta_T(Q_{\gtrsim l}\phi_{k_1})\dau\phi_{k_2}\dad\phi_{k_3})\|_{\xstmo}\\
&\lesssim \sum_{l\gg\ko}2^{(s'+\tht')l}\|\eta_T\|_M\|Q_{\gtrsim l}\phi_{k_1}\|_{\f{2M}{M-2},2}\|\dau\phi_{k_2}\|_{\infty,\infty}\|\dau\phi_{k_3}\|_{\infty,\infty}\\
&\lesssim T^{1/M}\sum_{l\gg\ko}2^{(s'+\tht')l}2^{l/M}2^{-\tht l}2^{-sl}\|\phi_{k_1}\|_{\xst}\cdot 2^{(1-s')\kt}\|\phi_{k_2}\|_{\xst}\cdot 2^{(1-s')\kth}\|\phi_{k_3}\|_{\xst}\\
&\lesssim T^{1/M}2^{(\f{1}{M}-2s')\ko}\|\phi_{k_1}\|_{\xst}\|\phi_{k_2}\|_{\xst}\|\phi_{k_3}\|_{\xst}
\end{align*}
which is acceptable choosing $1/M=2\tht'$. 
\bigskip

\item $\eta_T\, \phi_{k_1}( Q_{\gtrsim l}\dau\phi_{k_2})\dad\phi_{k_3}$:
\begin{align}
&\sum_{l\gg\ko}\|P_kQ_l(\eta_T\, \phi_{k_1}( Q_{\gtrsim l}\dau\phi_{k_2})\dad\phi_{k_3})\|_{\xstmo}\nonumber\\
&\lesssim \sum_{l\gg\ko}2^{(s'+\tht')l}\|\eta_T\|_M\|\phi_{k_1}\|_{\infty,\infty}\|Q_{\gtrsim l}\dau\phi_{k_2}\|_{\f{2M}{M-2},2}\|\dau\phi_{k_3}\|_{\infty,\infty}\nonumber\\
&\lesssim T^{1/M}\sum_{l\gg\ko}2^{(s'+\tht')l} (\dl_{\ko,0}+\|\phi_{k_1}\|_{\xst})\cdot 2^{(\f{1}{M}-\tht+1-s)l}\|\phi_{k_2}\|_{\xst}\cdot 2^{(1-s')\kth}\|\phi_{k_3}\|_{\xst}\nonumber\\
&\lesssim T^{1/M}2^{(\f{1}{M}-s')\ko}(\dl_{\ko,0}+\|\phi\|_{\xst})\|\phi_{k_2}\|_{\xst}\|\phi_{k_3}\|_{\xst}\label{pt3}
\end{align}
which is acceptable choosing e.g. $1/M=\tht'$.
\bigskip

\item $\eta_T \phi_{k_1}\dau\phi_{k_2}(Q_{\gtrsim l}\dad\phi_{k_3})$: as above.
\end{enumerate}
\end{itemize}
This completes the study of the case $\ko\geq\max\{k_2,k_3\}-10$.
\bigskip

We next turn to the case $\ko<\max\{k_2,k_3\}-10$. WLOG $\kt\geq\kth$. This time the whole term is at frequency $\lesssim 2^{\kt}$ and we must have $k\lesssim\kt$. We first study the case where the whole term has large modulation,
$$
\sum_{l\gg\kt}\|P_{\lesssim\kt}Q_l(\eta_T\phi_{k_1}\dau\phi_{k_2}\dad\phi_{k_3})\|_{\xstmo}
$$
Again, one of the factors must have modulation of order at least $2^l$, so we have four cases to consider.
\begin{itemize}[leftmargin=*]
\item $\sum_{l\gg\kt}P_{\lesssim\kt}Q_l$: 
\begin{enumerate}
\item $P_{\gtrsim l}^{(t)}\eta_T$: In this case we again use Lemma \ref{T_lem} to see that
\begin{align*}
&\sum_{l\gg\kt}\|P_kQ_l((P_{\gtrsim l}^{(t)}\eta_T)\phi_{k_1}\dau\phi_{k_2}\dad\phi_{k_3})\|_{\xstmo}\\
&\lesssim\sum_{l\gg\kt}2^{(s'+\tht')l}\|P_{\gtrsim l}^{(t)}\eta_T\|_M\|\phi_{k_1}\|_{\infty,\infty}\|\dau\phi_{k_2}\|_{\infty,2}\|\dau\phi_{k_3}\|_{\f{2M}{M-2},\infty}\\
&\lesssim\sum_{l\gg\kt}2^{(s'+\tht')l}T^{1/M}(2^lT)^{-N}2^{(1-s)\kt}2^{(\half+\f{1}{M}-s')\kth}(\dl_{\ko,0}+\|\phi_{k_1}\|_{\xst})\|\phi_{k_2}\|_{\xst}\|\phi_{k_3}\|_{\xst}\\
&\lesssim T^{\f{1}{2M}-s'-\tht'}2^{(\f{1}{2M}-2s')\kt}\|\phi_{k_1}\|_{\xst}\|\phi_{k_2}\|_{\xst}\|\phi_{k_3}\|_{\xst} 
\end{align*}
where we again used $N=\f{1}{2M}+s'+\tht'$ and $\kth\leq\kt$. Choosing $M$ such that $s'+\tht'<\f{1}{2M}<2s'$ we obtain the result.
\bigskip

\item $Q_{\gtrsim l}\phi_{k_1}$: This is a direct application of H\"older's inequality. Placing all three factors of $\phi$ into Strichartz spaces we have
\begin{align*}
&\sum_{l\gg\kt}\|P_kQ_l(\eta_T(Q_{\gtrsim l}\phi_{k_1})\dau\phi_{k_2}\dad\phi_{k_3})\|_{\xstmo}\\
&\lesssim\sum_{l\gg\kt}2^{(s'+\tht')l}\|\eta_T\|_M\| Q_{\gtrsim l}\phi_{k_1}\|_{\f{2M}{M-2},\infty}\|\dau\phi_{k_2}\|_{\infty,2}\|\dau\phi_{k_3}\|_{\infty,\infty}\\
&\lesssim T^{1/M}\sum_{l\gg\kt}2^{(s'+\tht')l}\cdot2^{3\ko/2}2^{(\f{1}{M}-\tht)l}2^{-sl}\|\phi_{k_1}\|_{\xst}\cdot2^{(1-s)\kt}\|\phi_{k_2}\|_{\xst}\cdot2^{(1-s')\kth}\|\phi_{k_3}\|_{\xst}\\
&\lesssim T^{1/M}2^{(\f{1}{M}-2s')\kt}\|\phi_{k_1}\|_{\xst}\|\phi_{k_2}\|_{\xst}\|\phi_{k_3}\|_{\xst}
\end{align*}
which is acceptable for e.g. $1/M=2\tht'$. 
\bigskip

\item $Q_{\gtrsim l}\dau\phi_{k_2}$:
\begin{align*}
&\sum_{l\gg\kt}\|P_kQ_l(\eta_T\phi_{k_1}(Q_{\gtrsim l}\dau\phi_{k_2})\dad\phi_{k_3})\|_{\xstmo}\\
&\lesssim\sum_{l\gg\kt}2^{(s'+\tht')l}\|\eta_T\|_M\|\phi_{k_1}\|_{\infty,\infty}\|Q_{\gtrsim l}\dau\phi_{k_2}\|_{\f{2M}{M-2},2}\|\dau\phi_{k_3}\|_{\infty,\infty}\\
&\lesssim T^{1/M}\sum_{l\gg\kt}2^{(s'+\tht')l}\cdot(\dl_{\ko,0}+\|\phi_{k_1}\|_{\xst})\cdot2^{(\f{1}{M}-\tht)l}2^{(1-s)l}\|\phi_{k_2}\|_{\xst}\cdot2^{(1-s')\kth}\|\phi_{k_3}\|_{\xst}\\
&\lesssim T^{1/M}2^{(\f{1}{M}-s')\kt}(\dl_{\ko,0}+\|\phi_{k_1}\|_{\xst})\|\phi_{k_2}\|_{\xst}\|\phi_{k_3}\|_{\xst}
\end{align*}
which is acceptable for $\f{1}{M}<s'$.
\bigskip

\item $Q_{\gtrsim l}\dad\phi_{k_3}$:
\begin{align*}
&\sum_{l\gg\kt}\|P_kQ_l(\eta_T\phi_{k_1}\dau\phi_{k_2}(Q_{\gtrsim l}\dad\phi_{k_3}))\|_{\xstmo}\\
&\lesssim\sum_{l\gg\kt}2^{(s'+\tht')l}\|\eta_T\|_M\|\phi_{k_1}\|_{\infty,\infty}\|\dau\phi_{k_2}\|_{\infty,2}\|Q_{\gtrsim l}\dau\phi_{k_3}\|_{\f{2M}{M-2},\infty}\\
&\lesssim T^{1/M}\sum_{l\gg\kt}2^{(s'+\tht')l}\cdot(\dl_{\ko,0}+\|\phi_{k_1}\|_{\xst})\cdot2^{(1-s)\kt}\|\phi_{k_2}\|_{\xst}\cdot2^{3\kth/2}2^{(\f{1}{M}-\tht)l}2^{(1-s)l}\|\phi_{k_3}\|_{\xst}\\
&\lesssim T^{1/M}2^{(\f{1}{M}-s')\kt}(\dl_{\ko,0}+\|\phi_{k_1}\|_{\xst})\|\phi_{k_2}\|_{\xst}\|\phi_{k_3}\|_{\xst}
\end{align*}
which is acceptable for $\f{1}{M}<s'$.
\end{enumerate}
\bigskip

\item $P_{\lesssim\kt}Q_{\lesssim\kt}:$ It remains to study the term with overall modulation restricted to $\lesssim 2^{\kt}$. We consider the cases $2^{\kt}T\leq1$ and $2^{\kt}T>1$ separately. The latter case we further split into $2^{\kth}T\leq1$ and $2^{\kth}T>1$.
\begin{enumerate}
\item $2^{\kt}T\leq1$: Here we use the modulation Bernstein estimate followed by Bernstein's inequality to bound (for $k\lesssim\kt$)
\begin{align*}
&\|P_kQ_{\lesssim\kt}(\eta_T\phi_{k_1}\dau\phi_{k_2}\dad\phi_{k_3})\|_{\xstmo}\\
&\lesssim\sum_{l\lesssim\kt}2^{(s-1)\kt}2^{(\tht-1)l}2^{l/2}\|\eta_T\|_1\|\phi_{k_1}\|_{\infty,\infty}\|\dau\phi_{k_2}\|_{\infty,2}\|\dad\phi_{k_3}\|_{\infty,\infty}\\
&\lesssim T\sum_{l\lesssim\kt}2^{(s-1)\kt}2^{\tht'l}2^{(1-s)\kt}2^{(1-s')\kth}(\dl_{\ko,0}+\|\phi_{k_1}\|_{\xst})\|\phi_{k_2}\|_{\xst}\|\phi_{k_3}\|_{\xst}\\
&\lesssim T2^{(1+\tht'-s')\kt}(\dl_{\ko,0}+\|\phi_{k_1}\|_{\xst})\|\phi_{k_2}\|_{\xst}\|\phi_{k_3}\|_{\xst}
\end{align*}
We then use that $s'$, $\tht'$ are very small and separate $2^{(1+\tht'-s')\kt}$ into $2^{(1+\half(\tht'-s'))\kt}2^{\half(\tht'-s')\kt}$. Since $2^{\kt}\leq T^{-1}$ this allows us to bound the previous line by
\begin{align*}
&\lesssim T^{\half(s'-\tht')}2^{\half(\tht'-s')\kt}(\dl_{\ko,0}+\|\phi_{k_1}\|_{\xst})\|\phi_{k_2}\|_{\xst}\|\phi_{k_3}\|_{\xst}
\end{align*}
which is acceptable since $s'>\tht'$.
\bigskip

\item $2^{\kt}T>1$, $2^{\kth}T\leq1$: In this case we start by using \eqref{MB} to lower the time exponent from $2$ to $1+$, then place all three factors of $\phi$ into Strichartz spaces:
\begin{align*}
&\|P_kQ_{\lesssim\kt}(\eta_T\phi_{k_1}\dau\phi_{k_2}\dad\phi_{k_3})\|_{\xstmo}\\
&\lesssim 2^{(s-1)\kt} \sum_{l\lesssim\kt}2^{(\tht-1)l}2^{(\half-\f{1}{M})l}\|\eta_T\|_{\f{2M}{M-1}}\|\phi_{k_1}\|_{\infty,\infty}\|\dau\phi_{k_2}\|_{\infty,2}\|\dad\phi_{k_3}\|_{\f{2M}{M-1},\infty}\\
&\lesssim T^{\half-\f{1}{2M}}2^{(\half+\f{1}{2M}-s')\kth}(\dl_{\ko,0}+\|\phi_{k_1}\|_{\xst})\|\phi_{k_2}\|_{\xst}\|\phi_{k_3}\|_{\xst}
\end{align*}
where we chose $1/M>\tht'$ and summed over $l\geq0$. 
In order to gain some decay in $k$ we need to split into two further sub-cases. Henceforth assume $s'+\tht'>2/M>2\tht'$.
\begin{enumerate}
\item $\kt\simeq k$: In this case, we simply bound $2^{\kth}\leq T^{-1}$ to find
\begin{align*}
\|P_kQ_{\lesssim\kt}(\eta_T\phi_{k_1}\dau\phi_{k_2}\dad\phi_{k_3})\|_{\xstmo}&\lesssim T^{s'-\f{1}{M}}(\dl_{\ko,0}+\|\phi_{k_1}\|_{\xst})\|\phi_{k_2}\|_{\xst}\|\phi_{k_3}\|_{\xst}
\end{align*}
which is acceptable.
\item $\kt\gg k$. Since $\ko\leq\kt-10$, we must in this case have $\kth\simeq\kt$. We find
\begin{align*}
&\|P_kQ_{\lesssim\kt}(\eta_T\phi_{k_1}\dau\phi_{k_2}\dad\phi_{k_3})\|_{\xstmo}\\
&\lesssim T^{\half-\f{1}{2M}}2^{(\half+\f{1}{2M}-\half(s'+\tht'))\kth}2^{-\half(s'-\tht')\kth}(\dl_{\ko,0}+\|\phi_{k_1}\|_{\xst})\|\phi_{k_2}\|_{\xst}\|\phi_{k_3}\|_{\xst}\\
&\lesssim T^{\f{1}{2}(s'+\tht')-\f{1}{M}}2^{-\half(s'-\tht')\kt}(\dl_{\ko,0}+\|\phi_{k_1}\|_{\xst})\|\phi_{k_2}\|_{\xst}\|\phi_{k_3}\|_{\xst}
\end{align*}
where we used that $2^{-\half(s'-\tht')\kth}\simeq^{-\half(s'-\tht')\kt}$ for the final inequality.
\end{enumerate}
\end{enumerate}
\end{itemize}
The remaining case $\ko<\max\{\kt,\kth\}-10$, $2^{\kt}T,\,2^{\kth}T>$, corresponds to a triple in $\Ss_{\ast}$ so the proof for $WM$ is complete.
\end{proof}

To handle the remaining $\lhh$ interactions we must incorporate the structures in the different terms. For the wave maps source terms we will use the following lemma, proved in Appendix \ref{appendixB}.
\begin{lemma}\label{big_lem}
Set $s=3/2+s'$, $\tht=1/2+\tht'$ for $\nu>s'>\tht'>0$. Let $\kt,\,\kth\geq0$. It holds
\begin{equation}\label{big_lem1}
\|\vp_{\kt}\cdot F_{\kth}\|_{X^{s-1,\tht-1}}\lesssim 2^{-s'\min\{\kt,\kth\}}\|\vp_{\kt}\|_{\xst}\|\fkth\|_{\xstmo}
\end{equation}
and
\begin{equation}\label{big_lem2}
\|\vp^{(2)}_{\kt}\cdot \vpthk\|_{X^{s,\tht}}\lesssim 2^{-s'\min\{\kt,\kth\}}\|\vp^{(2)}_{\kt}\|_{\xst}\|\vp^{(3)}_{\kth}\|_{\xst}
\end{equation}
\end{lemma}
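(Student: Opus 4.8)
The plan is to reduce both inequalities to a single weighted $L^2$ convolution estimate on the product of the two light cones and to keep track of the dependence on $2^{\min\{\kt,\kth\}}$ at every stage. By the structure of the statements I would first assume $\kt\leq\kth$, so $\min\{\kt,\kth\}=\kt$; the output frequency is then $\sim 2^{\kth}$ in the low--high regime $\kt\ll\kth$ and $\lesssim 2^{\kth}$ (possibly down to the unit scale) in the balanced regime $\kt\sim\kth$. Using the dualities $(\xstmo)^{*}=X^{1-s,1-\tht}$ and $(\xst)^{*}=X^{-s,-\tht}$, estimate \eqref{big_lem1} is equivalent to the trilinear bound
$$\Big|\int \vpkt\,\fkth\,\overline{h}\,\Big|\lesssim 2^{-s'\kt}\,\|\vpkt\|_{\xst}\,\|\fkth\|_{\xstmo}\,\|h\|_{X^{1-s,1-\tht}}$$
with $h$ supported at the output frequency, and \eqref{big_lem2} reduces to the same statement with $h\in X^{-s,-\tht}$. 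Passing to the space-time Fourier side, normalising each of the three factors in $L^2$, and writing $\sg_i:=\big||\tau_i|-|\xi_i|\big|$, both reduce to bounding a convolution integral over the resonant set $\{\xi_0=\xi_1+\xi_2,\ \tau_0=\tau_1+\tau_2\}$ in which the three frequency weights $\langle\xi_i\rangle$ and modulation weights $\langle\sg_i\rangle$ appear with the exponents dictated by $s=\tfrac32+s'$ and $\tht=\tfrac12+\tht'$.

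The second step is the usual disposal of the modulation weights by a case split on $\sg_{\max}:=\max\{\sg_0,\sg_1,\sg_2\}$, exploiting the modulation algebra (so that $\sg_{\max}$ dominates the relevant resonance function). In each non-resonant branch I would place the factor carrying $\sg_{\max}$ in $L^2_{t,x}$, bound the other two by the transferred Strichartz embedding of Lemma \ref{str_lem} together with the modulation-Bernstein bound \eqref{MB}, and then sum the resulting geometric series in the dyadic modulation parameters; because $s>\tfrac32$ and $\tht>\tfrac12$, the frequency budget available on the low-frequency factor is never exhausted, and a residual $\langle\xi_1\rangle^{-s'}\sim 2^{-s'\kt}$ is always left over. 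In the genuinely resonant branch, where all $\sg_i\lesssim 2^{\kt}$, the frequency-localised factors are morally free half-waves living in $2^{\kt}$-neighbourhoods of their cones, and I would invoke a bilinear $L^2$ estimate for free waves in $\R^{1+3}$ of the form $\|(e^{\pm it|D|}P_{\kt}f)(e^{\mp it|D|}\tilde{P}_{\kth}g)\|_{L^2_{t,x}}\lesssim 2^{\kt}\|f\|_{2}\|g\|_{2}$ for $\kt\leq\kth$ (the parallel sign combination cannot produce a low-frequency output so does not arise in the delicate regime), which converts the excess into precisely the claimed gain. The unit-frequency endpoints $k_i=0$ are absorbed by the crude $L^\infty$ information built into the $S^s_0$ norm, exactly as in Lemma \ref{sc_projection_lemma}, and \eqref{big_lem2} then follows by the identical argument with the output weights $\langle\xi_0\rangle^{-s}\langle\sg_0\rangle^{-\tht}$.

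I expect the main obstacle to be the balanced case $\kt\sim\kth$ in which the output frequency and the output modulation are simultaneously small: there a high--high interaction yielding a low-frequency, low-modulation output forces the two input spatial frequencies to be nearly antipodal, and the gain must be extracted from this null-cone transversality. I would treat it by a Whitney-type angular decomposition of the two cones (in the spirit of Lemma \ref{angular_sep_lem}, but carried out in space-time), using that on each pair of caps the convolution of the two surface measures is controlled by the product of the cap sizes times a transversality factor. Once the bilinear $L^2$ estimate is available with the sharp $2^{\min\{\kt,\kth\}}$ constant, the remainder — the geometric summations over the modulation scales and the repeated check that $s'>\tht'>0$ leaves enough room at each step — is routine but lengthy, which is why the full argument is deferred to Appendix \ref{appendixB}.
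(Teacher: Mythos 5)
Your proposal is correct in outline but follows a genuinely different route from the paper's proof, and one of your ingredients turns out to be unnecessary. The paper (Appendix~\ref{appendixB}) never dualizes and never invokes a bilinear $L^2$ wave estimate or an angular Whitney decomposition: it fixes the frequency ordering (WLOG $\kt\geq\kth$), splits into the regimes $|\kt-\kth|\leq10$ and $\kt\geq\kth+10$, and within each regime performs a dyadic split on the \emph{output} modulation $2^l$. If $l$ is large, the cone geometry forces one input to carry a comparable modulation, and that factor is placed in $L^2_{t,x}$ while the other goes into a Strichartz space via Lemma~\ref{str_lem}; a square-sum in $l$ (Cauchy--Schwarz) then closes the sum. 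If $l$ is small, the modulation-Bernstein estimate~\eqref{MB} downgrades $L^2_t$ to $L^{2+}_t$ so that both inputs can be placed in transferred Strichartz spaces $L^{2+}_tL^{\infty-}_x$ / $L^{\infty-}_tL^{2+}_x$ and H\"older closes. The off-critical gain $2^{-s'\min}$ (in fact the paper's computation yields only $2^{-(s'-\tht')\min}$ in the balanced case $\kt\sim\kth$, which suffices for the application) is produced purely by the subcritical room $s'>\tht'>0$ in these linear estimates; no bilinear refinement is needed. By contrast, your approach reaches for the sharp bilinear $L^2$ estimate and an angular decomposition in the balanced high--high--to--low case, which would give a cleaner constant but is heavier machinery than the argument requires; in particular the case $\kt\sim\kth$ is handled in the paper by a single application of Strichartz in H\"older-conjugate exponents, with no need for cone transversality. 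Both strategies are sound; yours is the ``standard sharp bilinear'' route, the paper's is the ``Strichartz everywhere plus subcritical room'' route. One small imprecision worth flagging: your parenthetical claim that the parallel-sign combination ``cannot produce a low-frequency output'' is only correct once you also restrict to low output \emph{modulation}; same-sign high--high interactions do produce low output frequency, but then with large output modulation $\sim 2^{\kt}$, which is exactly what pushes such terms out of your resonant branch.
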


The remaining interactions are then handled in the following proposition.
\begin{proposition}
Let $(\ko,\kt,\kth)\in\Ss_{\ast}$. Then for any $\phi,\,\tp\in \xst_1$ it holds
$$
\|\sum_{(\ko,\kt,\kth)\in\Ss_{\ast}}\eta_T\cdot \Tt_{\ko,\kt,\kth}(\phi)\|_{\xstmo_1}\lesssim C(\|\tp\|_{\xst_1}) T^{\eps}\|\phi\|_{\xst_1}^3
$$
for $\Tt\in\{WM,HWM_1,HWM_2\}$ and $C(\|\tp\|_{\xst_1})$ a constant as in the previous proposition.
\end{proposition}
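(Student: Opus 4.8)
The plan is to dispose of the remaining ``low--high--high'' configurations $(\ko,\kt,\kth)\in\Ss_{\ast}$ by bilinear estimates in $X^{s,\tht}$-spaces, the crucial point being that on $\Ss_{\ast}$ one has both $2^{\kt}T>1$ and $2^{\kth}T>1$, hence $2^{-s'\min\{\kt,\kth\}}\leq T^{s'}$. Every estimate below will produce a genuine bilinear gain of the form $2^{-s'\min\{\kt,\kth\}}$ (from Lemma \ref{big_lem}) or $2^{-s'\max\{\kt,\kth\}}$ (from the Strichartz embedding of Lemma \ref{str_lem} applied to a high-frequency factor); we spend a fixed fraction of it, say $T^{s'/2}$, to produce the $T^\eps$ of \eqref{ML_estimates} and keep $2^{-(s'/2)\min\{\kt,\kth\}}$ to carry out the three dyadic summations, together with the off-diagonal decay in Lemmas \ref{big_lem} and \ref{xsb_projection_estimate}. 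By the symmetry of all three nonlinearities in the two differentiated factors we may assume throughout that $\kt\geq\kth$, so that $\ko<\kt-10$ and the output frequency $k$ of $\Tt_{\ko,\kt,\kth}(\phi)$ is $\lesssim\kt$.

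\emph{The wave maps term.} Here I would first remove the cutoff using the boundedness of multiplication by $\eta_T$ on $X^{s-1,\tht-1}$ (Lemma \ref{stan_lem}, valid since $\tht-1\in(-\tfrac12,\tfrac12)$), then peel off the non-differentiated low-frequency factor $\phi_{\ko}$ via the product estimate $\|\phi_{\ko}\cdot G\|_{X^{s-1,\tht-1}}\lesssim\|\phi_{\ko}\|_{\xst}\|G\|_{\xstmo}$ of Lemma \ref{stan_lem}, part (2). It then remains to bound the Minkowski-contracted product $\dau\phi_{\kt}^T\dad\phi_{\kth}$ in $\xstmo$, and for this I would use the polarised form of \eqref{ns},
\[
2\,\dau\phi_{\kt}^T\dad\phi_{\kth}=\Box(\phi_{\kt}^T\phi_{\kth})-\phi_{\kt}^T\Box\phi_{\kth}-(\Box\phi_{\kt})^T\phi_{\kth}.
\]
Since $\Box:X^{s,\tht}\to\xstmo$ is bounded, the last two terms are estimated by \eqref{big_lem1} with one factor replaced by $\Box\phi_{\kth}\in\xstmo$ (resp.\ $\Box\phi_{\kt}$), while $\|\Box(\phi_{\kt}^T\phi_{\kth})\|_{\xstmo}\lesssim\|\phi_{\kt}\phi_{\kth}\|_{X^{s,\tht}}$ is controlled by \eqref{big_lem2}. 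In each case the resulting bound is $2^{-s'\min\{\kt,\kth\}}\|\phi_{\kt}\|_{\xst}\|\phi_{\kth}\|_{\xst}$, which after reinstating $\|\phi_{\ko}\|_{\xst}$ and summing over $\ko,\kt,\kth$ (using $2^{-s'\min\{\kt,\kth\}}\le T^{s'}$) gives the claim for $WM$.

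\emph{The half-wave maps terms.} These carry no null structure, so the identity above is unavailable; instead they must be handled through their geometric structure, exactly as in Sections \ref{HWM1_chapter} and \ref{HWM2_chapter} but now with the subcritical calculus. By the continuity property of Lemma \ref{xsb_projection_estimate} (cf.\ Remark \ref{red-remark}) the outer projection in $HWM_2$ may be discarded, and the inner projection in $HWM_1$ is controlled in $X^{s,\tht}$ by \eqref{xst_proj_estimate}: $P_{\kt}(\wip\dhalf\phi)$ behaves like $2^{\kt}\phi_{\sim\kt}$ up to off-diagonal tails and a high-modulation correction. The dangerous pieces on $\Ss_{\ast}$ are those in which a genuinely low frequency ($\ko$, or $\kth$ when $\kth<\ko$) meets the high ones with no derivative on it --- the analogues of \eqref{warning2} --- and I would remove them by the vector identity \eqref{vector_id} together with the geometric identity \eqref{geometric_identity_4}, converting the offending low--high product $\phi_{\mathrm{low}}\cdot\phi_{\mathrm{high}}$ into a high--high product at negligible cost (the residual integral and difference terms, as in Section \ref{HWM2_chapter}, move a derivative onto the low factor and are estimated directly). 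After this reduction every factor at frequency $\gtrsim\max\{\kt,\kth\}$ may be placed into a subcritical Strichartz norm via Lemma \ref{str_lem}, each contributing a gain $<T^{s'}$; combined with the bilinear estimates of Lemmas \ref{stan_lem} and \ref{big_lem} and the bound $\|\dhalf(\pkt\x\dhalf\pkth)-\pkt\x(-\D)\pkth\|_{p,q}\lesssim 2^{\kt+\kth}\|\pkt\|_{p_1,q_1}\|\pkth\|_{p_2,q_2}$ of Lemma \ref{useful_lemma}, this closes the estimate.

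\emph{Main obstacle.} I expect the half-wave maps terms to be the real difficulty. Unlike in Proposition \ref{loc_prop1}, crude Strichartz is not enough here: the target space $\xstmo$ has modulation exponent $\tht-1>-\tfrac12$, so one cannot simply land the relevant bilinear (or trilinear) expression in $L^1_tL^2_x$, and the geometric cancellation \eqref{geometric_identity_4} has to be dovetailed with a modulation analysis --- keeping track of the high-modulation corrections in \eqref{xst_proj_estimate}, of the modulation produced by $\eta_T$, and of the several sub-regimes of $\Ss_{\ast}$ (according to whether $\kth\gtrless\ko$, whether the output frequency is $\sim\kt$ or $\ll\kt$, and which factor absorbs the output modulation). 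This is largely a bookkeeping problem, but a lengthy one, and it is the main content of the argument.
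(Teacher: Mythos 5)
Your proposal follows the paper's own argument in all essential respects. For $WM$ the two arguments coincide: remove $\eta_T$ via Lemma \ref{stan_lem}, peel off $\phi_{\ko}$ by the product estimate of part (2), use the polarised null-form identity, estimate the $\Box(\phi_{\kt}\phi_{\kth})$ piece by \eqref{big_lem2} composed with the boundedness of $\Box:\xst\to\xstmo$, the other two pieces by \eqref{big_lem1}, and then trade the resulting $2^{-s'\kth}$ for both a power of $T$ (via $\Ss_\ast$) and the off-diagonal decay needed for the three dyadic sums. For the half-wave-maps terms the high-level plan also matches --- discard/control the projections via \eqref{contp} and \eqref{xst_proj_estimate}, apply the vector identity \eqref{vector_id} and the cancellation \eqref{geometric_identity_4} to flip the non-differentiated low-frequency factor to high frequency, treat the residual integral terms with Lemma \ref{moving_loc_lemma}, and finish with subcritical Strichartz --- and you correctly flag the need for the modulation-Bernstein step \eqref{MB} because of the weight $\tht-1>-\tfrac12$. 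The one place where you and the paper part ways is the source of the $T^{\eps}$ for the half-wave-maps pieces: the paper extracts it from $\|\eta_T\|_{L^M_t}\lesssim T^{1/M}$ and explicitly notes that the constraints $2^{\kt}T>1$, $2^{\kth}T>1$ are not needed for $HWM_1,HWM_2$; you instead propose to spend part of the Strichartz gain $2^{-s'\max\{\kt,\kth\}}$ by invoking the $\Ss_\ast$ constraint. Both routes are legitimate here since the statement is posed on $\Ss_\ast$, but the paper's choice has the modest advantage of applying in a slightly wider range and decoupling the $T$-power from the exponent bookkeeping.
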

\begin{proof}
We start with $WM$, again taking $\kt\geq\kth$ without loss of generality, so that $\ko<\kt-10$ and $2^{\kt}T,\,2^{\kth}T>1$. Use the null structure to write
$$
\dau\phi_{k_2}\cdot\dad\phi_{k_3}=\half[\Box(\phi_{k_2}\cdot\phi_{k_3})-\phi_{k_2}\cdot\Box\phi_{k_3}-\Box\phi_{k_2}\cdot\phi_{k_3}]
$$
First consider $\|\eta_T(\phi_{k_1}\Box(\phi_{k_2}\cdot\phi_{k_3}))\|_{\xstmo_1}$. Note that we may neglect the cut-off $\eta_T$ thanks to Lemma \ref{stan_lem}. By point 2 of Lemma \ref{stan_lem} followed by the definition of the $\xst$ space and Lemma \ref{big_lem} we have
\begin{align*}
\|P_k(\phi_{k_1}\Box(\phi_{k_2}\cdot\phi_{k_3}))\|_{\xstmo}&\lesssim\|\phi_{k_1}\|_{\xst}\|\Box(\phi_{k_2}\cdot\phi_{k_3})\|_{\xstmo}\\
&\lesssim\|\phi_{k_1}\|_{\xst}\|\phi_{k_2}\cdot\phi_{k_3}\|_{\xst}\\
&\lesssim 2^{-s'\kth}\|\phi_{k_1}\|_{\xst}\|\phi_{k_2}\|_{\xst}\|\phi_{k_3}\|_{\xst}
\end{align*}
If $\kt\simeq k$ we bound this by $T^{s'}\|\phi_{k_1}\|_{\xst}\|\phi_{k_2}\|_{\xst}\|\phi_{k_3}\|_{\xst}$ which is acceptable, and if $\kt\gg k$, we know (since $\ko\ll \kt$) that $\kth\gtrsim \kt$ so we can bound this by
$$
T^{s'/2}2^{-s'(\kth-k)/2}\|\phi_{k_1}\|_{\xst}\|\phi_{k_2}\|_{\xst}\|\phi_{k_3}\|_{\xst}
$$
which is also fine.

Similarly for $\|\phi_{k_1}\phi_{k_2}\cdot\Box\phi_{k_3}\|_{\xstmo}$ we use Lemma \ref{big_lem} to bound
\begin{align*}
\|P_k(\phi_{k_1}\phi_{k_2}\cdot\Box\phi_{k_3})\|_{\xstmo}
&\lesssim 2^{-s'\kth} \|\phi_{k_1}\|_{\xst}\|\phi_{k_2}\|_{\xst}\|\phi_{k_3}\|_{\xst}
\end{align*}
which is acceptable for the same reasons. The remaining term is similar.


\bigskip

For $HWM_1$ and $HWM_2$ we don't actually need to use that $2^{\kt}T$, $2^{\kth}T>1$ and we can get the gain we need from H\"older's inequality.
Let's start with $HWM_1$ in the case $\kth\geq\kt$, so $\ko<\kth-10$. Note that the high modulation case $||\tau|-|\xi||\gg\kth$ was handled in the previous proof, so we only have to consider low modulations. Since $\kth$ is the highest frequency the output is restricted to $2^k\lesssim2^{\kth}$, so for fixed $\kt$, $\kth$ we have
\begin{align*}
    &\|\sum_{\ko<\kth-10}Q_{\lesssim\kth}P_k(\eta_T\cdot HWM_{1;\ko,\kt,\kth}(\phi))\|_{\xstmo}\\
    &\lesssim\sum_{l\lesssim\kth}2^{(\tht-1)l}2^{(s-1)\kth}\|Q_lP_k(\eta_T P_{\kt}(\widetilde{\Pi}_{\tilde{\phi}^{\perp}}\du)(\phi_{<\kth-10}\cdot\dphi_{\kth}))\|_{2,2}\\
    &\lesssim \sum_{l\lesssim\kth}2^{(\tht-1)l}2^{(s-1)\kth}\|Q_lP_k(\eta_T P_{\kt}(\widetilde{\Pi}_{\tilde{\phi}^{\perp}}\du)(\phi_{<\kth-10}\cdot\dphi_{\kth}\\
    &\hspace{23em}-\dhalf P_{\kth}(\phi_{<\kth-10}\cdot\phi_{\geq\kth-10}))\|_{2,2}\\
    &\quad+\sum_{l\lesssim\kth}2^{(\tht-1)l}2^{(s-1)\kth}\|Q_lP_k(\eta_T P_{\kt}(\widetilde{\Pi}_{\tilde{\phi}^{\perp}}\du)\dhalf P_{\kth}(\phi_{<\kth-10}\cdot\phi_{\geq\kth-10}))\|_{2,2}
    \end{align*}
The first term above sees a derivative moved onto the low frequency factor $\phi_{<\kth-10}$ (see Lemmas \ref{useful_lemma} and \ref{moving_loc_lemma}), so is easier to handle. For the third line we use the geometric identity \eqref{geometric_identity_4} to swap the low frequency factor for a high one and find
\begin{align*}
    &\sum_{l\lesssim\kth}2^{(\tht-1)l}2^{(s-1)\kth}\|Q_lP_k(\eta_T P_{\kt}(\widetilde{\Pi}_{\tilde{\phi}^{\perp}}\du)\dhalf P_{\kth}(\phi_{<\kth-10}\cdot\phi_{\geq\kth-10}))\|_{2,2}\\
    &\lesssim\sum_{l\lesssim\kth}2^{(\tht-1)l}2^{(s-1)\kth}2^{l(\half-\f{1}{M})}\|\eta_T\|_M\|P_{\kt}(\widetilde{\Pi}_{\tilde{\phi}^{\perp}}\du)\|_{\f{2M}{M-4},\infty}\cdot 2^{\kth}\|\phi_{\geq\kth-10}\|_{2M,\f{2M}{M-1}}\|\phi_{\geq\kth-10}\|_{\f{2M}{M-1},2M}\\
    &\lesssim T^{1/M}2^{(\half+\f{2}{M}-s')(\kt-\kth)}2^{2(\f{1}{M}-s')(\kth-k)}2^{2(\f{1}{M}-s')k}\|\phi\|_{\xst_1}^2\sum_{k'\geq0}2^{-\sg|\kt-k'|}\|\phi_{k'}\|_{\xst}
\end{align*}
where we chose $M$ such that $\tht'<M^{-1}<s'$. This can be summed over $\kt\leq\kth$, $\kth\gtrsim k$ and $k\geq0$ as required. 

The case $\kt>\kth$ is similar, with the exception that we must separately study $\ko<\kth-10$ and $\ko\in[\kth-10,\kt-10]$ in order to apply \eqref{geometric_identity_4}.

\bigskip

Finally, we turn to $HWM_2$, again restricting to modulation $\lesssim 2^{\max\{\kt,\kth\}}$. We first consider $\kth\geq\kt+10$, in which case we must have output frequency $k\sim\kth$ and can write $HWM_{2;\ko,\kt,\kth}(\phi)=\pko\x\Lc_{\kt+\kth}(\pkt,\pkth)$ for $\Lc$ as in \eqref{Lc}. We therefore have
\begin{align*}
    &\|\sum_{\ko<\kth-10} Q_{\lesssim\kth} P_k(\eta_T\cdot HWM_{2;\ko,\kt,\kth}(\phi))\|_{\xstmo}\\
    &\lesssim \sum_{l\lesssim\kth}2^{(\tht-1)l}2^{(s-1)\kth}2^{(\half-\f{1}{M})l}\|\eta_T\,\phi_{<\kth-10}\x\Lc_{\kt+\kth}(\pkt,\pkth)\|_{\f{M}{M-1},2}\\
    &\lesssim T^{1/M}\sum_{l\lesssim\kth}\sum_{a,b}2^{(\tht'-\f{1}{M})l}2^{(s-1)\kth}c^{(\kt+\kth)}_{a,b}(\|\pkt(x+2^{-\kt}a)\ \phi_{<\kth-10}(x)\cdot\pkth(x+2^{-\kth}b)\|_{\f{M}{M-2},2}\\
    &\hspace{18em}+\|\pkth(x+2^{-\kth}b)\ \phi_{<\kth-10}(x)\cdot\pkt(x+2^{-\kt}a)\|_{\f{M}{M-2},2})
\end{align*}
where we used \eqref{FS} and \eqref{vector_id} in the second inequality. We then write
$$
\phi_{<\kth-10}(x)=\phi_{<\kth-10}(x+2^{-\kth}b)-2^{-\kth}b\int_0^1\na\phi_{<\kth-10}(x+2^{-\kth}b\tht)d\tht
$$
and use \eqref{geometric_identity_4} to bound
\begin{multline*}
    \|\pkt(x+2^{-\kt}a)\ \phi_{<\kth-10}(x)\cdot\pkth(x+2^{-\kth}b)\|_{\f{M}{M-2},2}\\
    \lesssim\langle b\rangle2^{-(\half-\f{2}{M}+s')\kt}2^{-(2+2s')\kth}\|\phi\|_{\xst_1}^2\left(\sum_{k'\gtrsim\kth}2^{-(\half+\f{1}{M}+s')(k'-\kth)}\|\phi_{k'}\|_{\xst}\right)
\end{multline*}
and similarly
\begin{align*}
    \|\pkth(x+2^{-\kth}b)\ \phi_{<\kth-10}(x)\cdot\pkt(x+2^{-\kt}a)\|_{\f{M}{M-2},2}
    \lesssim\langle a\rangle2^{-s\kth}2^{-(1-\f{2}{M}+2s')\kt}\|\pkth\|_{\xst_1}\|\phi\|_{\xst_1}^2
\end{align*}
It follows that, choosing $\tht'<M^{-1}<s'$,
\begin{align*}
    &\|\sum_{\ko<\kth-10} Q_{\lesssim\kth} P_k(\eta_T\cdot HWM_{2;\ko,\kt,\kth}(\phi))\|_{\xstmo}\\
    &\lesssim T^{1/M}\sum_{l\lesssim\kth}\sum_{a,b}2^{(\tht'-\f{1}{M})l}2^{(s-1)\kth}c^{\kt+\kth}_{a,b}\langle a \rangle\langle b \rangle 2^{-s\kth}2^{-(1-\f{2}{M}+2s')\kt}\|\phi\|_{\xst_1}^2\left(\sum_{k'\gtrsim\kth}2^{-\sg|k'-\kth|}\|\phi_{k'}\|_{\xst}\right)\\
    &\lesssim T^{1/M} 2^{2(\f{1}{M}-s')\kt}\|\phi\|_{\xst_1}^2\left(\sum_{k'\gtrsim\kth}2^{-\sg|k'-\kth|}\|\phi_{k'}\|_{\xst}\right)
\end{align*}
which is acceptable when summed over $\kt\geq0$, $\kth\sim k$, $k\geq0$. The case $\kt\geq\kth+10$ can be treated identically.

In the remaining case $\kt\simeq\kth$, we again call upon the identity \eqref{geometric_identity_4}, however this time there is nothing to be gained by cancellation and so one must instead split $HWM_2$ into its two components and treat each separately. The term involving
$$
\pkt\x(-\D)\pkth
$$
is easier to handle as there are no nonlocal operators acting so one can directly apply the vector product identity. For the term involving
$$
\dhalf(\pkt\x\dhalf\pkth),
$$
we only need to use \eqref{geometric_identity_4} when the frequency of this output output is comparable to $2^k$. The details are left to the reader.

\end{proof}

In combination with Remark \ref{red-remark}, the previous two propositions complete the proof of Proposition \ref{all_ML_estimates}.

\subsection{Local Wellposedness of the Half-Wave Maps Equation \eqref{eqn1.1}.}\label{ss2}
It remains to show that the local solution to the differentiated equation in fact solves the original problem (\ref{eqn1.1}) under the compatibility assumption $\phi_1=\phi_0\x\dhalf\phi_0$. We use an energy argument as in \cite{KS}.

Let $\phi$ be a smooth local solution to equation \eqref{eqn1.2} with data $(\phi_0,\phi_1)$ as above. Set
$$
X:=\phi_t-\phi\times\dhalf \phi
$$
Our goal is to show $X\equiv 0$. To this end, consider the energy type functional
$$
\tilde{E}(t)=\half\int_{\R^3}|X(t,x)|^2 dx
$$
A calculation as in \cite{KS} shows
\begin{equation*}
\dd_t X=-\phi(X\cdot(\phi\times\dhalf \phi+\phi_t))-X\times\dhalf \phi-\phi\times\dhalf X
\end{equation*}
from which
\begin{align}
\f{d}{dt}\tilde{E}(t)=-\int_{\R^3}(\phi(X\cdot(\phi\times\dhalf \phi+\phi_t)))\cdot X dx-\int_{\R^3}(\phi\times\dhalf X)\cdot Xdx\label{dtet2}
\end{align}
We immediately see that
\begin{align*}
\left|\int_{\R^3}(\phi(X\cdot(\phi\times\dhalf \phi+\phi_t)))\cdot X \,dx\right|\lesssim&\|\phi\|_{\infty}\|X\|_2^2\|\phi\times\dhalf \phi+\phi_t\|_{\infty}\lesssim_\phi\|X\|_2^2
\end{align*}
since $\phi,\,\na_{t,x} \phi\in \xst\hookrightarrow L^\infty_{t,x}$. For the second term, we subtract a term which is zero (by Plancherel):
\begin{align*}
\int_{\R^3}(\phi\ti(-\D)^{\half}X)\cdot X dx=&\int_{\R^3}[(\phi\times\dhalf X)-(-\Delta)^{\f{1}{4}}(\phi\times(-\Delta)^{\f{1}{4}}X)]\cdot X dx
\end{align*}
then bound
\begin{align*}
\|(\phi\times\dhalf X)-(-\D)^\f{1}{4}(\phi\times(-\D)^\f{1}{4}X)\|_2\lesssim\sum_{\ko\geq0}\|\Lc(\phi_{\ko},X_{<\ko+10})\|_2+\left(\sum_{\kt\geq0}\|\Lc(\phi_{<\kt-10},X_{\kt})\|_2^2\right)^\half
\end{align*}
with
$$
\Lc(\pko,X_{\kt})=\int_{\xi,\eta}e^{ix\cdot(\xi+\eta)}|\eta|^{\half}(|\eta|^{\half}-|\xi+\eta|^\half)\chi_{\ko}(\xi)\hat{\phi}(\xi)\chi_{\kt}(\eta)\hat{X}(\eta)d\xi d\eta
$$
It is then straightforward that 
\begin{align*}
   \sum_{\ko\geq0}\|\Lc(\phi_{\ko},X_{<\ko+10})\|_2\lesssim\sum_{\ko\geq0}2^{\ko}\|\pko\|_{\infty}\|X\|_2
   \lesssim_\phi\|X\|_2
\end{align*}
and applying Lemma \ref{useful_lemma} (using $\left||\eta|^{\half}(|\eta|^{\half}-|\xi+\eta|^\half|)\right|\lesssim|\xi|$), we also have
\begin{align*}
\left(\sum_{\kt\geq0}\|\Lc(\phi_{<\kt-10},X_{\kt})\|_2^2\right)^\half\lesssim\left(\sum_{\kt\geq0}\|\na \phi\|_{\infty}^2\|X_{\kt}\|_2^2\right)^\half
\lesssim_\phi \|X\|_2
\end{align*}
We have therefore shown that
\begin{equation*}
\f{d}{dt}\tilde{E}(t)\lesssim_\phi\tilde{E}(t)    
\end{equation*}
and since the initial conditions imply that $\tilde{E}(0)=0$, we conclude that $\tilde{E}\equiv0$ for all time. This completes the proof.

\appendix
\section{Control of the Low Frequencies}\label{appendixN}
In this short appendix we show that the low frequency portion of $\phi$ cannot blow up. It is recommended that the reader ignores this appendix until the end of the proof, since some of the methods will by then be familiar.

By the energy estimate for the wave equation we find
\begin{align*}
    \|P_{\leq0}\dd_t\phi\|_{L^\infty_tL^2_x([0,T])}&\lesssim\|P_{\leq0}\phi[0]\|_{\dot{H}^1\x L^2}+\|P_{\leq0}\Box\phi\|_{L^1_tL^2_x([0,T])}
\end{align*}
By our assumptions on the initial data certainly $\|P_{\leq0}\phi[0]\|_{\dot{H}^1\x L^2}<\infty$. For the nonhomogeneous term we use H\"older's inequality in time and Bernstein in space to find, for instance,
\begin{align*}
    \|P_{\leq0}(\phi\dad\phi\dau\phi)\|_{L^1_tL^2_x([0,T])}&\lesssim T(\|P_{\leq0}(\phi\dad\phi_{>10}\dau\phi_{>10})\|_{L^\infty_tL^1_x([0,T])}\\
    &\hspace{2em}+\|P_{\leq0}(\phi\dad\phi_{\leq10}\dau\phi_{>10})\|_{L^\infty_tL^2_x([0,T])}\\
    &\hspace{2em}+\|P_{\leq0}(\phi\dad\phi_{\leq10}\dau\phi_{\leq10})\|_{L^\infty_tL^2_x([0,T])})\\
    &\lesssim T(\|\phi\|_{\infty,\infty}\|\dad\phi_{>10}\|_{\infty,2}\|\dau\phi_{>10}\|_{\infty,2}\\
    &\hspace{2em}+\|\phi\|_{\infty,\infty}\|\dad\phi_{\leq10}\|_{\infty,\infty}\|\dau\phi_{>10}\|_{\infty,2}\\
    &\hspace{2em}+\|\phi\|_{\infty,\infty}\|\dad\phi_{\leq10}\|_{\infty,4}\|\dau\phi_{\leq10}\|_{\infty,4})
\end{align*}
All of these terms are bounded by $T\eps^2$ using the definition of $S$ and the local constancy of the frequency envelope. The half-wave maps source terms can be treated similarly using arguments as in Section \ref{normal_forms_chapter} (see for example Claim \ref{claim1}, Proposition \ref{D1_bounded_prop}).

This shows that the low frequency portion of $\dd_t\phi$ remains bounded for all time (even if this bound is growing in $T$). For the $L^2$ norm of the solution itself we can then use that the data is certainly in $L^2$ (upon subtracting the constant $p$) and calculate the derivative
\begin{align*}
    \f{d}{dt}\|P_{\leq0}\phi(t)\|_{L^2_x}^2&=2\int_{\R^3}P_{\leq0}\phi\cdot P_{\leq0}\dd_t\phi dx\\
    &\leq \eps^2\|P_{\leq0}\phi\|_{L^\infty_tL^2_x([0,T])}^2+\eps^{-2}\|P_{\leq0}\dd_t\phi\|_{L^\infty_tL^2_x([0,T])}^2
\end{align*}
Choosing $\eps=(2T)^{-1/2}$ and using the fundamental theorem of calculus this yields
\begin{align*}
    \|P_{\leq0}\phi\|_{L^\infty_tL^2_x([0,T])}^2&\leq2\|\phi_0\|_{L^2_x}^2+4T^2\|P_{\leq0}\dd_t\phi\|_{L^\infty_tL^2_x([0,T])}^2
\end{align*}
In combination with the bound already shown for $\|P_{\leq0}\dd_t\phi\|_{L^\infty_tL^2_x([0,T])}$ this shows that $\|P_{\leq0}\phi\|_{L^\infty_tL^2_x([0,T])}$ also remains bounded on the interval $[0,T]$.

We remark that the control on $\|P_{\leq0}\phi\|_{L^2_x}$ could also be obtained from the conserved mass
\begin{equation}\label{mass_con}
    M(t):=\int_{\R^n}|\phi-p|^2dx
\end{equation}
of the half-wave maps equation, although the approach above is of course more general.

\section{Proof of Lemma \ref{xsb_projection_estimate}}\label{appendixA}
In this appendix we give a brief proof of Lemma \ref{xsb_projection_estimate}, focusing only on the first point \eqref{xst_moser}. The remaining estimates \eqref{xst_proj_estimate}-\eqref{contp} follow upon writing
$$
\wip \Phi=\Phi-\Phi^TG(\tp)
$$
for $G(\tp)=g(\tp)\, g(\tp)$.
\begin{proof}[Proof of \eqref{xst_moser}]
    We study the different regimens of $(j,k)$ separately. In this proof all implicit constants may depend polynomially on $\tpxst$.
    \begin{itemize}[leftmargin=*]
    \item\underline{$k\ll j$:} We have to show that
    $$
    \|P_k Q_j g(\tp)\|_{2,2}\lesssim_{\tpxst} 2^{-(s+\tht)j}
    $$
    Note that since $j\gg k$ we have $P_kQ_j=P_kQ_jP^{(t)}_{\sim j}$ and $j>0$ so
    \begin{align*}
        \|P_k Q_j g(\tp)\|_{2,2}\lesssim \underbrace{2^{-j}\|P_kQ_j[Q_{\gtrsim j}\dd_t\tp\cdot g'(\tp)]\|_{2,2}}_{(A)}+\underbrace{2^{-j}\|P_kQ_j[Q_{\ll j}\dd_t\tp\cdot g'(\tp)]\|_{2,2}}_{(B)}
    \end{align*}
    Here
    \begin{align*}
        (A)\lesssim 2^{-j}\|Q_{\gtrsim j}\dd_t\tp\|_{2,2}\lesssim 2^{-j}2^{-(s+\tht-1)j}\|\dd_t\tp\|_{X^{0,s-1+\tht}}\lesssim 2^{-(s+\tht)j}\|\tp\|_{\xst}
    \end{align*}
For (B) we differentiate in $t$ a second time and find
    \begin{align*}
        (B)\lesssim \underbrace{2^{-2j}\|P_kQ_j[Q_{\ll j}\dd_t^2\tp\cdot g'(\tp)]\|_{2,2}}_{(B1)}+\underbrace{2^{-2j}\|P_kQ_j[Q_{\ll j}\dd_t\tp\cdot\dd_t\tp\cdot g''(\tp)]\|_{2,2}}_{(B2)}
    \end{align*}
We start with 
    $$
    (B1)\lesssim \underbrace{2^{-2j}\|P_kQ_j[Q_{< k-10}\dd_t^2\tp\cdot g'(\tp)]\|_{2,2}}_{(B1)_{\ll k}}+\underbrace{2^{-2j}\|P_kQ_j[Q_{[k-10,j-10]}\dd_t^2\tp\cdot g'(\tp)]\|_{2,2}}_{(B1)_{\gtrsim k}}
    $$
    For the lowest modulation case we have
    \begin{align}
        (B1)_{\ll k}&\lesssim \sum_{l<k-10}(2^{-2j}\|P_kQ_j[P_{<l-10}Q_l\dd_t^2\tp\cdot g'(\tp)]\|_{2,2}\tag*{$(B1a)_{\ll k}$}\\
        &\hspace{5em}+2^{-2j}\|P_kQ_j[P_{\geq l-10}Q_l\dd_t^2\tp\cdot g'(\tp)]\|_{2,2})\tag*{$(B1b)_{\ll k}$}
    \end{align}
    Now the real calculations begin. For the first of these terms we use that $l$ is far smaller than the scales $k$ or $j$ so the factor of $g'$ must also be localised to $P_{\sim k}Q_{\sim j}$. It follows that
    \begin{align*}
        (B1a)_{\ll k}&\lesssim2^{-2j}\sum_{l<\kmt}\|P_kQ_j[P_{<l-10}Q_l\dd_t^2\tp\cdot\dd_t^{-1}P_{\sim k}Q_{\sim j}[\dd_t\tp\cdot g''(\tp)]]\|_{2,2}\\
        &\lesssim2^{-2j}\sum_{l<\kmt}2^{-j}\|P_{<l-10}Q_l\dd_t^2\tp\|_{\infty,\infty}\|Q_{\gtrsim l}\dd_t\tp\|_{2,2}\|g''(\tp)\|_{\infty,\infty}\\
        &\quad+2^{-2j}\sum_{l<\kmt}\|P_kQ_j[P_{<l-10}Q_l\dd_t^2\tp\cdot\dd_t^{-1}P_{\sim k}Q_{\sim j}[Q_{\ll l}\dd_t\tp\cdot g''(\tp)]]\|_{2,2}
    \end{align*}
    The first line above can be bounded by 
    \begin{equation*}
        2^{-3j}\sum_{l<k-10}2^{[2-(s'+\tht')]l}\|\tp\|_{\xst}2^{-(s-1+\tht)l}\|Q_{\gtrsim l}\dd_t\tp\|_{X^{0,s-1+\tht}}
        \lesssim 2^{[3-(s+\tht)-(s'+\tht')](k-j)}2^{-(s+\tht)j}
    \end{equation*}
    as required. For the second line we further split $Q_{\ll l}\dd_t\tp$ into low and high frequencies to find
    \begin{align*}
        &2^{-2j}\sum_{l<\kmt}\|P_kQ_j[P_{<l-10}Q_l\dd_t^2\tp\cdot\dd_t^{-1}P_{\sim k}Q_{\sim j}[Q_{\ll l}\dd_t\tp\cdot g''(\tp)]]\|_{2,2}\\
        &\lesssim2^{-3j}\sum_{l<\kmt}\|P_{<l-10}Q_l\dd_t^2\tp\|_{2,2}\|P_{\lesssim l}Q_{\ll l}\dd_t\tp\|_{\infty,\infty}
        +2^{-3j}\sum_{l<\kmt}\|P_{<l-10}Q_l\dd_t^2\tp\|_{2,\infty}\|P_{\gg l}Q_{\ll l}\dd_t\tp\|_{\infty,2}\\
        &\lesssim2^{-3j}\sum_{l<\kmt}\left(2^{[2-(s+\tht)l]}2^{(1-s')l}+2^{3l/2}2^{[2-(s+\tht)]l}2^{(1-s)l}\right)\\
        &\lesssim2^{[3-(s+\tht)](k-j)}2^{-(s+\tht)j}
    \end{align*}
    This completes the study of $(B1a)_{\ll k}$. We now turn to $(B1b)_{\ll k}$. Write
    \begin{align}
        (B1b)_{\ll k}&\leq2^{-2j}\sum_{l<\kmt}\|P_kQ_j[P_{> k+10}Q_l\dd_t^2\tp\cdot g'(\tp)]\|_{2,2}\label{po0}\\
        &+2^{-2j}\sum_{l<\kmt}\|P_kQ_j[P_{[l-10, k+10]}Q_l\dd_t^2\tp\cdot g'(\tp)]\|_{2,2}\label{po1}
    \end{align}
    The easier of these terms is \eqref{po0}, which we write as follows:
    \begin{align}
        \eqref{po0}&\lesssim 2^{-2j}\sum_{l\ll k}\sum_{r\gg k}\|P_kQ_j[P_rQ_l\dd_t^2\tp\cdot P_{\sim r}g'(\tp)]\|_{2,2}\nonumber\\
        &\lesssim 2^{-2j}\sum_{\substack{l\ll k\\r\gg k}}\|P_kQ_j[P_rQ_l\dd_t^2\tp\cdot \na^{-1}P_{\sim r}[\na\tp_{\ll r}\cdot g''(\tp)]]\|_{2,2}\label{THIS-ONE}\\
        &\quad+2^{-2j}\sum_{\substack{l\ll k\\r\gg k}}\|P_kQ_j[P_rQ_l\dd_t^2\tp\cdot \na^{-1}P_{\sim r}[\na\tp_{\gtrsim r}\cdot g''(\tp)]]\|_{2,2}\label{OTHER-ONE}
    \end{align}
To study \eqref{THIS-ONE}, we differentiate in $t$ a further time and obtain
    \begin{align}
        \eqref{THIS-ONE}&\lesssim 2^{-3j}\sum_{\substack{l\ll k\\r\gg k}}\|P_kQ_j[P_rQ_l\dd_t^3\tp\cdot \na^{-1}P_{\sim r}[\na\tp_{\ll r}\cdot P_{\sim r}g''(\tp)]]\|_{2,2}\label{1i}\\
        &\quad+2^{-3j}\sum_{\substack{l\ll k\\r\gg k}}\|P_kQ_j[P_rQ_l\dd_t^2\tp\cdot \na^{-1}P_{\sim r}[\na\dd_t\tp_{\ll r}\cdot P_{\sim r}g''(\tp)]]\|_{2,2}\label{2ii}\\
        &\quad+2^{-3j}\sum_{\substack{l\ll k\\r\gg k}}\|P_kQ_j[P_rQ_l\dd_t^2\tp\cdot \na^{-1}P_{\sim r}[\na\tp_{\ll r}\cdot \dd_tP_{\sim r}g''(\tp)]]\|_{2,2}\label{3iii}
    \end{align}
    For \eqref{1i} we use Bernstein at frequency $2^k$ to see
    \begin{align*}
        \eqref{1i}&\lesssim 2^{-3j}\sum_{\substack{l\ll k\\r\gg k}}2^{3k/2}\|P_rQ_l\dd_t^3\tp\|_{\infty,2}\cdot 2^{-r}\|\na\tp_{\ll r}\|_{\f{2M}{M-1},2M}\|P_{\sim r}g''(\tp)\|_{2M,\f{2M}{M-1}}\\
        &\lesssim2^{-3j}\sum_{\substack{l\ll k\\r\gg k}}2^{3k/2}2^{-\tht'l}2^{(3-s)r}2^{-r}2^{(\half-\f{1}{M}-s')r}2^{-(\f{3}{2}-\f{1}{M}+s')r}\\
        &\lesssim2^{(1-3s')(k-j)}2^{-(2+3s')j}
    \end{align*}
    which is acceptable. The second term \eqref{2ii} can be treated in the same way. For \eqref{3iii} we use Lemma \ref{sc_projection_lemma} to bound
    \begin{align*}
        \eqref{3iii}&\lesssim 2^{-3j}\sum_{\substack{l\ll k\\r\gg k}}\|P_kQ_j[P_rQ_l\dd_t^2\tp\cdot \na^{-1}P_{\sim r}[\na\tp_{\ll r}\cdot \dd_t P_{\sim r}g''(\tp)]]\|_{2,2}\\
        &\lesssim 
        2^{-3j}\sum_{\substack{l\ll k\\r\gg k}}2^{3k/2}\|P_rQ_l\dd_t^2\tp\|_{\infty,2}\cdot 2^{-r}\|\na\tp_{\ll r}\|_{\f{2M}{M-1},2M}\|\dd_t P_{\sim r}g''(\tp)\|_{2M,\f{2M}{M-1}}\\
        &\lesssim 
        2^{-3j}\sum_{\substack{l\ll k\\r\gg k}}2^{3k/2}2^{-\tht'l}2^{(2-s)r}2^{-r}2^{(\half-\f{1}{M}-s')r}2^{-(\half-\f{1}{M}+s')r}\\
        &\lesssim 2^{-3j}2^{(1-3s')k}
    \end{align*}
    which is as required. We now turn to \eqref{OTHER-ONE}.    
    If we restrict the sum to $r\gtrsim j$ the term is easily handled:
    \begin{align*}
        &2^{-2j}\sum_{\substack{l\ll k\\r\gtrsim j}}\|P_kQ_j[P_rQ_l\dd_t^2\tp\cdot \na^{-1}P_{\sim r}[\na\tp_{\gtrsim r}\cdot g''(\tp)]]\|_{2,2}\\
        &\lesssim2^{-2j}\sum_{\substack{l\ll k\\r\gtrsim j}}\|P_rQ_l\dd_t^2\tp\|_{2M,\f{2M}{M-1}} 2^{-r}\|P_{\sim r}(\na\tp_{\gtrsim r}\cdot g''(\tp))\|_{\f{2M}{M-1},2M}\\
        &\lesssim2^{-2j}\sum_{\substack{l\ll k\\r\gtrsim j}}2^{(\half-\f{1}{2M}-\tht)l}2^{3r/2M}2^{(2-s)r}\cdot 2^{-r}2^{(\half-\f{1}{M}-s')r}\\
        &\lesssim2^{-(2+2s'-\f{1}{2M})j}
    \end{align*}
    Here we used the bound
    \begin{align*}
    \|P_{\sim r}(\na\tp_{\gtrsim r}\cdot g''(\tp))\|_{\f{2M}{M-1},2M}
    &\lesssim\|\na\tp_{\sim r}\|_{\f{2M}{M-1},2M}+\sum_{m\gg r}2^{3r/2}\|\na\tp_m\|_{\f{2M}{M-1},2M}\|P_{\sim m}g''(\tp)\|_{\infty,2}
    \end{align*}
    to go from the second to the third line. Such decompositions will be used frequently without comment in the sequel. Choosing $M$ such that $s'-\f{1}{2M}\geq\tht'$, we see that the sum over $r\gtrsim j$ is acceptable.
    
    For the sum over $r\in[k+10,j-10]$ we differentiate again and have
    \begin{align}
        &2^{-2j}\sum_{\substack{l\ll k\\r\in[k+10,j-10]}}\|P_kQ_j[P_rQ_l\dd_t^2\tp\cdot\na^{-1}P_{\sim r}[\na\tp_{\gtrsim r}\cdot g''(\tp)]]\|_{2,2}\nonumber\\
        &\lesssim 2^{-3j}\sum_{\substack{l\ll k\\r\in[k+10,j-10]}}\|P_kQ_j[P_rQ_l\dd_t^3\tp\cdot\na^{-1}P_{\sim r}[\na\tp_{\gtrsim r}\cdot g''(\tp)]]\|_{2,2}\tag{I}\label{I}\\
        &\quad+2^{-3j}\sum_{\substack{l\ll k\\r\in[k+10,j-10]}}\|P_kQ_j[P_rQ_l\dd_t^2\tp\cdot\na^{-1}P_{\sim r}[\na\dd_t\tp_{\gtrsim r}\cdot g''(\tp)]]\|_{2,2}\tag{II}\label{II}\\
        &\quad+2^{-3j}\sum_{\substack{l\ll k\\r\in[k+10,j-10]}}\|P_kQ_j[P_rQ_l\dd_t^2\tp\cdot\na^{-1}P_{\sim r}[\na\tp_{\gtrsim r}\cdot \dd_t g''(\tp)]]\|_{2,2}\tag{III}\label{III}
    \end{align}
    where
    \begin{align*}
        \eqref{I}\lesssim2^{-3j}\sum_{\substack{l\ll k\\r\in[k+10,j-10]}}\|P_rQ_l\dd_t^3\tp\|_{2M,\f{2M}{M-1}}\cdot 2^{-r}\|P_{\sim r}(\na\tp_{\gtrsim r}\cdot g''(\tp))\|_{\f{2M}{M-1},2M}
        \lesssim 2^{-(2+2s'-\f{1}{2M})j}
    \end{align*}
    (summing over $l\geq0$, $r\ll j$) which is again acceptable for $s'-\f{1}{2M}\geq\tht'$. The bound for \eqref{II} is similar. 
    For \eqref{III} the bound is straightforward upon placing $P_{\sim r}[\na\tp_{\sim r}\cdot\dd_t g''(\tp)]$ into $L^{2+}_tL^2_x$ and separately considering the cases where the frequency of $\dd_t g''(\tp)$ is comparable to or much smaller than that of $\na\tp$.
    
    This completes the work on \eqref{po0} so we now turn to \eqref{po1}:
    \begin{align}
    \eqref{po1}    &\lesssim2^{-2j}\sum_{l<\kmt}\|P_kQ_j[P_{[l-10,k-10]}Q_l\dd_t^2\tp\cdot g'(\tp)]\|_{2,2}\label{po1i}\\
    &\quad+2^{-2j}\sum_{l<\kmt}\|P_kQ_j[P_{[k-10,k+10]}Q_l\dd_t^2\tp\cdot g'(\tp)]\|_{2,2}\label{po1ii}
    \end{align}
    For the first line we use that $g'(\tp)$ must be restricted to frequency $\sim 2^k$ and modulation $\sim 2^j$, which allows us to swap a $2^j$ for a $2^k$ by Moser's inequality \eqref{sc_moser}:
    \begin{align*}
        \eqref{po1i}&\lesssim 2^{-2j}\sum_{l\ll k}\|P_kQ_j[P_{[l-10,k-10]}Q_l\dd_t^2\tp\cdot P_{\sim k}Q_{\sim j}g'(\tp)]\|_{2,2}\\
        &\lesssim 2^{-2j}\sum_{l\ll k}\|P_{[l-10,k-10]}Q_l\dd_t^2\tp\|_{2M,\f{2M}{M-1}}\cdot 2^{-j}\|\dd_t P_{\sim k}g'(\tp)\|_{\f{2M}{M-1},2M}\\
        &\lesssim2^{-3j}\sum_{l\ll k}\sum_{\la=l-10}^{k-10}2^{(\half-\f{1}{2M}-\tht)l}2^{3\la/2M}2^{(2-s)\la}2^{(\half-\f{1}{M}-s')k}\\
        &\lesssim2^{(1+\f{1}{2M}-2s')(k-j)}2^{-(2+2s'-\f{1}{2M})j}
    \end{align*}
    which is acceptable for $s'-\f{1}{2M}\geq\tht'$.

    To complete the work on $(B1b)_{\ll k}$ it remains to study \eqref{po1ii}. We use that $k\ll j$ to see that $g'(\tp)$ must be at modulation $\sim 2^j$ and so
        \begin{align*}
        \eqref{po1ii}&\lesssim 2^{-2j}\sum_{l\ll k}\|P_kQ_j[P_{\sim k}Q_l\dd_t^2\tp\cdot \dd_t^{-1}P_{\lesssim k}Q_{\sim j}\dd_tg'(\tp)]\|_{2,2}\\
        &\lesssim2^{-2j}\sum_{\substack{l\ll k\\\la\lesssim k}}\|P_{\sim k}Q_l\dd_t^2\tp\|_{2M,\f{2M}{M-1}}2^{-j}\|P_{ \la}\dd_tg'(\tp)\|_{\f{2M}{M-1},2M}\\
        &\lesssim2^{-3j}\sum_{\substack{l\ll k\\\la\lesssim k}}2^{(\half-\f{1}{2M}-\tht)l}2^{3k/2M}2^{(2-s)k}2^{(\half-\f{1}{M}-s')\la}\\
        &\lesssim 2^{(1-2s'+\f{1}{2M})(k-j)}2^{-(2+2s'-\f{1}{2M})j}
    \end{align*}
    which is acceptable for $s'-\f{1}{2M}>\tht'$. This completes the study of $(B1b)_{\ll k}$, and so of $(B1)_{\ll k}$.

    To finish the work on (B1), we therefore now have to study
    \begin{align}
        (B1)_{\gtrsim k}&\lesssim 2^{-2j}\sum_{l=\kmt}^{j-10}\|P_kQ_j[P_{<k-10}Q_l\dd_t^2\tp\cdot g'(\tp)]\|_{2,2}\tag*{$(B1a)_{\gtrsim k}$}\label{B1agk}\\
        &\quad+2^{-2j}\sum_{l=\kmt}^{j-10}\|P_kQ_j[P_{\geq k-10}Q_l\dd_t^2\tp\cdot g'(\tp)]\|_{2,2}\tag*{$(B1b)_{\gtrsim k}$}\label{B1bgk}
    \end{align}
    For \ref{B1agk} we note that $g'(\tp)$ must be at frequency $\sim 2^k$ and modulation $\sim 2^j$ and decompose
    \begin{align*}
        \text{\ref{B1agk}}&\lesssim 2^{-2j}\sum_{l=k-10}^{j-10}\|P_kQ_j[P_{<k-10}Q_l\dd_t^2\tp\cdot\dd_t^{-1}P_{\sim k}Q_{\sim j}\dd_tP_{\lesssim k}g'(\tp)]\|_{2,2}\\
        &\lesssim2^{-2j}\sum_{l\gtrsim k}\|P_{\ll k}Q_l\dd_t^2\tp\|_{2,2}\cdot 2^{-j}\|P_{\sim k}\dd_tg'(\tp)\|_{\infty,\infty}\\
        &\lesssim 2^{(1-s'-\tht')(k-j)}2^{-(2+s'+\tht')j}
    \end{align*}
    Then for \ref{B1bgk} we note that if $l\gg k$ we can write
    \begin{align}
        2^{-2j}\sum_{l=k+10}^{j-10}\|P_kQ_j[P_{\geq k-10}Q_l\dd_t^2\tp\cdot g'(\tp)]\|_{2,2}
        &\lesssim2^{-2j}\sum_{l=k+10}^{j-10}\|P_kQ_j[P_{> k+10}Q_l\dd_t^2\tp\cdot g'(\tp)]\|_{2,2}\label{pol0}\\
        &\quad+2^{-2j}\sum_{l=k+10}^{j-10}\|P_kQ_j[P_{\sim k}Q_l\dd_t^2\tp\cdot g'(\tp)]\|_{2,2}\label{pol1}
    \end{align}
    where \eqref{pol0} can be treated in exactly the same way as \eqref{po0}, and for \eqref{pol1} we observe that $g'(\tp)$ is restricted to $P_{\lesssim k}Q_{\sim j}g'(\tp)=P^{(t)}_{\sim j}P_{\lesssim k}Q_{\sim j}g'(\tp)$ and so
    \begin{align*}
        \eqref{pol1}&\lesssim2^{-2j}\sum_{l=k+10}^{j-10}\|P_kQ_j[P_{\sim k}Q_l\dd_t^2\tp\cdot \dd_t^{-1} P_{\lesssim k}Q_{\sim j}\dd_tg(\tp)]\|_{2,2}\\
        &\lesssim 2^{-3j}\sum_{l\gg k}\|P_{\sim k}Q_l\dd_t^2\tp\|_{2,2}\|P_{\lesssim k}\dd_tg(\tp)\|_{\infty,\infty}\\
        &\lesssim_{\tpxst} 2^{(1-s'-\tht')(k-j)}2^{-(2+s'+\tht')j}
    \end{align*}
    For the remaining part of \ref{B1bgk} with $l\sim k$ we have
    \begin{align}
        2^{-2j}\|P_kQ_j[P_{\gtrsim k}Q_{\sim k}\dd_t^2\tp\cdot g'(\tp)]\|_{2,2}
        &\lesssim2^{-2j}\|P_kQ_j[P_{\sim k}Q_{\sim k}\dd_t^2\tp\cdot P_{\lesssim k}Q_{\sim j}g'(\tp)]\|_{2,2}\label{ah1}\\
        &\quad+2^{-2j}\sum_{r\gg k}\|P_kQ_j[P_rQ_{\sim k}\dd_t^2\tp\cdot P_{\sim r}g'(\tp)]\|_{2,2}\label{ah2}
    \end{align}
    where
    \begin{align*}
        \eqref{ah1}&\lesssim 2^{-2j}\|P_kQ_j[P_{\sim k}Q_{\sim k}\dd_t^2\tp\cdot\dd_t^{-1}P_{\lesssim k}Q_{\sim j}\dd_t g'(\tp)]\|_{2,2}\\
        &\lesssim 2^{-2j}\|P_{\sim k}Q_{\sim k}\dd_t^2\tp\|_{2,2}\cdot 2^{-j}\|P_{\lesssim k}\dd_tg'(\tp)\|_{\infty,\infty}\\
        &\lesssim 2^{(3-s-\tht-s')(k-j)}2^{-(s+\tht)j}2^{-s'j}
    \end{align*}
    as required. For \eqref{ah2} we have
    \begin{align*}
        \eqref{ah2}
        &\lesssim 2^{-2j}\sum_{r\geq j-10}\|P_kQ_j[P_rQ_{\sim k}\dd_t^2\tp\cdot P_{\sim r} g'(\tp)]\|_{2,2}\\
        &\quad+2^{-2j}\sum_{r=k+10}^{j-10}\|P_kQ_j[P_rQ_{\sim k}\dd_t^2\tp\cdot\dd_t^{-1}P_{\sim r}Q_{\sim j}\dd_t g'(\tp)]\|_{2,2}\\
        &\lesssim 2^{-2j}\sum_{r\gtrsim j}2^{3k/2}\|P_rQ_{\sim k}\dd_t^2\tp\|_{2,2}\|P_{\sim r}g'(\tp)\|_{\infty,2}\\
        &\quad+2^{-2j}\sum_{r\gg k}2^{3k/2}\|P_rQ_{\sim k}\dd_t^2\tp\|_{2,2}\cdot 2^{-j}\|\dd_tP_{\sim r}g'(\tp)\|_{\infty,2}\\
        &\lesssim_{\tpxst} 2^{-2j}2^{3k/2}\sum_{r\gtrsim j}2^{-\tht k}2^{(2-s)r}2^{-sr}+2^{-3j}2^{3k/2}\sum_{r\gg k}2^{-\tht k}2^{(2-s)r}2^{(1-s)r}\\
        &\lesssim_{\tpxst} 2^{(s-\tht)(k-j)}2^{-(s+\tht)j}+2^{(3-s-\tht)(k-j)}2^{-(s+\tht)j}
    \end{align*}
    which is acceptable. This completes the work on (B1).

    (B2) can be treated similarly and this complete the study of $j\gg k$.
    \bigskip
    
    \item\underline{$j\simeq k$:} This time we have to show
    $$
    \|P_kQ_j g(\tp)\|_{2,2}\lesssim2^{-(s+\tht)k}
    $$
    We have
    \begin{align*}
        \|P_kQ_{\sim k} g(\tp)\|_{2,2}&\lesssim2^{-k}\|P_kQ_{\sim k}(\na\tp_{\gtrsim k}\cdot g'(\tp))\|_{2,2}+\tk\|P_kQ_{\sim k}(\na\tp_{\ll k}\cdot P_{\sim k}g'(\tp))\|_{2,2}\\
        &\lesssim\tk\|Q_{\gtrsim k}\na\tp_{\gtrsim k}\|_{2,2}+\tk\|P_kQ_{\sim k}(Q_{\ll k}\na\tp_{\gtrsim k}\cdot g'(\tp))\|_{2,2}\\
        &\quad+\tk\|\na\tp_{\ll k}\|_{\f{2M}{M-1},2M}\|P_{\sim k}g'(\tp)\|_{2M,\f{2M}{M-1}}\\
        &\lesssim\tk2^{(1-s-\tht)k}+\tk\|P_kQ_{\sim k}(Q_{\ll k}\na\tp_{\gtrsim k}\cdot g'(\tp))\|_{2,2}\\
        &\quad+\tk2^{(\half-\f{1}{M}-s')k}2^{-(\f{3}{2}-\f{1}{M}+s')k}
    \end{align*}
    The first and third terms here are as required, so it remains to study
    \begin{align}
        \tk\|P_kQ_{\sim k}(Q_{\ll k}\na\tp_{\gtrsim k}\cdot g'(\tp))\|_{2,2}&\lesssim\tk\|P_kQ_{\sim k}(Q_{\ll k}\na\tp_{\sim k}\cdot P_{\lesssim k}g'(\tp))\|_{2,2}\label{yup}\\
        &\quad+\sum_{r\gg k}\tk\|P_kQ_{\sim k}(Q_{\ll k}\na\tp_{ r}\cdot P_{\sim r}g'(\tp))\|_{2,2}\label{yup2}
    \end{align}
    where
    \begin{align*}
        \eqref{yup2}\lesssim\tk\sum_{r\gg k}\|\na\tp_r\|_{\f{2M}{M-1},2M}\|P_{\sim r}g'(\tp)\|_{2M,\f{2M}{M-1}}\lesssim\tk2^{(\half-\f{1}{M}-s')k}2^{-(\f{3}{2}-\f{1}{M}+s')k}
    \end{align*}
    is fine, and
    \begin{align*}
        \eqref{yup}&\lesssim\tk\|P_kQ_{\sim k}[Q_{\ll k}\na\tp_{\sim k}\cdot P_{\ll k}Q_{\sim k}g'(\tp)]\|_{2,2}\\
        &\quad+\tk\|P_kQ_{\sim k}[Q_{\ll k}\na\tp_{\sim k}\cdot P_{\sim k}g'(\tp)]\|_{2,2}\\
        &\lesssim\tk\|Q_{\ll k}\na\tp_{\sim k}\cdot \dd_t^{-1} P_{\ll k}Q_{\sim k}(\dd_t\tp\  g''(\tp))\|_{2,2}\\
        &\quad+\tk\|Q_{\ll k}\na\tp_{\sim k}\cdot \na^{-1}P_{\sim k}Q_{\sim k}(\na\tp\  g''(\tp))\|_{2,2}\\
        &\lesssim2^{-2k}(\|\na\tp_{\sim k}\|_{2M,\f{2M}{M-1}}\|\dtpt_{\lesssim k}\|_{\f{2M}{M-1},2M}+\|\na\tp_{\sim k}\|_{\f{2M}{M-1},2M}\|\dtpt_{\gg k}\|_{2M,\f{2M}{M-1}})\\
        &\quad+2^{-2k}(\|\na\tp_{\sim k}\|_{2M,\f{2M}{M-1}}\|\na\tp_{\lesssim k}\|_{\f{2M}{M-1},2M}+\|\na\tp_{\sim k}\|_{\f{2M}{M-1},2M}\|\na\tp_{\gg k}\|_{2M,\f{2M}{M-1}})\\
        &\lesssim 2^{-(2+2s')k}
    \end{align*}

    \bigskip
    
    \item\underline{$j\ll k$:} This time our goal is
    $$
    \|P_kQ_j g(\tp)\|_{2,2}\lesssim2^{-sk-\tht j}
    $$
    We have
    \begin{align}
        \|P_kQ_jg(\tp)\|_{2,2}&\lesssim\tk\|P_kQ_j(\na\tp_{\ll k}\cdot P_{\sim k}g'(\tp))\|_{2,2}\label{AAAAAa}\\
        &\quad+\tk\|P_kQ_j(\na\tp_{\sim k}\cdot g'(\tp))\|_{2,2}\label{BBBBBb}\\
        &\quad+\tk\sum_{r\gg k}\|P_kQ_j(\na\tp_r\cdot P_{\sim r}g'(\tp))\|_{2,2}\label{CCCCCc}
    \end{align}
    Here \eqref{AAAAAa} and \eqref{CCCCCc} can be handled as in the case $j\simeq k$. For \eqref{BBBBBb} we separate
    \begin{align*}
        \eqref{BBBBBb}\lesssim\tk\|P_kQ_j(Q_{\ll j}\na\tp_{\sim k}\cdot g'(\tp))\|_{2,2}+\tk\|P_kQ_j(Q_{\gtrsim j}\na\tp_{\sim k}\cdot g'(\tp))\|_{2,2}
    \end{align*}
    The second line here is straightforward to handle by placing $\na\tp$ into $L^2_tL^2_x$, so we consider only the first term. Referring to the result for $j\gg k$ to handle $g'$ we find
    \begin{align*}
        \tk\|P_kQ_j(Q_{\ll j}\na\tp_{\sim k}\cdot g'(\tp))\|_{2,2}
        &\lesssim\tk\|P_kQ_j(Q_{\ll j}\na\tp_{\sim k}\cdot Q_{\gtrsim j}P_{\ll j}g'(\tp))\|_{2,2}\\
        &\quad+\tk\|P_kQ_j(Q_{\ll j}\na\tp_{\sim k}\cdot P_{\gtrsim j}g'(\tp))\|_{2,2}\\
        &\lesssim\tk\|Q_{\ll j}\na\tp_{\sim k}\|_{\infty,2}\sum_{\substack{l\gtrsim j\\r\ll j}}\| Q_lP_rg'(\tp))\|_{2,\infty}\\
        &\quad+\tk\|Q_{\ll j}\na\tp_{\sim k}\|_{2M,2}\sum_{r\gtrsim j}2^{-r}\|P_r(\na\tp\ g''(\tp))\|_{\f{2M}{M-1},\infty}\\
        &\lesssim2^{-sk-\tht j}
    \end{align*}
    for $s'-\f{1}{2M}\geq\tht'$. This completes the proof of the Moser estimate.
    \end{itemize}
proof of projection estimate
\end{proof}

\section{Proof of Lemma \ref{big_lem}.}\label{appendixB}
The two parts of this Lemma are proved similarly, so we restrict to proving the second statement (which is somewhat simpler due to symmetry reductions). 
comment of first statement
\begin{proof}[Proof of \eqref{big_lem2}.]
In this proof we will constantly use the Strichartz estimate Lemma \ref{str_lem} and the modulation-Bernstein estimate \eqref{MB}.

Assume without loss of generality $\kt\geq\kth$. First suppose $\kt\geq\kth+10$ so the whole term is at frequency $\sim 2^{\kt}$. We split into the following cases:
\begin{itemize}
\item $\sum_{l\lesssim\kth}P_{\sim\kt}Q_l(Q_{\gtrsim l}\vptk\cdot\vpthk)$: Square summing over $l$ we have
\begin{align*}
\|\sum_{l\lesssim\kth}P_{\sim\kt}Q_l(Q_{\gtrsim l}\vptk\cdot\vpthk)\|_{\xst}
&\lesssim\left(\sum_{l\lesssim\kth}(2^{s\kt}2^{\tht l}\|Q_{\gtrsim l}\vptk\|_{2,2}\|\vpthk\|_{\infty,\infty})^2\right)^\half\\
&\lesssim\left(\sum_{l\lesssim\kth}(\sum_{j\gtrsim l}2^{\tht (l-j)}\|Q_j\vptk\|_{\xst}\cdot 2^{-s'\kth}\|\vpthk\|_{\xst})^2\right)^\half\\
&\lesssim 2^{-s'\kth}\|\vptk\|_{\xst}\|\vpthk\|_{\xst}
\end{align*}
where we used Cauchy-Schwarz for the final inequality.
\item $\sum_{l\lesssim\kth}P_{\sim\kt}Q_l(Q_{\ll l}\vptk\cdot\vpthk)$: Using \eqref{MB} on $\vp^{(2)}$ this term can be bounded in $\xst$ by
\begin{align*}
&\sum_{l\lesssim\kth}\sum_{j\ll l}2^{s\kt}2^{\tht l}\|Q_j\vptk\|_{M,2}\|\vpthk\|_{\f{2M}{M-2},\infty}\\
&\lesssim\sum_{l\lesssim\kth}\sum_{j\ll l}2^{\tht l}2^{(\f{1}{2}-\f{1}{M})j}2^{-\tht j}\|\vptk\|_{\xst}2^{-(\half-\f{1}{M}+s')\kth}\|\vpthk\|_{\xst}\\
&\lesssim2^{(\tht'+\f{1}{M}-s')\kth}\|\vptk\|_{\xst}\|\vpthk\|_{\xst}
\end{align*}
which is acceptable upon choosing $1/M<s'-\tht'$.
\end{itemize}
When the outer modulation is $\gg 2^{\kth}$, one of the inner terms must be of at least comparable modulation or the interaction is null. 
\begin{itemize}
\item $\sum_{\kth\ll l\lesssim\kt}P_{\sim\kt}Q_l(Q_{>l-10}\vptk\cdot\vpthk)$: In this case we use the bound
\begin{align*}
\|P_{\sim\kt}Q_l(Q_{>l-10}\vptk\cdot\vpthk)\|_{\xst}&\lesssim\sum_{j\gtrsim l}2^{s\kt}2^{\tht l}\|Q_j\vptk\|_{2,2}\|\vpthk\|_{\infty,\infty}\\
&\lesssim2^{-s'\kth}\sum_{j\gtrsim l}2^{\tht (l-j)}\|Q_j\vptk\|_{\xst}\|\vpthk\|_{\xst}
\end{align*}
which is again acceptable when square-summed in $l$.

\item $\sum_{\kth\ll l\lesssim\kt}P_{\sim\kt}Q_l(Q_{\ll l}\vptk\cdot\vpthk)$: This time we bound
\begin{align*}
\|P_{\sim\kt}Q_l(Q_{\ll l}\vptk\cdot\vpthk)\|_{\xst}&\lesssim2^{s\kt}2^{\tht l}\|Q_{\ll l}\vptk\|_{\infty,2}\|Q_{\gtrsim l}\vpthk\|_{2,\infty}\\
&\lesssim\|\vptk\|_{\xst}\sum_{j\gtrsim l}2^{-s'\kth}2^{\tht (l-j)}\|Q_j\vpthk\|_{\xst}
\end{align*}
which is again acceptable.
\end{itemize}
When the outer modulation is very large $\gg2^{\kt}$, we have a similar situation:
\begin{itemize}
\item $\sum_{l\gg\kt}P_{\sim\kt}Q_l(Q_{>l-10}\vptk\cdot\vpthk)$:
\begin{align*}
\|P_{\sim\kt}Q_l(Q_{>l-10}\vptk\cdot\vpthk)\|_{\xst}&\lesssim\sum_{j\gtrsim l}2^{sl}2^{\tht l}\|Q_j\vptk\|_{2,2}\|\vpthk\|_{\infty,\infty}\\
&\lesssim2^{-s'\kth}\sum_{j\gtrsim l}2^{(s+\tht) (l-j)}\|Q_j\vptk\|_{\xst}\|\vpthk\|_{\xst}
\end{align*}
which is acceptable.
\bigskip

\item $\sum_{l\gg\kt}P_{\sim\kt}Q_l(Q_{\ll l}\vptk\cdot\vpthk)$: Here
\begin{align*}
\|P_{\sim\kt}Q_l(Q_{\ll l}\vptk\cdot\vpthk)\|_{\xst}&\lesssim2^{sl}2^{\tht l}\|Q_{\ll l}\vptk\|_{\infty,\infty}\|Q_{\gtrsim l}\vpthk\|_{2,2}\\
&\lesssim2^{-s'\kt}\sum_{j\gtrsim l}2^{(s+\tht) (l-j)}\|\vptk\|_{\xst}\|Q_j\vpthk\|_{\xst}
\end{align*}
\end{itemize}

It remains to consider the case $|\kt-\kth|\leq10$. We first suppose that the outer modulation is restricted to $\lesssim 2^{\kt}$:
\begin{align*}
\|P_{\lesssim\kt}Q_{\lesssim\kt}(\vptk\cdot\vpthk)\|_{\xst}
&\lesssim2^{(s+\tht)\kt}\|\vptk\|_{\f{2M}{M-2},M}\|\vpthk\|_{M,\f{2M}{M-2}}\\
&\lesssim2^{(s+\tht)\kt}2^{-(\half+\f{2}{M}+s')\kt}\|\vptk\|_{\xst}\cdot 2^{-(\f{3}{2}-\f{2}{M}+s')\kth}\|\vpthk\|_{\xst}
\end{align*}
which is acceptable for $s'>\tht'$.

When the outer modulation is at $2^l\gg 2^{\kt}$, at least one of the terms must also be at modulation at least $\sim 2^l$ or else the term is null. Since we are considering $\kt\sim\kth$ we may assume WLOG it is the factor $\vptk$:
\begin{align*}
\sum_{l\gg\kt}\|P_{\lesssim\kt}Q_l(Q_{\gtrsim l}\vptk\cdot\vpthk)\|_{\xst}
&\lesssim\sum_{l\gg\kt}2^{(s+\tht)l}\|Q_{\gtrsim l}\vptk\|_{2,2}\|\vpthk\|_{\infty,\infty}\\
&\lesssim 2^{-s'\kth}\sum_{l\gg\kt}\sum_{j\gtrsim l}2^{(s+\tht)(l-j)}\|Q_j\vptk\|_{\xst}\|\vpthk\|_{\xst}
\end{align*} 
This was the final case and the proof is complete.
\end{proof}


\bibliographystyle{plain}
\bibliography{refs} 

  \bigskip
  \footnotesize
\textsc{B\^{a}timent des Math\'{e}matiques, EPFL, Station 8, 1015 Lausanne, Switzerland}\par\nopagebreak
\textit{E-mail address}: \texttt{katie.marsden@epfl.ch}\par\nopagebreak

\end{document}